\numberwithin{equation}{section} % Number formulas within sections
\theoremstyle{plain}
\newtheorem{theoalph}{Theorem}
\newtheorem{thm}{Theorem}
\newtheorem{theorem}{Theorem}[section]
\newtheorem{proposition}[theorem]{Proposition}
\newtheorem{coroalph}[theoalph]{Corollary}
\newtheorem{lemma}[theorem]{Lemma}
\theoremstyle{remark}
\newtheorem{remark}[theorem]{Remark}
\newcommand{\A}{\mathbb{A}}
\newcommand{\C}{\mathbb{C}}
\newcommand{\F}{\mathbb{F}}
\newcommand{\K}{\mathbb{K}}
\newcommand{\N}{\mathbb{N}}
\newcommand{\Q}{\mathbb{Q}}
\newcommand{\R}{\mathbb{R}}
\newcommand{\Z}{\mathbb{Z}}
\renewcommand{\H}{\mathbb{H}}
\newcommand{\cA}{\mathcal{A}}
\newcommand{\cD}{\mathcal{D}}
\newcommand{\cF}{\mathcal{F}}
\newcommand{\cM}{\mathcal{M}}
\newcommand{\cO}{\mathcal{O}}
\newcommand{\cU}{\mathcal{U}}
\newcommand{\fD}{\mathfrak{D}}
\newcommand{\fj}{\mathfrak{j}}
\newcommand{\hN}{\widehat{N}}
\newcommand{\halpha}{\widehat{\alpha}}
\newcommand{\hbeta}{\widehat{\beta}}
\newcommand{\hpsi}{\widehat{\psi}}
\newcommand{\tE}{\widetilde{E}}
\newcommand{\tL}{\widetilde{L}}
\newcommand{\talpha}{\widetilde{\alpha}}
\newcommand{\tpsi}{\widetilde{\psi}}
\renewcommand{\=}{\coloneqq}
\newcommand{\dd}{\hspace{1pt}\operatorname{d}\hspace{-1pt}}
\newcommand{\wtf}{\widetilde{f}}
\newcommand{\wtg}{\widetilde{g}}
\newcommand{\wtm}{\widetilde{m}}
\newcommand{\whw}{\widehat{w}}
\newcommand{\whx}{\widehat{x}}
\newcommand{\bfB}{\mathbf{B}}
\newcommand{\bfD}{\mathbf{D}}
\newcommand{\bfO}{\mathbf{O}}
\DeclareMathOperator{\oD}{D}
\DeclareMathOperator{\oE}{E}
\DeclareMathOperator{\Aut}{Aut}
\DeclareMathOperator{\Berk}{Berk}
\DeclareMathOperator{\Div}{Div}
\DeclareMathOperator{\End}{End}
\DeclareMathOperator{\Ker}{Ker}
\DeclareMathOperator{\Frob}{Frob}
\DeclareMathOperator{\Hom}{Hom}
\DeclareMathOperator{\SL}{SL}
\DeclareMathOperator{\SO}{SO}
\DeclareMathOperator{\supp}{supp}
\DeclareMathOperator{\Tate}{Tate}
\newcommand{\CM}{CM}
\DeclareMathOperator{\hyp}{hyp}
\newcommand{\mhyp}{\mu_{\hyp}}
\newcommand{\odelta}{\overline{\delta}}
\newcommand{\supsn}[1]{| #1 |_{p\operatorname{-}\sups}}
\DeclareMathOperator{\bad}{bad}
\DeclareMathOperator{\ord}{ord}
\DeclareMathOperator{\sups}{sups}
\newcommand{\Ell}{Y}
\newcommand{\Bad}{Y_{\bad}(\C_p)}
\newcommand{\Ord}{Y_{\ord}(\C_p)}
\newcommand{\Sups}{Y_{\sups}(\C_p)}
\newcommand{\Supstwo}{Y_{\sups}(\C_2)}
\newcommand{\tSups}{Y_{\sups}(\Fpalg)}
\newcommand{\Fpalg}{\overline{\F}_p}
\newcommand{\Qpalg}{\overline{\Q_p}}
\DeclareMathOperator{\unr}{unr}
\newcommand{\Qpun}{\Q_p^{\unr}}
\newcommand{\Cpun}{\C_p^{\unr}}
\newcommand{\Cpunalg}{\overline{\Cpun}}
\newcommand{\Ordun}{Y_{\ord}(\Cpunalg)}
\newcommand{\Zp}{\Z_{(p)}}
\newcommand{\AKber}{\A^1_{\Berk}}
\DeclareMathOperator{\can}{can}
\newcommand{\xcan}{x_{\can}}
\newcommand{\bfone}{\mathbf{1}}
\newcommand{\Rzp}{\mathbb{R}_0^+}
\newcommand{\cht}{\widecheck{\t}}
\newcommand{\chalpha}{\widecheck{\alpha}}
\newcommand{\chbeta}{\widecheck{\beta}}
\newcommand{\chN}{\widecheck{N}}
\newcommand{\chk}{\widecheck{k}}
\renewcommand{\t}{\mathbf{t}}
\newcommand{\kval}{v_p}
\newcommand{\kproj}{\widehat{v}_p}
\newcommand{\kvaltwo}{v_2}
\renewcommand{\ss}{e}
\begin{document}

\title[Convergence of \CM{} points towards the Gauss point]{$p$-Adic distribution of \CM{} points and Hecke orbits.
  \\ \small{I. Convergence towards the Gauss point}} 

\author{Sebasti\'an Herrero}
\address{Instituto de Matem\'aticas, Pontificia Universidad Cat\'olica de Valpara\'iso, Blanco Viel 596, Cerro Bar\'on, Valpara\'iso,
Chile.}
\email{sebastian.herrero.m@gmail.com}

    \author{Ricardo Menares}

    \address{
    Facultad de Matem\'aticas, Pontificia Universidad Cat\'olica de Chile, Vicu\~na Mackenna 4860, Santiago, Chile.}

    \email{rmenares@mat.uc.cl}

\author{Juan Rivera-Letelier}
\address{Department of Mathematics, University of Rochester. Hylan Building, Rochester, NY~14627, U.S.A.}
\email{riveraletelier@gmail.com}
\urladdr{\url{http://rivera-letelier.org/}}

\begin{abstract}
  We study the asymptotic distribution of \CM{} points on the moduli space of elliptic curves over~$\C_p$, as the discriminant of the underlying endomorphism ring varies.
  In contrast with the complex case, we show that there is no uniform distribution.
  In this paper we characterize all the sequences of discriminants for which the corresponding \CM{} points converge towards the Gauss point of the Berkovich affine line.
  We also give an analogous characterization for Hecke orbits.
  In the companion paper we characterize all the remaining limit measures of \CM{} points and Hecke orbits.
\end{abstract}

\maketitle

\setcounter{tocdepth}{1}
\tableofcontents

\section{Introduction}\label{intro}

Given an algebraically closed field~$\K$, denote by~$\Ell(\K)$ the moduli space of elliptic curves over~$\K$.
It is the space of all isomorphism classes of elliptic curves over~$\K$, for isomorphisms defined over~$\K$.
For a class~$E$ in~$\Ell(\K)$, the $j$-invariant~$j(E)$ of~$E$ is an element of~$\K$ determining~$E$ completely.
The map $j\colon\Ell(\K) \to \K$ so defined is a bijection.
See for example~\cite{Sil09} and~\cite{Lan87} for background on elliptic curves.

If~$\K$ is of characteristic~$0$, then the endomorphism ring of an elliptic curve defined over~$\K$ is isomorphic to~$\Z$ or to an order in a quadratic imaginary extension of~$\Q$.
In the latter case, the order only depends on the class~$E$ in~$\Ell(\K)$ of the elliptic curve and~$E$ is said to have \emph{complex multiplication} or to be a \emph{\CM{} point}.
In this paper, the \emph{discriminant of a \CM{} point} is the discriminant of the corresponding order.\footnote{This notion of discriminant is not to be confused with the discriminant of a Weierstrass model of an elliptic curve~\cite[Chapter~III, Section~1]{Sil09}.}.
Moreover, a \emph{discriminant} is the discriminant of an order in a quadratic imaginary extension of~$\Q$.
An integer~$D$ is a discriminant if and only if~$D < 0$ and~$D \equiv 0, 1 \mod 4$.

For every discriminant~$D$, the set
\begin{equation}\label{e:CM set}
  \Lambda_D
  \=
  \{ E\in \Ell(\K) : \text{\CM{} point of discriminant } D \}
\end{equation}
is finite and nonempty.
So, we can define the probability measure~$\odelta_D$ on~$\Ell(\K)$, by
$$ \odelta_D
\=
\frac{1}{\# \Lambda_D} \sum_{E \in \Lambda_D} \delta_E, $$
where~$\delta_x$ denotes the Dirac measure on~$\Ell(\K)$ at~$x$.

Throughout the rest of this paper we fix a prime number~$p$ and a completion~$(\C_p, | \cdot |_p)$ of an algebraic closure of the field of $p$-adic numbers~$\Q_p$.
Our first goal is to study, for~$\K = \C_p$, the asymptotic distribution of~$\Lambda_D$ as the discriminant~$D$ tends to~$- \infty$.
This is motivated by the following result in the case where~$\K$ is the field of complex numbers~$\C$.
Recall that, if we consider the usual action of~$\SL_2(\Z)$ on the upper half-plane~$\H$ by M{\"o}bius transformations, then~$\Ell(\C)$ can be naturally identified with the quotient space~$\SL_2(\Z)\backslash \H$.
An appropriate multiple of the hyperbolic measure on~$\H$ descends to a probability measure~$\mhyp$ on~$\Ell(\C)$.

\begin{thm}
  \label{t:complex CM points}
  For every continuous and bounded function~$\varphi \colon \Ell(\C) \to \R$, we have
  \begin{displaymath}
    \frac{1}{\# \Lambda_D} \sum_{E \in \Lambda_D} \varphi(E)
    \to
    \int \varphi \dd \mhyp,
  \end{displaymath}
  as the discriminant~$D$ tends to~$- \infty$.
  Equivalently, we have the weak convergence of measures
  \begin{displaymath}
    \odelta_D
    \to
    \mhyp,
  \end{displaymath}
  as the discriminant~$D$ tends to~$- \infty$.
\end{thm}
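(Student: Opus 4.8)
The plan is to verify the stated convergence on a spanning family of test functions, control the mass near the cusp, and then conclude by Weyl's equidistribution criterion; this is the route of Linnik's ergodic method together with Duke's theorem. First I would reduce to three cases: $\varphi$ constant, $\varphi$ a fixed Hecke--Maass cusp form, and $\varphi$ an incomplete Eisenstein series, since the span of these, together with a tightness statement ruling out escape of mass into the cusp, suffices for weak-$*$ convergence of probability measures on $\Ell(\C) \cong \SL_2(\Z)\backslash\H$. Using the classical bijection between $\Lambda_D$ and the orbit of a fixed Heegner point $\tau_D\in\H$ under the form class group of discriminant~$D$ (each $\cO_D$-ideal class $[\mathfrak{a}]$ sends $\tau_D$ to another \CM{} point of discriminant~$D$, and every such point arises this way), the normalised Weyl sum $\tfrac{1}{\#\Lambda_D}\sum_{E\in\Lambda_D}\varphi(E)$ becomes $\tfrac{1}{h(D)}\sum_{[\mathfrak{a}]}\varphi(\mathfrak{a}\cdot\tau_D)$, where $h(D)=\#\Lambda_D$ is the class number.

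The constant case is trivial. For an Eisenstein series $E(z,s)$ one unfolds the orbit sum into an elementary multiple of the partial zeta function of the form class group of discriminant~$D$; its analytic behaviour is governed by the class number formula and Hurwitz class number identities, and a direct computation gives $\tfrac{1}{h(D)}\sum_{[\mathfrak{a}]}E(\mathfrak{a}\cdot\tau_D,s)\to\int E(\cdot,s)\,\dd\mhyp$ as $D\to-\infty$, uniformly enough in $s$ on the critical line to handle the continuous spectrum. The same computation shows that the sets $\Lambda_D$ do not concentrate near the cusp, supplying the required tightness.

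The heart of the matter is the cuspidal bound: for every fixed Hecke--Maass cusp form $\varphi$ there is an absolute $\delta>0$ with $\sum_{[\mathfrak{a}]}\varphi(\mathfrak{a}\cdot\tau_D)\ll_\varphi|D|^{1/2-\delta}$. I would obtain this through the theta correspondence: by the Waldspurger--Katok--Sarnak formula this orbit sum is (a normalisation of) a Fourier coefficient of the weight-$\tfrac12$ Shimura lift of $\varphi$, and its square is proportional, up to an explicit positive archimedean factor, to the twisted central value $L(\tfrac12,\varphi\otimes\chi_D)$; hence any subconvex bound $L(\tfrac12,\varphi\otimes\chi_D)\ll_\varphi|D|^{1/2-2\delta}$ does the job. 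Equivalently one may invoke directly the nontrivial bound on Fourier coefficients of half-integral weight cusp forms. Alternatively, under an auxiliary splitting condition on~$D$, Linnik's ergodic argument deduces equidistribution from mixing of the Hecke correspondences $T_\ell$ on $\Lambda_D$ together with the ergodicity coming from a single fixed prime. Combining $\sum_{[\mathfrak{a}]}\varphi(\mathfrak{a}\cdot\tau_D)=o(|D|^{1/2})$ with Siegel's lower bound $h(D)\gg_\varepsilon|D|^{1/2-\varepsilon}$ forces the normalised Weyl sum to~$0$. This step, a power-saving/subconvex bound on cuspidal Weyl sums, is the main obstacle; it is also the source of the ineffectivity of the rate, inherited from Siegel's theorem.

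Finally I would remove the restriction to fundamental discriminants: writing $D=f^2D_0$ with $D_0$ fundamental, the \CM{} points of discriminant~$D$ and of~$D_0$ are related by an isogeny (``volcano'') correspondence of degree dividing a fixed power of~$f$, so the orbit sums for non-maximal orders are expressible through bounded-degree Hecke operators applied to the maximal-order case; both the Eisenstein main term and the cuspidal bound are stable under this, with implied constants controlled by the number of prime divisors of~$f$. Assembling the three cases and the tightness statement, Weyl's criterion yields $\odelta_D\to\mhyp$ weakly as $D\to-\infty$.
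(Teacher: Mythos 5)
The paper offers no proof of this statement: it is quoted as the classical theorem of Duke (for fundamental discriminants), completed by Clozel--Ullmo for arbitrary discriminants, and the only ingredient of it that the paper actually uses later is Siegel's lower bound~\eqref{eq-Siegel-low-general}. Your treatment of the fundamental-discriminant case is the standard and correct route: spectral decomposition into constants, Maass cusp forms and incomplete Eisenstein series; the Eisenstein computation via partial zeta functions, which also gives tightness; the cuspidal Weyl sums bounded by $o(|D|^{1/2})$ through Katok--Sarnak/Waldspurger and subconvexity for $L(\tfrac12,\varphi\otimes\chi_D)$ (equivalently the Iwaniec--Duke bounds on half-integral weight Fourier coefficients); and Siegel's bound $h(D)\gg_\varepsilon|D|^{1/2-\varepsilon}$ to divide.

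The gap is in your final paragraph, the passage from fundamental to arbitrary discriminants. Writing $D=df^2$, Zhang's formula (Lemma~\ref{Zhang general} of this paper) expresses $\Lambda_{df^2}$ through the divisors $T_{f_0}(\Lambda_d)$ for $f_0\mid f$; these correspondences have degree $\sigma_1(f_0)\ge f_0$, which is unbounded as $f\to\infty$, so this is not a ``bounded-degree'' correspondence and the implied constants are not controlled by the number of prime divisors of~$f$. Concretely, when $d$ is fixed and $f\to\infty$, one has $\#\Lambda_{df^2}\to\infty$ while $\#\Lambda_d$ stays constant, so $\odelta_{df^2}$ is in no sense a perturbation of $\odelta_d$, and no bound ``stable under the correspondence'' can produce equidistribution from the single finite set $\Lambda_d$. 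Equidistribution in this regime is precisely the uniform distribution of Hecke orbits $T_n(E)$ as $n\to\infty$ (\emph{Th\'eor\`eme}~2.1 of Clozel--Ullmo, or Eskin--Oh), which rests on a genuinely separate input: nontrivial bounds towards Ramanujan for Hecke eigenvalues of Maass forms, or an ergodic-theoretic substitute. Your sketch never invokes this, and it is exactly the ingredient Clozel--Ullmo add to Duke's theorem; one also needs it (or uniformity of the Duke bound under application of $T_{f_0}$) to handle the mixed regime in which $|d|$ and $f$ grow simultaneously.
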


The asymptotic distribution of \CM{} points on~$\Ell(\C)$ was part of a family of problems studied by Linnik, see~\cite{Lin68} and also~\cite{MicVen06}.
By applying a certain ``ergodic method'', Linnik proved the result above for sequences of discriminants satisfying some congruence restrictions.
In a breakthrough, Duke removed the congruence restrictions assumed by Linnik and proved Theorem~\ref{t:complex CM points} for fundamental discriminants~\cite{Duk88}.
Duke's proof uses the theory of non-holomorphic modular forms of half-integral weight and bounds for their Fourier coefficients, building on work of Iwaniec~\cite{Iwa87}.
Finally, Clozel and Ullmo obtained Theorem~\ref{t:complex CM points} for arbitrary discriminants, by studying the action of Hecke correspondences on \CM{} points and combining Duke's result together with the uniform distribution of Hecke orbits~\cite{CloUll04}.

\subsection{Convergence of \CM{} points  towards the Gauss point}
\label{ss:CM points}
Our first goal is to describe the asymptotic distribution of \CM{} points for the ground field~$\K = \C_p$.
However, it is easy to find sequences of discriminants~$(D_n)_{n = 1}^{\infty}$ for which the sequence of measures~$(\odelta_{D_n})_{n = 1}^{\infty}$ on~$\Ell(\C_p)$ has no accumulation measure.
A natural solution to this issue is to consider~$\Ell(\C_p)$ as a subspace of the Berkovich affine line~$\AKber$ over~$\C_p$, using the $j$-invariant to identify~$\Ell(\C_p)$ with the subspace~$\C_p$ of~$\AKber$.
In fact, every sequence of measures~$(\odelta_{D_n})_{n = 1}^{\infty}$ as above accumulates on at least one probability measure with respect to the weak topology on the space of Borel measures on~$\AKber$.
See Section~\ref{ss:berkovich} for a brief review of the space~$\AKber$ and the weak topology on the space of measures on~$\AKber$.

In contrast with Theorem~\ref{t:complex CM points}, for~$\K = \C_p$ the measures~$\odelta_D$ on~$\AKber$ do not converge to a limit as the discriminant~$D$ tends to~$- \infty$.
Our first main result is a characterization of all those sequences of discriminants~$(D_n)_{n = 1}^{\infty}$ tending to~$- \infty$, such that the sequence of measures~$(\odelta_{D_n})_{n = 1}^{\infty}$ in~$\AKber$ converges to the Dirac measure at the ``canonical'' or ``Gauss point'' $\xcan$ of~$\AKber$.
In the companion paper~\cite{HerMenRivII} we show that in all the remaining cases the sequence~$(\odelta_{D_n})_{n = 1}^{\infty}$ accumulates on at least one probability measure supported on a compact subset of the supersingular locus of~$\Ell(\C_p)$ and characterize all possible accumulation measures.

To state our first main result, we introduce some notation and terminology.
Identify the residue field of~$\C_p$ with an algebraic closure~$\Fpalg$ of the field with~$p$ elements~$\F_p$.
Recall that the endomorphism ring of an elliptic curve over~$\Fpalg$ is isomorphic to an order in either a quadratic imaginary extension of~$\Q$ or a quaternion algebra over~$\Q$.
In the former case the corresponding elliptic curve class is \emph{ordinary} and it is \emph{supersingular} in the latter.

Denote by~$\cO_p$ the ring of integers of~$\C_p$ and by~$\pi \colon \cO_p \to \Fpalg$ the reduction map.
An elliptic curve class~$E$ has \emph{good reduction}, if there is a representative elliptic curve defined over~$\cO_p$ whose reduction is smooth.
In this case the reduction is an elliptic curve defined over~$\Fpalg$, whose class~$\tE$ only depends on~$E$ and is the \emph{reduction of~$E$}.
Moreover, $E$ has \emph{ordinary} (resp. \emph{supersingular}) \emph{reduction} if~$\tE$ is ordinary (resp. supersingular).
An elliptic curve has good reduction precisely when~$j(E)$ is in~$\cO_p$ and when this is not the case~$E$ has \emph{bad reduction}.
The moduli space~$\Ell(\C_p)$ is thus partitioned into three pairwise disjoint sets: The \emph{bad}, \emph{ordinary} and \emph{supersingular reduction loci}, denoted by~$\Bad$, $\Ord$ and~$\Sups$, respectively.
Using~$j \colon \Ell(\C_p) \to \C_p$ to identify~$\Ell(\C_p)$ and~$\C_p$, we thus have the partition
\begin{displaymath}
  \cO_p
  =
  \Ord \sqcup \Sups.
\end{displaymath}
Moreover, if we denote by~$\tSups$ the finite subset of~$\Ell(\Fpalg)$ of supersingular classes, then~$\Sups = \pi^{-1}(\tSups)$ is a finite union of residue discs of~$\cO_p$.
Note that~$\Ord$ is a union of infinitely many residue discs of~$\cO_p$.

Every \CM{} point~$E$ has good reduction and the reduction type only depends on the discriminant~$D$ of~$E$, as follows.
\begin{enumerate}
\item[$(i)$]
  If~$p$ splits in~$\Q(\sqrt{D})$, then~$E$ has ordinary reduction.
\item[$(ii)$]
  If~$p$ ramifies or is inert in~$\Q(\sqrt{D})$, then~$E$ has supersingular reduction.
\end{enumerate}
See~\cite{Deu41} or~\cite[Chapter~13, Section~4, Theorem~12]{Lan87}.
We call a discriminant~$D$ \emph{$p$\nobreakdash-ordinary} in the first case and \emph{$p$-supersingular} in the second.
Moreover, we define
\begin{displaymath}
  \supsn{D}
  \=
  \begin{cases}
    0
    & \text{if~$D$ is $p$-ordinary};
    \\
    |D|_p
    & \text{if~$D$ is $p$-supersingular}.
  \end{cases}
\end{displaymath}

\begin{theoalph}
  \label{t:CM points}
  Let~$(D_n)_{n = 1}^{\infty}$ be a sequence of discriminants tending to~$- \infty$.
  Then we have the weak convergence of measures
  \begin{displaymath}
    \odelta_{D_n}\to \delta_{\xcan}
    \text{ as~$n \to \infty$}
  \end{displaymath}
  if and only if
  \begin{displaymath}
    \supsn{D_n} \to 0
    \text{ as $n \to \infty$}.
  \end{displaymath}
\end{theoalph}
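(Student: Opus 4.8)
The plan is to recast the convergence $\odelta_{D_n}\to\delta_{\xcan}$ as a concrete statement about how the finite sets $\Lambda_{D_n}$, viewed inside $\cO_p$ via the $j$-invariant, spread among the residue discs, and then to read off that statement from the $p$-adic theory of canonical and quasi-canonical liftings. For $a\in\cO_p$ and $0<r<1$ write $B_r(a)=\{z\in\C_p:|z-a|_p\le r\}$. Since every CM point has good reduction, each $\odelta_{D_n}$ is carried by $\cO_p$, and a neighborhood basis of $\xcan$ in $\AKber$ is given by the complements in $\AKber$ of finite unions of Berkovich closed discs not containing $\xcan$ (a neighborhood of $\infty$ being irrelevant, the measures living on $\cO_p$). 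By the criterion for weak convergence to a Dirac mass recalled in Section~\ref{ss:berkovich}, it thus suffices to show
\begin{displaymath}
  \odelta_{D_n}\to\delta_{\xcan}
  \quad\Longleftrightarrow\quad
  \text{for every $a\in\cO_p$ and $0<r<1$, }\ \odelta_{D_n}(B_r(a))\to 0,
\end{displaymath}
and moreover a failure of the right-hand condition for a single pair $(a,r)$ already precludes convergence. As each $B_r(a)$ lies in one residue disc and meets $\Lambda_{D_n}$ only when that disc and $D_n$ share the same reduction type, for $p$-ordinary $D_n$ one works inside ordinary residue discs, for $p$-supersingular $D_n$ inside the finitely many supersingular ones, and otherwise $\odelta_{D_n}(B_r(a))=0$.

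The engine of the proof is the explicit $p$-adic location of $\Lambda_D$ in a residue disc $\mathcal D$ in terms of $v_p(f)$, where $D=D_0f^2$; note $v_p(D)=2v_p(f)+O(1)$, so for $p$-supersingular $D$ the condition $\supsn{D}\to 0$ is the same as $v_p(f)\to\infty$. If $\mathcal D$ is ordinary, corresponding to $\tE_0/\Fpalg$ with $\End(\tE_0)=\cO_{D^*}$, then by Deuring's theory $\Lambda_D\cap\mathcal D\ne\emptyset$ forces $D=D^*p^{2k}$ with $k=v_p(f)$, and via the Serre--Tate coordinate $q$ on $\mathcal D$ (with $q=1$ the canonical lift, of $j$-invariant $a_0\in\cO_p$) together with Lubin--Tate theory, $\Lambda_D\cap\mathcal D$ is exactly the set of primitive $p^k$-th roots of unity (for $k\ge1$); since $q-1$ and $j-a_0$ are related by a power series with unit linear coefficient, each such point has $|j-a_0|_p=|q-1|_p=p^{-1/((p-1)p^{k-1})}$. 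If $\mathcal D$ is supersingular, corresponding to $\tE_0$ of $j$-invariant $\tilde\jmath_0$, then by Gross's theory of quasi-canonical liftings the points of $\Lambda_D\cap\mathcal D$ (for $p$-supersingular $D$, of level $k=v_p(f)$) are level-$k$ quasi-canonical lifts, defined over ring class fields whose ramification index over $\Q_p$ is bounded by a function of $k$ alone, and lying at $p$-adic distance $p^{-\delta_k}$ from the relevant level-$0$ lift, with $\delta_k>0$ depending only on $k$ and on whether $p$ is inert or ramified in $\Q(\sqrt{D_0})$, and $\delta_k\to 0$. Two consequences are extracted. \emph{(escape)}: for each residue disc $\mathcal D$, each $a\in\mathcal D\cap\cO_p$, and each $r<1$, there is $K_0$ so that every $D$ with reduction type matching $\mathcal D$ and $v_p(f)>K_0$ satisfies $\Lambda_D\cap B_r(a)=\emptyset$. \emph{(confinement)}: for each supersingular residue disc $\mathcal D$ of $j$-invariant $\tilde\jmath_0$ and each $M_0$, there is $\rho<1$ so that $\Lambda_D\cap\mathcal D\subset B_\rho(\tilde\jmath_0)$ for every $p$-supersingular $D$ with $v_p(f)\le M_0$.

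To prove the equivalence, first assume $\supsn{D_n}\to 0$ and fix $a\in\cO_p$, $0<r<1$, with $B_r(a)$ in a residue disc $\mathcal D$. If the reduction types of $\mathcal D$ and $D_n$ disagree, $\odelta_{D_n}(B_r(a))=0$. If $\mathcal D$ is ordinary and $D_n$ is $p$-ordinary, then for the $n$ with $\Lambda_{D_n}\cap\mathcal D\ne\emptyset$ one has $D_n=D^*p^{2k_n}$ for the fixed $D^*$ attached to $\mathcal D$, so $|D_n|\to\infty$ forces $k_n\to\infty$, i.e. $v_p(f_n)\to\infty$ along those $n$; by \emph{(escape)}, $\Lambda_{D_n}\cap B_r(a)=\emptyset$ for $n$ large, while for the other $n$ it is empty too. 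If $\mathcal D$ is supersingular and $D_n$ is $p$-supersingular, then $v_p(f_n)\to\infty$ directly from $\supsn{D_n}\to 0$, and again $\Lambda_{D_n}\cap B_r(a)=\emptyset$ for $n$ large by \emph{(escape)}. Hence $\odelta_{D_n}(B_r(a))\to 0$ for every $(a,r)$, so $\odelta_{D_n}\to\delta_{\xcan}$. Conversely, assume $\supsn{D_n}\not\to 0$; passing to a subsequence, all $D_n$ are $p$-supersingular with $v_p(f_n)\le M_0$ for a fixed $M_0$, hence $\Lambda_{D_n}\subset\Sups=\bigsqcup_{s=1}^{N}\mathcal D_s$, a finite union of supersingular residue discs. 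By the pigeonhole principle, after a further extraction there is a fixed $\mathcal D_{s_0}$ of $j$-invariant $\tilde\jmath_{s_0}$ with $\odelta_{D_n}(\mathcal D_{s_0})\ge 1/N$ for all $n$; by \emph{(confinement)}, $\Lambda_{D_n}\cap\mathcal D_{s_0}\subset B_\rho(\tilde\jmath_{s_0})$ with $\rho<1$ independent of $n$, so $\odelta_{D_n}(B_\rho(\tilde\jmath_{s_0}))\ge 1/N$ does not tend to $0$, and by the first paragraph $(\odelta_{D_n})$ does not converge to $\delta_{\xcan}$.

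The main obstacle is the input behind \emph{(escape)}: pinning down the exact $p$-adic valuation of the difference between a CM point and the canonical (respectively level-$0$) lift of its residue disc, and showing it decays like $p^{-v_p(f)}$, uniformly. In the ordinary case this rests on Serre--Tate coordinates, their first-order comparison (by a unit) with the modular parameter $j-a_0$, and the identification of the $\cO_D$-CM points above a fixed ordinary curve with the torsion of the Serre--Tate group; in the supersingular case it is Gross's valuation formula for quasi-canonical liftings, where the bookkeeping of the local constants (inert versus ramified $p$, unit indices, the level-by-level drift of the base point) demands some care. By comparison, \emph{(confinement)} requires only the soft fact that a CM point of $p$-adic conductor exponent at most $M_0$ is defined over a ring class field whose ramification at $p$ is bounded in terms of $M_0$.
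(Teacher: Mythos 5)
Your proposal is correct in its architecture, and that architecture coincides with the paper's: reduce via the Berkovich criterion (Lemma~\ref{l:berkovich}) to statements about the mass of $\Lambda_{D_n}$ in discs of radius $<1$; prove an ``escape'' statement when the $p$-part of the conductor is large and a ``confinement'' statement when it is bounded in the supersingular locus; deduce the forward implication from escape and the converse from confinement plus pigeonhole over the finitely many supersingular residue discs. Where you genuinely diverge is in how escape and confinement are established. The paper proves them with machinery it also needs for the Hecke-orbit results: in the ordinary locus, the rigid-analytic canonical branch $\t$ of $T_p$ and the identification of CM points with its preperiodic points (Theorem~\ref{t:ordinary CM points}), which yields only $\deg(\Lambda_D|_{\bfB}) \le C$ and therefore requires Siegel's bound~\eqref{eq-Siegel-low-general} to conclude; in the supersingular locus, Katz' valuation and the correspondences $\uptau_m$ (Proposition~\ref{p:katz kite} and Lemma~\ref{vcm}). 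You instead invoke Serre--Tate coordinates and the torsion-point description of ordinary CM lifts --- exactly the alternative the paper flags in Section~\ref{ss:notes} via \cite{deJNoo91} --- and Gross's quasi-canonical liftings in the supersingular case; your ordinary-locus argument then gives $\odelta_{D_n}(B_r(a)) = 0$ for all large $n$, bypassing Siegel entirely. Two inaccuracies should be fixed, neither fatal. First, the claim that $j - a_0$ and $q-1$ differ by a power series with unit linear coefficient fails on ordinary residue discs with $j \equiv 0$ or $1728 \bmod \cM_p$, where the map from the deformation space to the $j$-disc has degree $\#\Aut(\tE_0)/2 > 1$; the escape conclusion survives because that map is finite, so $|q-1|_p \to 1$ still forces $|j - a_0|_p \to 1$, but your exact valuation formula does not. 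Second, in the supersingular case the distances must be measured from a center fixed once and for all in each residue disc (the zero $\fj_{\ss}$ of the lifted Hasse invariant, as in Proposition~\ref{vnorm}), not from ``the relevant level-$0$ lift'', which varies with the prime-to-$p$ part of $D_n$; the uniformity of escape over all such $D_n$ is precisely the point that has to be checked there.
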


For readers unfamiliar with the Berkovich affine line, we give a concrete formulation of the convergence of measures in Theorem~\ref{t:CM points} in terms of~$\C_p$ only, see Lemma~\ref{l:berkovich}$(ii)$ in Section~\ref{ss:berkovich}.

We obtain Theorem~\ref{t:CM points} as a direct consequence of quantitative estimates in the cases where all the discriminants in~$(D_n)_{n = 1}^{\infty}$ are $p$-ordinary (Theorem~\ref{t:ordinary CM points} in Section~\ref{ss:dynamics of t}) or $p$-supersingular (Theorem~\ref{t:supersingular CM points} in Section~\ref{s:supersingular CM points}).
Note that in the former case Theorem~\ref{t:CM points} asserts that~$\odelta_{D_n} \to \delta_{\xcan}$ weakly as~$n \to \infty$.   
The following stronger statement is a direct consequence of our quantitative estimate in this case.

\begin{coroalph}[Ordinary \CM{} points are isolated]
  \label{isolatedCMOrd}
  Every disc of radius strictly less than one contained in~$\Ord$ contains at most a finite number of \CM{} points.
  In particular, the set of \CM{} points in~$\Ord$ is discrete.
\end{coroalph}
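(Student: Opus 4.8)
The plan is to deduce this from the quantitative version of Theorem~\ref{t:CM points} in the $p$-ordinary case, namely Theorem~\ref{t:ordinary CM points}. Suppose, for contradiction, that some disc $\mathbf{D} \subseteq \Ord$ of radius $r < 1$ contains infinitely many CM points, say with pairwise distinct discriminants $D_1, D_2, \ldots$. Since each $D_n$ is $p$-ordinary we have $\supsn{D_n} = 0$ for all $n$, so $\supsn{D_n} \to 0$ trivially, and Theorem~\ref{t:CM points} gives $\odelta_{D_n} \to \delta_{\xcan}$ weakly in $\AKber$. I would first argue that a disc of radius $r < 1$ inside $\Ord$ has closure (in the Berkovich topology) disjoint from $\xcan$: indeed, the Berkovich closure of such a disc is a closed subset of $\AKber$ consisting of points of the form $x_{a,s}$ with $|a| \le 1$ and $s \le r < 1$, hence a compact set not containing the Gauss point. (Here I am using that all the CM points in question lie in $\cO_p$, which holds because they have good reduction, so the disc may be taken inside $\cO_p$.)

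Next, let $\varphi \colon \AKber \to [0,1]$ be a continuous function that is identically $1$ on this compact closure $\overline{\mathbf{D}}$ and vanishes near $\xcan$; such a $\varphi$ exists because $\AKber$ is a nice (locally compact, Hausdorff) space and $\xcan \notin \overline{\mathbf{D}}$. Then on one hand $\int \varphi \dd \odelta_{D_n} \ge \odelta_{D_n}(\mathbf{D}) > 0$ for every $n$, since $\mathbf{D}$ contains at least one CM point of discriminant $D_n$ by hypothesis; in fact $\odelta_{D_n}(\mathbf{D}) \ge 1/\#\Lambda_{D_n}$, but more simply it is a positive number. On the other hand, weak convergence $\odelta_{D_n} \to \delta_{\xcan}$ forces $\int \varphi \dd \odelta_{D_n} \to \varphi(\xcan) = 0$. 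This is not yet a contradiction, since the lower bounds $\odelta_{D_n}(\mathbf{D})$ could themselves go to $0$; the genuine obstruction is that weak convergence only controls mass up to vanishing fractions.

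To get a real contradiction one must instead use the quantitative estimate directly, rather than just its weak-convergence consequence. The honest argument is: Theorem~\ref{t:ordinary CM points} provides an explicit bound showing that for any fixed disc $\mathbf{D}$ of radius $r < 1$, the number of CM points of discriminant $D$ lying in $\mathbf{D}$ is $O$ of something that tends to $0$ (or is bounded) as $|D| \to \infty$ — concretely, the CM points of discriminant $D$ in $\Ord$ spread out so that the number in any fixed proper disc grows strictly slower than $\#\Lambda_D$, and in fact the minimal separation between distinct ordinary CM points of a given discriminant, or across discriminants, is controlled. I would quote the relevant estimate: there is a function $\epsilon(D) \to 0$ as $D \to -\infty$ such that any two distinct CM points in $\Ord$ with discriminant of absolute value $\ge$ some threshold are at distance $> r$ apart, or more precisely that only finitely many discriminants contribute CM points to $\mathbf{D}$. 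Combined with the elementary fact that for each fixed discriminant $\Lambda_D$ is finite, this yields that $\mathbf{D}$ contains only finitely many CM points, proving the first assertion. The second assertion (discreteness of the set of CM points in $\Ord$) follows immediately: any point of $\Ord$ has a neighborhood that is a disc of radius $< 1$ contained in $\Ord$ (since $\Ord$ is open in $\cO_p$, being a union of residue discs), and that neighborhood meets the CM set in a finite, hence discrete, set.

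The main obstacle is locating and invoking the correct quantitative input: the weak convergence statement in Theorem~\ref{t:CM points} is by itself too weak to bound the count in a fixed disc, so the proof genuinely rests on the stronger estimate behind Theorem~\ref{t:ordinary CM points} — presumably a lower bound on how $p$-adically spread out ordinary CM points of a given discriminant are, together with a bound showing large-discriminant CM points cannot cluster in a proper disc. Once that estimate is in hand, the topological packaging above (compactness of $\overline{\mathbf{D}}$, openness of $\Ord$, finiteness of each $\Lambda_D$) is routine.
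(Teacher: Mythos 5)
You correctly discard the weak-convergence route (Theorem~\ref{t:CM points} cannot bound the count in a fixed disc, for exactly the reason you give), and your fallback strategy --- deduce the corollary from the quantitative $p$-ordinary estimate of Theorem~\ref{t:ordinary CM points} --- is the paper's. The problem is that the proof stops precisely where the content begins: the ``relevant estimate'' is never identified, and the candidate formulations you offer do not close the argument. A uniform bound $\deg(\Lambda_D|_{\bfB})\le C$ valid for each discriminant $D$ separately (which is what Theorem~\ref{t:ordinary CM points}$(ii)$ literally asserts) is \emph{not} by itself enough: infinitely many discriminants could each contribute up to $C$ points to $\bfB$. Your other guess --- that distinct CM points of large discriminant are pairwise at distance $>r$ --- is neither established nor the mechanism actually at work; the correct statement is that CM points of large discriminant in a residue disc are forced \emph{away from the periodic point}, i.e.\ out of any sub-disc of radius $<1$, not away from each other.

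What has to be supplied is the following, which is the substance of the proof of Theorem~\ref{t:ordinary CM points}. A disc $\bfB$ of radius $<1$ contained in $\Ord$ lies in a single residue disc $\bfD(\zeta)$, and by part~$(i)$ of that theorem (resting on Proposition~\ref{redendring} and Deuring lifting) the only discriminants whose CM points meet $\bfD(\zeta)$ are $D_0p^{2m}$, $m\ge 0$, the corresponding points being exactly the preperiodic points of preperiod $m$ of the canonical branch $\t$ of $T_p$. Since $\t$ contracts distances towards its unique periodic point $E_0$ in $\bfD(\zeta)$ (Lemma~\ref{periodicpoint}), any CM point $z$ in $\bfB\subseteq\bfD(E_0,c)$ with $c<1$ satisfies $\t^{rM}(z)=E_0$ for an integer $M$ depending only on $c$, so all CM points of $\bfB$ lie in the finite set $\bigcup_{i=1}^{M}\t^{-ir}(E_0)$; equivalently, only the finitely many discriminants $D_0p^{2m}$ with $m$ bounded in terms of the radius of $\bfB$ contribute, and each $\Lambda_D$ is finite. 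Without this dynamical input (CM points as preperiodic points of $\t$, plus the bound on the preperiod in terms of the radius), your argument is a reduction to an unproved black box.
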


  Corollary~\ref{isolatedCMOrd} seems to be well-known by the experts in the field, although we have not found this result explicitly stated in the literature.
  See Section~\ref{ss:notes} for comments and references.

\subsection{Convergence of Hecke orbits towards the Gauss point}
\label{ss:Hecke orbits}
To state our next main result, we first introduce Hecke correspondences.
See Section~\ref{ss:Hecke correspondences} for background.

Given an algebraically closed field~$\K$ of characteristic~$0$, a \emph{divisor on~$\Ell(\K)$} is an element of
$$\Div(\Ell(\K))
\=
\bigoplus_{E\in \Ell(\K)} \Z E, $$
the free abelian group spanned by the points of~$\Ell(\K)$.
The \emph{degree} and \emph{support} of a divisor $\fD = \sum_{E \in \Ell(\K)}n_EE$ in~$\Div(\Ell(\K))$ are defined by
\begin{displaymath}
  \deg(\fD)
  \=
  \sum_{E \in \Ell(\K)} n_E
  \text{ and }
  \supp(\fD)
  \=
  \{ E \in \Ell(\K) : n_E \neq 0 \},
\end{displaymath}
respectively.
If in addition~$\deg(\fD) \ge 1$ and for every~$E$ in~$\Ell(\K)$ we have~$n_E \ge 0$, then
$$\odelta_{\fD}
\=
\frac{1}{\deg(\fD)}\sum_{E \in \Ell(\K)} n_E \delta_E $$
is a probability measure on~$\Ell(\K)$.

For~$n$ in~$\N \= \{1,2,\ldots \}$ the $n$-th \emph{Hecke correspondence} is the linear map
\begin{displaymath}
  T_n \colon \Div(\Ell(\K)) \to \Div(\Ell(\K))
\end{displaymath}
defined for~$E$ in~$\Ell(\K)$, by
$$ T_n(E)
\=
\sum_{C\leq E \text{ of order } n}E/C, $$
where the sum runs over all subgroups~$C$ of~$E$ of order~$n$.
Note that~$\supp(T_n(E))$ is the set of all~$E'$ in~$\Ell(\K)$ for which there is an isogeny $E\to E'$ of degree~$n$.
Moreover,
$$ \deg(T_n(E))
=
\sum_{d|n,d>0}d
\ge
n, $$
so $\deg(T_n(E))\to \infty$ as $n \to \infty$.

In the case~$\K = \C_p$, it is easy to see that for each~$E$ in~$\Bad$ (resp.~$\Ord$, $\Sups$), we have that for every~$n$ in~$\N$ the divisor~$T_n(E)$ is supported on~$\Bad$ (resp.~$\Ord$, $\Sups$).
\begin{theoalph}
  \label{t:orbits}
  For every~$E$ in~$\Bad\cup \Ord$, we have the weak convergence of measures
  \begin{displaymath}
    \odelta_{T_n(E)}\to \delta_{\xcan} \text{ as } n \to \infty.
  \end{displaymath}
  Moreover, for~$E$ in~$\Sups$ and a sequence~$(n_j)_{j = 1}^{\infty}$ in~$\N$ tending to~$\infty$, we have the weak convergence of measures
  \begin{displaymath}
    \odelta_{T_{n_j}(E)} \to \delta_{\xcan} \text{ as } j \to \infty
  \end{displaymath}
  if and only if
  \begin{displaymath}
    | n_j |_p\to 0
    \text{ as }
    j \to \infty.
  \end{displaymath}
\end{theoalph}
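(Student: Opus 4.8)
The plan is to reduce Theorem~\ref{t:orbits} to the quantitative estimates behind Theorems~\ref{t:ordinary CM points} and~\ref{t:supersingular CM points}, after some soft reductions. By Lemma~\ref{l:berkovich}, and since the complement of a basic neighborhood of $\xcan$ in $\AKber$ is a finite union of closed Berkovich discs avoiding $\xcan$, the convergence $\odelta_{T_n(E)}\to\delta_{\xcan}$ is equivalent, in each of the three cases, to the following: for $E\in\Ord\cup\Sups$ (so $j(E)\in\cO_p$), that for every $a\in\cO_p$ and $r\in(0,1)$ the proportion of points of $T_n(E)$ (counted with multiplicity) whose $j$-invariant lies in $\bar D(a,r)$ tends to $0$; and for $E\in\Bad$, that for every $\rho>1$ the proportion of those with $|j|_p\ge\rho$ tends to $0$. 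Here one uses that $\Bad$, $\Ord$, $\Sups$ are each Hecke-stable, and that the Berkovich boundary of every residue disc is $\{\xcan\}$, so that $j$-invariants approaching the edge of their residue disc (for $E\in\Ord\cup\Sups$), or with $|j|_p\to1^+$ (for $E\in\Bad$), tend to $\xcan$.

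For $E\in\Bad$ the curve~$E$ has potentially multiplicative reduction, $E\cong E_q$ with $|q|_p<1$ and $|j(E)|_p=|q|_p^{-1}>1$. Classifying the subgroups of the Tate curve $E_q$, every $E'$ in $\supp T_n(E)$ is again a Tate curve $E_{q'}$ with $v_p(q')=\tfrac{e}{f}\,v_p(q)$, where a factorization $n=ef$ records how the isogeny splits between its multiplicative and \'etale parts; equivalently one reads this off the Newton polygon of $Y\mapsto\Phi_n(j(E),Y)$ via the Kronecker-type congruences for modular polynomials. Thus $|j(E')|_p$ is close to $1^+$, hence $E'$ close to $\xcan$, unless a definite fraction of the degree sits in the multiplicative direction, and a direct count shows the number of such exceptional subgroups is $o(\deg T_n)$. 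The case $E\in\Ord$ runs parallel to the $p$-ordinary case of Theorem~\ref{t:CM points}: on the ordinary locus $T_p$ splits into one ``canonical'' branch --- quotient by the canonical subgroup, a contracting lift of Frobenius fixing canonical \CM{} points --- and $p$ ``\'etale'' branches, which are expanding and move between residue discs; iterating, the \'etale branches outweigh the canonical one, and the estimate underlying Theorem~\ref{t:ordinary CM points}, expressed through the dynamics of $\t$, shows that almost all the mass of $T_n(E)$ escapes to the boundaries of the ordinary residue discs, i.e.\ towards $\xcan$.

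For $E\in\Sups$, let $h$ be a bounded, non-negative, continuous function on $\AKber$ measuring depth in the supersingular discs --- say a truncation of the $p$-adic valuation of a lift of the Hasse invariant, built from the supersingular polynomial --- so that $h(\xcan)=0$, $h>0$ exactly on $\Sups$, and $\{h\ge\delta\}$ is compact in $\AKber\setminus\{\xcan\}$ for every $\delta>0$. Since $\supp T_{n_j}(E)\subseteq\Sups$, the reduction above shows that $\odelta_{T_{n_j}(E)}\to\delta_{\xcan}$ holds if and only if $\#\{E'\in T_{n_j}(E):h(E')\ge\delta\}=o(\deg T_{n_j})$ for every $\delta>0$; when $|n_j|_p\to0$, i.e.\ $v_p(n_j)\to\infty$, this vanishing is precisely what Theorem~\ref{t:supersingular CM points} (or the estimate underlying it) supplies, with $v_p(n_j)$ playing the role of the length of a chain of $p$-isogenies, equivalently of $v_p$ of the conductor for $p$-supersingular \CM{} points. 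For the converse, the theory of the canonical subgroup gives $h(E')\gtrsim h(E)\,|n|_p$ whenever there is an isogeny $E\to E'$ of degree~$n$; hence $\int h\dd\odelta_{T_{n_j}(E)}\gtrsim h(E)\,|n_j|_p$, and since $h$ is bounded and continuous with $h(\xcan)=0$, a convergence $\odelta_{T_{n_j}(E)}\to\delta_{\xcan}$ would force $h(E)\,|n_j|_p\to0$; as $h(E)>0$ for $E\in\Sups$, this means $|n_j|_p\to0$.

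The main obstacle is the quantitative supersingular bound used in the ``if'' direction: one must control, uniformly as $v_p(n_j)\to\infty$, how $h(E/C)$ depends on the position of~$C$ relative to the canonical-subgroup filtration of~$E$, showing that the number of order-$n_j$ subgroups~$C$ with $h(E/C)\ge\delta$ stays bounded, or at least $o(\deg T_{n_j})$. This is the heart of Theorem~\ref{t:supersingular CM points} and rests ultimately on the Gross--Hopkins description of the supersingular residue discs. By comparison the remaining cases are soft: for $E\in\Bad$ it is a Newton-polygon computation, and for $E\in\Ord$ it follows from the dynamics of $\t$ exactly as in the $p$-ordinary \CM{}-point case.
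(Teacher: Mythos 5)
Your overall architecture matches the paper's: reduce to mass estimates on discs via Lemma~\ref{l:berkovich}, then split into the three reduction types, using Tate's uniformization for $\Bad$, the canonical branch~$\t$ for $\Ord$, and the canonical-subgroup/Katz-valuation machinery for $\Sups$. The bad-reduction sketch is essentially the paper's Proposition~\ref{p:bad orbits} (the count $\sum_{k\mid n,\ k<C\sqrt n}k\le C\sqrt n\,d(n)=o(\sigma_1(n))$), and the supersingular sketch, including the lower bound $\kproj(E')\gtrsim \kproj(E)\,|n|_p$ for the ``only if'' direction, is in substance Proposition~\ref{p:supersingular orbits} together with Lemma~\ref{l:katz hyper kite} (the paper gets the uniform bound from the explicit formula for the correspondences $\uptau_m$ and Lemma~\ref{invarv}, not from Gross--Hopkins, but that is a choice of tool rather than a gap).

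The genuine gap is in the ordinary case: your argument only treats the $p$-power part of $n$. For $n$ coprime to $p$ (or for the prime-to-$p$ factor $n_0$ of a general $n=p^mn_0$) there is no canonical/\'etale splitting and no iteration of~$\t$, and the mechanism you describe --- ``almost all the mass of $T_n(E)$ escapes to the boundaries of the ordinary residue discs'' --- is false there. For instance, if $E$ is an ordinary \CM{} point whose conductor is prime to~$p$, then for $p\nmid n$ every point of $T_n(E)$ is again such a \CM{} point, hence (by Theorem~\ref{t:ordinary CM points}$(i)$) a \emph{periodic} point of~$\t$, sitting at the canonical lift at the center of its residue disc; nothing escapes toward any boundary. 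The reason equidistribution still holds is that the mass spreads over many \emph{different} residue discs, and the number of points of $T_{n_0}(E)$ landing in any \emph{fixed} residue disc $\bfD'$ is at most the number of degree-$n_0$ isogenies $\tE\to e$ over $\Fpalg$, which (since $\tE$ is ordinary, so $\End(\tE)$ is an order in an imaginary quadratic field) is $O(n_0^{\varepsilon})=o(\sigma_1(n_0))$ by counting elements of norm $n_0$ in that order. This is the content of the paper's Proposition~\ref{orderrante} and Lemma~\ref{conteoo}, a separate arithmetic input that your proposal omits; the multiplicativity $T_{p^mn_0}=T_{p^m}\circ T_{n_0}$ is then used to combine it with the $\t$-dynamics estimate for $T_{p^m}$. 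Without this ingredient the ordinary case is not proved.
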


When restricted to the case where~$E$ is in~$\Bad$, the above theorem is~\cite[\emph{Th{\'e}or{\`e}me}~1.2]{Ric18}.

To the best of our knowledge, Theorem~\ref{t:orbits} gives the first example where equidistribution of orbits fails for correspondences of degree bigger than one, see Section~\ref{ss:Hecke correspondences} for a description of Hecke correspondences as algebraic correspondences.
In the complex case, pluri-potential theory has been used successfully to prove equidistribution for correspondences satisfying a mild ``non-modularity'' condition, see for example~\cite{DinKauWu1808} and references therein.

The uniform distribution of Hecke orbits on~$\Ell(\C)$ is a well-known result from the spectral theory of automorphic forms, see~\cite[\emph{Th{\'e}or{\`e}me}~2.1]{CloUll04}, and also~\cite{CloOhUll01,EskOh06} for extensions and~\cite{LinSku64} for the related work of Linnik and Skubenko.

\begin{remark}
  In \cite{CloOhUll01,EskOh06}, the starting point is an algebraic group~$G$ over~$\Q$ and a congruence subgroup~$\Gamma$ of~$G(\Q)$, and the ambient space is $X = \Gamma\backslash G(\R)$.
  In this context, there is a natural notion of Hecke correspondences on~$X$.
  The aforementioned works establish the uniform distribution of every orbit of such Hecke correspondences under general hypotheses.
  In particular, the $\Q$-structure of~$G$ allows for $p$-adic variants of such results, see, \emph{e.g.}, \cite[Remark~(1) in p.~332]{CloOhUll01}.
  In the particular case $G = \SL_2$ and $\Gamma = \SL_2(\Z)$, there is a natural isomorphism~$\Ell(\C) \simeq \SL_2(\Z)\backslash \SL_2(\R)/ \SO_2(\R)$ and the natural projection from~$X$ to~$\Ell(\C)$ takes Hecke orbits as in~\cite{CloOhUll01,EskOh06} to Hecke orbits on~$\Ell(\C)$ as defined in this paper.
  The uniform distribution of Hecke orbits on~$\Ell(\C)$ is thus a special case of~\cite[Theorem~1.6]{CloOhUll01}, see also \cite[Theorem~1.2]{EskOh06}.
  However, this  strategy breaks down for Hecke orbits on~$\Ell(\C_p)$, because there is no analogous uniformization of~$\Ell(\C_p)$ as a double quotient.
  Moreover, Theorem~\ref{t:orbits} shows that there is no uniform distribution of Hecke orbits on~$\Ell(\C_p)$.
  Indeed, Theorem~\ref{t:orbits} and our results in the companion paper~\cite{HerMenRivII} show that, in contrast with~\cite{CloOhUll01,CloUll04,EskOh06}, the asymptotic distribution of~$(T_{n_j}(E))_{j = 1}^{\infty}$ on~$\Ell(\C_p)$ depends on both the starting point~$E$ and the sequence of integers~$(n_j)_{j = 1}^{\infty}$. 
\end{remark}

\subsection{Notes and references}
\label{ss:notes}
After the first version of this paper was written, we learned about the related work of Goren and Kassaei, appeared as~\cite{GorKas1711}.
For a prime number~$\ell$ different from~$p$, Goren and Kassaei study in~\cite{GorKas1711} the dynamics of the Hecke correspondence~$T_{\ell}$ acting on the moduli space of elliptic curves with a marked torsion point of exact order~$N$ coprime to~$p\ell$.
So, on one hand~\cite{GorKas1711} is more general than this paper in that it considers modular curves with level structure.
On the other hand, \cite{GorKas1711} is more restrictive in that it only considers the dynamics of a single Hecke correspondence of prime index different from~$p$, as opposed to the dynamics of the whole algebra of Hecke correspondences considered here.
Note also that we use~$\C_p$ as a ground field, which is natural to study equidistribution problems, whereas~\cite{GorKas1711} is restricted to algebraic extensions of~$\Q_p$.
In spite of the fact that both papers study the dynamics of similar maps, there is no significant intersection between the results of~\cite{GorKas1711} and those of this paper.
See also~\cite{HerMenRivII} for our additional results in the supersingular locus and the corresponding comparison with the results of~\cite{GorKas1711}.
Finally, our results on the dynamics of the canonical branch~$\t$ of~$T_p$ (defined on~$\Ord$ in Section~\ref{ss:canonical branch}) on ordinary \CM{} points show that this map gives rise to a ``$(p + 1)$-volcano'' in the sense of~\cite[Section~2.1]{GorKas1711}, see Remark~\ref{r:volcano}.

  Corollary~\ref{isolatedCMOrd} seems well-known among experts in the field, although we have not found this result explicitly stated in the literature.
  Even for higher-dimensional abelian varieties it can be deduced from the explicit characterization of the Serre--Tate local coordinates of \CM{} points as torsion points of the multiplicative group,  see, \emph{e.g.}, \cite[Proposition~3.5]{deJNoo91}.
  Our approach makes no use of these local coordinates, and is based on rigid analytic properties of the canonical branch~$\t$ of~$T_p$. For \CM{} elliptic curves with ordinary reduction, the connection between these two approaches is well-known, see, \emph{e.g.}, \cite[Section~7~d)]{Dwo69}.

Since every \CM{} point of~$\Ell(\C_p)$ is in the bounded set~$\cO_p$, Theorem~\ref{t:CM points} yields the following stronger statement: For every continuous function~$\varphi \colon \Ell(\C_p) \to \R$ and every sequence of discriminants~$(D_n)_{n = 1}^{\infty}$ tending to~$- \infty$ and satisfying $\supsn{D_n} \to 0$ as~$n \to \infty$, we have
\begin{displaymath}
  \frac{1}{\# \deg(\Lambda_{D_n})} \sum_{E \in \Lambda_{D_n}} \varphi(E)
  \to
  \int \varphi \dd \delta_{\xcan}
  \text{ as $n \to \infty$}.
\end{displaymath}

Although our formulation of Theorem~\ref{t:complex CM points} seems stronger than the one in~\cite[\emph{Th{\'e}or{\`e}me}~2.4]{CloUll04}, it is easy to see that it is equivalent, see for example~\cite[Lemma~2.2]{Bil97}.

\subsection{Strategy and organization}
\label{ss:strategy}
We now explain the strategy of the proof of Theorems~\ref{t:CM points} and~\ref{t:orbits} and simultaneously describe the organization of the paper.

After some preliminaries in Section~\ref{s:preliminaries}, we proceed to the proof of Theorem~\ref{t:CM points} in Sections~\ref{s:ordinary CM points} and~\ref{s:supersingular CM points}.
Theorem~\ref{t:CM points} is a direct consequence of stronger quantitative estimates in two separate cases: The case where all the discriminants in~$(D_n)_{n = 1}^{\infty}$ are $p$-ordinary and the case where they are all $p$\nobreakdash-supersingular.

The $p$-ordinary case is treated in Section~\ref{s:ordinary CM points}.
There are two main ingredients, both of which are related to the ``canonical branch~$\t$'' of~$T_p$ that is defined in terms of the ``canonical subgroup'' in Section~\ref{ss:canonical branch}, see also Appendix~\ref{s:canonical analyticity}.
The first main tool is a simple formula, for every integer~$m \ge 1$, of~$T_{p^m}$ on~$\Ord$ in terms of~$\t$ (Proposition~\ref{prop-tpm} in Section~\ref{ss:canonical branch}).
To establish this formula we use results of Tate and Deligne to show that~$\t$ is rigid analytic.
The second main tool is the interpretation of $p$-ordinary \CM{} points as preperiodic points of~$\t$ on~$\Ord$ (Theorem~\ref{t:ordinary CM points}$(i)$), which is based on Deuring's work on the canonical subgroup.
Our quantitative estimate in the $p$-ordinary case is stated as Theorem~\ref{t:ordinary CM points}$(ii)$ in Section~\ref{ss:dynamics of t} and its proof is given at the end of this section.

The $p$-supersingular case is technically more difficult.
We use Katz--Lubin's extension of the theory of canonical subgroups to ``not too supersingular'' elliptic curves and ``Katz' valuation''.
We recall these in Section~\ref{ss:katz valuation}, where we also give an explicit formula relating Katz' valuation to the $j$\nobreakdash-invariant (Proposition~\ref{vnorm}).
We use Katz' valuation to give a concrete description of the action of Hecke correspondences on the supersingular locus in terms of a sequence of correspondences~$(\uptau_m)_{m = 1}^{\infty}$ acting on the interval~$\left[0, \frac{p}{p + 1} \right]$ (Proposition~\ref{p:katz kite} in Section~\ref{ss:katz kite}).
To do this, we rely on results in~\cite[Section~3]{Kat73} and, for~$p = 2$ and~$3$, on certain congruences satisfied by certain Eisenstein series, see Proposition~\ref{prop-E1-E2} in Appendix~\ref{s:lift Hasse 2 3}.
Our quantitative estimate in the $p$-ordinary case is stated as Theorem~\ref{t:supersingular CM points} at the beginning of Section~\ref{s:supersingular CM points} and its proof is given at the end of this section.

  In Appendix~\ref{s:canonical analyticity} we formulate some of our results on the canonical branch~$\t$ of~$T_p$, as a lift of the classical Eichler--Shimura congruence relation (Theorem~\ref{t:canonical analyticity}).

The proof of Theorem~\ref{t:orbits} splits in three complementary cases, according to the reduction type of~$E$.
In each case we obtain a stronger quantitative estimate.
For the bad reduction case we use Tate's uniformization theory (Proposition~\ref{p:bad orbits} in Section~\ref{ss:bad orbits}).
Thanks to the multiplicative properties of Hecke correspondences~\eqref{eq-multiplicativity-Hecke-operators}, the ordinary reduction case (Proposition~\ref{p:ordinary orbits} in Section~\ref{ss:ordinary orbits}) is reduced to two special cases: The asymptotic distribution of~$(T_{p^m}(E))_{m = 1}^{\infty}$  (Proposition~\ref{distord}) and, for a sequence~$(n_j)_{j = 1}^{\infty}$ of integers in~$\N$ that are not divisible by~$p$, the asymptotic distribution of~$(T_{n_j}(E))_{j = 1}^{\infty}$ (Proposition~\ref{orderrante}).
The former case is obtained using the tools developed in Section~\ref{t:ordinary CM points} and the latter is reduced to the study of the action of Hecke correspondences on ordinary elliptic curves in~$\Ell(\Fpalg)$ and is elementary.
Finally, the supersingular case (Proposition~\ref{p:supersingular orbits} in Section~\ref{ss:supersingular orbits}) is obtained from the description of the action of Hecke correspondences on the supersingular locus in Section~\ref{ss:katz kite} and an explicit formula for the correspondences~$(\uptau_m)_{m = 1}^{\infty}$ (Lemma~\ref{l:katz hyper kite}).

\subsection*{Acknowledgments}
The second and third named authors thank Leon Takhtajan for references.
The second named author thanks Emmanuel Ullmo for sharing his interest in the questions we study here.
He also acknowledges Rodolphe Richard for explaining him the basic ideas leading to Propositions~\ref{p:bad orbits} and~\ref{orderrante}.
We thank the anonymous referees for their valuable comments that helped us improve the exposition.

During the preparation of this work the first named author was supported by the Chilean CONICYT grant 21130412 and the Royal Swedish Academy of Sciences.
The second named author was supported by FONDECYT grant 1171329.
The third named author acknowledges partial support from NSF grant DMS-1700291.
The authors would like to thank the Pontificia Universidad Cat{\'o}lica de Valpara{\'{\i}}so, the University of Rochester and Universitat de Barcelona for hospitality during the preparation of this work.

\section{Preliminaries}
\label{s:preliminaries}
Recall that~$\N = \{1, 2, \ldots \}$.
Given~$n$ in~$\N$, denote by
\begin{displaymath}
  d(n)
  \=
  \sum_{d > 0, d \mid n} 1
  \text{ and }
  \sigma_1(n)
  \=
  \sum_{d > 0, d \mid n} d
\end{displaymath}
the number and the sum of the positive divisors of~$n$, respectively.
We use several times the inequality
  \begin{equation}
    \label{eq-lowb-sigma1}
  \sigma_1(n)\geq n,
\end{equation}
and the fact that for every~$\varepsilon > 0$ we have
  \begin{equation}
    \label{e:2}
    d(n) = o(n^{\varepsilon}),
  \end{equation}
  see for example~\cite[p.~296]{Apo76}.

  For a set~$X$ and a subset~$A$ of~$X$, we use~$\bfone_A \colon X \to \{0, 1\}$ to denote the indicator function of~$A$.

  For a topological space~$X$, denote by~$\delta_x$ the \emph{Dirac mass on~$X$ supported at~$x$}.
It is the Borel probability measure characterized by the property that for every Borel subset~$Y$ of~$X$ we have~$\delta_x(Y) = 1$ if~$x \in Y$ and~$\delta_x(Y) = 0$ otherwise.

Normalize the norm~$| \cdot |_p$ of~$\C_p$ so that~$|p|_p = \frac{1}{p}$ and denote by~$\ord_p \colon \C_p \to \R \cup \{ + \infty \}$ the valuation defined by~$\ord_p(0) = + \infty$ and for~$z$ in~$\C_p^{\times}$ by~$\ord_p(z) = - \frac{\log |z|_p}{\log p}$.
Denote by~$\cM_p$ the maximal ideal of~$\cO_p$ and recall that we identify~$\cO_p / \cM_p$ with~$\Fpalg$ and that~$\pi \colon \cO_p \to \Fpalg$ denotes the reduction morphism.
For~$\zeta$ in~$\Fpalg$, denote by~$\bfD(\zeta) \= \pi^{-1}(\zeta)$ the residue disc corresponding to~$\zeta$.

  \subsection{Divisors}
\label{ss:divisors}
A \emph{divisor} on a set~$X$\footnote{We only use this definition in the case~$X$ is one of several types of one-dimensional objects. For such~$X$, the notion of divisor introduced here can be seen as a natural extension of the usual notion of Weil divisor.} is a formal finite sum~$\sum_{x\in X}n_xx$ in~$\bigoplus_{x \in X} \Z x$.
In the special case where for some~$x_0$ in~$X$ we have~$n_{x_0} = 1$ and~$n_x = 0$ for every~$x \neq x_0$, we use~$[x_0]$ to denote this divisor.
When there is no danger of confusion, sometimes we use~$x_0$ to denote~$[x_0]$.

Let $\fD = \sum_{x \in X}n_x[x]$ be a divisor on~$X$.
The \emph{degree} and the \emph{support} of~$\fD$ are defined by
\begin{displaymath}
  \deg(\fD)
  \=
  \sum_{x \in X} n_x
  \text{ and }
  \supp(\fD)
  \=
  \{ x \in X : n_x \neq 0 \},
\end{displaymath}
respectively.
The divisor~$\fD$ is \emph{effective}, if for every~$x$ in~$X$ we have~$n_x \ge 0$.
For~$A\subseteq X$, the \emph{restriction of~$\fD$ to~$A$} is the divisor on~$X$ defined by
$$\fD|_{A} \= \sum_{x \in A}n_x[x].$$
For a set~$X'$ and a map~$f \colon X \to X'$, the \emph{push-forward action of~$f$ on divisors~$f_* \colon \Div(X) \to \Div(X')$} is the linear extension of the action of~$f$ on points.
In the particular case in which~$X' = G$ is a commutative group, also define~$f \colon \Div(X) \to G$ by
$$f(\fD)\=\sum_{x\in X} n_x f(x)\in G.$$
If~$X$ is a topological space and~$\fD$ is an effective divisor satisfying~$\deg(\fD) \ge 1$, then~$\odelta_{\fD} \= \frac{1}{\deg(\fD)} \sum_{x \in X} n_x \delta_x$ is a Borel measure on~$X$.
Note that in the case~$G = \R$ and~$f$ is measurable, we have
$$\int f \dd \odelta_{\fD}
=
\frac{f(\fD)}{\deg(\fD)}.$$

Since we are identifying~$\Ell(\C_p)$ with~$\C_p$ via~$j$, we identify divisors on~$\Ell(\C_p)$ and on~$\C_p$ accordingly.

\subsection{Hecke correspondences}
\label{ss:Hecke correspondences}
In this section we recall the construction and main properties of the Hecke correspondences.
For details we refer the reader to~\cite[Sections~7.2 and~7.3]{Shi71} for the general theory, or to the survey~\cite[Part~II]{DiaIm95}.

Let~$\K$ be an algebraically closed field of characteristic~$0$.
First, note that for every integer~$n \ge 1$ and divisor~$\fD$ in~$\Div(\Ell(\K))$, we have
\begin{displaymath}
  \deg(T_n(\fD))
  =
  \sigma_1(n) \deg(\fD).
\end{displaymath}
Moreover, for~$n = 1$ the correspondence~$T_1$ is by definition the identity on~$\Div(\Ell(\K))$.

  We also consider the linear extension of Hecke correspondences to~$\Div(\Ell(\K)) \otimes \Q$.
  
For an integer $N\geq 1$, denote by~$\Ell_0(N)$ the \emph{modular curve of level~$N$}.
It is a quasi-projective variety defined over~$\Q$.
The points of~$\Ell_0(N)$ over~$\K$ parametrize the moduli space of equivalence classes of pairs~$(E,C)$, where~$E$ is an elliptic curve over~$\K$ and~$C$ is a cyclic subgroup of~$E$ of order~$N$.
Here, two such pairs~$(E,C)$ and~$(E',C')$ are equivalent if there exists an isomorphism $\phi \colon E\to E'$ over~$\K$ taking~$C$ to~$C'$.
In particular, when~$N = 1$, for every algebraically closed field~$\K$ we can parametrize~$\Ell(\K)$ by~$\Ell_0(1)(\K)$, and~$\Ell_0(1)$ is isomorphic to the affine line $\A_{\Q}^1$.

For~$N > 1$, denote by~$\Phi_N(X,Y)$ the \emph{modular polynomial of level~$N$}, which is a symmetric polynomial in~$\Z[X,Y]$ that is monic in both~$X$ and~$Y$, see, \emph{e.g.}, \cite[Chapter~5, Sections~2 and~3]{Lan87}.
This polynomial is characterized by the equality
\begin{equation}\label{eq-mod-polynomial}
\Phi_N(j(E),Y)=\prod_{C\leq E \text{ cyclic of order }N}(Y-j(E/C)) \text{ for every } E \text{ in } \Ell(\K).
\end{equation}
This implies that a birational model for~$\Ell_0(N)$ is provided by the plane algebraic curve
\begin{equation}\label{eq-mod-equation}
\Phi_N(X,Y)=0.
\end{equation}

For each prime~$q$, let~$\alpha_q,\beta_q \colon \Ell_0(q)\to \Ell_0(1)$ be the rational maps over~$\Q$ given in terms of moduli spaces by 
$$ \alpha_q(E,C) \= E
\text{ and }
\beta_q(E,C) \= E/C. $$
In terms of the model~\eqref{eq-mod-equation} with~$N = q$, the rational maps~$\alpha_q$ and~$\beta_q$ correspond to the projections on the~$X$ and~$Y$ coordinate, respectively.
Denote by~$(\alpha_q)_*$ and~$(\beta_q)_*$ the push-forward action of~$\alpha_q$ and~$\beta_q$ on divisors, respectively, as in Section~\ref{ss:divisors}.
Denote also by~$\alpha_q^{*}$ the pull-back action of~$\alpha_q$ on divisors, defined at~$x$ in~$\Ell_0(1)(\K)$ by
\begin{displaymath}
  \alpha_q^{*}(x)
  \=
  \sum_{\substack{y\in \Ell_0(q)(\K) \\ \alpha_q(y)=x}} \deg_{\alpha_q}(y)[y],
\end{displaymath}
where~$\deg_{\alpha_q}(y)$ is the local degree of~$\alpha_q$ at~$y$.
This definition is extended by linearity to arbitrary divisors.
The pull-back action~$\beta_q^{*}$ of~$\beta_q$ is defined in a similar way.
Then the Hecke correspondence $T_q \colon \Div(\Ell(\K))\to \Div(\Ell(\K))$ is recovered as
  \begin{displaymath}
    T_q
    =
    (\alpha_q)_* \circ \beta_q^*
    =
    (\beta_q)_* \circ \alpha_q^*,
\end{displaymath}
where the second equality follows from the first and from the symmetry of~$T_q$.

For an arbitrary integer~$n \ge 2$, the correspondence~$T_n$ can be recovered from different~$T_q$'s, for~$q$ running over prime divisors of~$n$, by using the identities
\begin{equation}\label{eq-Hecke-Tpr}
  T_{q^r}
  =
  T_q \circ T_{q^{r-1}} - q \cdot T_{q^{r-2}} \text{ for } q \text{ prime and } r \geq 2;
\end{equation}
\begin{equation}
  \label{eq-multiplicativity-Hecke-operators}
  T_{\ell} \circ T_{m}
  =
  T_{\ell m} \, \text{ for } \ell, m \geq 1 \text{ coprime}.
\end{equation}

We conclude this section with the following lemma used in Sections~\ref{ss:canonical branch} and~\ref{ss:ordinary orbits}.

\begin{lemma}\label{lemma-Hecke-continuity}
  Let $n\geq 1$ be an integer.
  For~$E$ in~$\Ell(\C_p)$, the divisor~$T_n(E)$ varies continuously with respect to~$E$ in the following sense: For every commutative topological group~$G$ and every continuous function $f \colon \Ell(\C_p)\to G$, the function $T_nf \colon \Ell(\C_p)\to G$ given by
$$T_nf(E)
\=
f(T_n(E))$$
is continuous.
In particular, for every open and closed subset $A\subseteq \Ell(\C_p)$, the integer valued map
$$E\mapsto \deg\left(T_n(E)|_A\right)$$
is locally constant.
\end{lemma}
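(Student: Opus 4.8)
The plan is to realize, via the identification $j\colon\Ell(\C_p)\xrightarrow{\sim}\C_p$, the divisor $T_n(E)$ as the divisor of roots (with multiplicity) of an explicit monic polynomial in one variable whose coefficients depend polynomially on $j(E)$, and then to invoke the continuity of the roots of a monic polynomial over $\C_p$ as a function of its coefficients. For the polynomial model, recall that every subgroup $H\le E$ of order $n$ has a smallest elementary divisor $a$, and that $a^2\mid n$ and $E[a]\subseteq H$, so that $C\=[a](H)$ is a cyclic subgroup of $E$ of order $n/a^2$ with $E/H\cong E/C$; moreover $H\mapsto([a](H),a)$ is a bijection from the set of order-$n$ subgroups of~$E$ onto the set of pairs $(C,a)$ with $a^2\mid n$ and $C$ cyclic of order $n/a^2$, with inverse $(C,a)\mapsto[a]^{-1}(C)$. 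Together with~\eqref{eq-mod-polynomial} (and the convention $\Phi_1(X,Y)\=Y-X$) this shows that, viewed on~$\C_p$, the divisor $T_n(E)$ is the divisor of roots of
\[
  P_E(Y)\;\=\;\prod_{a\ge 1,\ a^2\mid n}\Phi_{n/a^2}\bigl(j(E),Y\bigr),
\]
a monic polynomial in $Y$ of degree $d\=\deg(T_n(E))=\sigma_1(n)$ (independent of~$E$) whose coefficients are polynomials in $j(E)$ with integer coefficients, hence continuous functions of~$E$.

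The one genuinely analytic ingredient is the continuity of the roots. Writing $P_E(Y)=Y^d+c_{d-1}(E)Y^{d-1}+\cdots+c_0(E)$, every root of~$P_E$ has absolute value at most $R(E)\=\max\{1,|c_{d-1}(E)|_p,\dots,|c_0(E)|_p\}$ (the ultrametric Cauchy bound), and for such a root~$\rho$ and any~$E'$ one has $|P_{E'}(\rho)|_p=|P_{E'}(\rho)-P_E(\rho)|_p\le\max_k|c_k(E')-c_k(E)|_p\cdot R(E)^d$, which tends to~$0$ as $E'\to E$. From this a short counting argument gives: for every $\varepsilon>0$ there is a neighborhood~$U$ of~$E$ such that for all $E'\in U$ the roots of~$P_{E'}$ and those of~$P_E$, each listed with multiplicity, admit labelings $\rho_1(E'),\dots,\rho_d(E')$ and $\rho_1(E),\dots,\rho_d(E)$ with $|\rho_i(E')-\rho_i(E)|_p<\varepsilon$ for all~$i$. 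I expect this step to be the main point requiring care: since closed balls of~$\C_p$ are not compact, one proceeds through these explicit estimates (together with a Newton-polygon or counting argument for the matching of the roots inside a cluster) rather than through any compactness.

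Granting this, both assertions follow formally. Under~$j$ one has $f(T_n(E))=\sum_{i=1}^d\overline{f}(\rho_i(E))$, where $\overline{f}\=f\circ j^{-1}\colon\C_p\to G$ is continuous. Fix~$E$ and a neighborhood~$V$ of~$0$ in~$G$; choose a neighborhood~$W$ of~$0$ with $W+\cdots+W\subseteq V$ ($d$ summands), and, using continuity of~$\overline{f}$ at each~$\rho_i(E)$, choose $\varepsilon>0$ so that $|y-\rho_i(E)|_p<\varepsilon$ implies $\overline{f}(y)-\overline{f}(\rho_i(E))\in W$ for every~$i$. Then for~$E'$ in the neighborhood~$U$ furnished above,
\[
  f(T_n(E'))-f(T_n(E))\;=\;\sum_{i=1}^d\bigl(\overline{f}(\rho_i(E'))-\overline{f}(\rho_i(E))\bigr)\;\in\;W+\cdots+W\;\subseteq\;V,
\]
so $T_nf$ is continuous. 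For the last statement, apply this with $G=\Z$ endowed with the discrete topology and $f=\bfone_A$, which is continuous because~$A$ is open and closed; then $E\mapsto\deg\bigl(T_n(E)|_A\bigr)=\sum_{i=1}^d\bfone_A\bigl(j^{-1}(\rho_i(E))\bigr)$ is continuous into a discrete space, i.e.\ locally constant.
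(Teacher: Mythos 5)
Your proof is correct, and it reaches the same analytic crux as the paper --- continuity of the roots of a monic polynomial over $\C_p$ as a function of its coefficients --- but it gets there by a different reduction. The paper only realizes $T_q(E)$ as the root divisor of $\Phi_q(j(E),Y)$ for $q$ prime, invokes the continuity of roots by citing Brink, and then handles general $n$ by a multiplicative induction through the Hecke relations \eqref{eq-Hecke-Tpr} and \eqref{eq-multiplicativity-Hecke-operators}, using that composition and integer linear combination of the $T_m$'s preserve the class of operators sending continuous $f$ to continuous $T_mf$. You instead dispense with the induction entirely by exhibiting a single monic polynomial $P_E(Y)=\prod_{a^2\mid n}\Phi_{n/a^2}(j(E),Y)$ whose root divisor is $T_n(E)$; the price is the (correct, and correctly justified) structure-theoretic bijection $H\mapsto([a](H),a)$ between order-$n$ subgroups and pairs of a square divisor and a cyclic subgroup, together with $E/H\cong E/[a](H)$. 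Your route is arguably cleaner in that the continuity-of-roots input is used exactly once and the degree count $\deg P_E=\sigma_1(n)$ falls out of the classical identity $\sum_{a^2\mid n}\psi(n/a^2)=\sigma_1(n)$; the paper's route avoids any subgroup classification at the cost of the bookkeeping in \eqref{eq:1}-style inductions. One remark: the ``short counting argument'' you defer --- upgrading ``every root of $P_E$ is close to some root of $P_{E'}$'' to a multiplicity-respecting matching of the two root lists --- is precisely the content of the result the paper imports from Brink (\cite[Theorem~2]{Bri06}); it is standard (e.g.\ via Newton polygons or by comparing the number of roots of $P_E$ and $P_{E'}$ in each small disc), but if you keep your self-contained phrasing you should either write it out or cite it, since it is the only non-formal step. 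The final deductions (the $W+\cdots+W\subseteq V$ argument for a general commutative topological group, and $G=\Z$ discrete with $f=\bfone_A$ for the last assertion) match the paper's.
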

\begin{proof}
      We first treat the case where~$n$ equals a prime number $q$.
  Let~$P_0(X)$, \ldots, $P_{q}(X)$ be the polynomials in~$\mathbb{Z}[X]$ such that
  \begin{displaymath}
    \Phi_q(X,Y)
    =
    P_0(X)+P_1(X)Y + \ldots + P_q(X) Y^q + Y^{q+1}.
  \end{displaymath}
  Let $(E_m)_{m=1}^{\infty}$ be a sequence and~$E_0$ be a point in $\Ell(\C_p)$, such that $j(E_m) \to j(E_0)$ when~$m$ tends to infinity.
  Then for every~$k$ in~$\{0,1,\ldots,q\}$, we have $P_k(j(E_m))\to P_k(j(E_0))$ when~$m$ tends to infinity.
  It follows that the roots of the polynomial $\Phi_q(j(E_m),Y)$ converge to the roots of $\Phi_q(j(E_0),Y)$, in the following sense: For every~$m$ in~$\{0, 1, 2,\ldots\}$ we can find~$z_{m, 0}$, \ldots, $z_{m, q}$ in~$\C_p$, so that
  \begin{displaymath}
    \Phi_q(j(E_m),Y)
    =
    \prod_{k=0}^{q} (Y-z_{m, k}),
  \end{displaymath}
  and so that for every~$k$ in~$\{0,1,\ldots,q\}$ we have $z_{m, k} \to z_{0, k}$ when~$m$ tends to infinity, see for example~\cite[Theorem~2]{Bri06}.
  For each~$m$ in~$\{0, 1, 2,\ldots\}$ and~$k$ in~$\{0,1,\ldots,q\}$, let~$E_{m,k}$ be the curve in~$\Ell(\C_p)$ with~$j(E_{m,k})=z_{m,k}$.
  By the definition of~$T_q$ and~\eqref{eq-mod-polynomial}, we have for every~$m \ge 0$
  \begin{displaymath}
    T_q(E_m)
    =
    \sum_{k=0}^q [E_{m,k}].
  \end{displaymath}
  Since for every~$k$ in~$\{0, 1,\ldots,q\}$ we have~$j(E_{m,k})\to j(E_{0,k})$  when~$m$ tends to infinity, we conclude that for every continuous function $f \colon \Ell(\C_p)\to G$ we have 
  \begin{displaymath}
    T_qf(E_m)
    =
    \sum_{ k=0}^q f(E_{k,m})
    \to
    \sum_{ k=0}^q f(E_{k,0})
      =
      T_qf(E_0).
  \end{displaymath}
  This proves that $T_qf$ is continuous.

  We now treat the general case by using multiplicative induction, the relations~\eqref{eq-Hecke-Tpr} and~\eqref{eq-multiplicativity-Hecke-operators}, and the fact that for every pair of linear maps $L, \tL \colon \Div(\Ell(\C_p)) \to \Div(\Ell(\C_p))$, every pair of integers~$m, \wtm$, and every function~$F \colon \Ell(\C_p) \to G$, one has
  \begin{equation}
    \label{eq:1}
    (L \circ \tL)(F)
    =
    \tL(L(F))
    \text{ and }
    (mL  + \wtm \tL)(F)
    =
    m L(F) + \wtm \tL(F).    
  \end{equation}
  Denote by~$I$ the set of those integers~$n \ge 1$ such that for every continuous function $f \colon \Ell(\C_p)\to G$, the function~$T_n(f)$ is also continuous.
  Clearly~$I$ contains~$1$, since for every function~$f$ we have $T_1(f)=f$.
  By the proof given above, $I$ contains all prime numbers.
  Let~$n \ge 1$ be a given integer having each divisor in~$I$, and let~$q$ be a prime number.
  Let~$s \ge 0$ and~$n_0 \ge 1$ be the integers such that~$n=q^sn_0$, and such that~$q$ does not divide $n_0$.
  Then by the relations~\eqref{eq-Hecke-Tpr} and~\eqref{eq-multiplicativity-Hecke-operators}, and by~\eqref{eq:1}, we have
\begin{displaymath}
  T_{qn}(f)
  =
  T_{q^{s+1}n_0}(f)
  =
  T_{n_0}(T_{q^{s+1}}(f)),
\end{displaymath}
and for~$s \ge 1$
\begin{displaymath}
  T_{q^{s+1}}(f)
  =
  T_{q^{s}}(T_{q}(f)) - qT_{q^{s-1}}(f).
\end{displaymath}
Since $n_0$, $q$, $q^s$, and $q^{s-1}$ if $s\geq 1$, are all in~$I$, we conclude that $T_{qn}(f)$ is continuous, and that~$qn$ is in $I$.    
This completes the proof of the multiplicative induction step, and of the first part of the lemma.

The second part of the lemma is an easy consequence of the first.
Indeed, let~$A\subseteq \Ell(\C_p)$ be an open and closed subset.
Then the function~$\bfone_A$ is continuous and the first part implies that
$$ E \mapsto T_n\bfone_A(E) = \bfone_A(T_n(E)) = \deg(T_n(E)|_A) $$
is also continuous.
But~$T_n\bfone_A$ has integer values, hence it must be locally constant.
This completes the proof of the lemma.
\end{proof}

\subsection{Hecke orbits of \CM{} points and an estimate on class numbers}
\label{sect-prelim-CM}
In this section we first recall a special case of a formula of Zhang describing the effect of Hecke correspondences on \CM{} points (Lemma~\ref{Zhang general}), which is used in Sections~\ref{s:ordinary CM points}, \ref{s:supersingular CM points} and~\ref{ss:ordinary orbits}.
To do this, and for the rest of the paper, for every discriminant~$D$ we consider~$\Lambda_D$ as a divisor.
We also use Siegel's classical lower bound on class numbers of quadratic imaginary extensions of~$\Q$, to give the following estimate used in the proof of  Theorem~\ref{t:CM points}: For every~$\varepsilon > 0$ there is a constant~$C > 0$ such that for every negative discriminant~$D$, we have
\begin{equation}
  \label{eq-Siegel-low-general}
  h(D)
  \=
  \deg(\Lambda_D)
  \ge
  C |D|^{\frac{1}{2}-\varepsilon}.
\end{equation}
In this section we follow~\cite[Section~2.3]{CloUll04}, adding some details for the benefit of the reader.

We use~$d$ to denote a negative fundamental discriminant.
For each discriminant~$D$ there is a unique negative fundamental discriminant~$d$ and integer~$f \ge 1$ such that~$D = d f^2$.
These are the \emph{fundamental discriminant} and \emph{conductor of~$D$}, respectively.
We denote by~$\cO_{d,f}$ the unique order of discriminant~$D$ in the quadratic imaginary extension~$\Q(\sqrt{d})$ of~$\Q$ and put
  $$ w_{d, f}
  \=
  \# \left( \cO_{d, f}^{\times} / \Z^{\times} \right)
  =
   \left(\# \cO_{d, f}^{\times}\right) / 2. $$
The integer~$f$ is the index of~$\cO_{d,f}$ inside the ring of integers of~$\Q(\sqrt{d})$. 
Note that~$w_{-3, 1} = 3$, $w_{-4, 1} = 2$, and that in all the remaining cases~$w_{d, f} = 1$.

Recall that the \emph{Dirichlet convolution} of two functions~$g, \wtg \colon \N \to \C$, is defined by
\begin{displaymath}
  (g \ast \wtg)(n)
  \=
  \sum_{d \in \N, d \mid n} g(d) \wtg \left(\frac{n}{d} \right).
\end{displaymath}
Given a fundamental discriminant~$d$, denote by~$R_d \colon \N \to \N \cup \{ 0 \}$ the function that to each~$n$ in~$\N$ assigns the number of integral ideals of norm~$n$ in the ring of integers of~$\Q ( \sqrt{d} )$. 
Moreover, denote by~$R_d^{-1}$ the inverse of~$R_d$ with respect to the Dirichlet convolution. 

  \begin{lemma}\label{Zhang general}
  For every fundamental discriminant $d<0$ and any pair of coprime integers~$f \ge 1$ and~$\wtf \ge 1$, we have the relations  
  \begin{equation}
  \label{e:general Zhang formula}
T_f \left( \frac{\Lambda_{d\wtf^2}}{w_{d, \wtf}} \right)
  =
  \sum_{f_0 \in \N, f_0 \mid f} R_d \left(\frac{f}{f_0}\right) \frac{\Lambda_{d(f_0 \wtf)^2}}{w_{d, f_0 \wtf}};
  \end{equation}
  \begin{equation}
    \label{e:general inverse Zhang formula}
   \frac{\Lambda_{d(f \wtf)^2}}{w_{d,f\wtf}}
    =
    \sum_{f_0 \in \N, f_0 \mid f} R_d^{-1} \left( \frac{f}{f_0} \right) T_{f_0}\left(\frac{\Lambda_{d\wtf^2}}{w_{d,\wtf}}\right).
  \end{equation}
  If in addition~$f$ is not divisible by~$p$, then we have
  \begin{equation}
    \label{e:p Zhang}
    \Lambda_{d (pf)^2}
    =
    \begin{cases}
      T_p \left( \frac{\Lambda_{d f^2}}{w_{d, f}} \right) - 2 \frac{\Lambda_{d f^2}}{w_{d, f}}
      & \text{if $p$ splits in~$\Q(\sqrt{d})$};
      \\
      T_p \left( \frac{\Lambda_{d f^2}}{w_{d, f}} \right) - \frac{\Lambda_{d f^2}}{w_{d, f}}
      & \text{if $p$ ramifies in~$\Q(\sqrt{d})$};
      \\
      T_p \left( \frac{\Lambda_{d f^2}}{w_{d, f}} \right)
      & \text{if $p$ is inert in~$\Q(\sqrt{d})$},
    \end{cases}
  \end{equation}
and for every integer~$m \ge 2$ we have
  \begin{multline}
    \label{e:pm Zhang}
    \Lambda_{d (p^m f)^2}
    \\ =
    \begin{cases}
      T_{p^m} \left(\frac{\Lambda_{d f^2}}{w_{d, f}} \right)
      - 2 T_{p^{m - 1}} \left(\frac{\Lambda_{d f^2}}{w_{d, f}} \right)
      + T_{p^{m - 2}} \left(\frac{\Lambda_{d f^2}}{w_{d, f}} \right)
      & \text{if $p$ splits in~$\Q(\sqrt{d})$};
      \\
      T_{p^m} \left(\frac{\Lambda_{d f^2}}{w_{d, f}} \right)
      - T_{p^{m - 1}} \left(\frac{\Lambda_{d f^2}}{w_{d, f}} \right)
      & \text{if $p$ ramifies in~$\Q(\sqrt{d})$};
      \\
      T_{p^m} \left(\frac{\Lambda_{d f^2}}{w_{d, f}} \right)
      - T_{p^{m - 2}} \left(\frac{\Lambda_{d f^2}}{w_{d, f}} \right)
      & \text{if $p$ is inert in~$\Q(\sqrt{d})$}.
    \end{cases}
  \end{multline}
\end{lemma}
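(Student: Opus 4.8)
The plan is to derive all four identities from \eqref{e:general Zhang formula} --- which is a reformulation of Zhang's formula, cf.\ \cite[Section~2.3]{CloUll04} --- the remaining three then following by formal manipulations with Dirichlet convolutions. Since these are identities between divisors supported on \CM{} points, and \CM{} points are defined over~$\overline{\Q}$ while the correspondences~$T_n$ come from~$\Q$, it suffices to argue over~$\C$ through the complex-analytic description. Recall that for a discriminant $D = d g^{2}$ a \CM{} point of discriminant~$D$ is represented by $\C/\Lambda$ for a lattice $\Lambda \subseteq \C$ whose multiplier ring $\{ z \in \C : z\Lambda \subseteq \Lambda \}$ equals $\cO_{d, g}$; such a $\Lambda$ may be chosen to be a proper (invertible) $\cO_{d, g}$-ideal, unique up to homothety, so that $\Lambda_{D}$ becomes the reduced divisor $\sum [\C/\mathfrak{a}]$ over $\operatorname{Pic}(\cO_{d, g})$, a group acting simply transitively on it. Unwinding the definition of~$T_f$ gives, for such a $\Lambda$, the identity $T_{f}(\C/\Lambda) = \sum_{\Lambda \subseteq \Lambda',\, [\Lambda':\Lambda] = f}[\C/\Lambda']$.

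To prove \eqref{e:general Zhang formula}, fix coprime integers $f, \wtf \ge 1$ and a proper $\cO_{d, \wtf}$-ideal~$\Lambda$. Since $\gcd(f, \wtf) = 1$, the order $\cO_{d, \wtf}$ is maximal at every prime $q \mid f$, so the localization of~$\Lambda$ at~$q$ is free of rank one over it, and the superlattices $\Lambda' \supseteq \Lambda$ of index~$f$ are controlled prime by prime over~$f$ --- and, via the Chinese Remainder Theorem, multiplicatively in~$f$ --- exactly as for the maximal order. For a prime power $f = q^{a}$ this is the local structure of the $q$-isogeny graph at a vertex whose order is maximal at~$q$, i.e.\ the ``$(q+1)$-volcano'' of \cite[Section~2.1]{GorKas1711}; from it one reads that $\End(\C/\Lambda') = \cO_{d, f_{0}\wtf}$ for a unique divisor~$f_{0}$ of~$f$, the part of the isogeny that does not change the conductor being multiplication by an integral ideal of~$\cO_{d}$ of norm $f/f_{0}$. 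Summing over the points of $\Lambda_{d\wtf^{2}}$ and using that $\operatorname{Pic}(\cO_{d\wtf^{2}})$ and $\operatorname{Pic}(\cO_{d(f_{0}\wtf)^{2}})$ act simply transitively and compatibly, every \CM{} point of discriminant $d(f_{0}\wtf)^{2}$ occurs with the same multiplicity $N_{f_{0}} \cdot h(d\wtf^{2}) / h(d(f_{0}\wtf)^{2})$, where $N_{f_{0}}$ is the class-independent number of index-$f$ superlattices of a proper $\cO_{d, \wtf}$-ideal whose endomorphism order is $\cO_{d, f_{0}\wtf}$; inserting the class number formula for orders converts this multiplicity into $R_{d}(f/f_{0}) \cdot w_{d, \wtf} / w_{d, f_{0}\wtf}$, which is precisely \eqref{e:general Zhang formula}. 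The explicit local count $N_{f_{0}}$ together with the accompanying bookkeeping of unit groups --- which is what produces the weights $w_{d,\cdot}$ --- is the step I expect to be the main obstacle; everything else is formal.

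It remains to deduce the other three identities from \eqref{e:general Zhang formula}. As $R_{d}(1) = 1$, the function $R_{d}$ is invertible for the Dirichlet convolution, and substituting \eqref{e:general Zhang formula} into the right-hand side of \eqref{e:general inverse Zhang formula} collapses the resulting double sum via $R_{d}^{-1} \ast R_{d}$ being the convolution unit, giving \eqref{e:general inverse Zhang formula}. Identity \eqref{e:p Zhang} is the case of \eqref{e:general Zhang formula} with~$\wtf$ there equal to~$f$ and~$f$ there equal to~$p$: the divisors of~$p$ being~$1$ and~$p$, the right-hand side equals $R_{d}(p)\,\Lambda_{df^{2}}/w_{d, f} + \Lambda_{d(pf)^{2}}$ since $w_{d, pf} = 1$, and solving for $\Lambda_{d(pf)^{2}}$ while noting $R_{d}(p) = 2, 1, 0$ according as~$p$ splits, ramifies, or is inert in $\Q(\sqrt{d})$ yields the three cases. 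Finally, \eqref{e:pm Zhang} is the case of \eqref{e:general Zhang formula} with~$\wtf$ there equal to~$f$ and~$f$ there equal to~$p^{m}$: writing the right-hand side of \eqref{e:pm Zhang} as $T_{p^{m}}(\Lambda_{df^{2}}/w_{d, f}) - c_{1} T_{p^{m-1}}(\Lambda_{df^{2}}/w_{d, f}) + c_{2} T_{p^{m-2}}(\Lambda_{df^{2}}/w_{d, f})$ with $(c_{1}, c_{2}) = (2, 1), (1, 0), (0, -1)$ in the split, ramified, inert cases respectively, expanding each term by \eqref{e:general Zhang formula}, and collecting the coefficient of $\Lambda_{d(p^{j}f)^{2}}/w_{d, p^{j}f}$, one gets $R_{d}(1) = 1$ for $j = m$ (using $w_{d, p^{m}f} = 1$) and $R_{d}(p^{k}) - c_{1} R_{d}(p^{k-1}) + c_{2} R_{d}(p^{k-2})$ with $k = m - j \ge 1$ for $j < m$; and the latter vanishes because, for these values of $(c_{1}, c_{2})$, the sequence $R_{d}(p^{\bullet})$ satisfies the recursion $R_{d}(p^{k}) = c_{1} R_{d}(p^{k-1}) - c_{2} R_{d}(p^{k-2})$ for $k \ge 1$ (with the conventions $R_{d}(p^{0}) = 1$, $R_{d}(p^{-1}) = 0$) --- this being the recursion attached to the Euler factor $\sum_{k \ge 0} R_{d}(p^{k}) X^{k}$ of $\zeta_{\Q(\sqrt{d})}(s)$ at~$p$. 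Hence the right-hand side of \eqref{e:pm Zhang} equals $\Lambda_{d(p^{m}f)^{2}}$, as claimed.
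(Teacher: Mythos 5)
Your deductions of \eqref{e:general inverse Zhang formula}, \eqref{e:p Zhang} and \eqref{e:pm Zhang} from \eqref{e:general Zhang formula} are correct and essentially identical to the paper's: the inverse formula is Dirichlet/M\"obius inversion, \eqref{e:p Zhang} is the specialization $f \mapsto p$, $\wtf \mapsto f$ together with $R_d(p) = 1 + \psi_d(p)$ and $w_{d,pf} = 1$, and your verification of \eqref{e:pm Zhang} via the recursion $R_d(p^k) = (1+\psi_d(p))R_d(p^{k-1}) - \psi_d(p)R_d(p^{k-2})$ coming from the Euler factor of $\zeta_{\Q(\sqrt d)}$ is a correct (if slightly more laborious) version of the paper's appeal to $R_d = \psi_d \ast \bfone$.

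The genuine gap is in your treatment of \eqref{e:general Zhang formula} itself. You set up the lattice-theoretic framework correctly, but the decisive step --- computing the number $N_{f_0}$ of index-$f$ superlattices of a proper $\cO_{d,\wtf}$-ideal whose multiplier ring is $\cO_{d,f_0\wtf}$, together with the unit-group bookkeeping that produces the weights $w_{d,\cdot}$, and the verification that the resulting multiplicity is class-independent and converts via the class number formula into $R_d(f/f_0)\,w_{d,\wtf}/w_{d,f_0\wtf}$ --- is exactly the content of Zhang's formula, and you explicitly leave it as ``the main obstacle'' rather than carrying it out. As written, the base identity on which everything else rests is therefore not proved. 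The paper avoids this computation entirely: it quotes Zhang's formula \eqref{e:Zhang formula} (the case $\wtf = 1$) from \cite[\emph{Lemme}~2.6]{CloUll04} or \cite[Proposition~4.2.1]{Zha01}, inverts it by M\"obius inversion to get \eqref{e:inverse Zhang formula}, and then obtains the general coprime-$\wtf$ statement \eqref{e:general inverse Zhang formula} by applying $T_{\wtf}$ and using the multiplicativity $T_{f_0\wtf} = T_{f_0}\circ T_{\wtf}$ of \eqref{eq-multiplicativity-Hecke-operators}; a final M\"obius inversion recovers \eqref{e:general Zhang formula}. If you want a self-contained argument you must actually perform the local count (the volcano structure at primes $q \mid f$, where $\cO_{d,\wtf}$ is maximal, does give $N_{f_0}$ multiplicatively, but the split/ramified/inert case analysis and the $w_{d,\cdot}$ corrections for $d = -3, -4$ have to be written out); otherwise the efficient route is the paper's reduction to the known case $\wtf = 1$.
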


To prove this lemma, we first record the following identity, which is also used in the proof~\eqref{eq-Siegel-low-general} below and of Lemma~\ref{conteoo} in Section~\ref{ss:ordinary orbits}.
Let~$\psi_d$ be the quadratic character associated to~$K=\Q(\sqrt{d})$, which is given by the Kronecker symbol~$\left(\frac{d}{\cdot}\right)$, and denote by~$\bfone \colon \N \to \C$ the constant function equal to~$1$.
Then we have the equality of functions
\begin{equation}
  \label{eq-Rd-char}
  R_d
  =
  \psi_d \ast \bfone.
\end{equation}
In fact, if we denote by~$\zeta(s)$ the Riemann zeta function, by~$\zeta_K(s)$ the Dedekind zeta function associated to~$K$, and by~$L(\psi_d,s)$ the Dedekind $L$-function associated to~$\psi_d$, then the formula above is equivalent to the factorization $\zeta_K(s)=\zeta(s)L(\psi_d,s)$, whose proof can be found for example in~\cite[Proposition~10.5.5 in p.~219]{Coh07b}, or~\cite[Chapter~XII, Section~1, Theorem~1]{Lan94}.

\begin{proof}[Proof of Lemma~\ref{Zhang general}]
  From the M{\"o}bius inversion formula we deduce that~\eqref{e:general Zhang formula} and~\eqref{e:general inverse Zhang formula} are equivalent.
  Hence, it is enough to prove  \eqref{e:general inverse Zhang formula}.
  We have the following formula of Zhang
  \begin{equation}
    \label{e:Zhang formula}
  T_f \left( \frac{\Lambda_{d}}{w_{d, 1}} \right)
  =
  \sum_{f_0 \in \N, f_0 \mid f}R_d\left(\frac{f}{f_0}\right) \frac{\Lambda_{df_0^2}}{w_{d, f_0}},
\end{equation}
see for example~\cite[\emph{Lemme}~2.6]{CloUll04} or~\cite[Proposition~4.2.1]{Zha01}.
Applying the M{\"o}bius inversion formula, one obtains
\begin{equation}
  \label{e:inverse Zhang formula}
  \frac{\Lambda_{df^2}}{w_{d, f}}
  =
  \sum_{f_0 \in \N, f_0 \mid f} R_d^{-1}\left(\frac{f}{f_0}\right)T_{f_0} \left( \frac{\Lambda_{d}}{w_{d, 1}} \right).
\end{equation}
On the other hand, note that if~$f$ and~$\wtf$ in~$\N$ are coprime, then by \eqref{eq-multiplicativity-Hecke-operators} and ~\eqref{e:inverse Zhang formula}, we obtain \eqref{e:general inverse Zhang formula}. 

Finally, \eqref{e:p Zhang} and~\eqref{e:pm Zhang} are a direct consequence of~\eqref{e:general Zhang formula}, \eqref{eq-Rd-char} and the fact that~$\psi_d(p) = 1$ (resp.~$0$, $-1$) if~$p$ splits (resp. ramifies, is inert) in~$\Q(\sqrt{d})$.
\end{proof}
  
To prove~\eqref{eq-Siegel-low-general}, recall from the theory of complex multiplication that for a fundamental discriminant~$d$ the number~$h(d)$ equals the class number of the quadratic extension~$\Q(\sqrt{d})$ of~$\Q$, see for example~\cite[Corollary~10.20]{Cox13}.
A celebrated result by Siegel states that for every~$\varepsilon>0$ there exists a constant~$C>0$ such that for every fundamental discriminant~$d<0$ we have
\begin{equation}
  \label{eq-Siegel-low-bound}
h(d)\geq C|d|^{\frac{1}{2}-\varepsilon},
\end{equation}
see for example~\cite{Sie35}, or~\cite[Chapter~XVI, Section~4, Theorem~4]{Lan94}.
On the other hand, by~\cite[Chapter~8, Section~1, Theorem~7]{Lan87} for every integer~$f \ge 2$ we have
\begin{equation}
  \label{eq:2}
  h(df^2)
=
\frac{w_{d, f}}{w_d} h(d) f\prod_{q\mid f, \text{ prime}} \left(\frac{q-\psi_d(q)}{q}\right).  
\end{equation}
Given~$\varepsilon>0$, there are~$C'$ in~$(0, 1)$ and~$N$ in~$\N$ such that $\frac{q-1}{q}\geq q^{-\varepsilon}$ for every~$q > N$ and $\frac{q-1}{q} \geq C' q^{-\varepsilon}$ for every $2 \le q\leq N$.
Hence, for every integer~$f \ge 2$ we have
$$\prod_{q\mid f, \text{ prime}} \left(\frac{q-\psi_d(q)}{q}\right)
\geq
\prod_{q\mid f, \text{ prime}} \left(\frac{q-1}{q}\right)
\geq
(C')^{N}\prod_{q\mid f, \text{ prime}}q^{-\varepsilon}
\geq
(C')^{N} f^{-\varepsilon}.$$
Combined with~\eqref{eq-Siegel-low-bound} and~\eqref{eq:2}, this completes the proof of~\eqref{eq-Siegel-low-general}.

\subsection{The Berkovich affine line over~$\C_p$ and the Gauss point}
\label{ss:berkovich}
We refer the reader to~\cite{Ber90} for the general theory of Berkovich spaces, and to~\cite[Chapter~1]{BakRum10} for the special case of the Berkovich affine line over~$\C_p$, which is the only Berkovich space of relevance in this paper.

The \emph{Berkovich affine line over~$\C_p$}, which we denote by~$\AKber$, is a topological space defined as follows.
As a set, $\AKber$ is the collection of all multiplicative seminorms on the polynomial ring~$\C_p[X]$ that take values in~$\Rzp$ and that extend the $p$-adic norm~$| \cdot |_p$ on~$\C_p$.
Hence, a point $x\in \AKber$ is given by a map $x \colon \C_p[X]\to \Rzp$ satisfying for every~$a$ in~$\C_p$ and for all~$f$ and~$g$ in~$\C_p[X]$,
\begin{displaymath}
  x(a) = |a|_p,
  x(f + g) \leq x(f) + x(g)
  \text{ and }
  x(fg)=x(f)x(g).
\end{displaymath}
The topology of~$\AKber$ is the weakest topology such that for every $f\in \C_p[X]$, the function $\AKber\to \C_p$ given by $x\mapsto x(f)$ is continuous.
The topological space~$\AKber$ is Hausdorff, locally compact, metrizable and path-connected.
It contains~$\C_p$ as a dense subspace via the map $\iota \colon \C_p\to \AKber$ given, for $z\in \C_p$ and $f\in \C_p[X]$, by $\iota(z)(f) \= |f(z)|_p$. 
We identify divisors on~$\C_p$ and on~$\iota(\C_p)$ accordingly.

The \emph{canonical point} or \emph{Gauss point}~$\xcan$ of~$\AKber$ is the Gauss norm
$$ \sum_{n=0}^N a_nX^n
\mapsto
\sup\left\{\left|\sum_{n=0}^N a_nz^n\right|_p:z\in \cO_p\right\}=\max\{|a_n|_p:n\in\{0,\ldots,N\}\}.$$

Given $a\in \C_p$ and $r>0$, define
\begin{align*}
  \bfD(a,r)
  &\=
    \{x\in \C_p:|x-a|_p< r\};
  \\
  \bfD^{\infty}(a,r)
  &\=
    \{x\in \C_p: |x-a|_p>r\};
  \\
  \cD(a,r)
  &\=
    \{x\in \AKber : x(X-a)< r\};
  \\
  \cD^{\infty}(a,r)
  &\=
    \{x\in \AKber : x(X-a)>r\}.
\end{align*}
A basis of neighborhoods of~$\xcan$ in~$\AKber$ is given by the collection of sets
\begin{equation}
  \label{eq-basis-affinoid}
  \cA(A;R)
  \=
  \cD(0,R) \cap  \bigcap_{a \in A}\cD^{\infty}(a,R^{-1}),
\end{equation}
where $R > 1$ and~$A$ is a finite subset of~$\cO_p$.

We conclude this section with the following result.
Recall that a sequence of Borel probability measures $(\mu_n)_{n \in \N}$ on a topological space~$X$ \emph{converges weakly to a Borel measure~$\mu$ on~$X$}, if for every continuous and bounded function $f \colon X \to \R$ we have
$$\lim_{n\to \infty}\int f \dd \mu_n
=
\int f \dd \mu, $$
see, \emph{e.g.}, \cite[Section~1.1]{Bil99}.

\begin{lemma}
  \label{l:berkovich}
  Let $(\fD_n)_{n \in \N}$ be a sequence of effective divisors on~$\C_p$ such that for every~$n$ we have~$\deg(\fD_n) \ge 1$.
  Then, the following are equivalent: 
\begin{enumerate}
\item[$(i)$]
  $\odelta_{\iota(\fD_n)}\to \delta_{\xcan}$ weakly as $n \to \infty$.
\item[$(ii)$]
  For every~$R > 1$ and every~$a$ in~$\cO_p$, we have for~$\bfD = \bfD(a,R^{-1})$ and $\bfD=\bfD^{\infty}(a,R)$,
  $$ \lim_{n\to \infty}\frac{\deg(\fD_n|_{\bfD})}{\deg(\fD_n)}
  =
  \lim_{n\to \infty }\odelta_{\fD_n}(\bfD)
  =
  0.$$
\end{enumerate}
\end{lemma}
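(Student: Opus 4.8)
The plan is to reduce weak convergence to the Gauss point to a statement about mass escaping from a neighborhood basis, and then to identify those neighborhoods concretely. First I would recall that $\AKber$ is locally compact and that $\delta_{\xcan}$ is a Borel probability measure, so that weak convergence $\odelta_{\iota(\fD_n)}\to\delta_{\xcan}$ is equivalent to the following statement: for every open neighborhood $U$ of $\xcan$ one has $\odelta_{\iota(\fD_n)}(U)\to 1$, equivalently $\odelta_{\iota(\fD_n)}(\AKber\setminus U)\to 0$. The standard way to see this: one direction is the portmanteau theorem (weak convergence implies $\liminf \mu_n(U)\ge \mu(U)=1$ for open $U$); the other direction uses that the sets $\AKber\setminus U$, as $U$ ranges over a neighborhood basis of $\xcan$, form a cofinal family of closed sets not containing $\xcan$, and any continuous bounded $f$ can be approximated by controlling its oscillation near $\xcan$ — explicitly, given $\varepsilon>0$ pick $U$ on which $|f-f(\xcan)|<\varepsilon$, split the integral $\int f\,\dd\odelta_{\iota(\fD_n)}$ over $U$ and its complement, and use $\odelta_{\iota(\fD_n)}(\AKber\setminus U)\to 0$ together with boundedness of $f$.

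Next I would invoke the explicit neighborhood basis of $\xcan$ given by the affinoid sets $\cA(A;R)$ in~\eqref{eq-basis-affinoid}, where $R>1$ and $A\subseteq\cO_p$ is finite. Since $\AKber\setminus\cA(A;R) = \big(\AKber\setminus\cD(0,R)\big)\cup\bigcup_{a\in A}\big(\AKber\setminus\cD^{\infty}(a,R^{-1})\big)$, and each $\fD_n$ is supported on $\C_p=\iota(\C_p)$, only the trace of these sets on $\C_p$ matters. One checks that $\iota^{-1}\big(\AKber\setminus\cD(0,R)\big)=\{z\in\C_p:|z|_p\ge R\}$ and $\iota^{-1}\big(\AKber\setminus\cD^{\infty}(a,R^{-1})\big)=\{z\in\C_p:|z-a|_p\le R^{-1}\}$; these are, up to harmless adjustments of the radius, the sets $\bfD^{\infty}(0,R')$ and $\bfD(a,(R')^{-1})$ appearing in condition~$(ii)$, once one notes that a divisor supported on $\cO_p$ sees $\{|z|_p\ge R\}$ and $\{|z|_p>R'\}$ interchangeably for suitable $R<R'$, and similarly for closed versus open discs of radius bounded away from the relevant threshold. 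Since a finite union of sets each of measure tending to $0$ has measure tending to $0$, condition~$(ii)$ (applied with the finitely many $a\in A$ and with the radius $R$) is precisely equivalent to $\odelta_{\iota(\fD_n)}(\AKber\setminus\cA(A;R))\to 0$ for every such $A$ and $R$, which by the first paragraph is equivalent to~$(i)$.

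Finally, the identity $\int f\,\dd\odelta_{\fD}=f(\fD)/\deg(\fD)$ from Section~\ref{ss:divisors}, applied to $f=\bfone_{\bfD}$, gives $\odelta_{\fD_n}(\bfD)=\deg(\fD_n|_{\bfD})/\deg(\fD_n)$, which justifies the first equality in~$(ii)$ and lets one phrase everything in terms of divisor degrees; this is purely bookkeeping. The main technical point to be careful about is the matching of radii between the strict and non-strict inequalities defining the Berkovich discs $\cD,\cD^{\infty}$ and the classical discs $\bfD,\bfD^{\infty}$: the neighborhood basis uses $\cD(0,R)$ and $\cD^{\infty}(a,R^{-1})$ with strict inequalities on seminorm values, while $(ii)$ is stated with $\bfD(a,R^{-1})$ (open, strict) and $\bfD^{\infty}(a,R)$ (open, strict) as well, so one must verify that enlarging or shrinking $R$ slightly converts one family of conditions into the other without changing the limiting statement — this works because for a divisor supported in $\cO_p$ the "boundary sphere" $\{|z-a|_p=r\}$ can be absorbed by moving $r$, and because the two families of neighborhoods (with strict or non-strict cutoffs) are mutually cofinal. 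I expect this radius-matching to be the only place requiring genuine (though elementary) care; everything else is the portmanteau theorem plus the explicit description of $\iota$ and of the basis~\eqref{eq-basis-affinoid}.
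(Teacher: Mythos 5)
Your proposal is correct and follows essentially the same route as the paper: the $(ii)\Rightarrow(i)$ direction is the same split-the-integral argument over $\cA(A;R)$ and its complement, and your portmanteau-based $(i)\Rightarrow(ii)$ is exactly the metrizability shortcut the paper itself records in the remark preceding its proof (the written proof merely replaces it with an explicit test function $\phi\circ\alpha$). The radius-matching between $\cD$, $\cD^{\infty}$ and $\bfD$, $\bfD^{\infty}$ that you flag is handled the same way in the paper, by passing to an auxiliary $R'\in(1,R)$.
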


  For the reader's convenience we provide a self-contained proof of this lemma, which applies to the Berkovich affine line over an arbitrary complete and algebraically closed field.
  Using that~$\AKber$ is metrizable, the lemma can also be obtained as a direct consequence of the following observations: $(i)$ is equivalent to the assertion that for every neighborhood~$\cU$ of~$\xcan$ in~$\AKber$ we have
\begin{displaymath}
    \lim_{n\to \infty }\odelta_{\fD_n}(\cU)
  =
  1.
\end{displaymath}
This last statement is equivalent to the contrapositive of~$(ii)$.

\begin{proof}[Proof of Lemma~\ref{l:berkovich}]
  Assume that~$(i)$ holds and let~$R > 1$ and~$a$ in~$\cO_p$ be given.
  Note that the first equality in~$(ii)$ is a direct consequence of the definitions.
  To prove the second equality, take a continuous function $\phi \colon \Rzp \to [0,1]$ satisfying~$\phi(1)=0$ and~$\phi(t)=1$ for $0\leq t\leq R^{-1}$ and for~$t \ge R$.
  Let $\alpha \colon \AKber\to \R$ be the continuous function given by $\alpha(x)=x(X-a)$ and put $F \= \phi\circ \alpha$.
  By construction we have
  $$ F(\xcan)=\phi(1)=0
  \text{ and }
  F(x)=1 \text{ for all }x\in \cD(a,R^{-1}) \cup \cD^{\infty}(a, R). $$
Using that for~$z\in \C_p$ we have
\begin{multline}\label{eq-iota-z}
  z \in \bfD(a,R^{-1})
  \Leftrightarrow
  \iota(z)\in \cD(a,R^{-1})
  \text{ and }
  z \in \bfD^{\infty}(a,R)
  \Leftrightarrow
  \iota(z)\in \cD^{\infty}(a, R),
\end{multline}
we get
$$ 0
\leq
\odelta_{\fD_n}(\bfD(a,R^{-1}) \cup \bfD^{\infty}(a, R))
=
\odelta_{\iota(\fD_n)}(\cD(a,R^{-1}) \cup \cD^{\infty}(a, R))
\leq
\int F \dd \odelta_{\iota(\fD_n)}.$$
Since $F$ is continuous and bounded, our hypothesis~$(i)$ implies that
\begin{displaymath}
  \odelta_{\fD_n}(\bfD(a,R^{-1})) \to 0
\text{ and }
\odelta_{\fD_n}(\bfD^{\infty}(a,R)) \to 0
\text{ as }
n \to \infty.
\end{displaymath}
This completes the proof of the implication~$(i) \Rightarrow (ii)$.

Now, assume that~$(ii)$ holds, let $F \colon \AKber\to \R$ be a continuous and bounded function and let~$\varepsilon > 0$ be given.
Since the sets~\eqref{eq-basis-affinoid} form a basis of neighborhoods of~$\xcan$, there are~$R > 1$ and a finite subset~$A$ of~$\cO_p$ such that
\begin{equation}
  \label{eq-cont-F}
  |F(x)-F(\xcan)| < \varepsilon
  \ \text{ for all } x \in \cA(A; R).
\end{equation}
Let $R'$ in $(1,R)$ be fixed. From the definition of~$\cA \= \cA(A; R)$, we have
\begin{displaymath}
  \cA' \= \AKber \setminus \cA \subseteq \cD^{\infty}(0, R') \cup \bigcup_{a \in A} \cD(a, (R')^{-1}).
\end{displaymath}
Using~\eqref{eq-iota-z} and~$(ii)$ with~$R$ replaced by~$R'$ and with~$a$ in~$A \cup \{ 0 \}$, we obtain
\begin{align*}
    \deg(\iota(\fD_n)|_{\cA'})
  & \le
  \deg(\iota(\fD_n)|_{\cD^{\infty}(0,R')}) + \sum_{a \in A} \deg(\iota(\fD_n)|_{\cD(a, (R')^{-1})})
  \\ & =
  \deg(\fD_n|_{\bfD^{\infty}(0,R')}) + \sum_{a \in A} \deg(\fD_n|_{\bfD(a, (R')^{-1})})
  \\ & =
  o(\deg(\iota(\fD_n))).
\end{align*}
Together with our choice of~$\cA(A; R)$, this implies
\begin{align*}
    \left| \int F \dd \odelta_{\iota(\fD_n)}-F(\xcan) \right|
    & \leq 
    \left| \frac{F(\iota{(\fD_n)}|_\cA)-F(\xcan)\deg(\iota(\fD_n)|_\cA)}{\deg(\fD_n)} \right|
    \\ & \quad
    + \left| \frac{F(\iota{(\fD_n)}|_{\cA'})-F(\xcan)\deg(\iota(\fD_n)|_{\cA'})}{\deg(\fD_n)} \right|
    \\ & \le
    \varepsilon + 2 \left( \sup_{x \in \AKber} |F(x)| \right) \frac{\deg(\iota(\fD_n)|_{\cA'})}{\deg(\iota(\fD_n)},
\end{align*}
and therefore
$$\limsup_{n\to \infty}\left|\int F \dd \odelta_{\iota(\fD_n)}-F(\xcan)\right|\leq \varepsilon.$$
Since $\varepsilon > 0$ is arbitrary, this completes the proof of the implication~$(ii) \Rightarrow (i)$ and of the lemma.
\end{proof}

\section{\CM{} points in the ordinary reduction locus}
\label{s:ordinary CM points}

The purpose of this section is to give a strengthened version of Theorem~\ref{t:CM points} in the case where all the discriminants in the sequence~$(D_n)_{n = 1}^{\infty}$ are $p$-ordinary (Theorem~\ref{t:ordinary CM points}$(ii)$ in Section~\ref{ss:dynamics of t}).
An important tool is ``the canonical branch~$\t$'' of~$T_p$ on~$\Ord$, which is defined using the canonical subgroup in Section~\ref{ss:canonical branch}.
We use it to give, for every integer~$m \ge 1$, a simple formula of~$T_{p^m}$ (Proposition~\ref{prop-tpm} in Section~\ref{ss:canonical branch}).
Moreover, we show that $p$-ordinary \CM{} points correspond precisely to the preperiodic points of~$\t$ on~$\Ord$ (Theorem~\ref{t:ordinary CM points}$(i)$).
Once these are established, Theorem~\ref{t:ordinary CM points}$(ii)$ follows from dynamical properties of~$\t$ on~$\Ord$ (Lemma~\ref{periodicpoint}).
In Appendix~\ref{s:canonical analyticity} we extend and further study the canonical branch~$\t$ of~$T_p$.

We use properties of reduction morphisms that are stated in most of the classical literature only for elliptic curves over discrete valued fields.
To extend the application of these results to elliptic curves over~$\C_p$ we use the continuity of the Hecke correspondences (Lemma~\ref{lemma-Hecke-continuity} in Section~\ref{ss:Hecke correspondences}).
To this purpose, we introduce the following notation: $\Qpun$ is the maximal unramified extension of~$\Q_p$ inside~$\Qpalg$, and~$\Cpun$ its completion.
Then, $\Cpun$ is an infinite degree extension of~$\Q_p$ with the same valuation group and with residue field~$\Fpalg$.
The algebraic closure~$\Cpunalg$ of $\Cpun$ inside~$\C_p$ is dense in $\C_p$.
Since~$\Cpunalg$ can be written as the union of finite extensions of~$\Cpun$, it follows that every elliptic curve in~$\Ell(\Cpunalg)$ can be defined over a complete discrete valued field with residue field~$\Fpalg$.
The same holds for finite subgroups and isogenies between elliptic curves over~$\Cpunalg$.

In what follows, we use~$\Ordun \= \Ord \cap \Ell(\Cpunalg)$.

\subsection{The canonical branch of~$T_p$ on~$\Ord$}
\label{ss:canonical branch}
In this section we define a branch of the Hecke correspondence~$T_p$ on~$\Ord$ that we use to give a simple description, for every integer~$m \ge 1$, of~$T_{p^m}$ that is crucial in what follows (Proposition~\ref{prop-tpm}).
See also Appendix~\ref{s:canonical analyticity}.
We start recalling the following result describing the endomorphism ring of the reduction of a \CM{} point in the ordinary locus.

\begin{proposition}[\cite{Lan87}, Chapter~13, Section~4, Theorem~12]
  \label{redendring}
  Let $d<0$ be a fundamental discriminant and let~$f \ge 1$ and~$m \ge 0$ be integers such that~$f$ is not divisible by~$p$.
  Then, for an elliptic curve~$E$ defined over a discrete valued subfield of~$\C_p$ having ordinary reduction, $\End(E) \simeq \cO_{d, p^m f}$ implies that the reduction~$\tE$ of~$E$ satisfies $\End(\tE)\simeq \cO_{d, f}$.
  In particular, if~$\End(E)$ is an order in a quadratic imaginary extension of~$\Q$ whose conductor is not divisible by~$p$, then the reduction map $\End(E)\to \End(\tE)$ is an isomorphism. 
\end{proposition}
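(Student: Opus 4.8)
The plan is to analyze the reduction homomorphism on endomorphism rings directly: use the Tate modules at primes $\ell\neq p$ to control the prime-to-$p$ part of the conductor of $\End(\tE)$, and the Frobenius endomorphism of a finite-field model of $\tE$ to control the $p$-part. Write $K=\Q(\sqrt{d})$ and $\cO=\End(E)\simeq\cO_{d,p^mf}$, and let $L$ be the discretely valued subfield of $\C_p$ over which $E$ is defined. First I would recall the standard facts about an elliptic curve with good reduction over $L$: the reduction map $\End(E)\to\End(\tE)$ is an injective ring homomorphism, and for every prime $\ell\neq p$ it induces an $\End(E)$-equivariant isomorphism between the $\ell$-adic Tate modules $\operatorname{Ta}_\ell(E)$ and $\operatorname{Ta}_\ell(\tE)$. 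I would also use that, since $E$ has complex multiplication by the order $\cO$, the module $\operatorname{Ta}_\ell(E)$ is free of rank one over $\cO\otimes\Z_\ell$ (after a complex embedding, $E\simeq\C/\mathfrak a$ with $\mathfrak a$ a proper, hence locally free of rank one, $\cO$-module). Since $E$ has ordinary reduction, $\End(\tE)$ is by definition an order in an imaginary quadratic field; it contains the image of $\cO$, so this field is $K$ and $\End(\tE)\simeq\cO_{d,f'}$ for some integer $f'$ dividing $p^mf$.

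\emph{First step: $f'$ and $f$ have the same prime-to-$p$ part.} Fix $\ell\neq p$. Under the reduction isomorphism, $\operatorname{Ta}_\ell(\tE)$ becomes free of rank one over $\cO\otimes\Z_\ell$, say $\operatorname{Ta}_\ell(\tE)=(\cO\otimes\Z_\ell)v$, while $\End(\tE)\otimes\Z_\ell\subseteq K\otimes\Q_\ell$ acts faithfully on it. Any $x\in\End(\tE)\otimes\Z_\ell$ maps $v$ into $(\cO\otimes\Z_\ell)v$, hence acts as multiplication by an element of $\cO\otimes\Z_\ell$, hence equals that element. Thus $\End(\tE)\otimes\Z_\ell=\cO\otimes\Z_\ell=\End(E)\otimes\Z_\ell$, so the conductors of $\cO_{d,f'}$ and $\cO_{d,p^mf}$ agree at $\ell$. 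Since this holds for every $\ell\neq p$ and $f'\mid p^mf$, we conclude $f'=p^af$ for some $0\leq a\leq m$.

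\emph{Second step: $a=0$.} Choose a finite field $\F_q$, $q=p^r$, over which $\tE$ is defined, and let $\pi\in\End(\tE)$ be the $q$-power Frobenius, so that $\pi\overline{\pi}=q$ and $\pi+\overline{\pi}=a_q\in\Z$. Since $\tE$ is ordinary we have $p\nmid a_q$, and $\pi\notin\Z$ (otherwise $q$ would be a square and $p\mid a_q$), so $\Z[\pi]$ is an order in $K$ of discriminant $a_q^2-4q$; as $a_q^2-4q\equiv a_q^2\not\equiv 0\pmod p$, this discriminant is a $p$-adic unit, hence $p\nmid[\cO_K:\Z[\pi]]$ and $\Z[\pi]\otimes\Z_p=\cO_K\otimes\Z_p$. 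From $\Z[\pi]\subseteq\End(\tE)\subseteq\cO_K$ we get $\End(\tE)\otimes\Z_p=\cO_K\otimes\Z_p$, so $p\nmid f'$, and combined with the first step $f'=f$; this proves $\End(\tE)\simeq\cO_{d,f}$. For the ``in particular'' (the case $m=0$): now $\End(E)\simeq\cO_{d,f}\simeq\End(\tE)$ are $\Z$-lattices of the same rank joined by the injective reduction map, and its (finite) cokernel becomes trivial after $\otimes\Z_\ell$ for every prime $\ell$ — for $\ell\neq p$ by the first step, and for $\ell=p$ because both sides equal $\cO_K\otimes\Z_p\simeq\Z_p\times\Z_p$ and an injective ring endomorphism of this ring must fix its two nontrivial idempotents, hence be the identity. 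Therefore the reduction map $\End(E)\to\End(\tE)$ is an isomorphism.

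The main obstacle is the second step, namely pinning down the $p$-part of the conductor of $\End(\tE)$: the rest is routine bookkeeping with orders in $K$ and with Tate modules, but the fact that an ordinary elliptic curve over $\Fpalg$ has endomorphism ring maximal at $p$ is genuinely a piece of Deuring's theory and uses the arithmetic of elliptic curves over finite fields. The Frobenius computation above seems the most economical route; one could instead invoke the connected–étale splitting $\tE[p^\infty]\simeq\mu_{p^\infty}\times\Q_p/\Z_p$ together with the structure of its endomorphism ring, but extracting the conductor that way still ultimately relies on a finite-field model of $\tE$.
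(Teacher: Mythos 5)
Your proof is correct. Note first that the paper offers no argument of its own here: Proposition~\ref{redendring} is stated with a citation to Deuring's theorem as recorded in Lang, so there is nothing in the text to compare against step by step. Your self-contained argument is the standard one and it works: the prime-to-$p$ part of the conductor is controlled by the $\End(E)$-equivariant isomorphism of $\ell$-adic Tate modules ($\ell \neq p$) together with the freeness of $\operatorname{Ta}_\ell(E)$ over $\cO \otimes \Z_\ell$ (which correctly uses that proper ideals of a quadratic order are invertible, hence locally free of rank one), and the $p$-part is controlled by the Frobenius of a finite-field model, where ordinarity gives $p \nmid a_q$ and hence $\disc(\Z[\pi]) = a_q^2 - 4q$ prime to $p$, forcing $\End(\tE) \otimes \Z_p = \cO_K \otimes \Z_p$. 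The one inaccuracy is in the last step: an injective ring endomorphism of $\Z_p \times \Z_p$ need not \emph{fix} the two nontrivial idempotents — it may swap them — and "hence be the identity" additionally uses the rigidity of $\Z_p$ (no nontrivial ring endomorphisms). Neither point damages the conclusion, since a permutation of the idempotents composed with the identity on each factor is still bijective; alternatively, you could avoid the local analysis at $p$ entirely in the "in particular" part by observing that the image of the injective reduction map is an order of $K$ isomorphic to $\cO_{d,f}$ sitting inside $\End(\tE) \simeq \cO_{d,f}$, and isomorphic orders (having equal discriminants) that are nested must coincide.
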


To define the canonical branch of~$T_p$ on~$\Ord$,
we use the \emph{canonical subgroup} of an elliptic curve~$E$ in~$\Ordun$, which is defined as the unique subgroup of order~$p$ of~$E$ in the kernel of the reduction morphism $E\to \tE$.
Equivalently, $H(E)$ is the kernel of the reduction morphism $E[p]\to \tE[p]$.
For an elliptic curve~$e\in \Ell(\Fpalg)$ denote by $\Frob \colon e\to e^{(p)}$ the Frobenius morphism, which is the isogeny given in affine coordinates by $(x,y)\mapsto (x^p,y^p)$.

\begin{theorem}
  \label{teo-Deuring}
  \

  \begin{enumerate}
  \item[$(i)$]
    For $E\in \Ordun$ the natural isogeny $\varphi \colon E\to E/H(E)$ reduces to the Frobenius morphism $\Frob \colon \tE\to \tE^{(p)}$.
    Moreover, the kernel of the isogeny dual to~$\varphi$ is different from the canonical subgroup of~$E/H(E)$.
  \item[$(ii)$]
    For each ordinary elliptic curve $e\in \Ell(\Fpalg)$ there exists a unique elliptic curve $e^{\uparrow}\in \Ell(\Cpunalg)$ reducing to $e$ for which the reduction map induces a ring isomorphism $\End(e^{\uparrow})\simeq\End(e)$.
  \item[$(iii)$]
    Given two ordinary elliptic curves $e_1,e_2\in \Ell(\Fpalg)$, the reduction map induces a group isomorphism $\Hom(e_1^{\uparrow},e_2^{\uparrow})\simeq \Hom(e_1,e_2)$. In particular, the Frobenius morphism $\Frob \colon e\to e^{(p)}$ lifts to an isogeny $e^{\uparrow}\to (e^{(p)})^{\uparrow}$ with kernel $H(e^{\uparrow})$, and $e^{\uparrow}/H(e^{\uparrow})=(e^{(p)})^{\uparrow}$.
\end{enumerate}
\end{theorem}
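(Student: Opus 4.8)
The plan is to establish the three assertions in order: $(i)$ is local formal-group geometry, $(ii)$ is Deuring's lifting theorem, which I would recall, and $(iii)$ follows from $(i)$, $(ii)$ and the functoriality of lifting finite subgroup schemes. For $(i)$, I work over the ring of integers of a suitable finite extension of $\Cpun$ (using the reduction-to-a-DVR set-up already in place). Since $E$ has good ordinary reduction, its formal group $\hat E$ has height one, so $\hat E[p]$ has order $p$ and all of its points reduce to $0$; by the very definition of the canonical subgroup this forces $H(E)=\hat E[p]$. Restricting $\varphi$ to formal groups identifies $\widehat{E/H(E)}$ with $\hat E/\hat E[p]$, and since multiplication by $p$ on $\hat E$ has the same kernel, $\varphi$ agrees with $[p]_E$ on formal groups up to an isomorphism of the target; reducing modulo the maximal ideal, $[p]$ on the formal group of the ordinary curve $\tE$ has vanishing differential at the origin, so $\bar\varphi$ has vanishing differential and hence is inseparable, and purely inseparable since $\deg\bar\varphi=p$ is prime. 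The only infinitesimal subgroup scheme of order $p$ of the ordinary curve $\tE$ is the Frobenius kernel, so $\ker\bar\varphi$ equals it and, after identifying $\widetilde{E/H(E)}$ with $\tE^{(p)}$, $\bar\varphi$ coincides with $\Frob\colon\tE\to\tE^{(p)}$. For the last clause, write $\psi$ for the isogeny dual to $\varphi$; reducing $\psi\circ\varphi=[p]_E$ and using $[p]_{\tE}=V\circ\Frob$ with $V$ the Verschiebung shows that $\bar\psi$ equals $V$ up to pre-composition with an isomorphism, and $V$ is separable because $\tE$ is ordinary. Hence the points of $\ker\psi$ reduce injectively onto the nontrivial étale subgroup $\ker V$ of $\tE^{(p)}$, so $\ker\psi$ is not contained in the kernel of reduction of $E/H(E)$; in particular it is not the canonical subgroup of $E/H(E)$.

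Part $(ii)$ is Deuring's theorem on the canonical lift; I would recall the mechanism. Let $\cO=\End(e)$, an order of conductor $f$ prime to $p$ in an imaginary quadratic field $K$ in which $p$ splits (both properties are equivalent to $e$ being ordinary). The elliptic curves over $\C$ with endomorphism ring exactly $\cO$ have $j$-invariants the roots of the ring class polynomial of $\cO$, so there are $h(\cO)$ of them, all defined over $\Cpunalg$; by Deuring's reduction theorem each has good ordinary reduction at $p$, and since $p\nmid f$, Proposition~\ref{redendring} shows its reduction has endomorphism ring exactly $\cO$ as well. Both the source set $\{E/\Cpunalg:\End(E)=\cO\}$ and the target set $\{e'/\Fpalg:\End(e')=\cO\}$ are principal homogeneous spaces under $\mathrm{Pic}(\cO)$ of cardinality $h(\cO)$, and the reduction map between them is $\mathrm{Pic}(\cO)$-equivariant; being an equivariant map between torsors under the same group, it is a bijection. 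Hence there is a unique $e^{\uparrow}$ over $\Cpunalg$ lifting $e$ with $\End(e^{\uparrow})=\cO$. Finally, for any lift $E$ of $e$ the reduction map $\End(E)\hookrightarrow\End(e)$ is injective because it preserves degrees of isogenies, so it is a ring isomorphism onto $\End(e)=\cO$ precisely when $\End(E)=\cO$; therefore $e^{\uparrow}$ is the unique lift of $e$ for which reduction induces a ring isomorphism on endomorphisms.

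For $(iii)$, injectivity of $\Hom(e_1^{\uparrow},e_2^{\uparrow})\to\Hom(e_1,e_2)$ is once more degree preservation. For surjectivity, let $g\colon e_1\to e_2$ be a nonzero isogeny; since $\End(e_1)=\End(e_2)=\cO$, its kernel $C=\ker g$ is an $\cO$-submodule, a finite subgroup scheme of $e_1$. Decompose $C$ into its prime-to-$p$ part, which is étale, and the connected and étale parts of its $p$-primary component, the connected part being $\hat e_1[p^b]$ for some $b\ge 0$; each piece lifts uniquely over the base discrete valuation ring — the étale pieces because finite étale subgroup schemes lift uniquely, the connected piece as $\widehat{e_1^{\uparrow}}[p^b]$. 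This produces an $\cO$-stable subgroup $\tilde C\subseteq e_1^{\uparrow}$ with $\#\tilde C=\#C$, so $e_1^{\uparrow}/\tilde C$ lifts $e_2$ and satisfies $\End(e_1^{\uparrow}/\tilde C)\supseteq\cO$; by injectivity of reduction on endomorphisms this containment is an equality, whence $(ii)$ gives $e_1^{\uparrow}/\tilde C=e_2^{\uparrow}$. The quotient isogeny $e_1^{\uparrow}\to e_2^{\uparrow}$ then reduces to $g$ up to an automorphism of $e_2$, which lifts since $\Aut(e_2^{\uparrow})\to\Aut(e_2)$ is onto; hence $g$ lies in the image. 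Applying this to $g=\Frob\colon e\to e^{(p)}$, whose kernel is connected of order $p$ and therefore equal to $\hat e[p]$, the lifted subgroup is $\widehat{e^{\uparrow}}[p]=H(e^{\uparrow})$, so $\Frob$ lifts to $e^{\uparrow}\to e^{\uparrow}/H(e^{\uparrow})$ and $e^{\uparrow}/H(e^{\uparrow})=(e^{(p)})^{\uparrow}$, which also recovers $(i)$ for $E=e^{\uparrow}$.

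The main obstacle is $(ii)$: parts $(i)$ and $(iii)$ are essentially formal once the height-one structure of ordinary formal groups and the functoriality of canonical subgroups are in hand, whereas the existence and uniqueness of the canonical lift is the genuinely deep input, resting on the bijectivity of reduction between the two $\mathrm{Pic}(\cO)$-torsors of CM curves — equivalently, that CM $j$-invariants with endomorphism ring $\cO$ remain pairwise distinct modulo $p$ — which is the reciprocity law of complex multiplication (Deuring's congruences). For this reason I would, in practice, simply invoke Deuring's theorem (see \cite{Deu41} or \cite[Chapter~13]{Lan87}) for $(ii)$ rather than reprove it.
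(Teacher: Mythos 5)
Your proposal is correct, and at the top level it follows the same architecture as the paper: part $(ii)$ is taken as Deuring's lifting theorem, and part $(iii)$ is deduced from $(i)$, $(ii)$ and a reduction to simpler isogenies. The differences are in the execution. For $(i)$ the paper simply cites the standard reference (the proof of Lemma~8.7.1 in Diamond--Shurman), whereas you give the self-contained formal-group argument ($H(E)=\widehat{E}[p]$ by height one, $\overline{\varphi}$ purely inseparable of prime degree hence Frobenius, and $\overline{\widehat{\varphi}}$ equal to the separable Verschiebung, which rules out $\ker\widehat{\varphi}=H(E/H(E))$ since the latter would force the reduction of the quotient map to be inseparable); this is a faithful expansion of the cited argument. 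For the surjectivity in $(iii)$ the two routes genuinely diverge: the paper factors the given isogeny $e_1\to e_2$ as a composition of Frobenius morphisms, duals of Frobenius morphisms, and an isogeny of degree prime to~$p$, lifts the first two kinds via $(i)$, $(ii)$ and Proposition~\ref{redendring}, and lifts the prime-to-$p$ factor using the bijection $E[n]\to\tE[n]$; you instead lift the kernel directly, splitting it into its prime-to-$p$ \'etale part, its connected $p$-part $\widehat{e_1}[p^b]$, and its \'etale $p$-part, and then identify the quotient with $e_2^{\uparrow}$ via uniqueness in $(ii)$. Both are standard and correct; your version requires the (true but glossed) facts that $\ker g$ is $\cO$-stable under the normalized CM actions and that finite \'etale subgroup schemes lift uniquely over the Henselian base, while the paper's version trades these for the elementary injectivity of reduction on prime-to-$p$ torsion. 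Your sketch of the torsor mechanism behind $(ii)$ is accurate, but, as you say, in the end both you and the paper simply invoke Deuring.
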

\begin{proof}
Item~$(i)$ follows from the definition of canonical subgroup and properties of reduction morphisms, see, \emph{e.g.}, \cite[Proof of Lemma~8.7.1]{DiaShu05}.
\textcolor{red}{Item}~$(ii)$ is usually known as ``Deuring's Lifting Theorem'', see for example~\cite{Deu41} or~\cite[Chapter~13, Section~5, Theorem~14]{Lan87}.
  Item~$(iii)$ is another known consequence of Deuring's work.
  To prove surjectivity, first note that every isogeny in~$\Hom(e_1, e_2)$ can be written as a composition of Frobenius morphisms, of duals of Frobenius morphisms, and of an isogeny whose degree is not divisible by~$p$.
  In view of items~$(i)$ and~$(ii)$, and of Proposition~\ref{redendring}, we can restrict to the case of an isogeny of degree~$n$ not divisible by~$p$.
  This case is a direct consequence of item~$(ii)$, and the fact that the reduction morphism~$E \to \tE$ induces a bijective map~$E[n] \to \tE[n]$, see for example~\cite[Chapter~VII, Proposition~3.1(b)]{Sil09}.
\end{proof}

  The following result is due to Tate in the case~$p = 2$ and to Deligne in the general case.
To state it, define
\begin{equation}
  \label{defoff}
  \begin{array}{rccl}
    \t \colon & \Ordun  & \to & \Ordun
                                \\ & E & \mapsto & \t(E) \= E/H(E),
  \end{array}
\end{equation}
and for~$\ss$ in~$\tSups$ put
\begin{equation}
  \label{eq:8}
  \delta_{\ss}
\=
\begin{cases}
  1
  & \text{if } p \geq 5, j(\ss)\neq 0,1728;
  \\
  3
  & \text{if }p\geq 5, j(\ss)= 0;
  \\
  2
  & \text{if }p\geq 5, j(\ss)= 1728;
  \\
  6
  & \text{if }p=3, j(\ss)=0=1728;
  \\
  12
  & \text{if }p=2, j(\ss)=0=1728.
\end{cases}
\end{equation}
Note that in all the cases~$\delta_{\ss} = (\# \Aut(\ss)) / 2$, see, \emph{e.g.}, \cite[Chapter~III, Theorem~10.1]{Sil94a}.

\begin{theorem}
  \label{teo-Deligne}
  For each~$\ss$ in~$\tSups$ choose~$\beta_{\ss}$ in~$\bfD(j(\ss)) \cap \Qpun$, so that~$\pi(\beta_{\ss})=j(\ss)$, and put~$\delta_{\ss}' \= \delta_{\ss}$ if~$\beta_{\ss} = 0$ and~$p \neq 3$ or if~$\beta_{\ss} = 1728$ and~$p \neq 2$, and~$\delta_{\ss}' \= 1$ otherwise.
Then, the map~$\t$ admits an expansion of the form
\begin{equation}
  \label{eq:6}
  \t(z)
  =
  z^p + pk(z)+ \sum_{\ss \in \tSups} \sum_{n=1}^{\infty} \frac{ A_n^{(\ss)}}{(z-\beta_{\ss})^n},
\end{equation}
where $k(z)$ is a polynomial of degree~$p-1$ in~$z$ with coefficients in $\Z$, and for each~$n \ge 1$ the coefficient~$A_n^{(\ss)}$ belongs to $\Q_p(\{ \beta_{\ss} : \ss \in \tSups \})$ and 
  \begin{equation}
    \label{eq:13}
  \ord_p(A_n^{(\ss)})
\ge
\delta_{\ss}' \left( \frac{1}{p+1}+n \frac{p}{p+1} \right).    
  \end{equation}

In particular, $\t(z)$ extends to a rigid analytic function $\Ord \to \Ord$ of degree~$p$ that we also denote by~$\t$.
\end{theorem}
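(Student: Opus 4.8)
The plan is to derive the expansion from two inputs: the rigid analyticity of the canonical-subgroup family, which is the part due to Tate for $p=2$ and to Deligne in general, and the fact that $\t$ lifts the $p$-power Frobenius, which is Theorem~\ref{teo-Deuring}$(i)$ together with Proposition~\ref{redendring}. First I would recall why $\t$ is rigid analytic. For $E\in\Ordun$, $H(E)$ is the unique order-$p$ subgroup contained in the kernel of reduction, and the family $E\mapsto(E,H(E))$ is precisely the rigid analytic section of $\alpha_p$ over $\Ord$ cut out by the ``Frobenius component'' of the special fibre of the standard model of $\Ell_0(p)$ over $\Z_p$, which over the ordinary locus is smooth (Deligne--Rapoport, Katz); as these are statements over complete discretely valued fields, one first works over $\Cpunalg$ and then uses the continuity of Hecke correspondences (Lemma~\ref{lemma-Hecke-continuity}) to pass to $\C_p$. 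Hence $\t$, sending $E$ to $\beta_p(E,H(E))=E/H(E)$, is a rigid analytic self-map of $\Ord$; equivalently, $Y=\t(z)$ is the branch of $\Phi_p(z,Y)=0$ over $\Ord$ characterised by $\pi(\t(z))=\pi(z)^p$. Since the canonical subgroup of the Tate curve $E_q$ is $\mu_p$ with $E_q/\mu_p=E_{q^p}$, the same description extends $\t$ to an analytic self-map of $\C_p\setminus\Sups=\Bad\cup\Ord$ and then of $\P^1\setminus\Sups$, with $\t(\infty)=\infty$. Restricting to a residue disc of $\Ord$ and reducing modulo $\cM_p$, $\t$ becomes $X\mapsto X^p$ in disc coordinates, so $\t$ is finite of degree $p$ and $\infty$ is a pole of order $p$.

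Second, I would read off the shape of the expansion from the structure of $\P^1\setminus\Sups$, the complement of the finitely many open residue discs $\bfD(\beta_\ss,1)$ with $\ss\in\tSups$. An analytic map $\P^1\setminus\Sups\to\P^1$ whose only pole is the order-$p$ pole at $\infty$ has a unique partial-fraction decomposition $\t(z)=Q(z)+\sum_{\ss\in\tSups}\sum_{n\ge1}A_n^{(\ss)}(z-\beta_\ss)^{-n}$, with $Q\in\C_p[z]$ of degree $p$, each series converging outside $\bfD(\beta_\ss,1)$, and all coefficients in the complete subfield $\Q_p(\{\beta_\ss:\ss\in\tSups\})$ over which $\t$ is defined. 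Comparing with $j(E_q)=q^{-1}+O(1)$ and $\t(E_q)=E_{q^p}$ near the cusp shows $Q$ is monic; reducing the decomposition modulo $\cM_p$ and comparing with $z\mapsto z^p$ on $\tOrd$ forces the principal parts to reduce to zero (as $z^p$ is regular at the distinct points $\bar\beta_\ss$), so $\ord_p(A_n^{(\ss)})>0$, and $Q$ reduces to $z^p$; thus $Q(z)=z^p+p\,k(z)$ with $\deg k\le p-1$. That $k\in\Z[z]$ follows either from the Serre--Tate description of $\t$ on the ordinary locus --- in Serre--Tate coordinates $\t$ is $q\mapsto q^p$ and $j$ has an integral expansion, cf.\ \cite[Section~7~d)]{Dwo69} --- or from $\Phi_p\in\Z[X,Y]$. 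Restricting to $\Ord$ gives~\eqref{eq:6}; the congruence $\t(z)\equiv z^p\pmod p$ just obtained is the lift of the Eichler--Shimura relation made precise in Appendix~\ref{s:canonical analyticity}. Finally $\t$ maps $\Ord$ into $\Ord$, since $\pi(\t(z))=\pi(z)^p$ and $\pi(z)$ ordinary forces $\pi(z)^p$ ordinary.

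Third, for the sharp bound~\eqref{eq:13}: the point is that $\t$ extends analytically strictly past $\Sups$. By the Katz--Lubin theory, the canonical subgroup $H(E)$ persists on the ``not too supersingular'' region where Katz' valuation is $<\tfrac{p}{p+1}$, and Proposition~\ref{vnorm} converts this threshold into a precise radius inside each supersingular residue disc: in the coordinate $z-\beta_\ss$, the canonical subgroup --- hence $\t$ --- is defined on $\{\,\ord_p(z-\beta_\ss)<\delta_\ss'\tfrac{p}{p+1}\,\}$, where the factor $\delta_\ss'$ records the ramification of $j$ at the elliptic point $\ss$, equivalently $\#\Aut(\ss)$, together with the chosen lift $\beta_\ss$. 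So $\sum_n A_n^{(\ss)}(z-\beta_\ss)^{-n}$ converges and is bounded by $1$ there, and the maximum principle on the circles $\ord_p(z-\beta_\ss)=\delta_\ss'\tfrac{p}{p+1}$ gives $\ord_p(A_n^{(\ss)})\ge n\,\delta_\ss'\tfrac{p}{p+1}$; the additional $\tfrac{\delta_\ss'}{p+1}$ in~\eqref{eq:13} comes from refining this estimate with the exact effect of $\t$ on Katz' valuation at the edge of its domain, again via Proposition~\ref{vnorm}.

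The main obstacle is this last step. The bare analyticity of $\t$ and the congruence $\t(z)\equiv z^p\pmod p$ are classical; by contrast, the sharp constants in~\eqref{eq:13} require the explicit dictionary between Katz' valuation, the $j$-coordinate and the local automorphism data at supersingular points, together with precise control of how $\t$ transports points near the boundary of its domain --- exactly the analysis that Proposition~\ref{vnorm} and Appendix~\ref{s:canonical analyticity} are designed to supply.
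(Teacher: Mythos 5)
Your route is genuinely different from the paper's. The paper proves this theorem by citing Dwork's \emph{$p$-adic cycles} for $p\ge 5$ (where both the expansion and the estimate come from Dwork's explicit analysis of the lifted Hasse invariant), and for $p=2,3$ by an explicit computation with Mestre's genus-zero model of $X_0(p)$ (Proposition~\ref{p:low canonical analyticity} via Lemma~\ref{l:invertibility}). You instead propose a soft argument: rigid analyticity of the Katz--Lubin canonical subgroup, a Mittag--Leffler decomposition on the complement of the supersingular discs, $q$-expansions for the integrality of $k$, and a maximum-principle estimate on circles approaching $\kval=\frac{p}{p+1}$. Two cautions even where this works: the rigid analyticity of $E\mapsto E/H(E)$ on all of $\{\kval<\frac{p}{p+1}\}$ must be taken from Katz/Buzzard directly, not from Appendix~\ref{s:canonical analyticity}, since Theorem~\ref{t:canonical analyticity} there \emph{uses} the present theorem (otherwise your argument is circular); and the case $\beta_{\ss}\neq\fj_{\ss}$, which is exactly where $\delta_{\ss}'$ differs from $\delta_{\ss}$, needs a separate (easy) treatment because the Laurent tail is then centered off the zero of $\oE_{p-1}$.

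The genuine gap is in your final step, precisely for $p=2$ and $3$. On circles $\kval(E)=v\to\frac{p}{p+1}$ one has, by Lemma~\ref{l:sups canonical subgroup}, $\ord_p\bigl(\t(z)-\fj_{\ss^{(p)}}\bigr)=\delta_{\ss^{(p)}}(1-v)\to\frac{\delta_{\ss}}{p+1}$, and to convert this into the bound on the tail you must show the polynomial part does not interfere, i.e.\ that $\ord_p\bigl(\beta_{\ss}^p+pk(\beta_{\ss})-\fj_{\ss^{(p)}}\bigr)\ge\frac{\delta_{\ss}}{p+1}$. For $p\ge5$ this is automatic because $\frac{\delta_{\ss}}{p+1}\le\frac{3}{p+1}<1\le\ord_p(pk(\beta_{\ss}))$; but for $p=2$ (resp.\ $p=3$) the threshold is $4$ (resp.\ $\frac32$) while a priori one only knows valuation $\ge1$. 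Concretely, for $p=2$ you need $\ord_2(2k(0))\ge4$ (in fact $2k(0)=627\cdot2^8$), and nothing in your sketch produces this; the phrase ``refining this estimate with the exact effect of $\t$ on Katz' valuation'' does not identify the obstruction. It can be repaired without explicit computation --- the valuation polygon $v\mapsto\min_n\bigl(\ord_p(A_n^{(\ss)})-n\delta_{\ss}v\bigr)$ of a nonzero Laurent tail is strictly decreasing, so it cannot coincide with the constant $\ord_p(pk(\beta_{\ss}))$ on an interval of $v$, which forces that constant above the threshold --- but this argument is absent, and without it (or the paper's explicit $\chk_p$) the estimate~\eqref{eq:13} for $p=2,3$ does not follow. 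Relatedly, for $p=2,3$ the Katz--Lubin input you invoke itself requires the level-structure liftings of the Hasse invariant from Appendix~\ref{s:lift Hasse 2 3}, which your proposal does not address.
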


  For~$p \ge 5$, this result is proved in~\cite[Chapter~7]{Dwo69}.
  In the case~$\delta_{\ss}' > 1$, \eqref{eq:13} can be obtained from the method of proof described in~\cite{Dwo69}, or from the estimate in~\cite[p.~80]{Dwo69} combined with the fact that~$\ord_p(A_n^{(\ss)})$ is an integer and that~$\beta_{\ss} = 0$ implies~$p \equiv 2 \mod 3$.  
  For~$p = 2$ and~$3$, this result is stated in~\cite[p.~89]{Dwo69} with a weaker version of~\eqref{eq:13}.
  We provide the details of the proof when~$p = 2$ and~$3$, see Proposition~\ref{p:low canonical analyticity} in Appendix~\ref{s:canonical analyticity}.

The theorem above implies that~$\t$ extends to a rigid analytic map from~$\Ord$ to itself.
  We denote this extension also by~$\t$ and call it the \emph{canonical branch of~$T_p$ on~$\Ord$}.

For $z\in \Ord$, let~$\t^{*}(z)$ be the divisor on~$\Ord$ given by
$$\t^{*}(z)
\=
\sum_{\substack{w\in \Ord \\ \t(w)=z}}\deg_\t(w) [w],$$
where~$\deg_\t(w)$ is the local degree of~$\t$ at~$w$.
Note that by Theorem~\ref{teo-Deligne} the rigid analytic map~$\t \colon \Ord \to \Ord$ is of degree~$p$, so for~$z$ in~$\Ord$ we have
\begin{displaymath}
  \deg(\t^{*}(z)) = p
  \text{ and }
  \t_*(\t^{*} (z)) = p [z].
\end{displaymath}
As usual, for an integer~$i \ge 1$ we denote by~$\t^i$ the $i$-th fold composition of~$\t$ with itself.
We also use~$\t^0$ to denote the identity on~$\Ord$.

\begin{proposition}
  \label{prop-tpm}
For every~$E$ in~$\Ord$ and every integer~$m \ge 1$, we have
\begin{equation}\label{tpm}
  T_{p^m}(E)
  =
  \sum_{i=0}^m (\t^*)^{m-i}([\t^{i}(E)]).
\end{equation}
\end{proposition}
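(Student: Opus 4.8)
The plan is to reduce the general identity~\eqref{tpm} to its case $m = 1$ and then bootstrap by induction on $m$, using the Hecke relation~\eqref{eq-Hecke-Tpr} together with two elementary identities on $\Div(\Ord)$: the equality $\t_* \circ \t^{*} = p \cdot \Id$, which is the relation $\t_*(\t^{*}(z)) = p[z]$ recorded after Theorem~\ref{teo-Deligne}, and $\t_*([F]) = [\t(F)]$, where $\t_*$ is the push-forward action of the map~$\t$ on divisors (Section~\ref{ss:divisors}). The case $m = 1$ is the key statement: for every $E \in \Ord$ one has $T_p(E) = [\t(E)] + \t^{*}(E)$, equivalently $T_p = \t_* + \t^{*}$ as operators on $\Div(\Ord)$.

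To prove this I would first take $E$ in the dense subset $\Ordun$, so that $E$ is defined over a complete discretely valued subfield of~$\C_p$ with residue field $\Fpalg$ and the canonical subgroup $H(E)$ and Theorem~\ref{teo-Deuring} are available. Among the $p + 1$ subgroups of order $p$ of $E$, the quotient by the canonical one is $\t(E)$ by~\eqref{defoff}, contributing the term $[\t(E)]$ to $T_p(E)$. For the remaining $p$ subgroups $C \neq H(E)$, reduction theory together with Theorem~\ref{teo-Deuring}$(i)$ shows that the dual of the isogeny $E \to E/C$ has kernel $H(E/C)$; hence $(E/C)/H(E/C) \cong E$, i.e.\ $\t(E/C) = E$, so each such $E/C$ lies in the fibre $\t^{-1}(E)$, and conversely every curve in that fibre arises this way. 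To get the multiplicities right --- that is, that this part of $T_p(E)$ equals $\t^{*}(E)$ \emph{as a divisor}, with the local degrees of~$\t$ as weights, in particular at curves with extra automorphisms --- I would carry out this bookkeeping on the ordinary locus of the modular curve $\Ell_0(p)$, which by Theorem~\ref{teo-Deuring} splits into a canonical piece $W^{\mathrm{can}}$ of pairs $(A, H(A))$ and an anticanonical piece $W^{\mathrm{ac}}$ of pairs $(A, C)$ with $C \neq H(A)$. On $W^{\mathrm{can}}$ the projection $\alpha_p$ is an isomorphism onto $\Ord$ and $\beta_p$ is identified with $\t$; on $W^{\mathrm{ac}}$, by Theorem~\ref{teo-Deuring}$(i)$, $\beta_p$ is an isomorphism onto $\Ord$ and $\alpha_p \circ (\beta_p|_{W^{\mathrm{ac}}})^{-1} = \t$. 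Splitting $T_p = (\alpha_p)_* \circ \beta_p^{*}$ along this decomposition then produces the term $\t^{*}(E)$ from $W^{\mathrm{can}}$ and the term $[\t(E)]$ from $W^{\mathrm{ac}}$. Finally I would extend the identity from $\Ordun$ to all of $\Ord$ by continuity: $T_p$ is continuous by Lemma~\ref{lemma-Hecke-continuity}, while $\t$ and its fibre divisor $\t^{*}$ vary continuously because $\t$ is rigid analytic of degree $p$ (Theorem~\ref{teo-Deligne}) --- the continuity of $\t^{*}$ being the ``roots of polynomials with continuously varying coefficients'' argument used in the proof of Lemma~\ref{lemma-Hecke-continuity} --- and $\Ordun$ is dense in $\Ord$.

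Granting the case $m = 1$, I would prove~\eqref{tpm} by induction, writing $\Sigma_m(E) \= \sum_{i = 0}^{m} (\t^{*})^{m - i}([\t^{i}(E)])$ for the right-hand side. The base cases are $m = 0$, where $\Sigma_0(E) = [E] = T_1(E)$, and $m = 1$, the identity above. For the inductive step, assume $\Sigma_{m - 1} = T_{p^{m - 1}}$ and $\Sigma_m = T_{p^m}$ with $m \geq 1$, and apply $T_p = \t_* + \t^{*}$ to $\Sigma_m(E)$. Since $\t^{*} \circ (\t^{*})^{m - i} = (\t^{*})^{m + 1 - i}$, one gets $\t^{*}(\Sigma_m(E)) = \Sigma_{m + 1}(E) - [\t^{m + 1}(E)]$; and since $\t_* \circ (\t^{*})^{m - i} = p \cdot (\t^{*})^{m - 1 - i}$ for $i < m$ while $\t_*([\t^m(E)]) = [\t^{m + 1}(E)]$, one gets $\t_*(\Sigma_m(E)) = [\t^{m + 1}(E)] + p \, \Sigma_{m - 1}(E)$. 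Adding these and using the inductive hypothesis, $T_p(T_{p^m}(E)) = \Sigma_{m + 1}(E) + p \, T_{p^{m - 1}}(E)$; comparing with the relation~\eqref{eq-Hecke-Tpr} for $T_{p^{m + 1}}$ (with $q = p$, $r = m+1$) yields $\Sigma_{m + 1}(E) = T_{p^{m + 1}}(E)$, closing the induction.

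The main obstacle is the case $m = 1$, and inside it the multiplicity bookkeeping: checking that the non-canonical part of $T_p(E)$ is exactly $\t^{*}(E)$ with the correct local degrees, and that everything remains valid over~$\C_p$ rather than over a discretely valued field. This is precisely where Theorem~\ref{teo-Deuring} (the identification of $\t$ with the Frobenius lift and the characterization of anticanonical isogenies), the decomposition of the ordinary locus of $\Ell_0(p)$, and Theorem~\ref{teo-Deligne} (rigid analyticity, used both to define $\t$ on all of $\Ord$ and to run the density argument) do the work; the subsequent induction is purely formal.
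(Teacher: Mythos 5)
Your proof is correct, and its skeleton is exactly the paper's: establish the case $m=1$ on a dense subset of~$\Ord$ using the canonical subgroup and Theorem~\ref{teo-Deuring}$(i)$, extend to all of~$\Ord$ by density together with the continuity of Hecke correspondences (Lemma~\ref{lemma-Hecke-continuity}) and the rigid analyticity of~$\t$, then induct on~$m$ via~\eqref{eq-Hecke-Tpr}. The one genuine divergence is how the multiplicities in~$\t^*(E)$ are matched with the $p$ non-canonical quotients~$E/C$. You propose to do this bookkeeping on the ordinary locus of~$\Ell_0(p)$, split into a canonical component (where $\alpha_p$ is an isomorphism and $\beta_p$ is identified with~$\t$) and an anticanonical one (where $\beta_p$ is an isomorphism); this works, but making those two components into rigid-analytic subspaces with the stated properties over~$\C_p$ is essentially a restatement of Theorem~\ref{teo-Deligne} and costs extra care. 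The paper sidesteps the issue by shrinking the dense set further: since $\t$ is rigid analytic of degree~$p$ on the affinoid~$\Ord$, it has only finitely many critical points, so the set~$\oD$ of~$E$ in~$\Ordun$ with $\# \supp(\t^{*}(E)) = p$ is still dense; on~$\oD$ every local degree equals~$1$, and the identity $\sum_{C \neq H(E)} [E/C] = \t^{*}(E)$ reduces to the bijection $C \mapsto E/C$ between non-canonical subgroups and the fibre $\t^{-1}(E)$, which Theorem~\ref{teo-Deuring}$(i)$ provides. Your formal induction via $T_p = \t_* + \t^{*}$ and $\t_* \circ \t^{*} = p \cdot \Id$ is a clean packaging of the paper's appeal to~\eqref{eq-Hecke-Tpr}, and the computation there is correct.
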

When $m=1$, the relation~\eqref{tpm} reads
\begin{equation}\label{tp1}
  T_p(E)
  =
  \t^*(E) + [\t(E)].
\end{equation}
See Theorem~\ref{t:canonical analyticity} in Appendix~\ref{s:canonical analyticity} for an extension.

\begin{proof}
The relation~\eqref{tpm} for~$m \ge 2$ follows from~\eqref{tp1} by induction using the recursive formula~\eqref{eq-Hecke-Tpr}.
To prove~\eqref{tp1}, first note that for~$E$ in~$\Ord$ satisfying~$\deg_{\t}(E) \ge 2$ we have~$\t'(E) = 0$.
Therefore there are at most a finite number of such~$E$ in the affinoid~$\Ord$, see for example~\cite[Proposition~3.3.6]{FrevdP04}.
It follows that for every~$E$ in~$\Ord$ outside a finite set of exceptions, we have~$\# \supp(\t^{*}(E)) = p$.
Thus, the set~$\oD$ of all those~$E$ in~$\Ordun$ with this property is dense in~$\Ord$. To prove~\eqref{tp1} for~$E$ in~$\oD$, use the definition of~$T_p(E)$ and~$\t(E)$, and Theorem~\ref{teo-Deuring}$(i)$, to obtain
  \begin{displaymath}
    T_p(E)
=
[\t(E)]+\sum_{\substack{C\leq E,\#C=p \\ C\neq H(E)}}[E/C]
  =
  [\t(E)] + \t^*(E).
\end{displaymath}
To prove~\eqref{tp1} for an arbitrary~$E$ in~$\Ord$, first note that by Lemma~\ref{lemma-Hecke-continuity} for every open and closed subset~$A$ of~$\Ord$ the function
\begin{displaymath}
  E
  \mapsto
  \bfone_A(T_n(E) - \t^{*}(E) - [\t(E)])
  =
  \deg((T_n(E) - \t^{*}(E) - [\t(E)])|_A)
\end{displaymath}
is continuous.
Since it is equal to~$0$ on the dense subset~$\oD$ of~$\Ord$, we conclude that it is constant equal to~$0$.
Since this holds for every open and closed subset~$A$ of~$\Ord$, this proves~\eqref{tp1} and completes the proof of the lemma.
\end{proof}

\subsection{\CM{} points as preperiodic points}
\label{ss:dynamics of t}

The purpose of this section is to prove the following result.
In the case where all the discriminants in the sequence~$(D_n)_{n = 1}^{\infty}$ are $p$-ordinary, Theorem~\ref{t:CM points} is a direct consequence of item~$(ii)$ of this result together with~\eqref{eq-Siegel-low-general} and Lemma~\ref{l:berkovich}.

Given a set~$X$ and a map~$T \colon X \to X$, a point~$x$ in~$X$ is \emph{periodic} if for some integer~$r \ge 1$ we have~$T^r(x) = x$.
Then the integer~$r$ is a \emph{period of~$x$} and the smallest such integer is the \emph{minimal period of~$x$}.
Moreover, a point~$y$ is \emph{preperiodic} if it is not periodic and if for some integer~$m \ge 1$ the point~$T^m(y)$ is periodic.
We call the least such integer~$m$ the \emph{preperiod of~$y$}.
\begin{theorem}
  \label{t:ordinary CM points}
  Let~$\zeta$ in~$\Fpalg$ be the $j$-invariant of an ordinary elliptic curve and denote by~$r$ the minimal period of~$\zeta$ under the Frobenius map $z\mapsto z^p$.
  Then there is a unique periodic point~$E_0$ of~$\t$ in~$\bfD(\zeta)$.
  The minimal period of~$E_0$ is~$r$.
  Moreover, $E_0$ is a \CM{} point and, if we denote by~$D_0$ the discriminant of the endomorphism ring of~$E_0$, then the conductor of~$D_0$ is not divisible by~$p$ and the following properties hold.
\begin{enumerate}
\item[$(i)$]
  Given a discriminant~$D$, the set~$\supp(\Lambda_D|_{\bfD(\zeta)})$ is nonempty if and only if for some integer~$m \ge 0$ we have~$D = D_0 p^{2m}$.
  Moreover,
  \begin{displaymath}
    \supp(\Lambda_{D_0}|_{\bfD(\zeta)}) = \{ E_0 \}
  \end{displaymath}
  and for each integer~$m \ge 1$ the set~$\supp(\Lambda_{D_0p^{2m}}|_{\bfD(\zeta)})$ is equal to the set of all the preperiodic points of~$\t$ on~$\bfD(\zeta)$ of preperiod~$m$, and is contained in~$\t^{-m}(\t^m(E_0))$.
  In particular, \CM{} points in~$\Ord$ correspond precisely to the periodic and preperiodic points of~$\t$ on~$\Ord$.
\item[$(ii)$]
  For every disc~$\bfB$ of radius strictly less than~$1$ contained in~$\bfD(\zeta)$ there is a constant~$C > 0$ such that for every discriminant~$D < 0$, we have
  $$ \deg(\Lambda_D|_{\bfB})
  \le
  C.$$
\end{enumerate}
\end{theorem}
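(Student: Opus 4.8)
\emph{Plan.} I would prove the assertions in the order stated: first the statements about the periodic point~$E_0$, then that~$E_0$ is \CM{} with conductor prime to~$p$, then $(i)$, then $(ii)$.

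\emph{The periodic point.} First I would record that, by Theorem~\ref{teo-Deuring}$(i)$, $\t$ reduces to the Frobenius $z\mapsto z^p$ on~$\Fpalg$, so it carries each ordinary residue disc $\bfD(\zeta)$ into $\bfD(\zeta^p)$. Next, in the expansion of Theorem~\ref{teo-Deligne} the poles $\beta_{\ss}$ reduce to supersingular $j$-invariants, so $|z-\beta_{\ss}|_p=1$ on~$\Ord$, while $\ord_p(A_n^{(\ss)})\ge\delta_{\ss}'\tfrac{1+np}{p+1}\ge1$ and $pk(z)$ has coefficients in~$p\Z$; hence $\t(z)\equiv z^p\pmod p$ on $\bfD(\zeta)$, and inductively $\t^r(z)\equiv z^{p^r}\pmod p$. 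Taking $\hat\zeta\in\cO_p$ to be the Teichm\"uller lift of~$\zeta$ (so $\hat\zeta^{p^r}=\hat\zeta$) and $u=z-\hat\zeta$, the restriction of $\t^r$ to $\bfD(\zeta)$ becomes $G(u)=u^{p^r}+p\Psi(u)$ with $\Psi$ convergent over~$\cO_p$; then $G(u)=u$ is equivalent to $F(u):=u-u^{p^r}-p\Psi(u)=0$ with $F'\equiv1\pmod p$ a unit and $|F(0)|_p\le p^{-1}$, and Hensel's lemma produces a unique root~$u_0$, i.e.\ a unique fixed point~$E_0$ of $\t^r$ in $\bfD(\zeta)$. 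Since $\t^s(\bfD(\zeta))\subseteq\bfD(\zeta^{p^s})$, any periodic point of~$\t$ in $\bfD(\zeta)$ has period a multiple of~$r$ and is fixed by a power of~$\t^r$, which by the same Hensel argument has~$E_0$ as its only fixed point; thus~$E_0$ is the unique periodic point of~$\t$ in $\bfD(\zeta)$ and its minimal period is~$r$.

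\emph{$E_0$ is \CM{} with conductor prime to~$p$.} Let~$e_0$ be the elliptic curve over~$\Fpalg$ with $j(e_0)=\zeta$ and $e_0^{\uparrow}\in\Ell(\Cpunalg)$ its canonical lift (Theorem~\ref{teo-Deuring}$(ii)$). By Theorem~\ref{teo-Deuring}$(iii)$ one has $\t(e^{\uparrow})=(e^{(p)})^{\uparrow}$ for every ordinary~$e$, so iterating $r$ times and using $\zeta^{p^r}=\zeta$ gives $\t^r(e_0^{\uparrow})=e_0^{\uparrow}$, whence $E_0=e_0^{\uparrow}$ by the uniqueness above. As~$e_0$ is ordinary its endomorphism ring is strictly larger than~$\Z$ (it contains the $\F_{p^r}$-Frobenius, which is not rational), hence an order in an imaginary quadratic field; let $D_0$ be its discriminant. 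Then $\End(E_0)\cong\End(e_0)$ has discriminant~$D_0$, so~$E_0$ is \CM, and Proposition~\ref{redendring} applied to~$E_0$, together with $\End(E_0)\cong\End(e_0)$, forces the $p$-adic valuation of the conductor of~$D_0$ to vanish.

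\emph{Proof of~$(i)$.} Put $E_{0,i}:=\t^i(E_0)$, the unique periodic point of~$\t$ in $\bfD(\zeta^{p^i})$. I would prove the dictionary: a point $E\in\bfD(\zeta)$ is \CM{} of discriminant $D_0p^{2m}$ if and only if it is preperiodic under~$\t$ of preperiod~$m$, with $m=0$ read as $E=E_0$. Since the reduction $\tE$ of any \CM{} point $E\in\bfD(\zeta)$ is isomorphic to~$e_0$, we have $\End(\tE)\cong\End(e_0)$, and Proposition~\ref{redendring} forces the discriminant of~$E$ to be $D_0p^{2m}$ for some $m\ge0$; when $m=0$ the reduction map $\End(E)\to\End(\tE)$ is an isomorphism, so $E=E_0$ by Theorem~\ref{teo-Deuring}$(ii)$. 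The key input for $m\ge1$ is the standard description of the canonical subgroup of an ordinary curve as the unique ``ascending'' $p$-isogeny (compatibly with the dual statement in Theorem~\ref{teo-Deuring}$(i)$): passing from~$E$ to $\t(E)=E/H(E)$ never raises the $p$-adic valuation of the conductor of the endomorphism ring, and lowers it by exactly one when it is positive. Iterating this, if~$E$ has discriminant $D_0p^{2m}$ then $\t^m(E)$ has conductor prime to~$p$ and reduces into $\bfD(\zeta^{p^m})$, so it equals $E_{0,m}=\t^m(E_0)$ by Theorem~\ref{teo-Deuring}$(ii)$; hence~$E$ is preperiodic of preperiod exactly~$m$ (fewer iterates cannot clear the $p$-part of the conductor) and lies in $\t^{-m}(\t^m(E_0))$. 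Conversely, a preperiod-$m$ point maps after~$m$ steps to $E_{0,m}$, hence is $p^m$-isogenous to the \CM{} point $E_{0,m}$ and is itself \CM, and the forward implication with minimality of the preperiod pins its discriminant to $D_0p^{2m}$. Finally, a degree computation from the Zhang-type relation~\eqref{e:pm Zhang} together with Proposition~\ref{prop-tpm} shows $\deg(\Lambda_{D_0p^{2m}}|_{\bfD(\zeta)})>0$, so preperiod-$m$ points exist; combined with the forward implication this yields ``$\supp(\Lambda_D|_{\bfD(\zeta)})\ne\emptyset\iff D=D_0p^{2m}$ for some $m\ge0$'', the equality $\supp(\Lambda_{D_0}|_{\bfD(\zeta)})=\{E_0\}$, and the identification of $\supp(\Lambda_{D_0p^{2m}}|_{\bfD(\zeta)})$ with the set of preperiod-$m$ points of~$\t$.

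\emph{Proof of~$(ii)$, and the main obstacle.} By~$(i)$, $\deg(\Lambda_{D_0p^{2m}}|_{\bfB})$ counts (with multiplicity) the preperiod-$m$ points of~$\t$ lying in~$\bfB$, so it suffices to find~$M$ with no preperiod-$m$ point in~$\bfB$ for $m>M$, whence $\deg(\Lambda_D|_{\bfB})\le\sum_{m=0}^{M}h(D_0p^{2m})<\infty$. This is the dynamical content of Lemma~\ref{periodicpoint}, and I expect it to be the hardest part. The plan is to center the coordinate of the first paragraph at $u_0=E_0$: writing $w=u-u_0$ one finds $G(u_0+w)-u_0=w\bigl(w^{p^r-1}+p\eta(w)\bigr)$ with $\eta$ integral and $\eta(0)=(\t^r)'(E_0)/p$, where $(\t^r)'(E_0)\ne0$ since the canonical branch is \'etale at the ordinary \CM{} point~$E_0$. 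A Newton-polygon computation then shows that the points of $G^{-1}(u_0)\setminus\{u_0\}$ lie at a definite $p$-adic distance $<1$ from~$u_0$, and inductively that the points of $G^{-n}(u_0)\setminus G^{-(n-1)}(u_0)$ lie at distances from~$u_0$ that increase to~$1$ as $n\to\infty$; since a preperiod-$m$ point of~$\t$ belongs to $G^{-n}(u_0)\setminus G^{-(n-1)}(u_0)$ with $n=\lceil m/r\rceil$, these points escape any fixed disc of radius $<1$ once~$m$ is large. The delicate points here are that the reduction of~$\t$ is purely inseparable---so~$\t$ genuinely has critical points inside $\bfD(\zeta)$ and generic fibres may cluster arbitrarily closely, forcing one to work along the backward orbit of the periodic cycle where \'etaleness is available---and the quantitative bookkeeping converting the Newton-polygon slopes into the uniform escape statement.
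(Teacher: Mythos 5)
Your architecture matches the paper's (identify the periodic point with the Deuring lift, translate \CM{} points of discriminant~$D_0p^{2m}$ into preperiod-$m$ points, then use the local dynamics of~$\t$ along the cycle to get the uniform bound in~$(ii)$), and several of your substitutions are fine: the Hensel/contraction argument for existence and uniqueness of~$E_0$ is essentially the paper's strict-contraction estimate~\eqref{e:7}, and your backward-orbit Newton-polygon escape argument for~$(ii)$ is a workable dual of the paper's forward-orbit argument (contraction toward the cycle plus injectivity of~$\t$ on a small disc around it, Lemma~\ref{periodicpoint}$(ii)$--$(iii)$). The genuine gap is the single line ``$(\t^r)'(E_0)\ne0$ since the canonical branch is \'etale at the ordinary \CM{} point~$E_0$.'' This is not a soft fact: the reduction of~$\t$ is the Frobenius, which is purely inseparable, so from the expansion of Theorem~\ref{teo-Deligne} one only gets $\ord_p(\t'(z))\ge1$ everywhere on~$\Ord$, and nothing prevents $\t'$ from vanishing at a given point --- as you yourself note, $\t$ does have critical points in~$\bfD(\zeta)$. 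Establishing $\t'(z_i)\ne0$ along the cycle is exactly the content of Lemma~\ref{periodicpoint}$(i)$, and the paper's proof is a genuine isogeny-theoretic argument: a ramification point would produce two distinct order-$p$ subgroups $C\ne C'$ of~$E^p$ with $E^p/C=E^p/C'=E$, whose dual isogenies both have kernel~$H(E)$ and hence differ by an automorphism $\sigma\ne\pm1$; this forces $j\in\{0,1728\}$ and $r=1$, and the contradiction is then extracted via the Deuring isomorphism $\End(E)\simeq\End(\ss)$. Without this input both your Newton polygon in~$(ii)$ and the expansion of the \'etale branch collapse, so as written the hardest step of the theorem is assumed rather than proved.

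A secondary, less serious issue is in~$(i)$: you invoke as ``standard'' that quotienting by the canonical subgroup lowers the $p$-adic valuation of the conductor by exactly one when it is positive (the ascending edge of the volcano). This is true, but it does not follow from Theorem~\ref{teo-Deuring}$(i)$ as stated (a $p$-isogeny a priori only changes the conductor exponent by $0$ or $\pm1$), and the usual proofs go through Serre--Tate coordinates, which the paper deliberately avoids. The paper instead derives the whole dictionary from Zhang's formula: \eqref{e:p Zhang} and Proposition~\ref{prop-tpm} give $\t_*(\Lambda_{D_0})=\Lambda_{D_0}$, and then \eqref{e:general Zhang formula} with $R_d(p^k)=k+1$ yields $\supp((\t^*)^m(\Lambda_{D_0}))=\bigcup_{k=0}^m\supp(\Lambda_{D_0p^{2k}})$, from which the preperiod-$m$ identification and nonemptiness both fall out. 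Since you already invoke \eqref{e:pm Zhang} for nonemptiness, you could run the paper's route wholesale and remove the unproved volcano input; but the \'etaleness gap in the first paragraph of this review must be filled by a real argument.
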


\begin{remark}
  \label{r:volcano}
  The natural directed graph associated to the dynamics of~$\t$ on the set of ordinary \CM{} points is a ``$(p + 1)$-volcano'' in the sense of~\cite[Section~2.1]{GorKas1711}.
  This follows from Theorem~\ref{t:ordinary CM points}$(i)$ and the fact that~$\t$ is of degree~$p$ on~$\Ord$ by Theorem~\ref{teo-Deligne}.
  Note in particular that the ``rim'' is the directed subgraph associated to the dynamics of~$\t$ on the set of its periodic points in~$\Ord$.
  Moreover, on the set of preperiodic points of~$\t$ in~$\Ord$, the preperiod corresponds to the function~``$b$'' of~\cite{GorKas1711}.
\end{remark}

To prove Theorem~\ref{t:ordinary CM points}, we describe the dynamics of~$\t$ on~$\Ord$ in Lemma~\ref{periodicpoint} below.
This description is mostly based on the fact that
\begin{equation}
  \label{eq-cong-t}
\t(z)\equiv z^p \mod p \cO_p,
\end{equation}
see Theorem~\ref{teo-Deligne}.
We deduce from general considerations that each residue disc~$\bfD \subseteq \Ord$ contains a unique periodic point~$z_0$ of~$\t$, that this point satisfies~$|\t'(z_0)| < 1$, and that every point in~$\bfD$ is asymptotic to~$z_0$.\footnote{This is somewhat similar to the case of a rational map having good reduction equal to the Frobenius map, see for example~\cite[Sections~3.1 and~4.5]{Riv03c}.}
The fact that no periodic point of~$\t$ in~$\Ord$ is a ramification point is used in a crucial way in the proof of the estimate~\eqref{disttpm} of Proposition~\ref{distord} in Section~\ref{ss:ordinary orbits}.

\begin{lemma}[Dynamics of~$\t$ on~$\Ord$]
  \label{periodicpoint}
  Let~$\ss$ be an ordinary elliptic curve defined over~$\Fpalg$ and let~$r \ge 1$ be the minimal period of~$j(\ss)$ under the Frobenius map.
  Then, $\ss^{\uparrow}$ is the unique elliptic curve in~$\bfD(j(\ss))$ that is periodic for~$\t$.
  The minimal period of~$\ss^{\uparrow}$ for~$\t$ is~$r$ and~$\ss^{\uparrow}$ is also characterized as the unique elliptic curve in~$\bfD(j(\ss)) \cap \Cpunalg$ whose endomorphism ring is an order in an quadratic imaginary extension of~$\Q$ of conductor not divisible by~$p$.
Moreover, if for every integer~$i \ge 0$ we put~$z_i \= \t^i(\ss^{\uparrow})$, then the following properties hold.
\begin{enumerate}
\item[$(i)$]
  For each integer~$i \ge 0$ we have~$0 < |\t'(z_i)|_p < 1$.
\item[$(ii)$]
  There is~$\rho$ in~$(0, 1)$ such that for every integer~$i \ge 0$ and all~$z$ and~$z'$ in~$\bfD(z_i, \rho)$, we have
  \begin{displaymath}
    \deg_{\t}(z) = 1
    \text{ and }
    |\t(z) - \t(z')|_p
    =
    |\t'(z_i)|_p \cdot |z - z'|_p.
  \end{displaymath}
  In particular, $\t$ is injective on~$\bfD(z_i, \rho)$.
\item[$(iii)$]
  For every $c \in (0,1)$ there exists~$\kappa_c$ in~$(0, 1)$ such that for every integer~$i \ge 0$, every~$z$ in~$\bfD(z_i, 1)$ satisfying $|z - z_i|_p\leq c$ and every integer~$m \ge 1$, we have
  \begin{displaymath}
|\t^m(z) - z_{i + m}|_p\leq \kappa_c^m |z - z_i|_p.
\end{displaymath}
\item[$(iv)$]
  For all~$i \ge 0$ and~$z$ in~$\bfD(z_i, 1)$, the sequence
  $$ (|\t^m(z) - z_{i + m})|_p)_{m = 0}^{\infty}$$
  is nonincreasing and converges to~$0$.
\end{enumerate}
\end{lemma}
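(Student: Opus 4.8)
The proof splits into a ``soft'' part, based on the congruence~\eqref{eq-cong-t} from Theorem~\ref{teo-Deligne}, which yields everything except the non-vanishing of~$\t'$ at the periodic points, and a ``CM-theoretic'' part, based on the formula $T_p = \t^* + [\t(\cdot)]$ from Proposition~\ref{prop-tpm}, which supplies that non-vanishing. First I would record the structural facts. Put $\zeta \= j(\ss)$ and, for $k \ge 0$, $\ss_k \= \ss^{(p^k)}$, so $j(\ss_k) = \zeta^{p^k}$. By Theorem~\ref{teo-Deuring}$(iii)$ one has $\t(\ss_k^{\uparrow}) = \ss_k^{\uparrow}/H(\ss_k^{\uparrow}) = \ss_{k+1}^{\uparrow}$; since $\zeta$ has minimal period~$r$ under $z \mapsto z^p$ we get $\ss_r \cong \ss$, hence $z_k = \t^k(\ss^{\uparrow}) = \ss_k^{\uparrow}$ and $\ss^{\uparrow}$ is periodic for~$\t$ with minimal period exactly~$r$ (if $\t^{r'}(\ss^{\uparrow}) = \ss^{\uparrow}$ with $r' < r$ then $\ss_{r'}^{\uparrow} = \ss^{\uparrow}$, and reducing gives $\zeta^{p^{r'}} = \zeta$, contradicting minimality). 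If moreover $E \in \bfD(\zeta) \cap \Cpunalg$ has $\End(E) \simeq \cO_{d,f'}$ with $p \nmid f'$, then by Proposition~\ref{redendring} its reduction, isomorphic to~$\ss$, has endomorphism ring $\cO_{d,f'}$; setting $\cO_{d,f} \= \End(\ss)$ this forces $f' = f$, and then $E = \ss^{\uparrow}$ by the uniqueness in Theorem~\ref{teo-Deuring}$(ii)$, which is the stated characterization. Finally, $p$ splits in $K \= \Q(\sqrt{d})$ (ordinary reduction), all the~$\ss_k$ have endomorphism ring isomorphic to the fixed order $\cO_{d,f}$ by Deuring, and writing $p\cO_K = \mathfrak p \overline{\mathfrak p}$ with $\mathfrak p \ne \overline{\mathfrak p}$ one may normalize~$\mathfrak p$ so that $H(\ss_k^{\uparrow}) = \ss_k^{\uparrow}[\mathfrak p]$ for all~$k$ (the canonical isogeny reduces to Frobenius by Theorem~\ref{teo-Deuring}$(i)$, which is multiplication by~$\mathfrak p$ in the sense of complex multiplication); thus $\ss_{k+1}^{\uparrow} \cong \ss_k^{\uparrow}/\ss_k^{\uparrow}[\mathfrak p]$ as $\cO_{d,f}$-curves.

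The soft part rests on the estimate that for every residue disc $\bfD(\eta) \subseteq \Ord$ and all $z,z'$ in $\bfD(\eta)$,
\[
  |\t(z) - \t(z')|_p \le |z-z'|_p \cdot \max\{p^{-1},|z-z'|_p\}.
\]
Indeed~$\t$ maps $\bfD(\eta)$ into $\bfD(\eta^p) \subseteq \cO_p$, so on $\bfD(\eta)$ it is given by a power series with coefficients in~$\cO_p$, hence Lipschitz with constant~$1$; writing $\t(z) = z^p + p\,u(z)$ with~$u$ such a power series (possible by~\eqref{eq-cong-t}) and expanding $z^p - (z')^p$ binomially around~$z'$, the divisibility of $\binom{p}{k}$ by~$p$ for $1 \le k \le p-1$ gives the sharper bound above. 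Iterating it along the forward orbit — each step is strictly contracting by a factor $\le \max\{p^{-1},|z-z'|_p\} < 1$ since $|z-z'|_p < 1$ — yields at once: uniqueness of the periodic point of~$\t$ in $\bfD(\zeta)$ (any such point is fixed by~$\t^r$ and, unless it equals~$\ss^{\uparrow}$, satisfies $|\t^r(z) - \t^r(\ss^{\uparrow})|_p < |z - \ss^{\uparrow}|_p$, a contradiction); item~$(iv)$; item~$(iii)$ with $\kappa_c \= \max\{p^{-1},c\}$ (one checks the condition $|\t^m(z)-z_{i+m}|_p \le c$ persists under iteration); and the bound $|\t'(z_i)|_p \le p^{-1}$ in item~$(i)$.

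The hard part, which I expect to be the main obstacle, is the non-vanishing $\t'(z_i) \ne 0$, that is, $\deg_{\t}(z_i)=1$. The plan is to evaluate $\deg_{\t}(z_i)$ as the multiplicity of~$[z_i]$ in the divisor $\t^*(z_{i+1})$, which by~\eqref{tp1} equals $T_p(z_{i+1}) - [z_{i+2}]$. The multiplicity of~$[z_i]$ in $T_p(z_{i+1}) = \sum_{C \le z_{i+1} \text{ of order } p}[z_{i+1}/C]$ is the number of distinct kernels of degree-$p$ isogenies $z_{i+1} \to z_i$. Since~$z_i$ and~$z_{i+1}$ both have endomorphism ring~$\cO_{d,f}$, every isogeny between them is $\cO_{d,f}$-linear, so $\Hom(z_{i+1},z_i)$ is a rank-one projective $\cO_{d,f}$-module; using $z_{i+1} = z_i/z_i[\mathfrak p]$ it is isomorphic to the ideal~$\mathfrak p$, and under this identification~$x$ has degree $N(x)/p$. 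Hence the degree-$p$ isogenies correspond to $x \in \mathfrak p$ with $N(x) = p^2$, i.e.\ to~$x$ with $(x) = (p)$ or $(x) = \mathfrak p^2$: those with $(x)=(p)$ all share the kernel of the isogeny dual to the canonical one $z_i \to z_{i+1}$, which is different from $H(z_{i+1})$ by Theorem~\ref{teo-Deuring}$(i)$; those with $(x) = \mathfrak p^2$ occur precisely when $\mathfrak p^2$ is principal, and then all have kernel $z_{i+1}[\mathfrak p] = H(z_{i+1})$. Thus~$[z_i]$ occurs in $T_p(z_{i+1})$ with multiplicity~$1$, plus one more exactly when $\mathfrak p^2$ is principal. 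On the other hand $z_{i+2} = z_{i+1}/H(z_{i+1}) \cong z_i/z_i[\mathfrak p^2]$, which is isomorphic to~$z_i$ exactly when $\mathfrak p^2$ is principal, so~$[z_i]$ occurs in $[z_{i+2}]$ with multiplicity~$1$ in that case and~$0$ otherwise. Subtracting, $[z_i]$ occurs in $\t^*(z_{i+1})$ with multiplicity~$1$ in all cases; the delicate point is precisely that the ``extra'' degree-$p$ isogeny, when present, has kernel exactly the canonical subgroup, so that it cancels against~$[z_{i+2}]$.

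Finally, item~$(ii)$ follows from $\t'(z_i) \ne 0$ by the standard local behaviour of $p$-adic analytic maps: writing $\t(z) - z_{i+1} = \t'(z_i)(z-z_i) + \sum_{k\ge 2} a_k(z-z_i)^k$ and using the displayed estimate to get $|a_k|_p \le 1$, one sees that on $\bfD(z_i,\rho)$ with $\rho < |\t'(z_i)|_p$ the linear term strictly dominates all others, whence $\deg_{\t}(z) = 1$ there and $|\t(z)-\t(z')|_p = |\t'(z_i)|_p\,|z-z'|_p$ for $z,z' \in \bfD(z_i,\rho)$. Since the periodic orbit $\{z_0,\dots,z_{r-1}\}$ is finite and each $\t'(z_i) \ne 0$, one may take $\rho \= \min_{0 \le i < r}|\t'(z_i)|_p \in (0,1)$ uniformly in~$i$, which gives item~$(ii)$ and completes the proof.
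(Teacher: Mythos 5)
Your proof is correct, and the ``soft'' parts (the structural identification $z_k = (\ss^{(p^k)})^{\uparrow}$ via Theorem~\ref{teo-Deuring}$(iii)$, the characterization via Proposition~\ref{redendring}, uniqueness of the periodic point, the bound $|\t'(z_i)|_p \le p^{-1}$, and items $(ii)$--$(iv)$ from the contraction estimate coming from $\t(z) \equiv z^p \bmod p\cO_p$) follow essentially the same route as the paper, which works with the explicit expansion $\t(z_i + w) - z_{i+1} = \sum_n B_n^{(i)} w^n$ from Theorem~\ref{teo-Deligne} rather than your binomial estimate --- the two are interchangeable. Where you genuinely diverge is the key non-vanishing $\t'(z_i) \neq 0$. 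The paper argues by contradiction: if $z_i$ were a ramification point, Proposition~\ref{prop-tpm} would produce two distinct non-canonical subgroups $C \neq C'$ of $z_{i+1}$ with the same quotient $z_i$; comparing the dual isogenies (both with kernel $H(z_i)$) forces a nontrivial automorphism $\sigma \neq \pm 1$, hence $j \in \{0,1728\}$ and $r = 1$, and then reducing modulo $p$ and invoking the isomorphism $\End(E) \simeq \End(\ss)$ of Theorem~\ref{teo-Deuring}$(ii)$ yields $C = C'$, a contradiction. You instead compute the multiplicity of $[z_i]$ in $T_p(z_{i+1})$ directly, identifying $\Hom(z_{i+1}, z_i)$ with the invertible ideal $\mathfrak{p}$ of $\cO_{d,f}$ (exactly as the paper itself does in Lemma~\ref{lemma-CM-vp}) and observing that the only ideals of norm $p^2$ contained in $\mathfrak{p}$ are $(p)$ and $\mathfrak{p}^2$, so the ``extra'' isogeny exists precisely when $\mathfrak{p}^2$ is principal, in which case its kernel is the canonical subgroup and it cancels against $[z_{i+2}]$. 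Both arguments rest on the same formula $T_p = \t^* + [\t(\cdot)]$; yours buys a transparent, case-free count that makes the volcano structure of Remark~\ref{r:volcano} explicit and never needs the exceptional $j$-invariants, while the paper's avoids invoking the module structure of $\Hom$ over the order and stays closer to the automorphism-theoretic toolkit it has already set up. One small point of care in your version: the normalization $H(z_k) = z_k[\mathfrak{p}]$ for a single prime $\mathfrak{p}$ uniformly in $k$ requires the identifications $\End(z_k) \simeq \cO_{d,f}$ to be chosen compatibly along the orbit (normalized on tangent spaces), which is standard but worth a sentence.
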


\begin{proof}
  We start proving~$(i)$.
  Suppose by contradiction that~$z_i$ is a ramification point of~$\t$.
  Without loss of generality, assume that~$i = 0$ and put~$E \= \ss^{\uparrow}$ and~$E^p \= (\ss^{(p)})^{\uparrow}$.
By Proposition~\ref{prop-tpm} with $m=1$ there are distinct subgroups~$C$ and~$C'$ of~$E^p$ of order~$p$ such that
\begin{displaymath}
  E^p/C = E^p/C' = E,
  C \neq H(E^p)
  \text{ and }
  C' \neq H(E^p).
\end{displaymath}
Let~$\psi$ (resp.~$\psi'$) be an isogeny~$E^p \to E$ with kernel~$C$ (resp.~$C'$) and denote by~$\hpsi$ (resp.~$\hpsi'$) its dual isogeny.
Then the kernel of~$\hpsi$ and of~$\hpsi'$ are both equal to~$H(E)$.
It follows that there is~$\sigma$ in~$\Aut(E^p)$ such that~$\sigma \circ \hpsi = \hpsi'$, see, \emph{e.g.}, \cite[Chapter~III, Corollary~4.11]{Sil09}.
Since $\sigma \neq \pm 1$, we have $j(E^p)\in \{0,1728\}$ and therefore $r=1$, $\t(z_0)=z_0$ and $E^p= E$.
In particular, $C$ and~$C'$ are subgroups of~$E$ and~$\psi,\psi' \in \End(E)$.
The kernel of each of the reduced isogenies~$\tpsi$ and~$\tpsi'$ is equal to~$\ss[p](\Fpalg)$, so there is~$\talpha$ in~$\Aut(\ss)$ such that~$\talpha \circ \tpsi = \tpsi'$.
Since the reduction map $\End(E)\to \End(e)$ is an isomorphism by Theorem~\ref{teo-Deuring}$(ii)$, we can find an automorphism $\alpha\in \Aut(E)$ satisfying $\alpha\circ \psi = \psi'$.
This implies that the kernel~$C$ of~$\psi$ is equal to the kernel~$C'$ of~$\psi'$, and we obtain a contradiction.
This completes the proof that~$z_i$ is not a ramification point of~$\t$ and therefore that~$\t'(z_i) \neq 0$.

To prove that~$|\t'(z_i)| < 1$ note that by Theorem~\ref{teo-Deligne}, we can write
  \begin{equation}
    \label{e:6}
    \t(w + z_i) - z_{i + 1}
    =
    \t(w + z_i) - \t(z_i)
  =
  \sum_{n = 1}^{\infty} B_n^{(i)} w^n,    
\end{equation}
where the coefficients~$B_n^{(i)}$ belong to~$\cO_p$ and satisfy~$|B_n^{(i)}|_p \le \frac{1}{p}$ for~$n \neq p$.
Since~$\t'(z_i) = B_1^{(i)}$, this completes the proof of~$(i)$.

To prove the assertions at the beginning of the lemma, for each integer~$i \ge 0$ denote by~$\ss^{(p^i)}$ the image of~$\ss$ by the $i$-th iterate of the Frobenius morphism.
Then by Theorem~\ref{teo-Deuring}$(iii)$ we have
  \begin{displaymath}
    z_i
    =
    \t^i(\ss^{\uparrow})
    =
    (\ss^{(p^i)})^{\uparrow} \in \pi^{-1}(j(\ss)^{p^i}).
  \end{displaymath}
  It follows that~$z_0$ is periodic of minimal period~$r$ for~$\t$.
  To prove uniqueness, note that by~\eqref{e:6} for every integer~$i \ge 0$ and distinct~$z$ and~$z'$ in~$D(z_i, 1)$ we have
  \begin{equation}
    \label{e:7}
    |\t(z) - \t(z')|_p
    <
    |z - z'|_p.
  \end{equation}
  Thus, there can be at most one periodic point of~$\t$ in~$\bfD(z_0, 1)$.
  Finally, combining Theorem~\ref{teo-Deuring}$(ii)$ and Proposition~\ref{redendring} we obtain that~$e^{\uparrow}$ is the unique elliptic curve reducing to~$e$ and whose endomorphism ring is an order of conductor not divisible by~$p$.
  This completes the proof of the assertions at the beginning of the proposition, so it only remains to prove~$(ii)$, $(iii)$ and~$(iv)$.

  To prove~$(ii)$, let~$\rho$ in~$(0, 1)$ be sufficiently small so that for every~$i$ in~$\{0, \ldots, p - 1 \}$, we have
  \begin{displaymath}
    \max \{ |B_n^{(i)}|_p \rho^{n - 1} : n \ge 2 \}
    \le
    |B_1^{(i)}|_p.
  \end{displaymath}
  Then by the ultrametric inequality for every integer~$i \ge 0$ and~$z \in \bfD(z_i, \rho)$ we have~$|\t'(z)|_p = |B_1^{(i)}|_p$, which is different from~$0$ by~$(i)$.
  In particular, $\deg_{\t}(z_i) = 1$.
  Moreover, for~$z'$ in~$\bfD(z_i, \rho)$ we have by the ultrametric inequality
  \begin{displaymath}
    |\t(z) - \t(z')|_p = |B_1^{(i)}|_p |z - z'|_p.
  \end{displaymath}
  This completes the proof of~$(ii)$.

  Item~$(iii)$ is a direct consequence of~\eqref{e:6} with
\begin{displaymath}
  \kappa_c
  \=
  \max \{ |B_n^{(i)}| c^{n - 1} : n \ge 1, i \in \{0, \ldots, p - 1 \} \},
\end{displaymath}
noting that for every integer~$n \ge 1$ and all integers~$i, i' \ge 0$ such that~$i - i'$ is divisible by~$p$, we have~$B_n^{(i')} = B_n^{(i)}$.

To prove item~$(iv)$, note that the fact that the sequence is nonincreasing follows from~\eqref{e:7} and the fact that it converges to~$0$ form~$(iii)$ with~$c = |z - z_i|$.
This completes the proof the lemma.
\end{proof}

\begin{proof}[Proof of Theorem~\ref{t:ordinary CM points}]
  The first assertions are given by Lemma~\ref{periodicpoint}.

  To prove~$(i)$, note that Proposition~\ref{redendring} implies that if a discriminant~$D < 0$ is such that~$\supp(\Lambda_D|_{\bfD(\zeta)})$ is nonempty, then there is an integer~$m \ge 0$ such that~$D = D_0 p^{2m}$.
  On the other hand, Lemma~\ref{periodicpoint} implies~$\supp(\Lambda_{D_0}|_{\bfD(\zeta)}) = \{ E_0 \}$.
  Fix an integer~$m \ge 1$ and note that by Lemma~\ref{periodicpoint} for every integer~$j \ge 1$ the point~$E_j \= \t^j(E_0)$ is the unique periodic point of~$\t$ in~$\bfD(\zeta^{p^j})$.
  So, if~$E$ is a preperiodic point of~$\t$ in~$\bfD(\zeta)$ of preperiod~$m$, then~$\t^m(E) = E_m$.
  This implies that the set of all preperiodic points of~$\t$ in~$\bfD(\zeta)$ is contained in~$\t^{-m}(E_m)$ and is equal to
  \begin{displaymath}
    \t^{-m}(E_m) \setminus \t^{-(m - 1)}(E_{m - 1})
    =
    \t^{- (m - 1)}(\t^{-1}(E_m) \setminus \{ E_{m - 1} \} ).
  \end{displaymath}
  Since the degree of~$\t$ is~$p$ and by Lemma~\ref{periodicpoint}$(i)$ we have~$\t'(E_{m - 1}) \neq 0$, the set~$\t^{-1}(E_m) \setminus \{ E_{m - 1} \}$ is nonempty and equal to~$\supp(\t^*([E_m]) - [E_{m - 1}])$.
  We thus conclude that the set of preperiodic points of~$\t$ in~$\bfD(\zeta)$ of preperiod~$m$ is equal to $\t^{- (m - 1)}(\supp(\t^*([E_m]) - [E_{m - 1}]))$ and it is nonempty.
  Thus, to complete the proof of~$(i)$ it is sufficient to show that the set of preperiodic points of~$\t$ in~$\bfD(\zeta)$ of preperiod~$m$ is equal to~$\supp(\Lambda_{D_0 p^{2m}}|_{\bfD(\zeta)})$.
  Note that by~\eqref{e:p Zhang} and Proposition~\ref{prop-tpm} we have
  $$\supp(\t_*(\Lambda_{D_0}))
  \subseteq
  \supp(T_p(\Lambda_{D_0}))
  =
  \supp(\Lambda_{D_0})\cup \supp(\Lambda_{D_0p^2}).$$
  By Lemma~\ref{periodicpoint} the set $\supp(\Lambda_{D_0})$, hence $\supp(\t_*(\Lambda_{D_0}))$, is formed by periodic points of~$\t$ while points in $\supp(\Lambda_{D_0p^2})$ are not periodic.
  This implies
     \begin{equation}
    \label{e:tfixesLambda}
    \t_*(\Lambda_{D_0})
    =
    \Lambda_{D_0}.
  \end{equation}
  Let~$d$ and~$f_0$ be the fundamental discriminant and conductor of $D_0$, respectively.
  Since~$p$ splits in $\Q(\sqrt{d})$ we deduce that for every integer~$k\geq 0$ we have~$R_d(p^k) = k + 1$.
  By~\eqref{e:general Zhang formula}, Proposition~\ref{prop-tpm} and~\eqref{e:tfixesLambda} we get
  \begin{displaymath}
    \supp ((\t^{\ast})^m(\Lambda_{D_0}))
    =
    \bigcup_{k=0}^m\supp(\Lambda_{D_0p^{2k}}).
  \end{displaymath}
This implies the equality
\begin{equation}\label{eq:12}
  \supp((\t^{\ast})^m(\Lambda_{D_0}))\setminus \supp((\t^*)^{m-1}(\Lambda_{D_0}))
  =
  \supp(\Lambda_{D_0p^{2m}}).
\end{equation}
By Lemma~\ref{periodicpoint} and~\eqref{e:tfixesLambda} the set~$\supp(\Lambda_{D_0})\cap \left(\bfD(\zeta)\cup \bfD(\zeta^p)\cup \cdots \cup \bfD(\zeta^{p^{r-1}})\right)$ equals the set of periodic points of $\t$ in~$\bfD(\zeta)\cup \bfD(\zeta^p)\cup \cdots \cup \bfD(\zeta^{p^{r-1}})$.
By~\eqref{eq:12} we conclude that the set $\supp(\Lambda_{D_0p^{2m}}|_{\bfD(\zeta)})$ equals the set of preperiodic points of~$\t$ in~$\bfD(\zeta)$ of preperiod~$m$. This completes the proof of~$(i)$.

  To prove~$(ii)$, let~$c$ in~$(0, 1)$ be such that $\bfB \subseteq \bfD(z_0,c)$, let~$\rho$ and~$\kappa_c$ be given by Lemma~\ref{periodicpoint} and let~$M \ge 1$ be an integer such that~$c \kappa_c^{rM} < \rho$.
  Let~$D < 0$ be a discriminant and~$z$ in~$\supp(\Lambda_D) \cap \bfB$ be given.
  By~$(i)$ there is an integer~$m \ge 0$ such that~$\t^{rm}(z) = E_0$.
  Assume by contradiction that the least integer~$m$ with this property satisfies~$m > M$.
  Then by Lemma~\ref{periodicpoint} and our choice of~$M$ we have
  $$ |\t^{rM}(z) - E_0|_p
  \le c\kappa_c^{rM}
  <
  \rho. $$
  On the other hand, $\t^{r(m - M)}$ is injective on~$\bfD(z_0, \rho)$ by Lemma~\ref{periodicpoint}$(ii)$ and it maps~$\t^{rM}(z)$ and~$E_0$ to~$E_0$, so~$\t^{rM}(z) = E_0$.
  This contradicts the minimality of~$m$ and proves that for every~$z$ in~$\supp(\Lambda_D) \cap \bfB$ we have~$\t^{rM}(z) = E_0$.
  Equivalently,
  $$ \supp(\Lambda_D|_{\bfB})
  \subseteq
  \bigcup_{i=1}^M \t^{-ir}(E_0). $$
  Since this last set is finite and independent of~$D$, this proves~$(ii)$ and completes the proof of the theorem.
\end{proof}

\section{\CM{} points in the supersingular reduction locus}\label{s:supersingular CM points}

The goal of this section is to prove the following result on the asymptotic distribution of \CM{} points in the supersingular reduction locus. From this result and Theorem~\ref{t:ordinary CM points}$(ii)$, we deduce Theorem~\ref{t:CM points} at the end of this section.

\begin{theorem}
  \label{t:supersingular CM points}
  For every~$\ss$ in~$\tSups$ fix an arbitrary~$\gamma_{\ss}$ in~$\bfD(j(\ss))$ and for~$r$ in~$(0, 1)$, put
  \begin{displaymath}
    \bfB(r)
    \=
    \bigcup_{e \in \tSups} \bfD(\gamma_{\ss}, r).
  \end{displaymath}
  Then the following properties hold.
\begin{enumerate}
\item[$(i)$]
  For every~$r$ in~$(0, 1)$ there exists~$m > 0$ such that for every discriminant~$D < 0$ satisfying~$\ord_p(D) \ge m$, we have~$\deg(\Lambda_{D}|_{\bfB(r)})=0$.
\item[$(ii)$]
  For every $m > 0$ there exists~$r$ in~$(0, 1)$ such that for every $p$-supersingular discriminant $D<0$ satisfying~$\ord_p(D) \le m$, we have~$\supp(\Lambda_{D}) \subseteq \bfB(r)$.
\end{enumerate}
\end{theorem}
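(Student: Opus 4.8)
The plan is to reduce both parts to quantitative estimates for Katz' valuation $\vsups$ along \CM{} points, and to obtain those estimates from Zhang's formula (Lemma~\ref{Zhang general}) together with the description of the Hecke action on $\Sups$ in terms of the correspondences $\uptau_m$ on $[0,\tfrac{p}{p+1}]$ (Proposition~\ref{p:katz kite}) and the explicit formula for $\uptau_m$ (Lemma~\ref{l:katz hyper kite}). Observe first that if $D$ is $p$-ordinary then $\supp(\Lambda_D)\subseteq\Ord$ is disjoint from $\Sups\supseteq\bfB(r)$, so in $(i)$ only $p$-supersingular discriminants need attention; $(ii)$ already restricts to those.

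The geometric content of Proposition~\ref{vnorm} that I would use is that $\vsups$ is \emph{radial} on each supersingular residue disc: for a suitable "centre" $\beta_{\ss}\in\bfD(j(\ss))$ there is $\rho_{\ss}\in(0,1)$ so that $\vsups\equiv\tfrac{p}{p+1}$ on the subdisc $|j(E)-\beta_{\ss}|_p\le\rho_{\ss}$, while on $\rho_{\ss}<|j(E)-\beta_{\ss}|_p<1$ the value $\vsups(E)$ is an explicit, strictly decreasing function of $|j(E)-\beta_{\ss}|_p$ tending to $0$ as $|j(E)-\beta_{\ss}|_p\to 1$. (For $p\ge 5$ this is Katz' computation; for $p=2,3$ it rests on the Eisenstein congruences of Proposition~\ref{prop-E1-E2}.) Two consequences follow. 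First, given $r\in(0,1)$ there is $c(r)>0$ such that $\vsups(E)\le c(r)$ forces $|j(E)-\beta_{\ss}|_p$ so close to $1$ that $|j(E)-\gamma_{\ss}|_p=|j(E)-\beta_{\ss}|_p\ge r$ (using $|\beta_{\ss}-\gamma_{\ss}|_p<1$), i.e. $E\notin\bfB(r)$. Second, given $c'>0$ there is $r(c')\in(0,1)$, which we may also take larger than $\max_{\ss}|\beta_{\ss}-\gamma_{\ss}|_p$, such that $\vsups(E)\ge c'$ forces $E\in\bfB(r(c'))$. Hence $(i)$ reduces to showing $\sup\{\vsups(E):E\in\supp(\Lambda_D)\}\to 0$ as $\ord_p(D)\to\infty$ along $p$-supersingular discriminants, and $(ii)$ reduces to showing that for each $m$ there is $c'(m)>0$ with $\vsups(E)\ge c'(m)$ for every $p$-supersingular $D$ with $\ord_p(D)\le m$ and every $E\in\supp(\Lambda_D)$.

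For these $\vsups$-estimates, write $D=d(p^{m}f_0)^2$ with $d<0$ the fundamental discriminant of $D$ and $p\nmid f_0$; then $\ord_p(D)=2m+\ord_p(d)$ with $\ord_p(d)$ bounded ($0$ or $1$ for $p$ odd, at most $3$ for $p=2$), so $\ord_p(D)\to\infty$ is the same as $m\to\infty$ and $\ord_p(D)$ bounded is the same as $m$ bounded. By~\eqref{e:general Zhang formula} (applied with the coprime conductors $f_0$ and $p^{m}$) the divisor $\Lambda_D$ appears with coefficient $R_d(1)=1$ in $T_{p^{m}}(\Lambda_{df_0^2}/w_{d,f_0})$, while the remaining \CM{} points of $\supp(T_{p^m}(\Lambda_{df_0^2}))$ have discriminant $d(p^{j}f_0)^2$ with $j<m$; thus $\supp(\Lambda_D)\subseteq\supp(T_{p^m}(\Lambda_{df_0^2}))$ is exactly the "level $m$" part. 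Every \CM{} point of discriminant $df_0^2$ has $\vsups=v_0$, a single value in $(0,\tfrac{p}{p+1}]$ depending only on whether $p$ is inert or ramified in $\Q(\sqrt d)$ (equal to $\tfrac{p}{p+1}$ in the inert case), which I would read off from Proposition~\ref{vnorm} and the theory of complex multiplication — alternatively it is pinned down by~\eqref{e:p Zhang} and the canonical branch of $\uptau_1$. By Proposition~\ref{p:katz kite} every point of $\supp(T_{p^m}(\Lambda_{df_0^2}))$ has $\vsups$-value in $\uptau_m(\{v_0\})$, and Lemma~\ref{l:katz hyper kite} should show, on the one hand, that the value attached to the level-$m$ \CM{} points is $v_0\,p^{-m}$ (positive for bounded $m$, tending to $0$ as $m\to\infty$), and, on the other, that no branch of $\uptau_m$ sends a value $v>0$ below $v\,p^{-m}$. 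The former yields the limit required for $(i)$; the latter, together with the positive lower bound on $v_0$, yields the bound required for $(ii)$. Combining Theorem~\ref{t:supersingular CM points} with Theorem~\ref{t:ordinary CM points}$(ii)$, the class number bound~\eqref{eq-Siegel-low-general} and Lemma~\ref{l:berkovich}$(ii)$ then gives Theorem~\ref{t:CM points}, since $\supsn{D_n}\to 0$ precisely rules out the concentration on some $\bfB(r)$ detected by $(i)$ and $(ii)$.

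The step I expect to be the main obstacle is the last one: the explicit bookkeeping inside $\uptau_m$. One must sort the $\vsups$-values produced by $\uptau_m$ from $v_0$ according to the $p$-power conductor of the corresponding \CM{} point, and verify that those attached to discriminant \emph{exactly} $d(p^mf_0)^2$ are precisely the small values $v_0\,p^{-m}$; this requires reconciling the combinatorics of Zhang's formula — notably the vanishing of $R_d(p^k)$ for odd $k$ when $p$ is inert — with the branch structure of $\uptau_m$ from Lemma~\ref{l:katz hyper kite}, and handling the cases $p=2,3$ uniformly via Proposition~\ref{prop-E1-E2}. The cleanest target statement, which would make everything above immediate, is that $\vsups$ is constant on $\supp(\Lambda_D)$ with value an explicit elementary function of $\ord_p(D)$; establishing that is where the real work lies.
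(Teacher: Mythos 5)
Your proposal follows essentially the same route as the paper: the ``cleanest target statement'' you defer is precisely the paper's Lemma~\ref{vcm} (namely that $\kproj$ is constant on $\supp(\Lambda_D)$ with value $\tfrac12 p^{-m}$ or $\tfrac{p}{p+1}p^{-m}$ according to whether $p$ ramifies or is inert, $m$ being the $p$-adic valuation of the conductor), and the paper establishes it exactly as you sketch --- constancy via prime-to-$p$ isogenies (Lemmas~\ref{lemma-CM-vp} and~\ref{invarv}), the value $v_0$ for $m=0$ from the fixed-point/single-image structure of $\uptau_1$, and the value $v_0p^{-m}$ from~\eqref{e:pm Zhang} combined with the branch structure of $\uptau_m$. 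The remaining translation between $\kproj$-level sets and the discs $\bfB(r)$ via Proposition~\ref{vnorm} is also how the paper concludes both parts.
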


We present the proof of Theorem~\ref{t:supersingular CM points} in Section~\ref{ss:proof of CM-sups} below.  In Section~\ref{ss:katz valuation}  we recall the definition of Katz' valuation. For that purpose, we briefly review Katz' theory of algebraic modular forms and the interpretation of the Eisenstein series~$\oE_{p-1}$ as an algebraic modular form over $\Q \cap \Z_{p}$. 
In Section~\ref{ss:katz kite} we use Katz--Lubin's extension of the theory of  canonical subgroups to not too supersingular elliptic curves to give a description of the action of Hecke correspondences on the supersingular locus (Proposition~\ref{ss:katz kite}).
For~$p = 2$ and~$3$, we also rely on certain congruences satisfied by certain Eisenstein series (Proposition~\ref{prop-E1-E2} in Appendix~\ref{s:lift Hasse 2 3}).
This description is used in the proof of Theorem~\ref{t:supersingular CM points}  and also in Section~\ref{ss:supersingular orbits} on Hecke orbits in the supersingular locus.

\subsection{Katz' valuation}
\label{ss:katz valuation}
In this section we define Katz' valuation, which is based on Katz' theory of algebraic modular forms, and give an explicit formula relating it to the $j$-invariant (Proposition~\ref{vnorm}).

For the reader's convenience we start with a short review of Katz' theory of algebraic modular forms. For details see~\cite[Chapter~1]{Kat73}.
Let $k\in \Z$ be an integer and let~$R_0$ be a ring (commutative and with identity).
Denote by $R_0\text{-Alg}$ the category of $R_0$-algebras. 
Given an $R_0$-algebra~$R$, define an elliptic curve~$E$ over~$R$ as a proper, smooth morphism of schemes~$E\to \mathrm{Spec}(R)$, whose geometric fibres are connected curves of genus one, together with a section~$\mathrm{Spec}(R)\to E$, and denote by~$\Omega^1_{E/R}$ the invertible sheaf of differential forms of degree~$1$ of~$E$ over~$R$. By replacing~$\mathrm{Spec}(R)$ by an appropriate affine subset we can assume that~$\Omega^1_{E/R}$ admits a nowhere vanishing global section. In this paper we assume, for simplicity, that this is always the case and denote by~$\Omega^1_{E/R}(E)'$ the (non-empty) set of nowhere vanishing global sections of~$\Omega^1_{E/R}$.
An algebraic modular form~$F$ of weight~$k$ and level one over~$R_0$ is a family of maps 
\begin{displaymath}
  F_R \colon \{(E,\omega): E\text{ elliptic curve over }R,\, \omega \in \Omega^1_{E/R} (E)' \} \to R
  \quad
  (R\in R_0\text{-Alg}),
\end{displaymath}
satisfying the following properties:
\begin{enumerate}
\item[$(i)$]
  $F_R{(E,\omega)}$ depends only on the isomorphism class of the pair $(E,\omega)$.
  More precisely, for every isomorphism of elliptic curves $\varphi \colon E\to E'$ over $R$,  we have $F_R{(E',\varphi_*\omega)}=F_R{(E,\omega)}$.
  Here, $\varphi_*\omega$ denotes the push-forward of $\omega$ by~$\varphi$.
\item[$(ii)$]
  $F_R(E,\lambda \omega)=\lambda^{-k}\,F_R(E,\omega)$ for every $\lambda \in R ^{\times}$.
\item[$(iii)$]
  $F_R$ is compatible  with base change.
  Namely, for every $R_0$-algebra morphism $g \colon R\to R'$, for the base change $(E,\omega)_{R'}$ of $(E,\omega)$ to~$R'$ by~$g$ we have $F_{R'}((E,\omega)_{R'})=g(F_R(E,\omega))$.
\end{enumerate}
Taking into account property $(iii)$, from now on we simply write $F$ instead of $F_R$. Moreover,   let $R_1$ be an $R_0$-algebra. Then, property $(iii)$ ensures that $F$ induces an algebraic modular form $F_1$ over $R_1$. We say that $F_1$ is the \emph{base change} of $F$ to $R_1$. We also say that $F$ is a \emph{lifting} of $F_1$ to $R$.

  Let ~$q$ be a formal variable and denote by~$\Tate(q)$ the \emph{Tate curve}, which is an elliptic curve over the field of fractions fractions~$\Z(\!( q)\!)$ of the ring of formal power series~$\Z\llbracket q\rrbracket$, see~\cite[Appendix~1]{Kat73}.
  The $j$-invariant of $\Tate(q)$ has the form 
\begin{equation}\label{jtateq}
j\left(\Tate(q)\right)=\frac{1}{q}+744+\sum_{n=1}^\infty c_nq^n, \quad c_n\in \Z.
\end{equation}

The \emph{$q$-expansion} of an algebraic modular form $F$  over $R_0$  as above is defined as the element  $F(q)\in \Z(\!( q)\!) \otimes_{\Z}R_0$ obtained by evaluating $F$ at the pair $(\Tate(q),\omega_{\can})$  consisting of the Tate curve together with its canonical differential $\omega_{\can}$, both considered  as defined over $\Z(\!( q)\!)\otimes_{\Z}R_0$. Moreover, $F$ is said to be \emph{holomorphic at infinity} if $F(q) \in \Z\llbracket q\rrbracket\otimes_\Z R_0$.

Now, we state a version of the \emph{$q$-expansion principle}, which is a particular case of ~\cite[Corollary~1.9.1]{Kat73}.

\begin{theorem}\label{qep}
Let $R_0$ be a ring and let $K \supseteq R_0$ be a  $R_0$-algebra. Let $k \in \Z$ be an integer and let  $F$ be an algebraic modular form over $K$ of weight $k$, level one and holomorphic at infinity. Assume that $F(q) \in \Z(\!( q)\!) \otimes_{\Z}R_0$. Then, $F$ is the base change of a unique algebraic modular form over $R_0$ of weight $k$. 
\end{theorem}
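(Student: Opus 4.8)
The plan is to deduce the statement from Katz's general $q$-expansion principle, \cite[Corollary~1.9.1]{Kat73}, after observing that our situation---weight~$k$, level one, holomorphic at infinity, with the $q$-expansion constrained to lie in $\Z(\!( q)\!)\otimes_{\Z}R_0$---is literally the level-one instance treated there; so the write-up should be little more than that citation, together with the remarks below recalling why it holds. The conceptual picture (which can be made rigorous either via the compactified modular stack $\overline{\mathcal{M}}_{1,1}$ or, as in \cite{Kat73}, via rigidified moduli schemes of level~$n\ge 3$) is that a weight-$k$, level-one algebraic modular form over a ring~$A$ that is holomorphic at infinity is the same datum as a global section of $\underline{\omega}^{\otimes k}$ on $\overline{\mathcal{M}}_{1,1}\otimes_{\Z}A$, where $\underline{\omega}$ is the Hodge bundle; under this identification the $q$-expansion of~$F$ is the pullback of~$F$ along the morphism $\mathrm{Spec}\,\Z(\!( q)\!)\to\overline{\mathcal{M}}_{1,1}$ classifying $(\Tate(q),\omega_{\can})$, a morphism which realises $\mathrm{Spec}\,\Z(\!( q)\!)$ as a punctured formal neighbourhood of the cusp.

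The argument then rests on two facts, both contained in \cite[Chapter~1]{Kat73}. Let~$M_k$ denote the $\Z$-module of weight-$k$, level-one algebraic modular forms over~$\Z$ (with poles at the cusp allowed); it is $\Z$-flat, and for any ring~$A$ the weight-$k$ forms over~$A$ form $M_k\otimes_{\Z}A$, with $q$-expansion the $\Z$-linear extension of the $q$-expansion map $M_k\to\Z(\!( q)\!)$. First, this $q$-expansion map is injective for every~$A$: one reduces to~$A$ a field by flatness and base-change compatibility, then to~$A=\Q$, where injectivity is classical; and over a field it holds because $\overline{\mathcal{M}}_{1,1}$ is a smooth, proper, geometrically connected (hence irreducible in each fibre) Deligne--Mumford curve, so a section of a line bundle vanishing to infinite order at the cusp vanishes identically, by the Krull intersection theorem. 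Second, the cokernel $N_k$ of $M_k\hookrightarrow\Z(\!( q)\!)$ is torsion-free, equivalently $\Z$-flat; this is the arithmetic core of the principle, and in the level-one case it amounts to Deligne's explicit description of the graded ring of modular forms over~$\Z$.

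Granting these, the theorem will follow formally. Since $\Z(\!( q)\!)$ is $\Z$-flat and $R_0\hookrightarrow K$, the map $\Z(\!( q)\!)\otimes_{\Z}R_0\to\Z(\!( q)\!)\otimes_{\Z}K$ is injective, so the hypothesis $F(q)\in\Z(\!( q)\!)\otimes_{\Z}R_0$ is unambiguous inside $\Z(\!( q)\!)\otimes_{\Z}K$. Identifying weight-$k$ forms over~$K$ (resp.\ over~$R_0$) with $M_k\otimes_{\Z}K$ (resp.\ with $M_k\otimes_{\Z}R_0$), tensoring the exact sequence $0\to M_k\to\Z(\!( q)\!)\to N_k\to 0$ with~$R_0$ and with~$K$, and using $\Z$-flatness of~$N_k$, I get $(M_k\otimes_{\Z}K)\cap(\Z(\!( q)\!)\otimes_{\Z}R_0)=M_k\otimes_{\Z}R_0$; hence $F$ lies in the image of $M_k\otimes_{\Z}R_0$, that is, $F$ is the base change of a form $F_1$ over~$R_0$, which is automatically holomorphic at infinity because $F(q)\in\Z\llbracket q\rrbracket\otimes_{\Z}K$ forces $F_1(q)\in\Z\llbracket q\rrbracket\otimes_{\Z}R_0$. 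Uniqueness of~$F_1$ is immediate from injectivity of the $q$-expansion map over~$R_0$, since any two lifts share the $q$-expansion~$F(q)$.

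The only genuinely non-formal ingredient is the $\Z$-torsion-freeness of the cokernel of $M_k\hookrightarrow\Z(\!( q)\!)$---morally, that an integral modular form is recognised by its integral $q$-expansion with no hidden denominators---and this is the step I expect to be the real obstacle; in the level-one setting it is supplied by Deligne's integral structure on the graded ring of modular forms, while in \cite{Kat73} it is deduced in general from properness of the compactified moduli together with irreducibility of its geometric fibres. Everything else in the proof is a bookkeeping of flat base change and a diagram chase, so in practice the statement should be recorded with a one-line reference to \cite[Corollary~1.9.1]{Kat73}.
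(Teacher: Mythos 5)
Your proposal is correct and matches the paper's treatment: the paper simply records Theorem~\ref{qep} as a particular case of \cite[Corollary~1.9.1]{Kat73} with no further argument, which is exactly the reduction you make. The additional material you supply (the identification with sections of $\underline{\omega}^{\otimes k}$, injectivity of the $q$-expansion map, and flatness of the cokernel) is a faithful sketch of Katz's own proof of that corollary rather than a different route, so nothing beyond the citation is needed here.
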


  There is a natural link between the previous theory  and the classical theory of modular forms. We refer to \cite[Section~A1.1]{Kat73} for details.
  For each classical holomorphic modular form of weight~$k$ and level one~$f \colon  \H \rightarrow \C$, there exists a unique algebraic modular form~$F$ over~$\C$ associated to~$f$ that is holomorphic at infinity.
The Fourier expansion at infinity of~$f$ and the $q$-expansion of~$F$ are related by
\begin{displaymath}
  f(\tau)
  =
  \sum_{n=0}^\infty a_ne^{2\pi i n\tau} \textrm{ if and only if } F(q)=\sum_{n=0}^{\infty}a_nq^n.
\end{displaymath}
 
  For an even integer $k\geq 4$, let~$\oE_{k}$ be the \emph{normalized Eisenstein series}
  \begin{displaymath}
    \oE_{k}(\tau)
  =
  1-\frac{2k}{B_k}\sum_{n=1}^{\infty}\sigma_{k-1}(n)e^{2\pi i n\tau},
  \quad \tau \in \H.
  \end{displaymath}
  Here, the symbol $B_k$ denotes the $k$-th Bernoulli number and $\sigma_{k-1}(n)\=\sum_{d \mid n,d>0}d^{k-1}$. The complex function $\oE_k$ is a classical holomorphic modular form of weight~$k$ and level one. Then, this function induces an algebraic modular form over $\C$, that we also denote by $\oE_k$, having the $q$-expansion with rational coefficients
 \begin{equation}\label{eq-q-exp-Ek}
  \oE_{k}(q)
  =
  1-\frac{2k}{B_k}\sum_{n=1}^{\infty}\sigma_{k-1}(n)q^n.
\end{equation}
When $p\geq 5$ and $k=p-1$, the von Staudt--Clausen Theorem ensures that   $\ord_p\big((2k)B_k^{-1}\big)=1$.
In particular,  the coefficients of the Fourier expansion of $\oE_{p-1}$ lie in $\Zp\=\Q \cap \Z_p$.
Hence, by Theorem~\ref{qep}, we can consider~$\oE_{p-1}$ as an algebraic modular form of weight $p-1$ over~$\Zp$. On the other hand, the same reasoning and a direct examination of the Fourier expansions of $\oE_4$ and $\oE_6$ allow us to consider these Eisenstein series as  algebraic modular forms of weight four and  six over $\Z$.

For~$E$ in~$\Sups$, that we regard as defined over $\cO_p$, choose $\omega$ in~$\Omega_{E/\cO_p}^1(E)'$ and define \emph{Katz' valuation}
$$\kval(E)
\=
\begin{cases}
  \ord_p(\oE_{p-1}(E, \omega ))
  & \text{if } p\geq 5;
  \\
  \frac{1}{3} \cdot \ord_3(\oE_{6}(E, \omega))
  & \text{if }p=3;
  \\
  \frac{1}{4} \cdot \ord_2(\oE_{4}(E, \omega))
  & \text{if }p=2.  
\end{cases} $$
Since for every~$\lambda$ in~$\cO_p^{\times}$ we have $\oE_{k}(E,\lambda \omega )=\lambda^{-k}\oE_{k}(E, \omega )$, this definition does not depend on the particular choice of $\omega$.
The above definition is motivated by the following considerations. 
The Hasse invariant $A_{p-1}$ is the unique algebraic modular form of weight $p-1$ over $\F_p$ with $q$-expansion $A_{p-1}(q)=1$, see~\cite[Chapter~2]{Kat73}.
When $p \geq 5$, the base change to $\F_p$ of the form~$\oE_{p-1}$  equals~$A_{p-1}$.  On the other hand, when~$p$ equals~$2$ or~$3$ it is not possible to lift $A_{p-1}$ to an algebraic modular form of level one, holomorphic at infinity, over~$\Zp$.
However, the base change of~$\oE_4$ (resp.~$\oE_6$) to $\F_2$ (resp.~to $\F_3$) is~$A_1^4$  (resp.~$A_2^3$).
See Appendix~\ref{s:lift Hasse 2 3} for details.

Since the Hasse invariant vanishes at supersingular elliptic curves, for every~$E$ in~$\Sups$ we have that $0<\kval(E)\leq \infty$.
An elliptic curve~$E$ in~$\Sups$ is \emph{not too supersingular} if $\kval(E)< \frac{p}{p+1}$, and it is \emph{too supersingular} otherwise.

The following result gives an explicit relation between~$\kval(E)$ and~$j(E)$.
For~$\ss$ in~$\tSups$, we use the number~$\delta_{\ss}$ defined by~\eqref{eq:8} in Section~\ref{ss:canonical branch}.

\begin{proposition}\label{vnorm}
  For each~$\ss$ in~$\tSups$, denote by~$\fj_e$ the~$j$-invariant of the unique zero of~$\oE_{p-1}$ (resp.~$\oE_4,\oE_6$) in~$\bfD(\ss)$ if~$p\geq 5$ (resp.~$p=2,3$).
  Then,  for every~$E$ in~$\Sups$ we have
  \begin{displaymath}
    \kval(E)
=
\sum_{\ss \in \tSups} \frac{1}{\delta_{\ss}} \ord_p(j(E) - \fj_{\ss}).
  \end{displaymath}
Moreover, if $p\geq 5$ and $\fj_{\ss} \equiv 0$ (resp.~$\fj_{\ss}\equiv 1728$) $\mod \cM_p$, then $\fj_{\ss}=0$ (resp.~$\fj_e=1728$).
In the case $p=2$ (resp.~$p=3$), $\tSups$ has a unique element~$\ss$ and $\fj_{\ss} = 0$ (resp.~$\fj_{\ss} = 1728$).
\end{proposition}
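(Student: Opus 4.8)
The plan is to express~$\kval$ in terms of the~$p$-adic valuation of an explicit monic polynomial in the~$j$-invariant and then to factor that polynomial; its roots will turn out to be the~$\fj_{\ss}$. Fix the normalized weight-$12$ cusp form~$\Delta$, viewed as an algebraic modular form over~$\Z$, and recall the classical identities~$\oE_4^3 = j\Delta$ and~$\oE_6^2 = (j - 1728)\Delta$. For~$p \ge 5$, the expression~$\oE_{p-1}(E, \omega)^{12}\Delta(E, \omega)^{-(p-1)}$ is independent of~$\omega$, because numerator and denominator both have weight~$12(p-1)$; hence it defines a function~$\Psi$ of the~$j$-invariant. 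By the~$q$-expansion principle (Theorem~\ref{qep}), and since~$\oE_{p-1}$ is holomorphic at infinity with constant term~$1$ while~$\Delta$ has a simple zero there, $\Psi$ is a monic polynomial in~$j$ of degree~$p - 1$; the von Staudt--Clausen congruence~$\oE_{p-1} \equiv 1 \pmod p$ on~$q$-expansions shows, furthermore, that~$\Psi$ has coefficients in~$\Zp$ and that~$\Psi \equiv A_{p-1}^{12}\overline{\Delta}^{-(p-1)} \pmod p$.

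Let~$E$ be in~$\Sups$, regarded over~$\cO_p$, and~$\omega$ in~$\Omega_{E/\cO_p}^1(E)'$. Since~$E$ has good reduction, $\omega$ is a N{\'e}ron differential and~$\Delta(E, \omega)$ is a unit of~$\cO_p$, so~$12\kval(E) = \ord_p(\Psi(j(E)))$ when~$p \ge 5$, while~$12\kval(E) = \ord_p(j(E))$ when~$p = 2$ and~$6\kval(E) = \ord_p(j(E) - 1728)$ when~$p = 3$, by~$\oE_4^3 = j\Delta$ and~$\oE_6^2 = (j - 1728)\Delta$. For~$p = 2, 3$ this is already the proposition: $\tSups$ has a single element, whose~$j$-invariant is~$0$ (for~$p = 2$) or~$1728 \equiv 0$ in~$\F_3$ (for~$p = 3$), with~$\delta_{\ss} = 12$, resp.\ $6$, and~$\fj_{\ss}$, the unique zero of~$\oE_4$, resp.\ $\oE_6$, equals~$0$, resp.\ $1728$. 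So it remains to prove, for~$p \ge 5$, the factorization
\begin{displaymath}
  \Psi(j) = \prod_{\ss \in \tSups}(j - \fj_{\ss})^{12/\delta_{\ss}},
\end{displaymath}
where~$\fj_{\ss} \in \bfD(\ss)$ is the unique zero of~$\oE_{p-1}$ in that disc.

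Since~$\oE_{p-1}$ does not vanish on~$\Bad$ (on a Tate curve its value is~$\oE_{p-1}(q) \equiv 1 \pmod p$) nor on~$\Ord$ (where it reduces to the Hasse invariant, nonzero on ordinary curves), every root of~$\Psi$ lies in~$\Sups = \bigsqcup_{\ss \in \tSups}\bfD(\ss)$; grouping roots by residue disc gives~$\Psi = \prod_{\ss}\Psi_{\ss}$ with each~$\Psi_{\ss}$ monic in~$\cO_p[j]$ and all its roots in~$\bfD(\ss)$. The crux is to prove~$\Psi_{\ss} = (j - \fj_{\ss})^{12/\delta_{\ss}}$, which I would get from lower bounds for~$\deg\Psi_{\ss}$ together with the Eichler--Deuring mass formula~$\sum_{\ss \in \tSups}\delta_{\ss}^{-1} = (p-1)/12$. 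If~$j(\ss) \ne 0, 1728$ (so~$\delta_{\ss} = 1$), the disc~$\bfD(\ss)$ carries a rigid analytic family of elliptic curves of~$j$-invariant~$j$ with N{\'e}ron differential, whence~$\Psi_{\ss}$ is a unit times the twelfth power of a rigid analytic function; so every root of~$\Psi_{\ss}$ has multiplicity divisible by~$12$ and~$\deg\Psi_{\ss} \ge 12$. If~$j(\ss) = 0$ (which forces~$p \equiv 2 \pmod 3$ and~$\delta_{\ss} = 3$), properties~$(i)$ and~$(ii)$ of algebraic modular forms, applied to the~$\mu_3$-action on~$E_t \colon y^2 = x^3 + tx + 1$ (scaling~$t$ and~$dx/y$ compatibly), give~$g(\zeta_3 t) = \zeta_3\, g(t)$ for~$g(t) \= \oE_{p-1}(E_t, dx/y)$, so~$\ord_t g \equiv 1 \pmod 3$; since~$\ord_t j(E_t) = 3$ and~$\Delta(E_t, dx/y)$ is a unit near~$t = 0$, comparing valuations in~$\Psi(j(E_t)) = g(t)^{12}\Delta(E_t, dx/y)^{-(p-1)}$ gives~$\deg\Psi_{\ss} \ge \ord_{j = 0}\Psi = 4\ord_t g \ge 4$, with~$0$ a root. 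Likewise, if~$j(\ss) = 1728$ (forcing~$p \equiv 3 \pmod 4$, $\delta_{\ss} = 2$), the~$\mu_4$-action on~$y^2 = x^3 + x + s$ gives~$\deg\Psi_{\ss} \ge \ord_{j = 1728}\Psi \ge 6$, with~$1728$ a root. As these bounds~$12/\delta_{\ss}$ sum over~$\tSups$ to~$p - 1 = \deg\Psi = \sum_{\ss}\deg\Psi_{\ss}$ by the mass formula, each is an equality; combined with the divisibility and the explicit roots found above, this forces~$\Psi_{\ss} = (j - \fj_{\ss})^{12/\delta_{\ss}}$ with~$\fj_{\ss} \in \bfD(\ss)$ its unique root --- equal to~$0$, resp.\ $1728$, when~$j(\ss) = 0$, resp.\ $1728$, and the unique zero of~$\oE_{p-1}$ in~$\bfD(\ss)$ in general. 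Substituting back yields~$12\kval(E) = \sum_{\ss}(12/\delta_{\ss})\ord_p(j(E) - \fj_{\ss})$, and the two supplementary assertions of the proposition fall out on the way.

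The main obstacle is exactly this last step --- that each local factor~$\Psi_{\ss}$ is a pure power of~$(j - \fj_{\ss})$. Away from~$j = 0, 1728$ this is immediate, as~$\Psi_{\ss}$ is a perfect twelfth power of a rigid analytic function; but at the two elliptic points one must feed the transformation law of~$\oE_{p-1}$ under the extra automorphisms into the mass formula to pin down the exponents~$12/\delta_{\ss}$. By contrast, the cases~$p = 2, 3$ follow immediately from~$\oE_4^3 = j\Delta$ and~$\oE_6^2 = (j - 1728)\Delta$.
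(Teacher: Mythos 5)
Your argument is correct in substance but takes a genuinely different route at the one nontrivial point. Both you and the paper reduce the proposition, for $p \ge 5$, to a factorization of $\Psi \= \oE_{p-1}^{12}\Delta^{-(p-1)}$ as $\prod_{\ss \in \tSups}(j-\fj_{\ss})^{12/\delta_{\ss}}$, and both dispose of $p=2,3$ via $\oE_4^3=j\Delta$ and $\oE_6^2=(j-1728)\Delta$. The paper simply quotes the Kaneko--Zagier identity $\oE_{p-1}=\Delta^m\oE_4^{\delta}\oE_6^{\varepsilon}P(j)$, where $X^{\delta}(X-1728)^{\varepsilon}P(X)$ reduces mod~$p$ to the (separable) supersingular polynomial, so that Hensel's lemma places exactly one root in each residue disc of $\tSups$. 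You instead re-derive an equivalent factorization from scratch: divisibility of all local multiplicities by $12/\delta_{\ss}$, obtained from a rigid analytic family of pairs $(E_j,\omega_j)$ over each disc (plus the $\mu_3$-, resp.\ $\mu_4$-, transformation law of $\oE_{p-1}$ at $j=0$, resp.\ $1728$), matched against $\deg\Psi=p-1$ via the Eichler--Deuring mass formula $\sum_{\ss}\delta_{\ss}^{-1}=(p-1)/12$. Your route is more self-contained about the shape of the factorization but trades one citation for another (the mass formula) and requires more local bookkeeping; the paper's route is shorter and, via the separability of the supersingular polynomial, also yields directly that each $\fj_{\ss}$ is algebraic and lies in $\Qpun$, a fact used later in the proof of Theorem~\ref{t:canonical analyticity}.

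One step should be tightened. For $\ss$ with $j(\ss)\neq 0,1728$ you infer ``$\deg\Psi_{\ss}\ge 12$'' from ``every root of $\Psi_{\ss}$ has multiplicity divisible by $12$'', but that implication requires $\Psi_{\ss}$ to be nonconstant, i.e., that $\oE_{p-1}$ actually vanishes somewhere in $\bfD(j(\ss))$. This is true --- the reduction of the monic polynomial $\Psi$ is $A_{p-1}^{12}\overline{\Delta}^{\,1-p}$, which vanishes at the supersingular value $j(\ss)$, so at least one root of $\Psi$ reduces to $j(\ss)$ --- but without this remark the degree count that drives the whole argument is not closed.
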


  It follows from the proof of this proposition that for every~$\ss$ in~$\tSups$ the number~$\fj_{\ss}$ is algebraic over~$\Q$ and is in~$\Cpun$.
  We note that in the case~$\fj_{\ss} \not\equiv 0, 1728 \mod \cM_p$, the elliptic curve class whose $j$-invariant is~$\fj_{\ss}$ is not \CM{},\footnote{In fact, $\fj_{\ss}$ need not be an algebraic integer: For~$p = 13$ (resp. $17, 19, 23$) there is a unique~$\ss$ in~$\tSups$ whose~$j$-invariant is different from~$0$ and~$1728$, and we have $\fj_{\ss} = 2^7 \cdot 3^3 \cdot 5^3 / 691$ (resp. $2^{10} \cdot 3^3 \cdot 5^3 / 3617$, $2^8 \cdot 3^3 \cdot 5^3 \cdot 11 / (7 \cdot 79^2)$, $2^8 \cdot 3^3 \cdot 5^3 \cdot 41 / (131 \cdot 593)$).} but it is ``fake \CM{}'' in the sense of~\cite{ColMcM06}, see Remark~\ref{r:quasi-canonic} below.

\begin{proof}[Proof of Proposition~\ref{vnorm}]
  Assume $p\geq 5$, so $p-1 \not\equiv 2, 8 \mod 12$.
  We can thus write $p-1$ uniquely in the form $p-1=12m+4\delta+6\varepsilon$ with $m\geq 0 $ integer and $ \delta,\varepsilon \in \{0,1\}$.
  The modular discriminant
  \begin{displaymath}
    \Delta(\tau)=e^{2\pi i\tau}\prod_{n=1}^{\infty}(1-e^{2\pi in\tau})^{24},
    \quad \tau \in \H.
  \end{displaymath}
is a classical holomorphic modular form of weight~$12$ and level one, see, \emph{e.g.}, \cite[Sections~1.1 and~1.2]{DiaShu05}.
  The infinite product above shows that the Fourier coefficients of $\Delta$ are rational integers. Hence, Theorem~\ref{qep} ensures that $\Delta$ can be considered as an algebraic modular form over $\Z$. 
  At the level of classical modular forms, we have the identity 
\begin{equation*}
\oE_{p-1}=\Delta^m\oE_4^{\delta}\oE_6^{\varepsilon}P\left(j\right),
\end{equation*}
where~$P(X)$ is a monic polynomial over~$\Zp$ of degree~$m$ such that $P_{\sups}(X)\=X^{\delta}(X-1728)^{\varepsilon}P(X)$ reduces modulo~$p$ to the supersingular polynomial, \emph{i.e.}, the monic separable polynomial over~$\F_{p}$ whose roots are the $j$-invariants of the supersingular elliptic curves over $\Fpalg$, see, \emph{e.g.}, \cite[Theorem~1]{KanZag98}.
Using the classical identities $\oE_4^3=\Delta j$ and $\oE_6^2=\Delta (j-1728)$ we get
\begin{equation*}
  \oE^{12}_{p-1}
  =
  \Delta^{p-1} j^{4\delta}(j-1728)^{6\varepsilon} P(j)^{12}.
\end{equation*}
Theorem~\ref{qep} ensures that the above identity also holds at the level of algebraic modular forms over $\Zp$.  Write
$$ P_{\sups}(X)
=
\prod_{\ss \in \tSups} (X-\fj_{\ss}), $$
where $\fj_{\ss} \in \bfD\big(j(\ss)\big)$ for each~$\ss \in \tSups$.
Now, for every pair $(E,\omega)$ defined over~$\cO_p$ and having good reduction we have $\Delta(E,\omega)\in \cO_p^{\times}$, hence
\begin{equation*}
  |\oE_{p-1}(E,\omega)|_p^{12}
  =
  |j(E)|_p^{4\delta} |j(E)-1728|_p^{6\varepsilon} \prod_{\substack{\ss \in \tSups \\ \fj_{\ss} \not \equiv 0,1728}} |j(E)-\fj_{\ss}|_p^{12}.
\end{equation*}
Since~$p\geq 5$, we have that $j=0$ (resp. $j=1728$) is supersingular at $p$ if and only if $p \equiv 2 \mod 3$ (resp. $p\equiv 3 \mod 4$) \cite[Chapter~V, Examples~4.4, 4.5]{Sil09}. This implies the result when $p\geq 5$. The cases $p=2$ and $3$ follow similarly from the formulas
$$|\oE_4(E,\omega)|_2^3
=
|j(E)|_2
\text{ and }
|\oE_6(E,\omega)|_3^2
=
|j - 1728|_3,$$
respectively. This completes the proof of the proposition. 
\end{proof}

\begin{remark}
  \label{r:quasi-canonic}
      Let~$\ss$ in~$\Sups$ be such that $\fj_{\ss}\not \equiv 0,1728\mod \cM_p$, and let~$E_{\ss}$ be the elliptic curve class in~$\Ell(\C_p)$ such that~$j(E_{\ss}) = \fj_{\ss}$.
Then~$E_{\ss}$ is not~\CM{}, but it is ``fake \CM{}'' in the sense of~\cite{ColMcM06}.
In particular, $\fj_{\ss}$ is not a singular modulus over~$\C_p$.
To show that~$E_{\ss}$ is not~\CM{}, choose a field isomorphism $\C_p\simeq \C$ and~$\tau_{\ss}$ in~$\H$ such that~$E_{\ss}(\C) \simeq \C / (\Z + \tau_{\ss} \Z)$.
It is sufficient to show that~$\tau_{\ss}$ is transcendental over~$\Q$, see, \emph{e.g.}, \cite[Chapter~1, Section~5]{Lan87}.
The complex number~$\tau_\ss$ must be a zero of the holomorphic function $\tau\mapsto \oE_{p-1}(\tau)$.
Since~$j(\tau_{\ss}) = \fj_{\ss}$ is different from~$0$ and~$1728$, it follows that~$\tau_{\ss}$ is not equivalent to $\rho=\frac{1+\sqrt{-3}}{2}$ or $i=\sqrt{-1}$ under the action of the modular group~$\SL_2(\Z)$ by M{\"o}bius transformations on~$\H$.
Then~\cite[Theorem~1]{Koh03} implies that~$\tau_\ss$ is transcendental over~$\Q$.

    To see that~$E_{\ss}$ is fake \CM{}, note first that, since the reduction modulo~$p$ of~$P_{\sups}(X)$ is separable and splits completely over~$\F_{p^2}$, by Hensel's lemma all roots of~$P_{\sups}(X)$ are in the ring of integers~$\cO$ of the unramified quadratic extension of~$\Q_p$.
    As~$\fj_{\ss}$ is a root of~$P_{\sups}(X)$, this implies that~$E_{\ss}$ is defined over~$\cO$.
    Let~$[p]_{\ss}$ and~$\phi$ be the multiplication by~$p$ and the $p^2$-power Frobenius endomorphism on the supersingular curve~$\ss$, respectively.
    Then there exists~$\sigma$ in $\Aut(\ss)$ satisfying~$\sigma\circ [p]_{\ss} = \phi$, see~\cite[Chapter~II, Corollary~2.12]{Sil09}.
    Since $j(\ss) = \pi(\fj_{\ss})$ is different from~$0$ and~$1728$, we have $\sigma = \pm 1$ and~$\pm [p]_{\ss}=\phi$.
    Choose~$\pi_0=\pm p$ as a uniformizer of~$\cO$.
    The multiplication by~$\pi_0$ map on the formal group $\cF_{E_{\ss}}$ of~$E_{\ss}$ defines an endomorphism~$f(X)$ of~$\cF_{E_{\ss}}$, satisfying
    \begin{displaymath}
      f(X) \equiv \pi_0 X \mod X^2
      \text{ and }
      f(X) \equiv X^{p^2} \mod~\pi_0.
    \end{displaymath}
    It follows that~$\cF_{E_{\ss}}$ is a Lubin--Tate formal group over $\cO$, see~\cite[Section~8]{Haz78}, and compare with~\cite[Remark~3.4]{ColMcM06}.
    In particular $\End(\cF_{E_{\ss}})\simeq \cO$ and therefore~$E_{\ss}$ is fake \CM{}, see \cite[Theorem~8.1.5 and Proposition~23.2.6]{Haz78}.
   \end{remark}

\subsection{Katz' kite}
\label{ss:katz kite}
The goal of this section is to give the following description of the action of Hecke correspondences on the supersingular locus.

\begin{proposition}
  \label{p:katz kite}
  Let~$\kproj \colon \Sups \to \left[0, \frac{p}{p + 1} \right]$ be the map defined by
$$ \kproj
\=
\min\left\{ \kval, \frac{p}{p+1} \right\}. $$
Moreover, denote by~$\uptau_0$ the identity on~$\Div \left( \left[0, \frac{p}{p + 1} \right] \right)$, let~$\uptau_1$ be the piecewise-affine correspondence on~$\left[ 0, \frac{p}{p + 1} \right]$ defined by
    \begin{displaymath}
      \uptau_1(x)
      \=
      \begin{cases}
        [px] + p[\frac{x}{p}]
        & \text{if } x \in \left[0, \frac{1}{p + 1} \right];
        \\
        [1 - x] + p[\frac{x}{p}]
        & \text{if } x \in \left] \frac{1}{p + 1}, \frac{p}{p + 1} \right],
      \end{cases}
    \end{displaymath}
and for each integer~$m \ge 2$ define the correspondence~$\uptau_m$ on~$\left[ 0, \frac{p}{p + 1} \right]$ recursively, by
    \begin{displaymath}
      \uptau_m
      \=
      \uptau_1 \circ \uptau_{m - 1} - p \uptau_{m - 2}.
    \end{displaymath}
Then for every integer~$m \ge 0$ and every integer~$n_0 \ge 1$ not divisible by~$p$, we have
  \begin{displaymath}
    (\kproj)_* \circ T_{p^m n_0}|_{\Sups}
    =
    \sigma_1(n_0) \cdot \uptau_m \circ (\kproj)_*.
  \end{displaymath}
\end{proposition}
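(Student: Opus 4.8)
The plan is to prove the identity first for $n_0 = 1$ by induction on $m$, and then to deduce the general case from the multiplicativity of the Hecke correspondences. Throughout, recall that each $T_n$ preserves the supersingular locus, so $T_n|_{\Sups}$ is a well-defined operator on $\Div(\Sups)$. The base case $m = 0$, $n_0 = 1$ is trivial, since $T_1$ and $\uptau_0$ are both the identity. The case $m = 1$, $n_0 = 1$ is the heart of the matter: it amounts to computing, for $E$ in $\Sups$ with $v \= \kval(E)$, the Katz valuations of the $p + 1$ quotients $E/C$ over all subgroups $C$ of $E$ of order $p$. This is exactly where Katz--Lubin's theory of canonical subgroups enters. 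Using the results of \cite[Section~3]{Kat73} --- for $p \ge 5$ applied to the lift $\oE_{p-1}$ of the Hasse invariant, and for $p = 2, 3$ applied to $\oE_4$, respectively $\oE_6$, together with the congruences of Proposition~\ref{prop-E1-E2} (which is why the normalizing factors $\tfrac14$, $\tfrac13$ occur in the definition of $\kval$) --- one obtains: if $0 < v < \tfrac{p}{p+1}$ then $E$ has a canonical subgroup $H$, with $\kval(E/H) = pv$ if $v \le \tfrac{1}{p+1}$ and $\kval(E/H) = 1 - v$ if $v \ge \tfrac{1}{p+1}$, while $\kval(E/C) = v/p$ for each of the remaining $p$ subgroups $C$; and if $v \ge \tfrac{p}{p+1}$ (so $E$ is too supersingular and $\kproj(E) = \tfrac{p}{p+1}$) then $\kval(E/C) = \tfrac{1}{p+1}$ for every subgroup $C$ of order $p$. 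All these values are at most $\tfrac{p}{p+1}$, so applying $\kproj$ and comparing with the definition of $\uptau_1$ gives $(\kproj)_*(T_p(E)) = \uptau_1([\kproj(E)])$ in every case --- the two branches of $\uptau_1$ agreeing at $x = \tfrac{1}{p+1}$, and $\uptau_1$ taking the ``democratic'' value $(p+1)[\tfrac{1}{p+1}]$ at the endpoint $x = \tfrac{p}{p+1}$. By linearity this yields $(\kproj)_* \circ T_p|_{\Sups} = \uptau_1 \circ (\kproj)_*$.

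For the inductive step, assume the identity for $n_0 = 1$ holds for all exponents smaller than $m \ge 2$. By the Hecke recursion~\eqref{eq-Hecke-Tpr} with $q = p$ we have $T_{p^m} = T_p \circ T_{p^{m-1}} - p\, T_{p^{m-2}}$, so restricting to $\Div(\Sups)$,
\begin{displaymath}
  (\kproj)_* \circ T_{p^m}|_{\Sups}
  =
  \big( (\kproj)_* \circ T_p|_{\Sups} \big) \circ T_{p^{m-1}}|_{\Sups} - p\, (\kproj)_* \circ T_{p^{m-2}}|_{\Sups}.
\end{displaymath}
Rewriting the first factor as $\uptau_1 \circ (\kproj)_*$ by the case $m = 1$, and then applying the inductive hypothesis for $m - 1$ and $m - 2$, the right-hand side becomes
\begin{displaymath}
  \uptau_1 \circ \uptau_{m-1} \circ (\kproj)_* - p\, \uptau_{m-2} \circ (\kproj)_*
  =
  \uptau_m \circ (\kproj)_*,
\end{displaymath}
by the defining recursion for $\uptau_m$. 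This completes the induction.

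Now let $m \ge 0$ and let $n_0 \ge 1$ be coprime to $p$. By the multiplicativity relation~\eqref{eq-multiplicativity-Hecke-operators}, $T_{p^m n_0} = T_{p^m} \circ T_{n_0}$, so on $\Div(\Sups)$,
\begin{displaymath}
  (\kproj)_* \circ T_{p^m n_0}|_{\Sups}
  =
  \big( (\kproj)_* \circ T_{p^m}|_{\Sups} \big) \circ T_{n_0}|_{\Sups}
  =
  \uptau_m \circ (\kproj)_* \circ T_{n_0}|_{\Sups},
\end{displaymath}
by the case just proved. It therefore suffices to show $(\kproj)_* \circ T_{n_0}|_{\Sups} = \sigma_1(n_0) \cdot (\kproj)_*$, i.e.\ that $\kval(E/C) = \kval(E)$ whenever $C$ is a subgroup of $E$ of order prime to $p$; then, since $\deg(T_{n_0}(E)) = \sigma_1(n_0)$, we get $(\kproj)_*(T_{n_0}(E)) = \sigma_1(n_0)\,[\kproj(E)]$. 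The invariance of Katz' valuation under a prime-to-$p$ isogeny is a standard fact: such an isogeny is étale, hence a $p$-adic isomorphism in the relevant sense, and can be extracted from \cite[Section~3]{Kat73}; alternatively one may argue using the explicit formula of Proposition~\ref{vnorm} together with the behaviour of the modular equation $\Phi_{n_0}$ at supersingular points. Assembling the three parts proves the proposition.

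The substantive work is concentrated in the case $m = 1$: establishing, uniformly in $E$ in $\Sups$ and including the too supersingular curves, the precise formulas above for the Katz valuations of the quotients $E/C$. For $p \ge 5$ this is careful bookkeeping with the canonical subgroup; the genuinely delicate part is $p \in \{2, 3\}$, where the Hasse invariant admits no lift to a level-one modular form holomorphic at infinity over $\Zp$, and one must run Katz's arguments with $\oE_4$, $\oE_6$ and the Eisenstein-series congruences of Proposition~\ref{prop-E1-E2} in place of $\oE_{p-1}$.
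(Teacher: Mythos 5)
Your proposal follows essentially the same route as the paper's proof: reduce to $n_0=1$ via multiplicativity together with the invariance of Katz' valuation under prime-to-$p$ isogenies (the paper's Lemma~\ref{invarv}), reduce to $m=1$ by matching the Hecke recursion~\eqref{eq-Hecke-Tpr} against the recursive definition of $\uptau_m$, and settle $m=1$ by the Katz--Lubin canonical-subgroup valuation formulas of Lemma~\ref{l:sups canonical subgroup}, including the degenerate value $(p+1)\left[\frac{1}{p+1}\right]$ at the endpoint $x=\frac{p}{p+1}$. The one caveat is that the unconditional identity $\kval(E/C)=\kval(E)$ you invoke as a ``standard fact'' is stronger than what the paper establishes (Lemma~\ref{invarv} assumes $\kval(E)<1$, resp.\ $<\frac{2p-1}{2p}$ for $p=2,3$, precisely because $\oE_{p-1}$, $\oE_4$, $\oE_6$ only approximate the Hasse invariant to bounded $p$-adic precision); since only the truncation $\kproj$ enters the statement, the bounded version suffices, so this is a harmless imprecision rather than a gap.
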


  See Figure~\ref{f:katz kite} for the graph of the correspondence~$\uptau_1$ and Lemma~\ref{l:katz hyper kite} in Section~\ref{ss:supersingular orbits} for a formula of~$\uptau_m$ for every~$m \ge 0$.

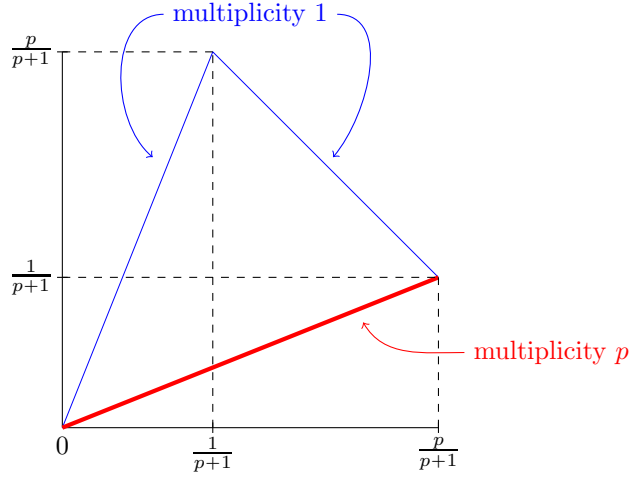
\begin{figure}[htp]
\centering
  \begin{tikzpicture}

% horizontal axis
\draw[-] (0,0) -- (5,0) node[anchor=north] {$\frac{p}{p+1}$};
% labels
\draw (2,2pt) -- (2,-2pt)
      (5,2pt) -- (5,-2pt);

\draw	(0,0) node[anchor=north] {0}
		(2,0) node[anchor=north] {$\frac{1}{p+1}$}
		(5,0) node[anchor=north] {};
% ranges

% vertical axis
\draw[-] (0,0) -- (0,5) node[anchor=east] {$\frac{p}{p+1}$};
% nominal speed
%\draw[dotted] (2,0) -- (2,4);

\draw (-2pt,2) -- (2pt,2)
      (-2pt,5) -- (2pt,5);

\draw	(0,5) node[anchor=east] {}
        (0,2) node[anchor=east] {$\frac{1}{p+1}$};

% Us
\draw[color=blue,very thin] (0,0) -- (2,5) -- (5,2);
%\draw (1,1.5) node {$y$}; %label
\draw[color=red,ultra thick] (0,0) -- (5,2);

%\draw[thick,dashed] (0,3) -- (2,3) parabola[bend at end] (6,1);
%\draw (2.5,3) node {$\varPsi_s$}; %label

\draw[dashed] (2,0) -- (2,5)
              (5,0) -- (5,2)
              (0,2) -- (5,2)
              (0,5) -- (2,5);

\node[color=blue](m1) at (2.5,5.5) {multiplicity 1};

\draw[color=blue,->] (m1.east) to[out=0,in=50] (3.6,3.6);
\draw[color=blue,->] (m1.west) to[out=180,in=140] (1.2,3.6);

\node[color=red](m2) at (6.5,1) {multiplicity $p$};

\draw[color=red,->] (m2.west)  to[out=-180,in=-60] (4,1.4);

\end{tikzpicture}
\caption{Graph of the correspondence~$\uptau_1$ representing the action of~$T_p$ in terms of the projection~$\kproj$.}
\label{f:katz kite}
\end{figure}

The proof of Proposition~\ref{p:katz kite} is given after a couple of lemmas.
The following is a reformulation, in our setting, of a theorem of Katz--Lubin on the existence of canonical subgroups for elliptic curves that are not too supersingular, see~\cite[Theorems~3.1 and~3.10.7]{Kat73} and also~\cite[Theorem~3.3]{Buz03}.
\begin{lemma}
  \label{l:sups canonical subgroup}
  For every elliptic curve~$E$ in~$\Sups$ that is not too supersingular there is a unique subgroup~$H(E)$ of~$E$ of order~$p$ satisfying
  \begin{equation}
    \label{e:sups canonical subgroup A}
    \kproj(E/H(E))
    =
    \begin{cases}
      p\kval(E)
      & \text{if $\kval(E) \in \left] 0, \frac{1}{p + 1} \right]$};
        \\
        1 - \kval(E)
        & \text{if $\kval(E) \in \left] \frac{1}{p + 1}, \frac{p}{p + 1} \right[$}.
      \end{cases}
    \end{equation}
    Furthermore, $H(E)$ is also uniquely characterized by the property that for every subgroup~$C$ of~$E$ of order~$p$ that is different from~$H(E)$, we have
  \begin{equation}
    \label{e:sups canonical subgroup B}
    \kval(E/C) = p^{-1} \kval(E).
  \end{equation}
In addition, the map
  \begin{displaymath}
\begin{array}{rccl}
  \t \colon & \left\{ E \in \Sups : \kval(E) < \frac{p}{p + 1} \right\}
  & \to & \Sups
  \\
  & E & \mapsto & \t(E) \= E/ H(E)
\end{array}  
\end{displaymath}
satisfies the following properties.
\begin{enumerate}
\item[$(i)$]
  Let~$E$ be in~$\Sups$ and let~$C$ be a subgroup of~$E$ of order~$p$.
  In the case~$\kval(E) < \frac{p}{p + 1}$, assume in addition that~$C \neq H(E)$.
  Then
  \begin{displaymath}
    \kval(E/C) = p^{-1}\kproj(E)
    \text{ and } \t(E/C) = E.
  \end{displaymath}
\item[$(ii)$]
  For~$E$ in~$\Sups$ satisfying~$\frac{1}{p + 1} < \kval(E) < \frac{p}{p + 1}$, we have~$\t^2(E) = E$.
\end{enumerate}
\end{lemma}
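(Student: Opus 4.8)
The plan is to obtain Lemma~\ref{l:sups canonical subgroup} as a translation of the Katz--Lubin theory of canonical subgroups into the language of Katz' valuation introduced in Section~\ref{ss:katz valuation}. I would phrase the input facts directly in terms of $\kval$, which is legitimate because for $p \ge 5$ the parameter governing the Katz--Lubin theory is literally $\ord_p(\oE_{p-1}(E,\omega)) = \kval(E)$, while for $p = 2$ and $3$ Proposition~\ref{prop-E1-E2} in Appendix~\ref{s:lift Hasse 2 3} identifies the reductions of $\oE_4$ and $\oE_6$ with powers of the Hasse invariant, and hence shows that $\frac{1}{4}\ord_2(\oE_4(E,\omega))$ and $\frac{1}{3}\ord_3(\oE_6(E,\omega))$ are the correct normalization. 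From \cite[Theorems~3.1 and~3.10.7]{Kat73} (see also \cite[Theorem~3.3]{Buz03}), for $E$ over $\cO_p$ with supersingular reduction one has: (a) if $\kval(E) < \frac{p}{p+1}$, then among the $p+1$ subgroups of $E$ of order $p$ there is a canonical one $H(E)$, and $\kval(E/H(E)) = p\,\kval(E)$ when $\kval(E) < \frac{1}{p+1}$, $\kval(E/H(E)) = 1 - \kval(E)$ when $\frac{1}{p+1} < \kval(E) < \frac{p}{p+1}$, and $\kval(E/H(E)) \ge \frac{p}{p+1}$ when $\kval(E) = \frac{1}{p+1}$; (b) if $\kval(E) \ge \frac{p}{p+1}$ (too supersingular), then $\kval(E/C) = \frac{1}{p+1}$ for every subgroup $C$ of order $p$; and (c) for a degree-$p$ isogeny $\phi \colon E \to E'$ with $E'$ not too supersingular, the kernel of the dual isogeny $\hphi \colon E' \to E$ equals $H(E')$ precisely when $E$ is too supersingular or $\ker\phi \neq H(E)$.

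Granting this, \eqref{e:sups canonical subgroup A} follows by feeding the formulas in~(a) into $\kproj = \min\{\kval, \frac{p}{p+1}\}$: for $\kval(E) \in (0, \frac{1}{p+1}]$ one gets $\kproj(E/H(E)) = \min\{p\,\kval(E), \frac{p}{p+1}\} = p\,\kval(E)$, the value at the endpoint already being $\frac{p}{p+1}$ in accordance with $\kval(E/H(E)) \ge \frac{p}{p+1}$ there, and for $\kval(E) \in (\frac{1}{p+1}, \frac{p}{p+1})$ one has $1 - \kval(E) \in (\frac{1}{p+1}, \frac{p}{p+1})$, so $\kproj(E/H(E)) = \kval(E/H(E)) = 1 - \kval(E)$. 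The characterization~\eqref{e:sups canonical subgroup B} is the content of the remaining part of~(a): for $C \neq H(E)$ one has $\kval(E/C) = p^{-1}\kval(E)$, and since $p\,\kval(E)$, $1 - \kval(E)$, and any value $\ge \frac{p}{p+1}$ are all different from $p^{-1}\kval(E)$ whenever $0 < \kval(E) < \frac{p}{p+1}$, the subgroup $H(E)$ is the only one for which the other $p$ quotients take the common value $p^{-1}\kval(E)$; this gives both the assertion~\eqref{e:sups canonical subgroup B} and the claimed uniqueness.

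For item~$(i)$ I split according to~(a)--(b). If $\kval(E) < \frac{p}{p+1}$ and $C \neq H(E)$, then $\kval(E/C) = p^{-1}\kval(E) = p^{-1}\kproj(E)$ by~\eqref{e:sups canonical subgroup B}; this is $< \frac{1}{p+1} < \frac{p}{p+1}$, so $E/C$ is not too supersingular and $\t(E/C)$ is defined, while by~(c) (with $\ker\phi = C \neq H(E)$) the canonical subgroup of $E/C$ is $\ker\hphi$ for $\phi \colon E \to E/C$, whence $\t(E/C) = (E/C)/\ker\hphi = E$. If instead $\kval(E) \ge \frac{p}{p+1}$, then by~(b) every $E/C$ has $\kval(E/C) = \frac{1}{p+1} = p^{-1}\kproj(E)$, so $E/C$ is again not too supersingular and, by~(c) ($E$ too supersingular), $H(E/C) = \ker\hphi$, giving $\t(E/C) = E$. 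Finally, item~$(ii)$ asserts that $\t$ is an involution on $\{E \in \Sups : \frac{1}{p+1} < \kval(E) < \frac{p}{p+1}\}$: for such $E$, by~(a), $\kval(\t(E)) = 1 - \kval(E)$ again lies in $(\frac{1}{p+1}, \frac{p}{p+1})$, so $\t^2(E)$ is defined; taking $\phi \colon E \to \t(E)$, the $p$ non-canonical subgroups of $\t(E)$ yield quotients of valuation $p^{-1}\kval(\t(E)) < \frac{1}{p+1} < \kval(E)$, whereas $\t(E)/\ker\hphi = E$ has valuation $\kval(E)$, so $\ker\hphi$ is not one of them, forcing $\ker\hphi = H(\t(E))$ and $\t^2(E) = \t(E)/H(\t(E)) = E$.

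The principal difficulty here is bookkeeping rather than conceptual: one must keep careful track of the boundary case $\kval(E) = \frac{1}{p+1}$, where the canonical quotient enters the too-supersingular range with an a priori undetermined valuation — which is exactly why $\kproj$, and not $\kval$, appears in~\eqref{e:sups canonical subgroup A} and in item~$(i)$ — and, for $p = 2$ and $3$, one must reconcile the normalization of Katz' valuation in Section~\ref{ss:katz valuation} with the parameter used by Katz and Lubin, which is precisely the role of Proposition~\ref{prop-E1-E2}.
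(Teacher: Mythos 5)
Your proposal follows essentially the same route as the paper: reduce everything to the Katz--Lubin theory (Katz, Theorems~3.1 and~3.10.7, cf.\ Buzzard), restate it in terms of~$\kval$, and then do the bookkeeping with~$\kproj$. Your derivations of~\eqref{e:sups canonical subgroup A}, \eqref{e:sups canonical subgroup B}, $(i)$ and~$(ii)$ from your facts (a)--(c) are correct; in particular, the observation that $p^{-1}\kval(E)$ is never equal to $p\,\kval(E)$, to $1-\kval(E)$, or to any value $\ge \frac{p}{p+1}$ when $0<\kval(E)<\frac{p}{p+1}$ is exactly how the uniqueness is obtained in the paper.

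There are, however, three points where the paper does work that your plan skips. First, your fact (c) is misstated as a biconditional: when $\frac{1}{p+1}<\kval(E)<\frac{p}{p+1}$ and $\ker\phi=H(E)$, the target $E'$ is not too supersingular and one nevertheless has $\ker\hphi=H(E')$ (this is Katz's assertion that $H(E/H(E))=E[p]/H(E)$ in that range, and it is precisely what your own argument for~$(ii)$ re-proves via valuations --- so your (c) contradicts your~$(ii)$). Since you only ever invoke the ``if'' direction of (c), nothing breaks, but the statement should be corrected. Second, Katz's theorems are formulated for ``$r$-situations'' over complete discrete valuation rings, not for elliptic curves over~$\cO_p$; the paper justifies the application by choosing $r$ in~$\Cpunalg$ with $\ord_p(r)=\ord_p(\oE_{p-1}(E,\omega))$ and working over the ring of integers of a finite extension of~$\Cpun$. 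Third, for $p=2$ and~$3$ the input to Katz's Theorem~3.1 is a triple $(E,\omega,\alpha_n)$ including a level~$n$ structure, so a priori the resulting subgroup $H(E)$ could depend on the choice of~$\alpha_n$; the paper rules this out using the formal-group characterization of the canonical subgroup in Katz's Theorem~3.10.7(1). These are technical rather than conceptual gaps, but they are needed to make the citations legitimate.
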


\begin{proof}
  For~$E$ in~$\Sups$ that is not too supersingular, note that the uniqueness statements about~$H(E)$ follow from the fact that~\eqref{e:sups canonical subgroup A} and~\eqref{e:sups canonical subgroup B} imply that~$H(E)$ is the unique subgroup~$C$ of~$E$ of order~$p$ satisfying~$\kval(E/C) \neq p^{-1} \kval(E)$.

  Assume $p\geq 5$ and let~$E$ be an elliptic curve in~$\Sups$ that is not too supersingular, so that~$\kval(E) < \frac{p}{p + 1}$.
  Let~$\omega$ be a differential form in $\Omega^1_{E/\cO_p}(E)'$ and put $r_E\=\oE_{p-1}(E,\omega)\in \cO_p$.
  Since~$\Cpunalg$ and~$\C_p$ have the same valuation group we can find~$r\in \Cpunalg$ satisfying $\ord_p(r)=\ord_p(r_E)$.
  Then~$r$ lies in the ring of integers~$R_0$ of some finite extension of~$\Cpun$, and~$R_0$ is a complete discrete valuation ring of residue characteristic $p$ and generic characteristic zero. The triple $(E,\omega,r\, r_E^{-1})$ defines a $r$-situation in the sense of~\cite[Theorem~3.1]{Kat73} (see also~\cite[Section~2.2]{Kat73}) and therefore there is a canonical subgroup~$H(E)$ of~$E$ of order~$p$.
  Then~\cite[Theorem~3.10.7(2, 3)]{Kat73} implies~\eqref{e:sups canonical subgroup A} and~$(ii)$, see also the proof of~\cite[Theorem~3.3$(iii)$]{Buz03}, and~\eqref{e:sups canonical subgroup B} and~$(i)$ are given by~\cite[Theorem~3.10.7(5)]{Kat73}.
  Finally, note that for~$E$ in~$\Sups$ satisfying~$\kval(E) \ge \frac{p}{p + 1}$, the assertion~$(i)$ follows from~\cite[Theorem~3.10.7(4)]{Kat73}.
  This completes the proof of the proposition in the case~$p \ge 5$.

It remains to prove the proposition in the cases~$p = 2$ and~$p = 3$.
We only give the proof in the case~$p=2$, the case~$p = 3$ being analogous.
Let~$\oE_1$ be an algebraic modular form of weight one and level~$n_1$, with $3 \leq n_1 \leq 11$ odd, holomorphic at infinity and defined over~$\Z [1/n_1]$ whose reduction modulo~$2$ is~$A_1$, see Appendix~\ref{s:lift Hasse 2 3} for details on level structures.
Let~$E$ in~$\Supstwo$ be an elliptic curve that is not too supersingular, let~$\omega$ be a differential form in~$\Omega^1_{E/\cO_2}(E)'$ and~$\alpha_{n_1}$ a level~$n_1$ structure on~$E$ over~$\cO_2$.
  By Proposition~\ref{prop-E1-E2} and our hypothesis~$\kvaltwo(E)<\frac{2}{3}$, we have
  $$ \ord_2(\oE_1(E,\omega,\alpha_{n_1}))
  =
  \kvaltwo(E)
  <
  \frac{2}{3}. $$
  Then, \cite[Theorem~3.1]{Kat73} gives the existence of~$H(E)$ which might depend on the choice of~$\alpha_{n_1}$.
  The fact that~$H(E)$ depends only on~$E$ follows from the characterization in~\cite[Theorem~3.10.7(1)]{Kat73} of the canonical subgroup as the subgroup of order~$2$ containing the unique point corresponding to the solution with valuation $1-\kvaltwo(E)$ of the equation~$[2](X)=0$ in the formal group of~$E$ (here~$[2]$ denotes the multiplication by~$2$ map and~$X$ is a certain normalized parameter for the formal group).
Then~\eqref{e:sups canonical subgroup A}, \eqref{e:sups canonical subgroup B}, $(i)$ and $(ii)$ follow from~\cite[Theorem~3.10.7]{Kat73} as in the case~$p \ge 5$ above.
This completes the proof of the lemma.
\end{proof}

\begin{lemma}
  \label{invarv}
Let~$E$ in~$\Sups$ be such that 
\begin{displaymath}
  \kval(E)
<
\begin{cases}
  1
  & \text{if } p\geq 5;
  \\
  \frac{2p - 1}{2p}
  & \text{if $p = 2$ or~$3$}.
\end{cases}
\end{displaymath}
Then for every subgroup~$C$ of~$E$ of order not divisible by~$p$, we have $\kval(E/C)=\kval(E)$.
\end{lemma}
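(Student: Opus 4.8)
The plan is to combine the modular-form description of Katz' valuation with the observation that a separable isogeny of degree prime to~$p$ is \emph{étale} over~$\cO_p$, which allows a comparison of~$\kval(E)$ and~$\kval(E/C)$ modulo~$p$.

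Write~$n \= \# C$, so that~$p \nmid n$, and let~$\phi \colon E \to E' \= E/C$ be the quotient isogeny, all regarded as defined over~$\cO_p$ (note that~$E'$ again has supersingular reduction). Since~$n$ is prime to the residue characteristic, the subgroup scheme~$C$ is étale over~$\cO_p$, so~$\phi$ is finite étale over~$\cO_p$ and reduces to a separable isogeny~$\tphi \colon \tE \to \tE'$. Consequently, for any generator~$\omega'$ of~$\Omega^1_{E'/\cO_p}(E')'$, its pull-back~$\omega \= \phi^{*} \omega'$ is a generator of~$\Omega^1_{E/\cO_p}(E)'$. Since Katz' valuation is independent of the chosen differential, for~$p \ge 5$ we have~$\kval(E) = \ord_p(\oE_{p-1}(E, \omega))$ and~$\kval(E') = \ord_p(\oE_{p-1}(E', \omega'))$, both sides lying in~$\cO_p$.

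The key point is the congruence~$\oE_{p-1}(E', \omega') \equiv \oE_{p-1}(E, \omega) \pmod{p \cO_p}$. To prove it, base change everything along~$\cO_p \to \cO_p / p \cO_p$: since~$p$ vanishes in~$\cO_p/p\cO_p$, this is an~$\F_p$-algebra, and the reduction of~$\oE_{p-1}$ modulo~$p$ is the Hasse invariant~$A_{p-1}$. Hence the images in~$\cO_p/p\cO_p$ of both sides of the asserted congruence are the values of~$A_{p-1}$ at the base changes of~$(E', \omega')$ and of~$(E, \phi^{*}\omega')$; and these coincide because the Hasse invariant is compatible with the étale isogeny obtained by reducing~$\phi$ — that is, the Hasse invariant of the source, computed with the pulled-back differential, equals that of the target, which is standard (it follows from the functoriality of the Cartier operator, equivalently of the Frobenius action on~$H^1(\mathcal{O})$, under étale pull-back, the pull-back being an isomorphism because~$\deg \phi$ is invertible). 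Now the hypothesis gives~$\ord_p(\oE_{p-1}(E, \omega)) = \kval(E) < 1 = \ord_p(p)$, so the ultrametric inequality applied to the congruence forces~$\ord_p(\oE_{p-1}(E', \omega')) = \ord_p(\oE_{p-1}(E, \omega))$, i.e.~$\kval(E') = \kval(E)$.

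For~$p = 2$ and~$p = 3$ the argument is the same, except that the weight-$(p-1)$ Hasse invariant has no level-one lift over~$\Zp$; here one uses instead the weight-$(p-1)$ algebraic modular forms with auxiliary level structure furnished by Proposition~\ref{prop-E1-E2}, which reduce modulo~$p$ to~$A_{p-1}$ and whose value at~$(E, \omega, \alpha)$ has valuation~$\kval(E)$ as soon as~$\kval(E) < \frac{2p-1}{2p}$ (in particular~$< 1$). After factoring~$\phi$ into a composition of isogenies of prime-power degree one may pick the auxiliary level prime to~$p$ and to~$\deg\phi$, so that~$\phi$ transports the level structure; then the same reduction modulo~$p\cO_p$ — using that the Hasse invariant ignores the level structure and is compatible with étale isogenies — together with~$\kval(E) < \frac{2p-1}{2p} < 1$ gives~$\kval(E') = \kval(E)$ at each step. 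The main obstacle is precisely this last bookkeeping for~$p = 2, 3$ (the auxiliary level and the exact statement of Proposition~\ref{prop-E1-E2}) together with the compatibility of the Hasse invariant with étale prime-to-$p$ isogenies over the non-reduced~$\F_p$-algebra~$\cO_p/p\cO_p$; once these are in place the valuation estimate is immediate.
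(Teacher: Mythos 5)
Your argument is correct, and it converges on the same pivot as the paper's proof --- the quantity computing $\kval$ changes by an element of $p\cO_p$ under a prime-to-$p$ isogeny because modulo $p$ it is the Hasse invariant, and the hypothesis $\kval(E)<1$ then wins by the ultrametric inequality --- but the mechanism is genuinely different. The paper never pulls back differentials or invokes functoriality of $A_{p-1}$; it works on formal groups. Choosing Katz-normalized parameters, it writes $[p]_E(X)=pX+aX^p+\cdots$ and $[p]_{E/C}(Y)=pY+a'Y^p+\cdots$ with $a\equiv A_{p-1}\bmod p\cO_p$ by \cite[Proposition~3.6.6]{Kat73}, uses that $\phi$ induces an isomorphism of formal groups $\phi(X)=t_1X+\cdots$ with $t_1\in\cO_p^{\times}$, and compares the coefficients of $X^p$ in $[p]_{E/C}\circ\phi=\phi\circ[p]_E$ to get $\ord_p(a')=\ord_p\bigl(a+t_1^{-1}t_p(p^p-p)\bigr)$, whence $\ord_p(a')=\ord_p(a)$. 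Your route is cleaner for $p\ge 5$, with one caveat you should make explicit: functoriality of Frobenius on $H^1(\cO)$ actually gives $A_{p-1}(E/C,\omega')=(\deg\phi)^{p-1}A_{p-1}(E,\phi^{*}\omega')$ (since $\phi^{*}\eta'=\deg(\phi)\,\eta$ under Serre duality and $F^{*}$ is $p$-linear), and the identity you want holds because $(\deg\phi)^{p-1}=1$ in the $\F_p$-algebra $\cO_p/p\cO_p$. The real structural difference appears at $p=2,3$: the paper's coefficient $a$ is intrinsic to $(E,\omega)$ and carries no level structure, so Proposition~\ref{prop-E1-E2} is invoked only to identify $\ord_p(a)$ with $\kval(E)$ and $\ord_p(a')$ with $\kval(E/C)$, and nothing needs to be transported along $\phi$. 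Your approach must move the auxiliary level structure across $\phi$, which forces the factorization into prime-power-degree isogenies and the choice of level coprime to $\deg\phi$ (for $p=2$ this choice is only possible after that factorization, the admissible levels being confined to $\{3,5,7,9,11\}$); as you note, each intermediate quotient still satisfies $\kval<\tfrac{2p-1}{2p}$ by the step just proved, so the induction closes. This bookkeeping is all sound, just heavier than what the formal-group computation requires.
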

\begin{proof}
  For~$E_0$ in~$\Ell(\C_p)$ and~$\zeta$ in~$\Z_p$, denote by~$[\zeta]_{E_0}$ the multiplication by~$\zeta$ map in the formal group of~$E_0$.

  Put~$E' \= E/C$ and denote by~$\phi \colon E \to E'$ an isogeny with kernel~$C$.
  Let~$X$ (resp. $Y$) be a parameter of the formal group of~$E$ (resp. $E'$), such that for any $(p-1)$-th root of unity~$\zeta\in \Z_p$ we have $[\zeta]_E (X)=\zeta X$ (resp. $[\zeta]_{E'} (Y)=\zeta Y$), see~\cite[Lemma~3.6.2(2)]{Kat73}.
  Let~$\omega$ be a differential form in~$\Omega^1_{E/\cO_p}(E)'$ whose expansion in the parameter~$X$ is of the form
  $$ \omega
  =
  \left(1+\sum_{n= 1}^{\infty}a_nX^n \right)dX, $$
  where~$a_n\in \cO_p$ for all~$n \ge 1$.
  Then, by~\cite[Proposition~3.6.6]{Kat73} we have
  $$ [p]_E(X)
  =
  pX + aX^p + \sum_{m\geq 2} c_m X^{m(p-1)+1}, $$
where $c_m \in \cO_p$ for all $m\geq 2$ and~$a \in \cO_p$ satisfies
\begin{equation}
  \label{e:p-th coefficient}
  a\equiv A_{p-1}( (E,\omega)_{\cO_p/p\cO_p}) \mod p\cO_p,
\end{equation}
where $(E,\omega)_{\cO_p/p\cO_p}$ denotes the base change of~$(E,\omega)$ to $\cO_p/p\cO_p$.
Similarly,
$$[p]_{E'}(Y) = pY + a'Y^p + \sum_{m\geq 2} c_m' Y^{m(p-1)+1},$$
where $c_m' \in \cO_p$ for all $m\geq 2$ and~$a' \in \cO_p$ satisfies, for some differential form~$\omega'$ of~$\Omega^1_{E'/\cO_p}(E)'$,
\begin{equation}
  \label{e:p-th coefficient'}
  a' \equiv A_{p-1}( (E',\omega')_{\cO_p/p\cO_p}) \mod p\cO_p.
\end{equation}
Since the order of~$\Ker(\phi) = C$ is not divisible by~$p$, the isogeny~$\phi$ induces an isomorphism of formal groups of the form
$$\phi(X)=\sum_{n=1}^{\infty}t_nX^n, $$
where~$t_n\in \cO_p$ for all~$n \ge 1$.
Since $\phi(X)$ is invertible, we must have $t_1\in \cO_p^{\times}$. By the identity $[p]_{E'}\circ \phi=\phi \circ [p]_E$ we get
\begin{multline*}
  p(t_1X+t_2X^2+t_3X^3+\ldots ) + a' (t_1X+t_2X^2+t_3X^3+\ldots )^p+\ldots
  \\ =
  t_1(pX+aX^p+\ldots)+t_2(pX+aX^p+\ldots)^2+\ldots
\end{multline*}
Comparing the coefficients of~$X^p$, we get
$$pt_p + a't_1^p
=
t_1a + t_pp^p.$$
Using that $t_1\in \cO_p^{\times}$ we obtain
\begin{equation}
  \label{e:equality of p-th}
  \ord_p(a')
=
\ord_p(a't_1^{p - 1})
=
\ord_p(a + t_1^{-1} t_p(p^p-p)).
\end{equation}

In the case~$p \ge 5$, \eqref{e:p-th coefficient} implies~$\ord_p(a-\oE_{p-1}(E,\omega))\geq 1$, so by our hypothesis~$\kval(E) < 1$ we have~$\ord_p(a) = \kval(E) < 1$.
Combined with~\eqref{e:equality of p-th}, this implies~$\ord_p(a') = \ord_p(a) = \kval(E) < 1$.
Finally, by~\eqref{e:p-th coefficient'} we have~$\ord_p(a' - \oE_{p-1}(E', \omega'))\geq 1$, so~$\kval(E') = \ord_p(a') = \kval(E)$.
This proves the lemma in the case~$p \ge 5$.
For the case $p = 2$ or~$3$, \eqref{e:p-th coefficient}, \eqref{e:p-th coefficient'}, \eqref{e:equality of p-th}, our hypothesis~$\kval(E) < \frac{2p - 1}{2p}$ and Proposition~\ref{prop-E1-E2} imply in a similar way
\begin{displaymath}
  \ord_p(a) =\kval(E) < \frac{2p - 1}{2p},
  \ord_p(a') = \ord_p(a)
  \text{ and }
  \kval(E') = \ord_p(a').
\end{displaymath}
This completes the proof of the lemma.
\end{proof}

\begin{proof}[Proof of Proposition~\ref{p:katz kite}]
  By the multiplicative property of Hecke correspondences~\eqref{eq-multiplicativity-Hecke-operators} and Lemma~\ref{invarv}, it is sufficient to consider the case~$n_0 = 1$.
  Moreover, in view of~\eqref{eq-Hecke-Tpr} and the recursive definition of~$\uptau_m$ for~$m \ge 2$, it is sufficient to consider the case~$m = 1$.
  For~$E$ in~$\Sups$ satisfying~$\kproj(E) < \frac{p}{p + 1}$, this is given by~\eqref{e:sups canonical subgroup A} and~\eqref{e:sups canonical subgroup B} in Lemma~\ref{l:sups canonical subgroup}, together with the fact that~$\deg(T_p(E)) = p + 1$.
  Finally, for~$E$ in~$\Sups$ satisfying~$\kproj(E) = \frac{p}{p + 1}$ the desired statement follows from Lemma~\ref{l:sups canonical subgroup}$(i)$.
  This completes the proof of the proposition.
\end{proof}

\subsection{Proof of Theorem~\ref{t:supersingular CM points}}
\label{ss:proof of CM-sups}
The proof of Theorem~\ref{t:supersingular CM points} is below, after a couple of lemmas.

\begin{lemma}\label{lemma-CM-vp}
  Let $D<0$ be a discriminant and let~$E$ and~$E'$ be in~$\supp(\Lambda_D)$.
  Then, for every integer $m\geq 1$ there exists an isogeny $E\to E'$ of degree coprime to~$m$.
\end{lemma}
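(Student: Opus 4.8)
The plan is to reduce the statement to the classical fact that every ideal class of an imaginary quadratic order contains an invertible integral ideal of norm prime to any prescribed integer.

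First I would record that $E$ and $E'$, being \CM{} points of the same discriminant $D$, have endomorphism rings isomorphic to the \emph{same} order $\cO \= \cO_{d,f}$, where $d$ and $f$ denote the fundamental discriminant and the conductor of $D$. Since singular moduli are algebraic numbers, the curves $E$ and $E'$, all their finite subgroups, and all isogenies between them are defined over $\overline{\Q}$; fixing an embedding $\overline{\Q} \hookrightarrow \C$ I may therefore carry out the construction over $\C$ and transport the resulting isogeny back to $\C_p$ at the end. Over $\C$ I would write $E \simeq \C/\mathfrak{a}$ and $E' \simeq \C/\mathfrak{b}$, with $\mathfrak{a}$ and $\mathfrak{b}$ proper (hence invertible) fractional $\cO$-ideals.

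The next step is to describe the possible degrees of isogenies $E \to E'$. Every nonzero such isogeny has the form $z \mapsto \lambda z$ for some $\lambda \in \mathfrak{b}\mathfrak{a}^{-1}\setminus\{0\}$, and its degree equals the index $[\mathfrak{b}:\lambda\mathfrak{a}]$. Writing $\mathfrak{c} \= \mathfrak{a}\mathfrak{b}^{-1}$, the product $\lambda\mathfrak{c}$ is an invertible integral $\cO$-ideal lying in the fixed ideal class $[\mathfrak{c}]$, and since multiplication by an invertible ideal preserves indices one gets $[\mathfrak{b}:\lambda\mathfrak{a}] = [\cO:\lambda\mathfrak{c}] = N(\lambda\mathfrak{c})$; conversely, every invertible integral ideal in the class $[\mathfrak{c}]$ arises as $\lambda\mathfrak{c}$ for a suitable $\lambda$. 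Hence the set of degrees of isogenies $E \to E'$ is precisely $\{\,N(\mathfrak{n}) : \mathfrak{n}\ \text{an invertible integral}\ \cO\text{-ideal},\ [\mathfrak{n}]=[\mathfrak{c}]\,\}$. (One could instead phrase this through the simply transitive action of $\mathrm{Pic}(\cO)$ on $\supp(\Lambda_D)$ by $\mathfrak{n}$-isogenies, whose degrees are the ideal norms, thereby avoiding any mention of $\C$.)

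To finish, I would invoke the standard fact that every ideal class of $\cO$ contains an invertible integral ideal prime to $m$; any such ideal $\mathfrak{n}$ in the class $[\mathfrak{c}]$ then has $\gcd(N(\mathfrak{n}),m)=1$, and by the preceding step there is an isogeny $E \to E'$ of degree $N(\mathfrak{n})$, which, transported back to $\C_p$, proves the lemma. The step requiring the most care is this last one, because $\cO$ need not be a Dedekind domain: I would handle it in the usual way, replacing $m$ by $mf$, observing that the prime ideals of $\cO$ not dividing $f$ are invertible with the same localizations as in the maximal order of $\Q(\sqrt d)$, and then running a prime-avoidance/approximation argument inside the group of $\cO$-ideals prime to $f$ — or simply citing \cite[Chapter~7]{Cox13}.
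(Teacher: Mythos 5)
Your proposal is correct and follows essentially the same route as the paper: both pass through the complex uniformization $E\simeq \C/\mathfrak{a}$, $E'\simeq \C/\mathfrak{a}'$ with proper fractional $\cO_{d,f}$-ideals, identify $\Hom(E,E')$ with $\mathfrak{a}'\mathfrak{a}^{-1}$ so that degrees become norms, and then invoke the classical representation fact from \cite{Cox13}. The only (immaterial) difference is that the paper phrases the last step in the language of binary quadratic forms---a primitive form of discriminant $D$ represents integers coprime to $m$ \cite[Lemma~2.25]{Cox13}---whereas you use the equivalent ideal-theoretic statement that every class of invertible $\cO_{d,f}$-ideals contains an integral ideal of norm prime to $m$.
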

\begin{proof}
  Denote by~$d$ and~$f$ the fundamental discriminant and conductor of~$D$, respectively, and fix a field isomorphism $\C_p\simeq \C$.
  Since~$E$ and~$E'$ are \CM{} with ring of endomorphisms isomorphic to~$\cO_{d, f}$, we can find proper fractional $\cO_{d,f}$-ideals~$\mathfrak{a}$ and $\mathfrak{a}'$ in~$\Q(\sqrt{D})$ for which we have the complex uniformizations $E(\C) \simeq \C/\mathfrak{a}$ and $E'(\C) \simeq \C/\mathfrak{a}'$.
  Then there is a natural identification
  \begin{displaymath}
    \iota \colon \Hom(E,E')
    \to
    \mathfrak{a}' \mathfrak{a}^{-1}
    =
    \{ \lambda \in \C : \lambda \mathfrak{a} \subseteq \mathfrak{a}' \}.
  \end{displaymath}
  Without loss of generality, assume~$\mathfrak{a}' \subset \mathfrak{a}$, and choose $\Z$-generators $\alpha$ and~$\beta$ of the ideal~$\mathfrak{a}' \mathfrak{a}^{-1}$ of~$\cO_{d, f}$.
  Then
  \begin{displaymath}
    f(x, y)
    \=
    (\alpha x - \beta y)\overline{(\alpha x - \beta y)} / [\cO_{d, f} : \mathfrak{a}' \mathfrak{a}^{-1}]
  \end{displaymath}
  is a positive definite primitive binary quadratic form with integer coefficients and discriminant~$d$ \cite[Theorem~7.7 and Exercise~7.17]{Cox13}.
  Moreover, there are integers~$x_0$ and~$y_0$ such that~$f(x_0, y_0)$ is coprime to~$m$ \cite[Lemma~2.25]{Cox13}.
  If we denote by~$\phi_0$ the isogeny in~$\Hom(E, E')$ satisfying~$\lambda_0 \= \iota(\phi_0) = \alpha x_0 - \beta y_0$, then
  \begin{multline*}
    \deg(\phi_0)
    =
    \# \Ker(\phi_0)
    =
    [\mathfrak{a}' : \lambda_0 \mathfrak{a}]
    =
    [\mathfrak{a}' \mathfrak{a}^{-1} : \lambda_0 \cO_{d, f}]
    \\ =
    [\cO_{d, f} : \lambda_0 \cO_{d, f}] / [\cO_{d, f} : \mathfrak{a}' \mathfrak{a}^{-1}]
    =
    \lambda_0 \overline{\lambda_0} / [\cO_{d, f} : \mathfrak{a}' \mathfrak{a}^{-1}]
    =
    f(x_0, y_0).
  \end{multline*}
  This proves that~$\deg(\phi_0)$ is coprime to~$m$, and completes the proof of the lemma.
\end{proof}

When restricted to~$p \ge 3$, the following lemma is~\cite[Lemma~4.8]{ColMcM06}.

\begin{lemma}
  \label{vcm}
  Let~$D$ be a $p$-supersingular discriminant and~$m \ge 0$ the largest integer such that~$p^m$ divides the conductor of~$D$.
  Then for every~$E$ in~$\supp(\Lambda_D)$ we have
  $$\kproj(E)
  =
  \begin{cases}
    \frac{1}{2} \cdot p^{-m}
    & \text{ if }p \text{ ramifies in } \Q(\sqrt{D});
    \\
    \frac{p}{p + 1} \cdot p^{-m}
    & \text{ if $p$  is inert in } \Q(\sqrt{D}).
  \end{cases} $$
\end{lemma}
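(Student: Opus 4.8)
The plan is to reduce the computation of $\kproj$ on $\supp(\Lambda_D)$ to two inputs: a formal-group computation when $D$ is a fundamental discriminant, and an explicit evaluation of the correspondences $\uptau_m$ of Proposition~\ref{p:katz kite} on a single point of $\left[0,\tfrac{p}{p+1}\right]$. For $p\ge 3$ the whole lemma is contained in~\cite[Lemma~4.8]{ColMcM06}, but the argument below is uniform in $p$. Write $d$ for the fundamental discriminant of $D$ and $p^m f_0$ for its conductor, with $p\nmid f_0$, and set $w_m\=\tfrac12 p^{-m}$ if $p$ ramifies in $\Q(\sqrt{d})$ and $w_m\=\tfrac{p}{p+1}p^{-m}$ if $p$ is inert. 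I will show $(\kproj)_*(\Lambda_D)=\deg(\Lambda_D)\cdot[\,w_m]$; as $\Lambda_D$ is effective this forces $\kproj$ to be constant equal to $w_m$ on $\supp(\Lambda_D)$, which is the assertion. The first reduction removes the dependence on $f_0$: given $E\in\supp(\Lambda_D)$, writing $E(\C)=\C/\mathfrak a$ for a proper $\cO_{d,p^m f_0}$-ideal $\mathfrak a$, the inclusion $\mathfrak a\subseteq \cO_{d,p^m}\mathfrak a$ yields a cyclic isogeny $E\to E_0$ of degree $f_0$ onto a curve $E_0\in\supp(\Lambda_{d p^{2m}})$, so $E_0\in\supp(T_{f_0}(E))$; since $p\nmid f_0$, Proposition~\ref{p:katz kite} (with exponent $0$ and $n_0=f_0$) gives $(\kproj)_*(T_{f_0}(E))=\sigma_1(f_0)[\kproj(E)]$, whence $\kproj(E)=\kproj(E_0)$. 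It therefore suffices to prove $(\kproj)_*(\Lambda_{d p^{2m}})=\deg(\Lambda_{d p^{2m}})\cdot[\,w_m]$ for every $m\ge 0$.

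The base case $m=0$ (so $D=d$ is fundamental) I would handle with formal groups. Let $E\in\supp(\Lambda_d)$, so $E$ has supersingular reduction and $\End(E)=\cO_{d,1}$; since $E$ has supersingular reduction its formal group $\cF_E$ has height $2$ over $\Z_p$, and the faithful action of $\cO_{d,1}$ on $\cF_E$ extends to an embedding $\cO_{K_p}\=\cO_{d,1}\otimes\Z_p\hookrightarrow\End(\cF_E)$, where $K_p\=\Q_p(\sqrt{d})$ is the unramified (resp.\ ramified) quadratic extension of $\Q_p$ when $p$ is inert (resp.\ ramifies). Thus $\cF_E$ is a formal $\cO_{K_p}$-module of $\cO_{K_p}$-height one, hence, over $\cO_p$, a Lubin--Tate formal group for $K_p$. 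Writing $[p]_{\cF_E}(X)=pX+aX^p+\cdots$ in a parameter adapted to a differential $\omega$ as in the proof of Lemma~\ref{invarv}, so that $a\equiv\oE_{p-1}\big((E,\omega)_{\cO_p/p\cO_p}\big)\bmod p\cO_p$ for $p\ge 5$ (and the analogous congruence through Proposition~\ref{prop-E1-E2} for $p=2,3$), the Lubin--Tate description $[p]=[\pi_{K_p}^{e}\cdot u]$ ($u$ a unit, $e=e(K_p/\Q_p)$) gives $\ord_p(a)$ at once: if $p$ is inert then $e=1$ and $[p](X)\equiv X^{p^{2}}\bmod p$ has no $X^p$-term, so $\ord_p(a)\ge 1$; if $p$ ramifies then $e=2$, the $X^p$-coefficient of $[\pi_{K_p}]^{\circ 2}$ is $\pi_{K_p}+\pi_{K_p}^{p}=\pi_{K_p}\big(1+\pi_{K_p}^{p-1}\big)$ of valuation $\tfrac12$, and every other contribution to $a$ has valuation $\ge 1>\tfrac12$, so $\ord_p(a)=\tfrac12$. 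Since $\oE_{p-1}(E,\omega)\equiv a\bmod p$ one concludes (as in Lemma~\ref{invarv}) that $\kproj(E)=\min\!\big(\tfrac{p}{p+1},\ord_p(a)\big)$, which is $\tfrac12=w_0$ if $p$ ramifies and $\tfrac{p}{p+1}=w_0$ if $p$ is inert. As $E$ was arbitrary, $(\kproj)_*(\Lambda_d)=h(d)[w_0]$.

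For $m\ge 1$, apply $(\kproj)_*$ to Zhang's formula~\eqref{e:pm Zhang} (or~\eqref{e:p Zhang} when $m=1$) with $f=1$, which expresses $\Lambda_{d p^{2m}}$ as a $\Z$-combination of the divisors $T_{p^k}\!\big(\tfrac{\Lambda_d}{w_{d,1}}\big)$ with $k\in\{m,m-1,m-2\}$; using Proposition~\ref{p:katz kite} in the form $(\kproj)_*\circ T_{p^k}|_{\Sups}=\uptau_k\circ(\kproj)_*$ and the base-case identity $(\kproj)_*(\Lambda_d)=h(d)[w_0]$, one obtains (with the conventions $\uptau_0=\mathrm{id}$, $\uptau_{-1}=0$)
\begin{displaymath}
  (\kproj)_*(\Lambda_{d p^{2m}})
  =
  \frac{h(d)}{w_{d,1}}\cdot
  \begin{cases}
    (\uptau_m-\uptau_{m-1})\big([\tfrac12]\big) & \text{if } p \text{ ramifies in } \Q(\sqrt{d}),
    \\
    (\uptau_m-\uptau_{m-2})\big([\tfrac{p}{p+1}]\big) & \text{if } p \text{ is inert in } \Q(\sqrt{d}).
  \end{cases}
\end{displaymath}
A direct induction on $m$ from the recursion $\uptau_m=\uptau_1\circ\uptau_{m-1}-p\,\uptau_{m-2}$ and the explicit $\uptau_1$ evaluates these: in the ramified case $\uptau_m([\tfrac12])=\sum_{i=0}^{m}p^{i}[\tfrac12 p^{-i}]$, so the bracket equals $p^{m}[\tfrac12 p^{-m}]=p^{m}[w_m]$; in the inert case $\uptau_m([\tfrac{p}{p+1}])=\uptau_{m-2}([\tfrac{p}{p+1}])+p^{m-1}(p+1)[\tfrac{1}{p^{m-1}(p+1)}]$ for $m\ge 1$, so the bracket equals $p^{m-1}(p+1)[w_m]$ (note $\tfrac{1}{p^{m-1}(p+1)}=\tfrac{p}{p+1}p^{-m}$). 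In both cases $(\kproj)_*(\Lambda_{d p^{2m}})$ is a positive integer multiple of $[w_m]$, and comparing degrees it equals $\deg(\Lambda_{d p^{2m}})[w_m]$, which completes the proof.

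The main obstacle is the base case. One must justify that $\cF_E$ really is a Lubin--Tate formal group for $K_p$ over $\cO_p$ — this is standard Lubin--Tate theory (height-one formal $\cO_{K_p}$-modules over $\cO_p$ form a single isomorphism class), but it must be stated with care — and, in the ramified case, one must check that the valuation-$\tfrac12$ term $\pi_{K_p}(1+\pi_{K_p}^{p-1})$ in the $X^p$-coefficient of $[\pi_{K_p}]^{\circ 2}$ is not cancelled by the other contributions; this follows from the observation that $[\pi_{K_p}]^{\circ 2}(X)$ has nonzero coefficients only in degree $1$ and in degrees $\ge p$, so that all remaining contributions to $a$ involve a $p$-fold product of linear terms and hence have valuation $\ge 1$. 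One must also carry this through Proposition~\ref{prop-E1-E2} when $p=2,3$. By comparison, the reduction removing $f_0$ and the computation of the $\uptau_m$ on a point are elementary.
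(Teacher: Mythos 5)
Your proof is correct, and the reduction to conductor $p^m$ and the induction on $m$ via Zhang's formula, Proposition~\ref{p:katz kite} and the explicit evaluation of $\uptau_m$ at $\tfrac12$ and $\tfrac{p}{p+1}$ match the paper's argument (your computed values of $(\uptau_m-\uptau_{m-1})([\tfrac12])$ and $(\uptau_m-\uptau_{m-2})([\tfrac{p}{p+1}])$ agree with what Lemma~\ref{l:katz hyper kite} gives). Where you genuinely diverge is the base case $m=0$: you compute $\ord_p$ of the $X^p$-coefficient of $[p]$ directly, by identifying the formal group of a curve with CM by the maximal order and supersingular reduction as a Lubin--Tate formal group for $\Q_p(\sqrt d)$, getting valuation $\tfrac12$ in the ramified case and $\ge 1$ in the inert case. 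The paper instead avoids all formal-group input here: having already established (via Lemma~\ref{lemma-CM-vp} and Lemma~\ref{invarv}) that $\kproj$ is constant on $\supp(\Lambda_d)$ and on $\supp(\Lambda_{dp^2})$, it reads off $\kproj(d)$ from the combinatorics of $\uptau_1$ alone --- in the inert case $\Lambda_{dp^2}=T_p(\Lambda_d/w_{d,1})$ forces $\uptau_1(\kproj(d))$ to be supported at a single point, which happens only at $x=\tfrac{p}{p+1}$; in the ramified case $\kproj(d)\in\supp(\uptau_1(\kproj(d)))$, which happens only at $x=\tfrac12$. The paper's route is shorter and self-contained given what has already been proved; yours is more computational but more robust (it is essentially the Coleman--McMurdy argument, works point by point without needing constancy of $\kproj$ on $\supp(\Lambda_d)$ as an input, and the Lubin--Tate mechanism is one the paper itself invokes in Remark~\ref{r:quasi-canonic}). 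The two points you flag as needing care are exactly the right ones, and both are fine: the quotient $\cO_{d,p^m}\mathfrak a/\mathfrak a$ is killed by $f_0$, so the isogeny degree is coprime to $p$ (which is all Proposition~\ref{p:katz kite} needs), and the valuation of the $X^p$-coefficient is insensitive to admissible changes of parameter as long as it is $<1$, by the same manipulation as in the proof of Lemma~\ref{invarv}.
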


\begin{proof}
  Let~$d$ be the fundamental discriminant of~$D$ and~$f \ge 1$ the integer such that the conductor of~$D$ is equal to~$p^m f$, so~$D = d (fp^m)^2$ and~$f$ is not divisible by~$p$.
  By Lemma~\ref{invarv} and Lemma~\ref{lemma-CM-vp} with~$m = p$, we deduce that for~$E$ in~$\supp(\Lambda_D)$ the number~$\kproj(D) \= \kproj(E)$ is independent of~$E$.
  By Zhang's formula~\eqref{e:general Zhang formula} with~$\wtf = p^m$ it follows that there exists an isogeny of degree~$f$ from some elliptic curve in~$\supp(\Lambda_{dp^{2m}})$ to an elliptic curve in~$\supp(\Lambda_D)$.
  We conclude from Lemma~\ref{invarv} that $\kproj(D)=\kproj(dp^{2m})$.
  Thus, it is enough to prove the lemma in the case where~$f = 1$.

  We start with~$m = 0$ and~$m = 1$.
  By~\eqref{e:p Zhang} with~$f = 1$ and Proposition~\ref{p:katz kite} with~$m = 1$ and~$n_0 = 1$, we have
  \begin{displaymath}
    \supp(\uptau_1(\kproj(d)))
    =
    \begin{cases}
      \{ \kproj(d), \kproj(dp^2) \}
      & \text{if~$p$ ramifies in $\Q(\sqrt{d})$};
      \\
      \{ \kproj(dp^{2}) \}
      & \text{if~$p$ is inert in $\Q(\sqrt{d})$}.
    \end{cases}
  \end{displaymath}
  From the definition of~$\uptau_1$ we have that~$\frac{p}{p + 1}$ is the only value of~$x$ in~$\left] 0, \frac{p}{p + 1} \right]$ such that~$\uptau_1(x)$ is supported on a single point.
  We conclude that if~$p$ is inert in~$\Q(\sqrt{d})$, then~$\kproj(d) = \frac{p}{p + 1}$ and therefore $\kproj(dp^2)=\tfrac{1}{p+1}$.
  On the other hand, $\frac{1}{2}$ is the only value of~$x$ in~$\left] 0, \frac{p}{p + 1} \right]$ satisfying~$x \in \supp(\uptau_1(x))$.
  So, if~$p$ ramifies in~$\Q(\sqrt{d})$, then~$\kproj(d) = \frac{1}{2}$ and therefore~$\kproj(dp^{2})=\frac{1}{2}p^{-1}$.
This completes the proof of the lemma when~$m = 0$ and~$m = 1$.
Assume~$m \ge 2$ and note that by~\eqref{e:pm Zhang} with~$f = 1$ and by Proposition~\ref{p:katz kite} with~$n_0 = 1$,
\begin{displaymath}
  \{ \kproj(d p^{2m}) \}
  =
  \begin{cases}
    \supp((\uptau_m - \uptau_{m - 1})(\frac{1}{2}))
    & \text{if~$p$ ramifies in $\Q(\sqrt{d})$};
    \\
    \supp((\uptau_m - \uptau_{m - 2})(\frac{p}{p + 1}))
    & \text{if~$p$ is inert in $\Q(\sqrt{d})$}.
  \end{cases}
\end{displaymath}
From the definition of~$\uptau_m$, we see that the right-hand side contains~$\frac{1}{2} \cdot p^{-m}$ if~$p$ ramifies in~$\Q(\sqrt{d})$ and~$\frac{p}{p + 1} \cdot p^{-m}$ if~$p$ is inert in~$\Q(\sqrt{d})$.
This proves~$\kproj(d p^{2m}) = \frac{1}{2} \cdot p^{-m}$ in the former case and~$\kproj(d p^{2m}) = \frac{p}{p + 1} \cdot p^{-m}$ in the latter, and completes the proof of the lemma.
\end{proof} 

\begin{proof}[Proof of Theorem~\ref{t:supersingular CM points}]
    To prove~$(i)$, note that by Proposition~\ref{vnorm} there is~$m > 0$ so that~$\kproj(\bfB(r)) \subseteq \left] \frac{p}{p + 1} \cdot p^{-m}, \frac{p}{p + 1} \right]$.
    Then by Lemma~\ref{vcm} for every $p$-supersingular discriminant~$D < 0$ satisfying~$\ord_p(D) \ge 2m + 3$ we have~$\supp((\kproj)_*(\Lambda_D)) \cap \kproj(\bfB(r)) = \emptyset$,
        and therefore~$\deg(\Lambda_D|_{\bfB(r)}) = 0$.
    On the other hand, if~$D$ is a $p$-ordinary discriminant, then~$\supp(\Lambda_D) \subset \Ord$ is disjoint from~$\bfB(r)$, and therefore~$\deg(\Lambda_D|_{\bfB(r)}) = 0$.
    This completes the proof of~$(i)$.

  To prove~$(ii)$, note that by Proposition~\ref{vnorm} there is~$r$ in~$(0, 1)$ so that
  \begin{displaymath}
    \kproj^{-1} \left( \left[ \frac{1}{2} \cdot p^{-m}, \frac{p}{p + 1} \right] \right)
    \subseteq
    \bfB(r). 
  \end{displaymath}
  Then by Lemma~\ref{vcm} for every $p$-supersingular discriminant~$D < 0$ satisfying~$\ord_p(D) \le m$ we have~$\supp((\kproj)_*(\Lambda_D)) \subseteq \left[ \frac{1}{2} \cdot p^{-m}, \frac{p}{p + 1} \right]$ and therefore~$\supp(\Lambda_D) \subseteq \bfB(r)$.
This completes the proof of~$(ii)$ and of the theorem.
\end{proof}

\begin{proof}[Proof of Theorem~\ref{t:CM points}]
  In the case where all the discriminants in the sequence~$(D_n)_{n = 1}^{\infty}$ are $p$-ordinary (resp. $p$-supersingular), Theorem~\ref{t:CM points} is a direct consequence of Theorem~\ref{t:ordinary CM points}$(ii)$ (resp. Theorem~\ref{t:supersingular CM points}), together with~\eqref{eq-Siegel-low-general} and Lemma~\ref{l:berkovich}.
  The general case follows from these two special cases.
\end{proof}

\section{Hecke orbits}
\label{s:orbits}

The goal of this section is to prove Theorem~\ref{t:orbits} on the asymptotic distribution of Hecke orbits.
The proof is divided into three complementary cases, according to whether the starting elliptic curve class has bad, ordinary or supersingular reduction.
These are stated as Propositions~\ref{p:bad orbits}, \ref{p:ordinary orbits} and~\ref{p:supersingular orbits} in Sections~\ref{ss:bad orbits}, \ref{ss:ordinary orbits} and~\ref{ss:supersingular orbits}, respectively.
In each case we prove a stronger quantitative statement.

\subsection{Hecke orbits in the bad reduction locus}
\label{ss:bad orbits}

In this section we prove a stronger version of the part of Theorem~\ref{t:orbits} concerning the bad reduction locus, which is stated as Proposition~\ref{p:bad orbits} below.
We start by recalling some well-known results on the uniformization of $p$-adic elliptic curves with multiplicative reduction.
See~\cite{Tat95} for the case of elliptic curves over complete discrete valued field, and~\cite{Roq70} for the case of complete valued fields (see also~\cite[Chapter~V, Theorem~3.1 and Remark~3.1.2]{Sil94a}).

Let~$z$ be in $\bfD(0,1)^{*}\=\{z' \in \C_p:0 < |z'|_p < 1\}$.
We obtain, by the specialization~$q=z$ in the Tate curve, an elliptic curve~$\Tate(z)$ over~$\C_p$ whose $j$-invariant satisfies
\begin{equation}\label{jtate}
| j(\Tate(z))|_p=|z|_p^{-1}>1,
\end{equation}
see \eqref{jtateq}. This defines
a bijective map
\begin{eqnarray*}
  \bfD(0,1)^{*} &\to & \Bad
  \\
  z & \mapsto & \Tate(z).
\end{eqnarray*}
Moreover, for each $z\in \bfD(0,1)^{*}$ there exists an explicit uniformization by~$\C_p^{\times}$ of the set of $\C_p$-points of $\Tate(z)$.
This uniformization induces an isomorphism of analytic groups $\varphi_z \colon \C_p^{\times}/z^{\Z}\to \Tate(z)(\C_p)$, see~\cite[Theorem~1]{Tat95} for details.
This allows us to give, for each integer~$n \ge 1$, the following description of~$T_n(\Tate(z))$.
Note that for each positive divisor~$k$ of~$n$ and each $\ell \in \bfD(0,1)^{*}$ satisfying~$\ell^k=z^{n/k}$, the set
\begin{equation}
  \label{e:1}
  C_{n,\ell}
  \=
  \{a\in \C_p^{\times}:a^{n/k}\in \ell^{\Z}\}/z^{\Z}
\end{equation}
is a subgroup of order~$n$ of $\C_p^{\times}/z^{\Z}$.
It is the kernel of the morphism of analytic groups $\C_p^{\times}/z^{\Z}\to \C_p^{\times}/\ell^{\Z}$ induced by the map $a\mapsto a^{n/k}$.
Pre-composing this morphism with~$\varphi_z^{-1}$ and then composing with~$\varphi_{\ell}$, we obtain an isogeny~$\Tate(z)\to \Tate(\ell)$ of degree~$n$ whose kernel is $\varphi_z(C_{n,\ell})$.
Since every subgroup of order~$n$ of~$\C_p^{\times}/z^{\Z}$ is of the form~\eqref{e:1}, we deduce that
\begin{equation}\label{tateq}
T_n(\Tate(z))=\sum_{\substack{k>0,k|n\\ \ell^k=z^{n/k} }}\Tate(\ell).
\end{equation}

In the case where~$E$ is in~$\Bad$, Theorem~\ref{t:orbits} is a direct consequence of the following result together with~\eqref{eq-lowb-sigma1}, \eqref{e:2} and Lemma~\ref{l:berkovich}.

\begin{proposition}\label{p:bad orbits}
  Let~$z$ in~$\bfD(0,1)^{*}$ and $R>1$ be given.
  Then, for every~$\varepsilon>0$ there exists $C>0$ such that for every integer $n\geq 1$ we have
$$ \deg(T_n(\Tate(z))|_{\bfD^{\infty}(0,R)})
\leq 
 C n^{\frac{1}{2}}d(n).
$$
\end{proposition}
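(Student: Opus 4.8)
The plan is to read everything off the explicit formula~\eqref{tateq} for~$T_n(\Tate(z))$, so that the only input beyond~\eqref{tateq} is an elementary bound on the number of divisors. First I would fix the constants: set $\rho \= \log R/\log p > 0$ and $v \= \ord_p(z) > 0$, both depending only on~$R$ and~$z$, so that $R = p^{\rho}$ and $|z|_p = p^{-v}$. Using the identification of~$\Ell(\C_p)$ with~$\C_p$ via the $j$-invariant together with~\eqref{jtate}, for~$\ell$ in~$\bfD(0,1)^{*}$ one has $\Tate(\ell) \in \bfD^{\infty}(0,R)$ if and only if $|j(\Tate(\ell))|_p = |\ell|_p^{-1} > R$, i.e. if and only if $|\ell|_p < R^{-1}$.

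The key observation is then the following. For a fixed positive divisor~$k$ of~$n$, each of the~$k$ roots~$\ell$ of the equation $\ell^k = z^{n/k}$ satisfies $|\ell|_p^k = |z|_p^{n/k}$, hence $|\ell|_p = |z|_p^{n/k^2} = p^{-vn/k^2}$, a quantity depending only on~$k$. So, among these~$k$ roots, either all of them give points lying in~$\bfD^{\infty}(0,R)$, or none do, according to whether $p^{-vn/k^2} < p^{-\rho}$, that is, whether $k^2 < vn/\rho$; the boundary case $k^2 = vn/\rho$ gives $|j(\Tate(\ell))|_p = R$, which does not belong to~$\bfD^{\infty}(0,R)$ and is thus correctly excluded by the strict inequality. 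Counting terms of~\eqref{tateq} with multiplicity, this would yield the exact identity
\[
  \deg\left(T_n(\Tate(z))|_{\bfD^{\infty}(0,R)}\right)
  =
  \sum_{\substack{k > 0,\ k \mid n \\ k^2 < vn/\rho}} k ,
\]
where no care about whether the various~$\Tate(\ell)$ coincide is needed (in fact they do not, since~$|\ell|_p$ determines~$k$).

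The last step is a crude estimate of this sum: every~$k$ appearing in it satisfies $k < \sqrt{v/\rho}\,\sqrt{n}$, and~$n$ has at most~$d(n)$ positive divisors, so $\deg\left(T_n(\Tate(z))|_{\bfD^{\infty}(0,R)}\right) \le \sqrt{v/\rho}\, n^{1/2}\, d(n)$, which gives the statement with $C = \sqrt{v/\rho} = \sqrt{(\ord_p(z))(\log p)/\log R}$, depending only on~$z$ and~$R$; in particular one may take~$C$ independent of~$\varepsilon$. I do not expect a genuine obstacle here: the only points needing a moment's thought are the multiplicity bookkeeping, handled by the ``all-or-nothing in~$k$'' remark above, and the validity of the bound for small~$n$, which is automatic since the left-hand side vanishes as soon as no admissible~$k$ exists. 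To close the circle, I would note that for every~$a$ in~$\cO_p$ the disc~$\bfD(a,R^{-1})$ is contained in~$\cO_p$, hence disjoint from~$\Bad$, while $\bfD^{\infty}(a,R) = \bfD^{\infty}(0,R)$; combining the above estimate with $\deg(T_n(\Tate(z))) = \sigma_1(n) \ge n$ from~\eqref{eq-lowb-sigma1}, with $d(n) = o(n^{\varepsilon})$ from~\eqref{e:2}, and with Lemma~\ref{l:berkovich}$(ii)$ then yields $\odelta_{T_n(\Tate(z))} \to \delta_{\xcan}$, which is the case $E \in \Bad$ of Theorem~\ref{t:orbits}.
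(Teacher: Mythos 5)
Your argument is correct and is essentially the paper's own proof: both read the count off the explicit formula~\eqref{tateq}, observe via~\eqref{jtate} that all $k$ roots $\ell$ of $\ell^k=z^{n/k}$ satisfy $|\ell|_p=|z|_p^{n/k^2}$ so that the condition $\Tate(\ell)\in\bfD^{\infty}(0,R)$ depends only on $k$ and amounts to $k<C\sqrt{n}$ with the same constant $C=\sqrt{-\log(|z|_p)/\log R}$, and then bound the resulting divisor sum by $Cn^{1/2}d(n)$. Your extra remarks on multiplicity bookkeeping and on $C$ being independent of $\varepsilon$ are correct but not needed beyond what the paper records.
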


\begin{proof}
Set $C\=\sqrt{-\frac{\log(|z|_p)}{\log(R)}}$ and let~$n \ge 1$ be an integer.
By~\eqref{jtate}, for a positive divisor~$k$ of~$n$ and $\ell \in \bfD(0,1)^{*}$ with $\ell^k=z^{n/k}$, we have
$$|\Tate(\ell)|_p=|\ell|_p^{-1}=|z|_p^{-n/k^2}.$$
Noting that~$|z|_p^{-n/k^2} > R$ is equivalent to~$k < C n^{\frac{1}{2}}$, from~\eqref{tateq} we deduce
$$\deg(T_n(\Tate(z))|_{\bfD^{\infty}(0,R)})
=
\sum_{\substack{k > 0, k|n \\ 0<k< C \sqrt{n}}}k
<
C n^{\frac{1}{2}}d(n).$$
This completes the proof of the proposition.
\end{proof}

\subsection{Hecke orbits in the ordinary reduction locus}
\label{ss:ordinary orbits} 
The goal of this section is to prove the following result describing, for an elliptic curve~$E$ in~$\Ord$, the asymptotic distribution of the Hecke orbit $(T_n(E))_{n = 1}^{\infty}$.
In the case where~$E$ is in~$\Ord$, Theorem~\ref{t:orbits} with~$n = p^m n_0$ is a direct consequence of this result together with~\eqref{eq-lowb-sigma1} and Lemma~\ref{l:berkovich}.

\begin{proposition}
  \label{p:ordinary orbits}
  Let~$\bfD$ be a residue disc contained in~$\Ord$ and let~$\bfB$ be a disc of radius strictly less than~$1$ contained in~$\Ord$.
  Then for every~$\varepsilon > 0$ there is a constant~$C > 0$ such that for every~$E$ in~$\bfD$ and all integers~$m \ge 0$ and~$n_0 \ge 1$ such that~$n_0$ is not divisible by~$p$, we have
  $$ \deg\left(T_{p^m n_0}(E)|_{\bfB}\right)
  \le
  C (m + 1) n_0^{\varepsilon}.$$
\end{proposition}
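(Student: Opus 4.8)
The plan is to combine the divisor formula for $T_{p^m}$ in terms of the canonical branch $\t$ (Proposition~\ref{prop-tpm}), the contracting dynamics of $\t$ near its periodic cycles (Lemma~\ref{periodicpoint}), and the elementary count of the Hecke action on ordinary elliptic curves over~$\Fpalg$ (Lemma~\ref{conteoo}). Let $\zeta \in \Fpalg$ be such that $\bfB \subseteq \bfD(\zeta)$, let $r$ be the minimal period of $\zeta$ under the Frobenius map, let $z_0$ be the unique $\t$-periodic point in $\bfD(\zeta)$ given by Lemma~\ref{periodicpoint}, and put $z_i \= \t^i(z_0)$. By the multiplicativity relation~\eqref{eq-multiplicativity-Hecke-operators} we have $T_{p^m n_0}(E) = T_{p^m}(T_{n_0}(E))$, and writing $T_{n_0}(E) = \sum_j [E_j]$, an effective divisor of degree $\sigma_1(n_0)$ supported on~$\Ord$, gives
\[
  \deg(T_{p^m n_0}(E)|_{\bfB}) = \sum_j \deg(T_{p^m}(E_j)|_{\bfB}).
\]
The proof then reduces to two claims. \textbf{(A)} There is $C_1 > 0$, depending only on~$\bfB$, such that $\deg(T_{p^m}(E')|_{\bfB}) \le C_1 (m+1)$ for every $E' \in \Ord$ and every $m \ge 0$. \textbf{(B)} The term indexed by~$j$ vanishes unless $\pi(j(E_j))$ lies in the Frobenius cycle $\{\zeta, \zeta^p, \dots, \zeta^{p^{r-1}}\}$, and the number of such~$j$, counted with multiplicity, is $O(d(n_0))$.

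For~(A), by Proposition~\ref{prop-tpm} it suffices to bound $\deg((\t^*)^k([w])|_{\bfB})$ uniformly in $k \ge 0$ and $w \in \Ord$, since $T_{p^m}(E')$ is a sum of $m+1$ such divisors. The key is dynamical: $\bfB$ lies strictly inside the residue disc $\bfD(\zeta)$, so with $c_0 \= \sup\{ |z - z_0|_p : z \in \bfB \} \in (0,1)$ Lemma~\ref{periodicpoint} furnishes an injectivity radius~$\rho$ and a contraction rate~$\kappa_{c_0}$ for which there is a \emph{fixed} integer $M$ (depending only on~$\bfB$) with $\t^M(\bfB) \subseteq \bfD(z_M, \rho)$, and on the discs $\bfD(z_i, \rho)$ the map~$\t$ is injective, unramified, and maps into $\bfD(z_{i+1}, \rho)$. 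Hence for $k > M$ every $v \in \bfB$ with $\t^k(v) = w$ has $\t^M(v)$ equal to the unique preimage of~$w$ in $\bfD(z_M, \rho)$ under $\t^{k-M}$ when it exists; this forces $\deg((\t^*)^k([w])|_{\bfB}) \le p^M$, and for $k \le M$ the same bound is trivial. So~(A) holds with $C_1 = p^M$.

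For~(B), if $\deg(T_{p^m}(E_j)|_{\bfB}) \neq 0$ then, by Proposition~\ref{prop-tpm}, for some $i \in \{0, \dots, m\}$ we have $\t^i(E_j) \in \t^{m-i}(\bfB)$; since $\t$ reduces to the Frobenius morphism (Theorem~\ref{teo-Deuring}$(i)$, equivalently~\eqref{eq-cong-t}), $\t^{m-i}(\bfB) \subseteq \bfD(\zeta^{p^{m-i}})$, whence $\pi(j(E_j))^{p^i} = \zeta^{p^{m-i}}$, and because $x \mapsto x^p$ is a bijection of~$\Fpalg$ preserving the cycle of~$\zeta$, this forces $\pi(j(E_j))$ into that cycle. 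Consequently the number of contributing~$j$, with multiplicity, is at most $\deg\bigl(T_{n_0}(E)|_{\pi^{-1}(\{\zeta^{p^c} : 0 \le c < r\})}\bigr)$; since $n_0$ is coprime to~$p$, reduction carries $T_{n_0}(E)$ to $T_{n_0}(\tE)$ (reducing first to the case of curves over a discrete valued field and then extending by local constancy, via Lemma~\ref{lemma-Hecke-continuity}, as in Section~\ref{s:ordinary CM points}), so this quantity equals $\deg(T_{n_0}(\tE)|_{\{\zeta^{p^c}\}})$, which Lemma~\ref{conteoo} bounds by $C_2\, d(n_0)$ with $C_2$ depending only on~$\bfD$ and~$\bfB$.

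Combining the two claims gives $\deg(T_{p^m n_0}(E)|_{\bfB}) \le C_1 C_2 (m+1) d(n_0)$, and since $d(n_0) = o(n_0^\varepsilon)$ by~\eqref{e:2} there is $C_\varepsilon$ with $d(n_0) \le C_\varepsilon n_0^\varepsilon$ for all~$n_0$, so the bound assumes the desired shape $C(m+1) n_0^\varepsilon$. I expect the main obstacle to be claim~(A): its content is that arbitrarily deep $\t$-preimages cannot accumulate inside the fixed disc~$\bfB$, so that $C_1$ is independent of both~$m$ and~$E'$, and this is exactly what the uniform contraction together with the fixed injectivity radius around the periodic cycle in Lemma~\ref{periodicpoint} provide; claim~(B), by pinning down the few residue discs that can contribute and then invoking the elementary count over~$\Fpalg$, is what yields the $O(n_0^\varepsilon)$ improvement over the trivial bound $\sigma_1(n_0)$.
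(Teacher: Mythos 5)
Your proposal is correct and follows essentially the same route as the paper's proof: multiplicativity~\eqref{eq-multiplicativity-Hecke-operators} to split off~$T_{n_0}$, the divisor formula of Proposition~\ref{prop-tpm} together with the contraction and injectivity radius of Lemma~\ref{periodicpoint} for the $p$-power part (the paper's Proposition~\ref{distord}), and reduction modulo~$p$ plus the isogeny count of Lemma~\ref{conteoo} for the prime-to-$p$ part (the paper's Proposition~\ref{orderrante}). Your claim~(A) is a slightly streamlined, per-term version of the paper's estimate~\eqref{disttpm} — bounding $\deg((\t^*)^k([w])|_{\bfB})$ uniformly by $p^M$ because all preimages entering~$\bfB$ must agree after $M$ iterates, $\t^{k-M}$ being injective and unramified on the $\rho$-discs along the periodic cycle — which is exactly the mechanism the paper uses, organized with less case analysis.
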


To prove Proposition~\ref{p:ordinary orbits} we use the multiplicative property of the Hecke correspondences, see~\eqref{eq-multiplicativity-Hecke-operators} in Section~\ref{ss:Hecke correspondences}.
We first treat the case~$n_0=1$ (Propositions~\ref{distord}) and the case~$m=0$ (Propositions~\ref{orderrante}) separately.
The proof of Proposition~\ref{p:ordinary orbits} is given at the end of this section.

\begin{proposition}
  \label{distord}
  Let~$\zeta$ in~$\Fpalg$ be the $j$-invariant of an ordinary elliptic curve, denote by~$r$ the minimal period of~$\zeta$ under the Frobenius map $z\mapsto z^p$ and put~$\bfO \= \bigcup_{i=0}^{r-1}\bfD (\zeta^{p^i})$.
  Then for every~$E$ in~$\bfD(\zeta)$ and every integer~$m \ge 1$, we have
\begin{equation}\label{tpinresidue}
\supp(T_{p^m}(E))\subseteq \bfO.
\end{equation}
Moreover, for every disc~$\bfB$ of radius strictly less than~$1$ contained in~$\bfO$ there is a constant~$C_1 > 0$ such that for every~$E$ in~$\bfO$ and every integer~$m \ge 1$, we have
\begin{equation}
    \label{disttpm}
    \deg(T_{p^m}(E)|_{\bfB})
    \le
    C_1 m.
\end{equation}
\end{proposition}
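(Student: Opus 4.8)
The plan is to use the formula for $T_{p^m}$ in terms of the canonical branch $\t$ from Proposition~\ref{prop-tpm}, namely $T_{p^m}(E) = \sum_{i=0}^m (\t^*)^{m-i}([\t^i(E)])$, together with the dynamical description of $\t$ on $\Ord$ provided by Lemma~\ref{periodicpoint}. First I would prove~\eqref{tpinresidue}. By Theorem~\ref{teo-Deuring}$(i)$ the canonical branch $\t$ maps $\bfD(\zeta^{p^i})$ into $\bfD(\zeta^{p^{i+1}})$, so $\t^i(E) \in \bfD(\zeta^{p^i}) \subseteq \bfO$ for all $i$ (using that the Frobenius orbit of $\zeta$ has period $r$, so the discs cycle). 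For the terms $(\t^*)^{j}([w])$ with $w \in \bfO$, since $\t$ maps each residue disc $\bfD(\zeta^{p^i})$ onto $\bfD(\zeta^{p^{i+1}})$, the preimage under $\t$ of a point in $\bfO$ lies in $\bfO$; iterating, $\supp((\t^*)^j([w])) \subseteq \bfO$. Combining these with Proposition~\ref{prop-tpm} gives~\eqref{tpinresidue}.

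Next I would prove the quantitative bound~\eqref{disttpm}. Fix a disc $\bfB$ of radius $c < 1$ contained in $\bfO$; without loss of generality $\bfB \subseteq \bfD(z_\ell, c)$ for some periodic point $z_\ell = \t^\ell(\ss^\uparrow)$ and some $c \in (0,1)$. The key point, which is where Lemma~\ref{periodicpoint} enters crucially, is that no periodic point of $\t$ in $\Ord$ is a ramification point: $\t'(z_i) \neq 0$ for all $i$ by Lemma~\ref{periodicpoint}$(i)$. Consequently, near each periodic point $\t$ is, up to finitely many exceptional points, unramified, and one controls how the preimages $\t^{-j}(z)$ distribute among residue discs. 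Concretely, by Proposition~\ref{prop-tpm}, $\deg(T_{p^m}(E)|_\bfB) = \sum_{i=0}^m \deg((\t^*)^{m-i}([\t^i(E)])|_\bfB)$, and each summand is at most $\#(\supp((\t^*)^{m-i}([\t^i(E)])) \cap \bfB)$ counted with multiplicity, which is bounded by the number of $\t$-preimages (of order $m-i$) of a fixed point that land in $\bfB$. Using Lemma~\ref{periodicpoint}$(ii)$, on the small disc $\bfD(z_\ell, \rho)$ the map $\t$ is injective with constant multiplier, so at most one preimage of any order lands in $\bfD(z_\ell, \rho)$; for $\bfB$ possibly larger than that (but still of radius $c < 1$), one uses Lemma~\ref{periodicpoint}$(iii)$: after $N$ further iterations any point of $\bfD(z_\ell, c)$ lands in $\bfD(z_{\ell+N}, \rho)$ where $N$ depends only on $c$, $\kappa_c$ and $\rho$, so each $\t^{-(m-i)}$-fibre meets $\bfB$ in at most a bounded number $C_1$ of points (with multiplicity), $C_1$ depending only on $\bfB$ and on $\deg \t = p$. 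Summing over the $m+1$ values of $i$ gives $\deg(T_{p^m}(E)|_\bfB) \le C_1(m+1)$, which after adjusting the constant is~\eqref{disttpm}.

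The main obstacle I anticipate is handling the multiplicities in $(\t^*)^{m-i}$ and the passage from the small disc $\bfD(z_\ell,\rho)$ on which $\t$ is genuinely injective to a general disc $\bfB$ of radius $c<1$: one must argue that each point of $\supp((\t^*)^{m-i}([w]))$ in $\bfB$ carries local degree $1$ except possibly at the finitely many ramification points of $\t$ in $\Ord$, and that the number of fibre points landing in $\bfB$ does not grow with $m$. The cleanest way is to factor $\t^{m-i} = \t^{N} \circ \t^{m-i-N}$ for $m-i \ge N$ and note that $(\t^*)^{m-i}([w])|_\bfB$ is supported on $\t^{-N}$ applied to $(\t^*)^{m-i-N}([w])|_{\t^N(\bfB)}$, where $\t^N(\bfB) \subseteq \bfD(z_{\ell+N},\rho)$ by Lemma~\ref{periodicpoint}$(iii)$; on the latter disc injectivity of $\t$ (Lemma~\ref{periodicpoint}$(ii)$) caps the relevant count by $\deg(\t^N) = p^N$, a constant. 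The remaining contribution from $m-i < N$ is from finitely many values of $i$, hence uniformly bounded. I would then absorb everything into a single constant $C_1$ depending only on $\bfB$.
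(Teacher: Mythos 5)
Your proposal is correct and follows essentially the same route as the paper: the inclusion~\eqref{tpinresidue} from Proposition~\ref{prop-tpm} together with the congruence $\t(z)\equiv z^p \bmod p\cO_p$, and the bound~\eqref{disttpm} by bounding each of the $m+1$ terms $(\t^*)^{m-i}([\t^i(E)])|_{\bfB}$ uniformly by $p^{N}$, where $N$ is chosen via Lemma~\ref{periodicpoint}$(iii)$ so that $\t^{N}$ maps $\bfB$ into the small disc $\bfD(z_{\ell+N},\rho)$ on which the iterates of $\t$ are injective by Lemma~\ref{periodicpoint}$(ii)$. The paper's bookkeeping (introducing the first index $i_0$ at which a fibre meets the disc) is slightly more elaborate but yields the same constant $p^{i_1}(m+1)$, so your streamlined version is a faithful equivalent.
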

\begin{proof}
  The inclusion~\eqref{tpinresidue} is a direct consequence of Proposition~\ref{prop-tpm} and~\eqref{eq-cong-t}.
  To prove~\eqref{disttpm}, let~$e$ be an ordinary elliptic curve with $j$-invariant~$\zeta$, for every integer~$i \ge 0$ put~$z_i \= \t^i(e^{\uparrow})$ and for every integer~$i \le - 1$ let~$i'$ be the unique integer in~$\{0, \ldots, r - 1 \}$ such that~$i - i'$ is divisible by~$r$ and put~$z_i \= z_{i'}$. Note that for all nonnegative integers~$a,b$, every integer~$i$ and every point~$z$ in~$\bfD(z_{i},1)$, the set~$\t^{-a}(\t^b(z))$ is contained in~$\bfD(z_{i+b-a},1)$.
    Let~$c$ in~$(0, 1)$ be such that~$\bfB$ is contained in~$\bfB(c) \= \bigcup_{i = 0}^{r - 1} \bfD(z_i, c)$, let~$\rho$ and~$\kappa_c$ be given by Lemma~\ref{periodicpoint} and let~$i_1 \ge 0$ be a sufficiently large integer so that~$c \kappa_c^{i_1} < \rho$.

  Fix~$E$ in~$\bigcup_{i = 0}^{r - 1} \bfD(z_i, 1)$ and let~$m \ge 1$ be a given integer.
  Without loss of generality we assume $E\in \bfD(z_0,1)$. 
  We treat the cases~$m<i_1$ and~$m\geq i_1$ separately.
  If~$m < i_1$, then we have
  \begin{displaymath}
    \deg(T_{p^m}(E)|_{\bfB(c)})
    \le
    \deg(T_{p^m}(E))
    =\frac{p^{m+1}-1}{p-1}
    \le
    p^{i_1}m.
  \end{displaymath}
  Now, assume~$m \ge i_1$.
  If for every~$i$ in~$\{0, \ldots, m \}$ the set~$\t^{-(m - i)}(\t^i(E))$ is disjoint from~$\bfD(z_{2i -m}, c)$, then 
  \begin{displaymath}
    \deg(T_{p^m}(E)|_{\bfB(c)})
    =
    \sum_{i=0}^m\deg((\t^{\ast})^{(m-i)}([\t^i(E)])|_{\bfD(z_{2i -m}, c)})
    =0.
  \end{displaymath}
So we assume this is not the case and denote by~$i_0$ the least integer~$i$ in~$\{0, \ldots, m \}$ such that~$\t^{-(m - i)}(\t^i(E))$ contains a point~$E_0$ in~$\bfD(z_{2i -m}, c)$.
Note that by Lemma~\ref{periodicpoint}$(iii)$ the point~$E_1 \= \t^{i_1}(E_0)$ satisfies
\begin{displaymath}
  |E_1 - z_{2i_0 - m + i_1}|_p \le c \kappa_c^{i_1} < \rho,
\end{displaymath}
so it is in~$\bfD( z_{2i_0 - m + i_1}, \rho)$.

If~$m \le i_0 + i_1$, then we have
\begin{multline*}
    \deg \left(T_{p^m}(E)|_{\bfB(c)}\right)
    =
    \sum_{i = i_0}^{m} \deg \left( (\t^*)^{m - i}([\t^i(E)]) \right)
    \\ \le
    \sum_{i = i_0}^{m} p^{m - i}
    =\frac{p^{m-i_0+1}-1}{p-1}
    \le
    p^{i_1}(m + 1).
  \end{multline*}
  
Suppose~$m > i_0 + i_1$, and let~$i$ be an integer satisfying $i_0\leq i \leq m - i_1$.
Noting that for every~$E'$ in~$\t^{-(m - i)}(\t^{i}(E))$ we have
\begin{displaymath}
  \deg_{\t^{m - i}}(E')
  =
  \deg_{\t^{m - i - i_1}}(\t^{i_1}(E'))\deg_{\t^{i_1}}(E'),
\end{displaymath}
we obtain
\begin{equation}
  \label{e:8}
  (\t^{*})^{m - i}([\t^i(E)])
  =
  \sum_{E'' \in \t^{-(m - i - i_1)}(\t^i(E))} \deg_{\t^{m - i - i_1}}(E'') (\t^*)^{i_1}([E'']).
\end{equation}
On the other hand, for every~$z$ in~$\t^{-(m - i)}(\t^i(E))$ contained in~$\bfD(z_{2i - m}, c)$, we have by Lemma~\ref{periodicpoint}$(iii)$ and our choice of~$i_1$,
\begin{displaymath}
  | \t^{i_1}(z) - z_{2i - m + i_1} |_p
  \le
  c \kappa_c^{i_1}
  <
  \rho,
\end{displaymath}
so~$\t^{i_1}(z) \in \bfD(z_{2i - m + i_1}, \rho)$. 
Since  for such~$z$ we have
\begin{displaymath}
  \t^{m - i - i_1}(\t^{i_1}(z))
  =
  \t^{i}(E)
  =
  \t^{m - i - i_1}(\t^{2i - 2i_0}(E_1))
\end{displaymath}
and by Lemma~\ref{periodicpoint}$(ii)$ the map~$\t^{m - i - i_1}$ is injective on~$\bfD(z_{2i - m + i_1}, \rho)$, we conclude that~$\t^{i_1}(z) = \t^{2i - 2i_0}(E_1)$.
Since we also have
\begin{displaymath}
  \deg_{\t^{m - i - i_1}}(\t^{2i - 2i_0}(E_1)) = 1
\end{displaymath}
by Lemma~\ref{periodicpoint}$(ii)$, when we restrict~\eqref{e:8} to~$\bfD(z_{2i - m}, c)$ we obtain
\begin{equation*}
  (\t^{*})^{m - i}([\t^i(E)])|_{\bfD(z_{2i - m}, c)}
  =
  (\t^*)^{i_1}([\t^{2i - 2i_0}(E_1)])|_{\bfD(z_{2i - m}, c)},
\end{equation*}
and therefore
\begin{displaymath}
  \deg \left((\t^*)^{m-i}([\t^i(E)])|_{\bfD(z_{2i-m},c)}\right)
  \le
  \deg \left( (\t^*)^{i_1} ([\t^{2i - 2i_0}(E_1)])\right)
  =
  p^{i_1}.
\end{displaymath}
Together with Proposition~\ref{prop-tpm} and our definition of~$i_0$, this implies
\begin{multline*}
    \deg \left(T_{p^m}(E)|_{\bfB(c)}\right)
    \\
    \begin{aligned}
     \leq &
    \sum_{i = i_0}^{m - i_1 - 1} \deg \left( (\t^*)^{m - i}([\t^i(E)])|_{\bfD(z_{2i - m}, c)} \right)
    + \sum_{i = m - i_1}^{m} \deg \left( (\t^*)^{m - i}([\t^i(E)]) \right)
    \\ \le &
    p^{i_1} (m - i_0 - i_1) + \sum_{i = m - i_1}^{m} p^{m - i}
    \\ \le &
    p^{i_1}(m + 1).
    \end{aligned}
\end{multline*}
This completes the proof of Proposition~\ref{distord} with~$C_1 = 2p^{i_1}$.
\end{proof}

\begin{proposition}
  \label{orderrante}
  Let~$\bfD$ and~$\bfD'$ be residue discs contained in~$\Ord$.
  Then for every $\varepsilon>0$ there is a constant~$C_1 > 0$ such that for every~$E$ in~$\bfD$ and every integer~$n \ge 1$ that is not divisible by~$p$, we have
  $$ \deg(T_n(E)|_{\bfD'})
  \le
  C_2 n^{\varepsilon}. $$
\end{proposition}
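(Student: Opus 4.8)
The plan is to transport the count to characteristic~$p$, where every ordinary elliptic curve has complex multiplication, and then to bound it by a representation number of a fixed positive definite binary quadratic form. Write $\bfD = \bfD(\zeta)$ and $\bfD' = \bfD(\zeta')$, with $\zeta$ and $\zeta'$ the $j$-invariants of ordinary elliptic curves over $\Fpalg$, fix an ordinary elliptic curve $e$ over $\Fpalg$ with $j(e) = \zeta$, and for an integer $n \ge 1$ not divisible by~$p$ denote by $N_n$ the number of subgroups $C$ of $e$ of order $n$ with $j(e/C) = \zeta'$; this depends only on the isomorphism class of $e$. The first step is to show that $\deg(T_n(E)|_{\bfD'}) = N_n$ for every $E$ in $\bfD$. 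Since $\bfD'$ is open and closed in $\C_p$ (it is an ultrametric ball), Lemma~\ref{lemma-Hecke-continuity} gives that $E \mapsto \deg(T_n(E)|_{\bfD'})$ is locally constant, and since $\bfD \cap \Ordun$ is dense in $\bfD$ it is enough to verify the identity for $E$ in $\bfD \cap \Ordun$. As explained at the beginning of Section~\ref{s:ordinary CM points}, such an $E$, together with its order~$n$ subgroups and the corresponding quotient isogenies, is defined over a complete discrete valued subfield of $\C_p$ with residue field $\Fpalg$, and $E$ has good reduction $\tE$ with $j(\tE) = \zeta$. Since $n$ is coprime to~$p$, reduction induces a bijection between the subgroups of order $n$ of $E$ and those of $\tE$ (using that $E[n] \to \tE[n]$ is an isomorphism, see for example~\cite[Chapter~VII, Proposition~3.1(b)]{Sil09}), and if $C$ reduces to $\tC$, then $E/C$ has good reduction $\tE/\tC$. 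Hence $E/C$ lies in $\bfD'$ if and only if $j(\tE/\tC) = \zeta'$, and summing over $C$ yields $\deg(T_n(E)|_{\bfD'}) = N_n$.

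The second step is to bound $N_n$ for $n$ coprime to~$p$. Fix an ordinary elliptic curve $e'$ over $\Fpalg$ with $j(e') = \zeta'$. If $e$ and $e'$ are not isogenous, then $N_n = 0$; otherwise $M \= \Hom(e,e')$ is a free $\Z$-module of rank~$2$, since $e$ and $e'$ are ordinary and isogenous over $\Fpalg$, and the degree map $\phi \mapsto \deg(\phi)$ is a positive definite integral binary quadratic form $Q$ on $M$. Every subgroup $C$ of $e$ of order $n$ with $e/C \cong e'$ is the kernel of some $\phi$ in $M$ with $\deg(\phi) = n$ --- compose the quotient isogeny $e \to e/C$ with an isomorphism $e/C \to e'$ --- and two such $\phi$ with the same kernel differ by an automorphism of $e'$. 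Therefore
\begin{displaymath}
  N_n
  \le
  \#\{ \phi \in M : \deg(\phi) = n \}
  =
  r_Q(n),
\end{displaymath}
where $r_Q(n)$ denotes the number of representations of $n$ by $Q$.

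The final step is routine: since $Q$ is a fixed positive definite integral binary quadratic form, a classical estimate bounds $r_Q(n)$ by a constant depending only on the discriminant of $Q$ times $d(n)$; combined with~\eqref{e:2} this gives $r_Q(n) = o(n^{\varepsilon})$, hence $N_n \le r_Q(n) \le C_2 n^{\varepsilon}$ for a constant $C_2 > 0$ depending only on $\bfD$, $\bfD'$ and $\varepsilon$. Together with the first step, this proves the proposition. I expect the first step --- the transport to $\Ell(\Fpalg)$ and the bookkeeping of reductions of subgroups and quotients --- to be the main point of the argument; the remaining bound on $N_n$ is elementary, and could alternatively be deduced from the properties of the arithmetic functions $R_d$ and $d$ recalled in Section~\ref{sect-prelim-CM}, using that $\Hom(e,e')$ is an invertible module over the relevant imaginary quadratic order.
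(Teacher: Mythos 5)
Your proposal is correct and follows essentially the same route as the paper: reduce to $E$ in $\Ordun$ via the local constancy from Lemma~\ref{lemma-Hecke-continuity}, transport the count to the special fibre using the reduction bijection on prime-to-$p$ subgroups, and bound the resulting count of degree-$n$ isogenies between fixed ordinary curves over $\Fpalg$ by a representation number of a positive definite binary quadratic form, controlled by $d(n)=o(n^{\varepsilon})$. The only cosmetic differences are that the paper contents itself with the inequality $\deg(T_n(E)|_{\bfD'})\le \#\Hom_n(\tE,e)$ rather than your equality, and bounds $\#\Hom_n(e,e')$ by first composing with a fixed isogeny to land in $\End(e)$ and then invoking $R_d=\psi_d\ast\bfone$, exactly the alternative you mention at the end.
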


To prove this proposition we first establish an intermediate estimate.

\begin{lemma}\label{conteoo}
  Let~$\ss$ and~$\ss'$ be ordinary elliptic curves over~$\Fpalg$, and for each integer~$n \ge 1$ denote by~$\Hom_n(\ss,\ss')$ the set of isogenies from~$\ss$ to~$\ss'$ of degree~$n$.
  Then, for every~$\varepsilon > 0$ we have
  \begin{equation}
    \label{eq:3}
    \#\Hom_n(\ss,\ss') = o(n^{\varepsilon}).
  \end{equation}
\end{lemma}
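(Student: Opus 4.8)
The plan is to identify $\#\Hom_n(\ss,\ss')$ with the number of representations of~$n$ by a fixed positive-definite integral binary quadratic form, and then to bound that representation number by a divisor-type function using the identity~\eqref{eq-Rd-char} together with~\eqref{e:2}.

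First I would dispose of the trivial case: if~$\ss$ and~$\ss'$ are not isogenous then $\Hom(\ss,\ss') = 0$, so $\Hom_n(\ss,\ss') = \emptyset$ for every~$n$ and there is nothing to prove. Assume then that they are isogenous and fix an isogeny $\lambda \colon \ss \to \ss'$. Since~$\ss$ is ordinary, $\End^0(\ss) \= \End(\ss) \otimes \Q$ is an imaginary quadratic field, and $\alpha \mapsto \lambda \circ \alpha$ is an isomorphism from $\End^0(\ss)$ onto $\Hom^0(\ss,\ss') \= \Hom(\ss,\ss') \otimes \Q$, with inverse $\beta \mapsto (\deg \lambda)^{-1} \hat{\lambda} \circ \beta$; in particular $\Hom(\ss,\ss')$ is a free $\Z$-module of rank~$2$. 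On this module the degree map is a positive-definite integral quadratic form (see, \emph{e.g.}, \cite[Chapter~III]{Sil09}): choosing a $\Z$-basis $\phi_1, \phi_2$, the form $Q(x,y) \= \deg(x \phi_1 + y \phi_2) = a x^2 + bxy + cy^2$ has $a = \deg(\phi_1) \ge 1$, $b, c \in \Z$ and discriminant $D \= b^2 - 4ac < 0$, and by construction $\#\Hom_n(\ss,\ss')$ equals the representation number $r_Q(n) \= \#\{ (x,y) \in \Z^2 : Q(x,y) = n \}$.

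Next I would bound $r_Q(n)$. Completing the square gives $4a\, Q(x,y) = (2ax + by)^2 + |D| y^2$, so the injective map $(x,y) \mapsto (2ax + by, y)$ shows $r_Q(n) \le \#\{ (u,v) \in \Z^2 : u^2 + |D| v^2 = 4an \}$. Setting $F \= \Q(\sqrt{-|D|})$, an imaginary quadratic field with ring of integers~$\cO_F$ and fundamental discriminant~$d_F$, the right-hand side counts the elements $u + v\sqrt{-|D|}$ of the order $\Z[\sqrt{-|D|}] \subseteq \cO_F$ of norm~$4an$. Bounding this by the number of elements of~$\cO_F$ of norm~$4an$ and grouping them according to the principal ideal they generate (a class of at most $\# \cO_F^{\times} \le 6$ elements), we get $r_Q(n) \le 6\, R_{d_F}(4an)$, where $R_{d_F}$ is the ideal-counting function of Section~\ref{sect-prelim-CM}. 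By~\eqref{eq-Rd-char}, $R_{d_F}(m) = (\psi_{d_F} \ast \bfone)(m) = \sum_{k \mid m} \psi_{d_F}(k)$, hence $R_{d_F}(m) \le d(m)$; combined with the submultiplicativity bound $d(4an) \le d(4a)\, d(n)$, this yields $\#\Hom_n(\ss,\ss') = r_Q(n) \le 6\, d(4a)\, d(n)$, the constant $6\, d(4a)$ depending only on~$\ss$ and~$\ss'$. Since $d(n) = o(n^{\varepsilon})$ by~\eqref{e:2}, this proves~\eqref{eq:3}.

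I do not expect a genuinely hard step here; the argument is formal once one has two classical inputs — that the degree is a rank-two positive-definite integral quadratic form on $\Hom(\ss,\ss')$, and the divisor bound for its representation numbers. The only point that needs care is the rank-two assertion, which is precisely where the ordinariness of~$\ss$ is used: for supersingular~$\ss$ the endomorphism algebra is a quaternion algebra, $\Hom(\ss,\ss')$ has rank four, and the analogous estimate degrades to $O(n^{1+\varepsilon})$. Everything else is the reduction chain: isogenies of degree~$n$, then integer solutions of $u^2 + |D| v^2 = 4an$, then elements of a fixed imaginary quadratic order of norm~$4an$, then ideals of norm~$4an$ in the maximal order, whose number is controlled by~\eqref{eq-Rd-char}.
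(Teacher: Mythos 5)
Your proof is correct and ultimately rests on the same two inputs as the paper's: the identity~\eqref{eq-Rd-char} bounding representation numbers by a divisor-type function, and the divisor bound~\eqref{e:2}. The only (inessential) difference is in the reduction step: the paper first composes with a fixed isogeny $\phi_0\colon \ss'\to \ss$ to inject $\Hom_n(\ss,\ss')$ into the degree-$n\deg(\phi_0)$ part of $\End(\ss)$, where degree equals the field norm on an imaginary quadratic order, whereas you work directly with the degree as a rank-two positive-definite form on $\Hom(\ss,\ss')$ and complete the square; both routes land on counting elements of bounded norm in an imaginary quadratic order, so the arguments are essentially the same.
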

\begin{proof}
    Assume there is a nonzero element~$\phi_0$ in~$\Hom(\ss',\ss)$, for otherwise there is nothing to prove.
  Then, the map $\iota \colon \Hom(\ss,\ss') \to  \End(\ss)$ given by $\iota(\phi) =\phi_0\circ \phi$ is an injection, and $\deg(\iota(\phi))=\deg(\phi_0)\deg(\phi)$.
  It is thus enough to prove~\eqref{eq:3} when $\ss'=\ss$.

  Since~$\ss$ is ordinary, the ring $\End(\ss)$ is isomorphic to an order inside a quadratic imaginary extension~$K$ of~$\Q$.
  Moreover, the isomorphism can be taken such that the degree of an isogeny is the same as the field norm of the corresponding element in~$K$, see, \emph{e.g.}, \cite[Chapter~V, Theorem~3.1]{Sil09}.
  Let~$d$ be the discriminant of~$K$.
  Then~$\cO_{d,1}$ is the ring of integers of~$K$, and hence it is enough to show
\begin{displaymath}
  \#\{ x \in \cO_{d, 1} : x \overline{x} = n\}
  =
  o(n^\varepsilon).
\end{displaymath}
Since the group of units~$\cO_{d, 1}^{\times}$ is finite, this estimate follows from~\eqref{e:2} and~\eqref{eq-Rd-char}.
\end{proof}

\begin{proof}[Proof of Proposition~\ref{orderrante}]
  Let~$\ss$ be the ordinary elliptic curve over~$\Fpalg$ so that~$\bfD' = \bfD(j(e))$. 
  In view of Lemma~\ref{conteoo}, it is sufficient to show that for every~$E$ in~$\bfD$ and every integer~$n \ge 1$ that is not divisible by~$p$ we have
  \begin{equation}
    \label{e:4}
    \deg(T_n(E)|_{\bfD'})\leq \# \Hom_n(\tE, e).
  \end{equation}
  Since the function $E \mapsto \deg(T_n(E)|_{\bfD'})$ is locally constant by Lemma~\ref{lemma-Hecke-continuity}, it is sufficient to establish this inequality in the case where~$E$ is in~$\Ordun$.
  
  To prove~\eqref{e:4}, recall that the reduction morphism $E\to \tE$ induces a bijective map $E[n]\to \tE[n]$, see for example~\cite[Chapter~VII, Proposition~3.1(b)]{Sil09}.
  In addition, note that for a subgroup~$C$ of~$E$ of order~$n$ such that~$j(E/C)$ is in~$\bfD'$, there is an isogeny~$\tE\to e$ whose kernel is equal to the reduction of~$C$.
  This defines an injective map
  \begin{displaymath}
    \{C\leq E: \#C=n,j(E/C)\in \bfD' \} \to \Hom_n(\tE,e),
  \end{displaymath}
  proving~\eqref{e:4} and completing the proof of the proposition.
\end{proof}

\begin{proof}[Proof of Proposition~\ref{p:ordinary orbits}]
  Let~$C_1$ and~$C_2$ be given by Propositions~\ref{distord} and~\ref{orderrante}, respectively.
  Let~$\zeta$ in~$\Fpalg$ be such that~$\bfB \subseteq \bfD(\zeta)$, let~$r \ge 1$ be the minimal period of~$\zeta$ under the Frobenius map and put~$\bfO \= \bigcup_{i = 0}^{r - 1} \bfD(\zeta^{p^i})$.

  Let~$E$ in~$\bfD$ be given.
  By~\eqref{tpinresidue}, for every~$E'$ in~$\supp (T_{n_0}(E))$ that is not in~$\bfO$ we have
  \begin{displaymath}
    \deg(T_{p^m}(E')|_{\bfB})
    \le
    \deg(T_{p^m}(E')|_{\bfO})
    =
    0.
  \end{displaymath}
  On the other hand, for every~$E'$ in~$\supp(T_{n_0}(E))$ that is in~$\bfO$, we have by Proposition~\ref{distord}
  \begin{displaymath}
    \deg(T_{p^m}(E')|_{\bfB})
    \le
    C_1 m + 1.
  \end{displaymath}
  Together with~\eqref{eq-multiplicativity-Hecke-operators} and Proposition~\ref{orderrante} with~$\bfD' = \bfD(\zeta), \ldots, \bfD(\zeta^{p^{r - 1}})$, this implies
  $$ \deg(T_{p^m n_0}(E)|_{\bfB})
    \le
    (C_1 m + 1) \deg(T_{n_0}(E)|_{\bfO})
  \le
  r C_2(C_1 + 1) (m + 1) n_0^{\varepsilon}. $$
  This proves the theorem with~$C = r C_2 (C_1 + 1)$.
\end{proof}

\subsection{Hecke orbits in the supersingular reduction locus}
\label{ss:supersingular orbits}
The purpose of this section is to prove the following result on Hecke orbits inside the supersingular reduction locus.
In the case where~$E$ is in~$\Sups$, Theorem~\ref{t:orbits} with~$n = p^m n_0$ is a direct consequence of this result together with~\eqref{eq-lowb-sigma1} and Lemma~\ref{l:berkovich}.

 \begin{proposition}
   \label{p:supersingular orbits}
   For every~$\ss$ in~$\tSups$ fix an arbitrary~$\gamma_{\ss}$ in~$\bfD(j(\ss))$ and for every~$r > 0$, put
   \begin{displaymath}
     \bfB(r)
     \=
     \bigcup_{\ss \in \tSups} \bfD(\gamma_{\ss}, r).
   \end{displaymath}
   Then the following properties hold.
\begin{enumerate}
\item[$(i)$]
  For every~$r$ in~$(0, 1)$ there is a constant~$C > 0$ such that for every~$E$ in~$\Sups$, every integer~$m \ge 0$ and every integer~$n_0 \ge 1$ that is not divisible by~$p$, we have
  \begin{displaymath}
    \deg(T_{p^m n_0}(E)|_{\bfB(r)})
    \le
    C \sigma_1(n_0).
  \end{displaymath}
\item[$(ii)$]
  For every~$r_0$ in~$(0, 1)$ and every integer~$m_0 \ge 0$, there is~$r$ in~$(0, 1)$ such that for every~$m$ in~$\{0, \ldots, m_0 \}$ and integer~$n_0 \ge 1$ not divisible by~$p$, we have for every~$E$ in~$\bfB(r_0)$
  \begin{displaymath}
    \supp(T_{p^m n_0}(E)) \subseteq \bfB(r).
  \end{displaymath}
\end{enumerate}
\end{proposition}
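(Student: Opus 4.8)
The plan is to push everything onto the interval $\left[0,\tfrac{p}{p+1}\right]$ via the projection $\kproj$ and the correspondences $\uptau_m$ of Proposition~\ref{p:katz kite}, using Proposition~\ref{vnorm} to locate $\bfB(r)$ in the $\kproj$-picture. Since for $E$ in $\bfD(j(\ss))$ one has $\kval(E)=\delta_{\ss}^{-1}\ord_p(j(E)-\fj_{\ss})$, a short computation with the ultrametric inequality shows that for each fixed $r$ in $(0,1)$ the set $S\=\kproj(\bfB(r))$ is contained in $\left[a(r),\tfrac{p}{p+1}\right]$ for some $a(r)>0$ (on each disc $\bfD(\gamma_{\ss},r)$ the function $\kproj$ is either a positive constant, or ranges over a half-open interval abutting $\tfrac{p}{p+1}$ whose left endpoint is bounded below in terms of $r$ and $\ord_p(\gamma_{\ss}-\fj_{\ss})$). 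The same computation gives the converse statement, already exploited in the proof of Theorem~\ref{t:supersingular CM points}: for every $v_0>0$ there is $r<1$ with $\kproj^{-1}\!\left(\left[v_0,\tfrac{p}{p+1}\right]\right)\subseteq\bfB(r)$.

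For part~$(i)$, since $T_{p^mn_0}(E)$ is effective and supported on $\Sups$ and $\bfB(r)\subseteq\kproj^{-1}(S)$, Proposition~\ref{p:katz kite} yields
\begin{displaymath}
  \deg(T_{p^mn_0}(E)|_{\bfB(r)})
  \le
  \deg((\kproj)_*(T_{p^mn_0}(E))|_{S})
  =
  \sigma_1(n_0)\deg(\uptau_m([\kproj(E)])|_{S}).
\end{displaymath}
It then suffices to bound $\deg(\uptau_m([v])|_{[a,p/(p+1)]})$ uniformly in $m\ge 0$ and $v$, for each fixed $a\in\left(0,\tfrac{p}{p+1}\right)$. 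This is read off from the explicit formula for $\uptau_m$ in Lemma~\ref{l:katz hyper kite}: after an initial transient of length $O_a(1)$, $\uptau_m([v])$ is a sum of atoms of the shape $p^k[w/p^k]$ with $0\le k\le m$ and $w$ in a bounded ``rim'' set, so such an atom meets $\left[a,\tfrac{p}{p+1}\right]$ only when $k\le\log_p(1/a)+O(1)$; there are boundedly many such $k$ and each contributes a coefficient $\le p^{\log_p(1/a)+O(1)}$. Hence $\deg(T_{p^mn_0}(E)|_{\bfB(r)})\le C(r)\sigma_1(n_0)$.

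For part~$(ii)$, write $T_{p^mn_0}=T_{p^m}\circ T_{n_0}$ via~\eqref{eq-multiplicativity-Hecke-operators}. Every $E''$ in $\supp(T_{n_0}(E))$ carries an isogeny to $E$ of degree $n_0$ coprime to $p$, so Lemma~\ref{invarv} (applied to that isogeny, or to the original one when $\kval(E)$ is large) forces $\kval(E'')=\kval(E)$ in the regime where the lemma applies, and $\kval(E'')\ge\min\{\kval(E),\tfrac{2p-1}{2p}\}$ in general; combined with Proposition~\ref{vnorm} and the lower bound $\kval(E)\ge a_0(r_0)>0$ valid for $E$ in $\bfB(r_0)$, this places every such $E''$ in a fixed $\bfB(r_1)$ with $r_1=r_1(r_0)<1$, \emph{independently of $n_0$}. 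Next, for $E''$ in $\bfB(r_1)$ and $m\le m_0$, every value in $\supp(\uptau_m([\kproj(E'')]))$ is $\ge p^{-m}\kproj(E'')\ge p^{-m_0}a_1$, where $a_1\=\min\kproj(\bfB(r_1))>0$; hence every $E'''$ in $\supp(T_{p^m}(E''))$ has $\kproj(E''')\ge v_0$ with $v_0\=p^{-m_0}a_1>0$, and by the converse inclusion of the first paragraph $E'''\in\bfB(r)$ for a suitable $r=r(v_0,m_0)<1$. Composing the two steps gives $\supp(T_{p^mn_0}(E))\subseteq\bfB(r)$, completing Proposition~\ref{p:supersingular orbits}.

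The main difficulty is the uniformity in $m$ in part~$(i)$: in contrast with ordinary Hecke orbits (Section~\ref{ss:ordinary orbits}), where the analogous quantity grows linearly in $m$, here one must use the fine structure of $\uptau_m$ from Lemma~\ref{l:katz hyper kite}, namely that the exponentially large multiplicities $p^k$ in $\uptau_m([v])$ sit only at the small points of size $\asymp p^{-k}$, so that a fixed neighborhood of the ``too supersingular'' point $\tfrac{p}{p+1}$ bounded away from $0$ absorbs only $O(1)$ of the total mass $\sigma_1(p^m)$. A subsidiary nuisance is the behavior at the endpoint $v=\tfrac{p}{p+1}$ (where $\uptau_1([v])=(p+1)\!\left[\tfrac{1}{p+1}\right]$, not the generic two-atom shape) and, in part~$(ii)$, the range $\kval(E)\ge 1$, where Lemma~\ref{invarv} must be invoked through the dual isogeny rather than directly.
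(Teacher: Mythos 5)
Your proof is correct and follows essentially the same route as the paper: locate $\bfB(r)$ inside an interval $[x_{\ell},x_0]$ of Katz valuations via Proposition~\ref{vnorm}, transfer the Hecke action to the correspondences $\uptau_m$ via Proposition~\ref{p:katz kite}, and read off from Lemma~\ref{l:katz hyper kite} that the multiplicity $p^i$ of an atom of $\uptau_m$ landing in $I_{k'}$ satisfies $i\le k'$, which gives the uniform bound in $m$ for part~$(i)$ and the inclusion $\supp(\uptau_m(x))\subseteq[x_{\ell+m_0},x_0]$ for part~$(ii)$. The only superfluous step is your separate treatment of $T_{n_0}$ in part~$(ii)$ via Lemma~\ref{invarv} and the dual-isogeny trick: Proposition~\ref{p:katz kite} already asserts $(\kproj)_*\circ T_{p^mn_0}|_{\Sups}=\sigma_1(n_0)\cdot\uptau_m\circ(\kproj)_*$, so the support of $(\kproj)_*(T_{p^mn_0}(E))$ is independent of $n_0$ with no further argument needed.
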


The proof of this result is based on the following lemma, giving for each integer~$m \ge 0$ a formula for the correspondence~$\uptau_m$ defined in Proposition~\ref{p:katz kite}.
To state this lemma, for each integer~$k \ge 0$ put
\begin{displaymath}
  x_k \= \frac{p}{p + 1} \cdot p^{-k}
  \text{ and }
  I_k
  \=
  [x_{k + 1}, x_k],
\end{displaymath}
and note that~$\bigcup_{k = 0}^{\infty} I_k = \left] 0, \frac{p}{p + 1} \right]$.
Moreover, for all integers~$k, k' \ge 0$ denote by
\begin{displaymath}
  A_{k, k'}^{(+1)} \colon I_k \to I_{k'}
  \text{ (resp. }
  A_{k, k'}^{(-1)} \colon I_k \to I_{k'}
  \text{)}
\end{displaymath}
the unique affine bijection preserving (resp. reversing) the orientation.
Note that for every~$k \ge 0$ we have $1 - A_{k, 0}^{(+1)} = A_{k, 0}^{(-1)}$ and that for every~$k' \ge 1$ we have
\begin{equation}
  \label{e:10}
  p A_{k, k'}^{(\pm 1)} = A_{k, k' - 1}^{(\pm 1)}.
\end{equation}
\begin{lemma}
  \label{l:katz hyper kite}
  For each integer~$m \ge 0$ denote by~$\uptau_m$ the correspondence acting on~$\left[0, \frac{p}{p + 1} \right]$ defined in Proposition~\ref{p:katz kite}.
    Then for all integers~$k, m \ge 0$, we have
    \begin{multline*}
      \uptau_m|_{I_k} =
      \\
      \begin{cases}
        \sum_{i = 0}^m p^i \left( A_{k, 2i - (m - k)}^{(+1)} \right)_{*}
        & \text{if } m \le k;
        \\
        \sum_{i = 0}^{m - k - 1} p^i \left( A_{k, i}^{((-1)^{m - k - i})} \right)_{*}
        + \sum_{i = m - k}^{m} p^i \left( A_{k, 2i - (m - k)}^{(+1)} \right)_{*}
        & \text{if } m \ge k + 1.
      \end{cases}
    \end{multline*}
  \end{lemma}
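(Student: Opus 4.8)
The plan is to prove Lemma~\ref{l:katz hyper kite} by induction on~$m$, using the recursive definition~$\uptau_m = \uptau_1 \circ \uptau_{m - 1} - p \uptau_{m - 2}$ from Proposition~\ref{p:katz kite}, and the elementary affine identities~\eqref{e:10} together with~$1 - A_{k, 0}^{(+1)} = A_{k, 0}^{(-1)}$. First I would settle the base cases~$m = 0$ (where~$\uptau_0$ is the identity, matching the~$m \le k$ formula with only the~$i = 0$ term~$A_{k, k - k}^{(+1)} = A_{k, 0}^{(+1)} = \operatorname{id}$) and~$m = 1$: from the definition of~$\uptau_1$, on~$I_0 = \left] \frac{1}{p + 1}, \frac{p}{p + 1} \right]$ one has~$\uptau_1(x) = [1 - x] + p[\frac{x}{p}]$, and~$1 - x$ ranges over~$I_0$ reversing orientation while~$\frac{x}{p}$ ranges over~$I_1$; this matches the~$m \ge k + 1$ case with~$k = 0$ (the first sum has the single term~$i = 0$, namely~$A_{0,0}^{((-1)^{1})} = A_{0,0}^{(-1)}$, and the second sum has the single term~$i = 1$, namely~$p (A_{0, 1}^{(+1)})_*$, where~$A_{0,1}^{(+1)} \colon I_0 \to I_1$ is~$x \mapsto x/p$). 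On~$I_k$ for~$k \ge 1$, one has~$\uptau_1(x) = [px] + p[\frac{x}{p}]$, and~$px$ lands in~$I_{k - 1}$ preserving orientation while~$\frac{x}{p}$ lands in~$I_{k + 1}$; this matches the~$m = 1 \le k$ formula, whose two terms are~$i = 0$: $A_{k, -(1 - k)}^{(+1)} = A_{k, k - 1}^{(+1)}$ (multiplication by~$p$) and~$i = 1$: $p A_{k, 2 - (1 - k)}^{(+1)} = p A_{k, k + 1}^{(+1)}$ (division by~$p$).

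For the inductive step, I would fix~$m \ge 2$, assume the formula for~$\uptau_{m-1}$ and~$\uptau_{m-2}$, and compute~$\uptau_1 \circ \uptau_{m-1} - p \uptau_{m-2}$ restricted to each~$I_k$. The key point is that~$\uptau_{m-1}|_{I_k}$ is a sum of pushforwards along affine bijections~$A_{k, k'}^{(\pm 1)} \colon I_k \to I_{k'}$, and post-composing with~$\uptau_1$ amounts to post-composing each such bijection with the two branches of~$\uptau_1$ on~$I_{k'}$ (``multiply by~$p$, land one interval down'' and ``divide by~$p$, land one interval up'', with the subtlety that on~$I_0$ the ``multiply by~$p$'' branch becomes ``$x \mapsto 1 - x$'' reversing orientation and staying in~$I_0$). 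Using~\eqref{e:10} to absorb the index shifts and track the orientation signs~$(-1)^{\bullet}$, the composite~$\uptau_1 \circ \uptau_{m-1}|_{I_k}$ becomes a double sum over~$i$ (the index from~$\uptau_{m-1}$) and over the two branches; the~$- p\uptau_{m-2}$ term then cancels exactly the ``extra'' contributions, leaving precisely the claimed single sum for~$\uptau_m$. I would organize the bookkeeping by splitting into the cases~$m \le k$, $m = k + 1$, and~$m \ge k + 2$ (the case distinction in the statement), and carefully checking the boundary behavior when one of the intermediate intervals visited is~$I_0$ (since that is the only place orientation-reversal is introduced, and it only happens on the ``downward'' branch).

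I expect the main obstacle to be the combinatorial bookkeeping of the orientation exponents and the telescoping cancellation with the~$-p\uptau_{m-2}$ term, particularly near~$I_0$. The cleanest way to handle this is probably to observe that for~$k \ge 1$ the branches of~$\uptau_1$ on~$I_k$ are genuinely ``multiply/divide by~$p$'' (orientation preserving), so that as long as all intervals involved have positive index, composing~$\uptau_1$ repeatedly simply moves the target index up or down by one at each step with multiplicity~$1$ or~$p$, and the recursion~$\uptau_m = \uptau_1 \circ \uptau_{m-1} - p\uptau_{m-2}$ is formally the same recursion that the ``one-sided'' affine maps satisfy — this gives the~$m \le k$ formula by a clean induction. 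The only genuine complication is the reflection at~$I_0$; there, writing~$A_{0,0}^{(-1)} = 1 - A_{0,0}^{(+1)}$ and noting that~$\uptau_1$ on~$I_0$ sends a point to~$I_0$ (via~$1 - x$) and to~$I_1$ (via~$x/p$) lets one check that the orientation of the image alternates with each further ``bounce'' off~$I_0$, producing the~$(-1)^{m - k - i}$ factor, while the terms with index~$\ge 1$ in the image never bounce and stay orientation-preserving. Verifying that the~$-p\uptau_{m-2}$ correction removes exactly the over-counting at the reflection (and nowhere else) is the heart of the argument, and I would do it by direct comparison of the~$i$-indexed terms coming from~$\uptau_1 \circ \uptau_{m-1}$ against those of~$p\uptau_{m-2}$, interval by interval.
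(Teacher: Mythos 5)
Your proposal is correct and follows essentially the same route as the paper: induction on~$m$ via the recursion~$\uptau_m = \uptau_1 \circ \uptau_{m-1} - p\,\uptau_{m-2}$, using~\eqref{e:10} and~$1 - A_{k,0}^{(+1)} = A_{k,0}^{(-1)}$ to track index shifts and the orientation reversal at~$I_0$, with the~$-p\,\uptau_{m-2}$ term cancelling the duplicated contributions. The only cosmetic difference is that the paper's bookkeeping separates~$m = k+2$ from~$m \ge k+3$ as an extra case in the induction step.
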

  \begin{proof}
    Fix~$k \ge 0$.
    We proceed by induction on~$m$.
    The case~$m = 0$ is trivial and the case~$m = 1$ is a direct consequence of the definition given in Proposition~\ref{p:katz kite}.
    Let~$m \ge 2$ be given and suppose that the lemma holds with~$m$ replaced by~$m - 1$ and by~$m - 2$.
    If~$m \le k$, then by~\eqref{e:10}
    \begin{align*}
        \uptau_1 (\uptau_{m - 1}|_{I_k})
      & =
      \sum_{i = 0}^{m - 1} p^i \left( A_{k, 2i - (m - k)}^{(+1)} \right)_{*}
      + \sum_{i = 0}^{m - 1} p^{i + 1} \left( A_{k, 2i - (m - k) + 2}^{(+1)} \right)_{*}
      \\ & =
      p \uptau_{m - 2}|_{I_k} + \sum_{i = 0}^{m } p^i \left( A_{k, 2i - (m - k)}^{(+1)} \right)_{*},
    \end{align*}
    which proves the induction step in the case~$m \le k$.
    In the case~$m = k + 1$, using~$1 - A_{k, 0}^{(+1)} = A_{k, 0}^{(-1)}$ we have
    \begin{align*}
        \uptau_1(\uptau_{k}|_{I_k})
      & =
      \left( A_{k, 0}^{(-1)} \right)_{*} + \sum_{i = 1}^k p^i \left( A_{k, 2i - 1}^{(+1)} \right)_{*}
      + \sum_{i = 0}^k p^{i + 1} \left( A_{k, 2i + 1}^{(+1)} \right)_{*}
      \\ & =
      p \uptau_{k - 1}|_{I_k}
      + \left( A_{k, 0}^{(-1)} \right)_{*} + \sum_{i = 1}^{k + 1} p^i \left( A_{k, 2i - 1}^{(+1)} \right)_{*}.
    \end{align*}
    This proves the induction step in the case~$m = k + 1$.
    If~$m = k + 2$, then
    \begin{align*}
        \uptau_1 (\uptau_{k + 1}|_{I_k})
      & =
      \left( A_{k, 0}^{(+1)} \right)_{*} + p \left( A_{k, 1}^{(-1)} \right)_{*}
      + \sum_{i = 1}^{k + 1} p^i \left( A_{k, 2i - 2}^{(+1)} \right)_{*}
      + \sum_{i = 1}^{k + 1} p^{i + 1} \left( A_{k, 2i}^{(+1)} \right)_{*}
      \\ & =
      \left( A_{k, 0}^{(+1)} \right)_{*}+ p \left( A_{k, 1}^{(-1)} \right)_{*}
      + p \uptau_{k}|_{I_k}
      + \sum_{i = 2}^{k + 2} p^i \left( A_{k, 2i - 2}^{(+1)} \right)_{*}.
    \end{align*}
    This proves the induction step in the case~$m = k + 2$.
    Finally, if~$m \ge k + 3$, then~$\uptau_1 (\uptau_{m - 1}|_{I_k})$ is equal to
    \begin{multline*}
        \left( A_{k, 0}^{((-1)^{m - k})} \right)_{*}
        + \sum_{j = 1}^{m - k - 2} p^j \left( A_{k, j - 1}^{((-1)^{m - k - j - 1})} \right)_{*}
        + \sum_{j = 0}^{m - k - 2} p^{j + 1} \left( A_{k, j + 1}^{((-1)^{m - k - j - 1})} \right)_{*}
        \\
        \begin{aligned}
          & \quad
        + \sum_{i = m - k - 1}^{m - 1} p^i \left( A_{k, 2i - (m - k)}^{(+1)} \right)_{*}
        + \sum_{i = m - k - 1}^{m - 1} p^{i + 1} \left( A_{k, 2i - (m - k - 2)}^{(+1)} \right)_{*}
        \\ & =
        \sum_{\ell = 0}^{m - k - 1} p^{\ell} \left( A_{k, \ell}^{((-1)^{m - k - \ell})} \right)_{*}
        + p \sum_{s = 0}^{m - k - 3} p^s \left( A_{k, s}^{((-1)^{m - k - s - 2})} \right)_{*}
        \\ & \quad
        + \sum_{i = m - k}^{m} p^i \left( A_{k, 2i - (m - k)}^{(+1)} \right)_{*}
        + p \sum_{i = m - k - 2}^{m - 2} p^i \left( A_{k, 2i - (m - k - 2)}^{(+1)} \right)_{*}
        \\ & =
        p \uptau_{m - 2}|_{I_k} + \sum_{\ell = 0}^{m - k - 1} p^{\ell} \left( A_{k, \ell}^{((-1)^{m - k - \ell})} \right)_{*}
        + \sum_{i = m - k}^{m} p^i \left( A_{k, 2i - (m - k)}^{(+1)} \right)_{*}.
        \end{aligned}
    \end{multline*}
    This completes the proof of the induction step and of the lemma.
  \end{proof}

\begin{proof}[Proof of Proposition~\ref{p:supersingular orbits}]
  Let~$\kproj$ and~$(\uptau_m)_{m = 0}^{\infty}$ be as in Proposition~\ref{p:katz kite}.
  
  To prove~$(i)$, let~$r$ in~$(0, 1)$ be given.
  By Proposition~\ref{vnorm} there is an integer~$\ell \ge 0$ such that~$\kproj(\bfB(r)) \subseteq [x_{\ell}, x_0]$.
  Then the desired assertion follows from Proposition~\ref{p:katz kite} and by the observation that by Lemma~\ref{l:katz hyper kite} for every~$x$ in~$]0, x_0]$ we have
  \begin{displaymath}
    \deg(\uptau_m(x)|_{[x_{\ell}, x_0]})
    \le
    1 + p + \cdots + p^{\ell}.
  \end{displaymath}

  To prove~$(ii)$, let~$r_0$ in~$(0, 1)$ and an integer~$m_0 \ge 0$ be given.
  By Proposition~\ref{vnorm} there is an integer~$\ell \ge 0$ such that~$\kproj(\bfB(r_0)) \subseteq [x_{\ell}, x_0]$ and~$r$ in~$(0, 1)$ such that~$\kproj^{-1}([x_{\ell + m_0}, x_0]) \subseteq \bfB(r)$.
  Then the desired inclusion follows from Proposition~\ref{p:katz kite} by noting that by Lemma~\ref{l:katz hyper kite} for every~$x$ in~$[x_{\ell}, x_0]$ and every~$m$ in~$\{0, \ldots, m_0 \}$, we have~$\supp(\uptau_m(x)) \subseteq [x_{\ell + m_0}, x_0]$.
\end{proof} 

\appendix
\section{Lifting the Hasse invariant in characteristic 2 and 3}
\label{s:lift Hasse 2 3}

When~$p$ equals~$2$ or~$3$ it is not possible to lift the Hasse invariant~$A_{p-1}$ to a modular form of level one, holomorphic at infinity, over~$\Zp$.
There are two approaches to solve this issue. On the one  hand,  there are liftings of~$A_1^4$ and~$A_2^3$ in the desired space (namely, the Eisenstein series~$\oE_4$ and~$\oE_6$). 
On the other hand, considering level structures, liftings can be constructed as algebraic modular forms over~$\Z_{(p)}$  of the expected weight but higher level. In this appendix we recall both approaches, following \cite[Section~2.1]{Kat73}, and give a quantitative comparison between them, embodied in Proposition~\ref{prop-E1-E2} below. Such comparison is needed in Section~\ref{ss:katz kite}.

We start by recalling level structures. Let~$R$ be a ring and let $n\geq 1$ be an integer which is assumed to be invertible in~$R$.
Let~$E$ be an elliptic curve over~$R$ in the sense of Section~\ref{ss:katz valuation}.
A \emph{level~$n$ structure} on~$E$ over~$R$  is an isomorphism $\alpha_n \colon E[n]\to (\Z/n\Z)^2$ of group schemes over~$R$. 

Given an integer $n\geq 1$ and an arbitrary ring~$R_0$ where~$n$ is invertible, an algebraic modular form of level $n\geq 1$ over~$R_0$  is a  family of maps
$F=(F_R)_{R\in R_0\text{-Alg}}$ such that for any $R \in R_0\text{-Alg}$, the~$R$-valued map~$F_R$ is defined on the the set of triples $(E,\omega,\alpha_n)$, where~$E$ is an elliptic curve over $R\in R_0\text{-Alg}$, together with a differential form in~$\Omega^1_{E/R}(E)'$ and a level~$n$ structure.
The element  $F_R(E,\omega,\alpha_n) \in R$  must define an assignment satisfying properties analogous to~$(i),(ii)$ and~$(iii)$ stated in Section~\ref{ss:katz valuation}.
See \cite[Section~1.2]{Kat73} for further details.

 When  $R_0$ contains~$1/n$ and a primitive $n$-th root of unity, the $q$-expansions of an algebraic modular form~$F$ of level~$n$ over~$R_0$  are defined as the elements of $\Z(\!( q)\!) \otimes_{\Z}R_0$ obtained by evaluating~$F$ at the triples $(\Tate(q^n),\omega_{\can},\alpha_n)_{R_0}$ consisting of the Tate curve~$\Tate(q^n)$ (see Section~\ref{ss:bad orbits}) with its canonical differential $\omega_{\can}$, viewed as defined over $\Z(\!( q)\!)\otimes_{\Z}R_0$, with~$\alpha_n$ varying over all level~$n$ structures of $\Tate(q^n)$ over $\Z(\!( q)\!) \otimes_{\Z}R_0$.
 If all of the $q$-expansions of~$F$ lie in $\Z\llbracket q\rrbracket \otimes_{\Z}R_0$ then~$F$ is called holomorphic at infinity. For algebraic modular forms~$F$ of level one there is only one $q$-expansion, which coincides with the previously defined~$F(q)$.

According to~\cite[p.~98]{Kat73}, for any level $3\leq n\leq 11$ odd, there exists a lifting of~$A_1$ to a modular form of level~$n$, weight one, holomorphic at infinity, over~$\Z[1/n]$. We define~$\oE_1$ as any such lifting and set~$n(\oE_1)\=n$. Similarly, when~$m\geq 4$ and~${3 \nmid m}$, there exists a lifting of~$A_2$ to a modular form of level~$m$, weight two, holomorphic at infinity, over~$\Z[1/m]$. We define~$\oE_2$ as any such lifting and set~$n(\oE_2)\=m$.

The following statement is a comparison between both approaches.

\begin{proposition}
  \label{prop-E1-E2}
Let~$E\in \Sups$ and let~$\omega$ be a differential form in~$\Omega^1_{E/\cO_p}(E)'$.
\begin{enumerate}
\item[$(i)$]
  For any level~$n(\oE_1)$ structure~$\alpha$ on~$E$ we have
$$\ord_2(\oE_4(E,\omega))<3\Leftrightarrow \ord_2(\oE^4_1(E,\omega,\alpha))<3, $$
in which case $\ord_2(\oE_4(E,\omega))=\ord_2(\oE^4_1(E,\omega,\alpha))$.
\item[$(ii)$] For any  level~$n(\oE_2)$ structure~$\alpha$ on~$E$ we have
$$\ord_3(\oE_6(E,\omega))<\frac{5}{2}\Leftrightarrow \ord_3(\oE^3_2(E,\omega,\alpha))<\frac{5}{2}, $$
in which case $\ord_3(\oE_4(E,\omega))=\ord_3(\oE^3_2(E,\omega,\alpha))$.
\end{enumerate}
\end{proposition}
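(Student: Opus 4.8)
The plan is to compare the two liftings through their $q$-expansions and to descend the resulting divisibilities to modular forms via the $q$-expansion principle (Theorem~\ref{qep}, in the level-$n$ version recalled in this appendix), after which the $p$-adic valuations at the fixed pair $(E,\omega)$ follow by integrality.

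First I would collect the elementary congruences for the level-one Eisenstein series. From $\oE_4(q)=1+240\sum_{n\ge 1}\sigma_3(n)q^n$ and $240=2^4\cdot 3\cdot 5$ one reads off $\oE_4(q)\equiv 1\pmod{16}$, and from $\oE_6(q)=1-504\sum_{n\ge 1}\sigma_5(n)q^n$ and $504=2^3\cdot 3^2\cdot 7$ one reads off $\oE_6(q)\equiv 1\pmod 9$. Since $\oE_1$ (resp.\ $\oE_2$) lifts $A_1$ (resp.\ $A_2$), each of its $q$-expansions is $\equiv 1\pmod 2$ (resp.\ $\pmod 3$); writing $\oE_1(q,\alpha)=1+2B$ and $\oE_2(q,\alpha)=1+3B$, a direct expansion gives $\oE_1^4(q,\alpha)\equiv 1\pmod 8$ and $\oE_2^3(q,\alpha)\equiv 1\pmod 9$. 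Hence all $q$-expansions of the weight-$4$ form $\oE_4-\oE_1^4$ lie in $8\,\Z[1/n(\oE_1)]\llbracket q\rrbracket$ and all $q$-expansions of the weight-$6$ form $\oE_6-\oE_2^3$ lie in $9\,\Z[1/n(\oE_2)]\llbracket q\rrbracket$, so by the $q$-expansion principle there are modular forms $v_1$ (weight $4$, level $n(\oE_1)$, over $\Z[1/n(\oE_1)]$) and $v_2$ (weight $6$, level $n(\oE_2)$, over $\Z[1/n(\oE_2)]$), holomorphic at infinity, with $\oE_4-\oE_1^4=8\,v_1$ and $\oE_6-\oE_2^3=9\,v_2$.

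For part $(i)$ this already suffices: as $(E,\omega)$ and $\alpha$ are defined over $\cO_2$, the value $v_1(E,\omega,\alpha)$ lies in $\cO_2$, so $\ord_2\big((\oE_4-\oE_1^4)(E,\omega,\alpha)\big)\ge 3$. Reading $\oE_1^4(E,\omega,\alpha)=\oE_4(E,\omega)-8\,v_1(E,\omega,\alpha)$ through the ultrametric inequality then shows that whenever one of $\ord_2(\oE_4(E,\omega))$, $\ord_2(\oE_1^4(E,\omega,\alpha))$ is $<3$ the other equals it, which is $(i)$.

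For part $(ii)$ the same reasoning only gives $\ord_3\big((\oE_6-\oE_2^3)(E,\omega,\alpha)\big)\ge 2$, which does not cover the range $\ord_3(\oE_6(E,\omega))\in[2,\tfrac52)$; bridging this gap is the crux, and the step I expect to be the main obstacle. The claim I would prove is that $v_2$ reduces modulo $3$ to a form over $\F_3$ that vanishes on the one-point supersingular locus, i.e.\ is divisible by the Hasse invariant $A_2$. I would establish this by an explicit mod-$9$ analysis of $\oE_2$: verify it for one convenient Eisenstein-series lift (for which $\tfrac19(\oE_6-\oE_2^3)\equiv 0\pmod 3$), and then note that for two lifts $\oE_2,\oE_2'$ of $A_2$ one has $\oE_2-\oE_2'=3g$ and $\oE_2^2+\oE_2\oE_2'+\oE_2'^2\equiv 3A_2^2\equiv 0\pmod 3$, so $v_2-v_2'$ reduces modulo $3$ to $A_2^2$ times a form; hence the vanishing on the supersingular locus is independent of the lift (alternatively one invokes the analysis of \cite[Section~2.1]{Kat73}). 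Granting the claim, I would lift $A_2$ through the level structure to $\oE_2$ itself and write $v_2=\oE_2\,\widetilde h+3r$ with $\widetilde h,r$ forms over $\Z[1/n(\oE_2)]$ of weights $4$ and $6$, so that $\ord_3\big(v_2(E,\omega,\alpha)\big)\ge\min\{\ord_3(\oE_2(E,\omega,\alpha)),1\}$. Finally I would split cases. If $\ord_3(\oE_6(E,\omega))<2$, the ultrametric inequality applied to $\oE_2^3=\oE_6-9v_2$ gives $\ord_3(\oE_2^3(E,\omega,\alpha))=\ord_3(\oE_6(E,\omega))$ as in $(i)$; symmetrically if $\ord_3(\oE_2^3(E,\omega,\alpha))<2$. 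Otherwise both sides of $\oE_2^3=\oE_6-9v_2$ have valuation $\ge 2$, whence $\ord_3(\oE_2(E,\omega,\alpha))\ge\tfrac23$, so $\ord_3(v_2(E,\omega,\alpha))\ge\tfrac23$ and $\ord_3\big((\oE_6-\oE_2^3)(E,\omega,\alpha)\big)\ge\tfrac83>\tfrac52$; the ultrametric inequality then forces $\ord_3(\oE_6(E,\omega))=\ord_3(\oE_2^3(E,\omega,\alpha))$ whenever either of them lies in $[2,\tfrac52)$. Assembling the cases proves $(ii)$ (with $\oE_6$ in place of the $\oE_4$ printed in the conclusion, which is an evident slip).
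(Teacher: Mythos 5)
Your part~$(i)$ is the paper's argument: the congruence $\oE_4\equiv\oE_1^4\bmod 2^3$ obtained from the $q$-expansion principle, followed by the ultrametric inequality. For part~$(ii)$ you take a genuinely different route. The paper works directly with the weight-two form $G_2$ on $\Gamma_0(2)$, using the identity $4G_2^3=\oE_6+3\oE_4G_2$ of~\eqref{eq-G2-E6-E4} together with $\ord_3(\oE_6^2-\oE_4^3)=3$ from~\eqref{eq-ord-E6-E4}, and runs a two-way case analysis on $\ord_3(\oE_2)$ and $\ord_3(\oE_4)$. You instead upgrade the crude congruence $\oE_6\equiv\oE_2^3\bmod 9$ by showing that $v_2\=\tfrac19(\oE_6-\oE_2^3)$ reduces mod~$3$ to a multiple of the Hasse invariant $A_2$; this gives $\ord_3(v_2(E,\omega,\alpha))\ge\min\{\ord_3(\oE_2(E,\omega,\alpha)),1\}$, and the biconditional at the threshold $\tfrac52$ then falls out of the clean valuation count $2+\tfrac23=\tfrac83>\tfrac52$. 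Your independence-of-the-lift computation ($v_2-v_2'\equiv gA_2^2\bmod 3$) is correct, and the resulting case analysis is tighter and more transparent than the paper's.

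Two caveats. First, your parenthetical assertion that for a convenient lift one has $\tfrac19(\oE_6-\oE_2^3)\equiv 0\bmod 3$ is false: taking $G_2$ as the lift, \eqref{eq-G2-E6-E4} gives $\oE_6-G_2^3=3G_2(G_2^2-\oE_4)=9\,G_2\cdot\tfrac13(G_2^2-\oE_4)$, and the $q$-expansion of $\tfrac13(G_2^2-\oE_4)$ begins $-64q+\cdots$, which is nonzero mod~$3$; so $v_2\bmod 3$ is $A_2$ times a \emph{nonzero} form. Only the weaker divisibility by $A_2$ holds — fortunately that is all the rest of your argument uses, so this is a repairable slip rather than a fatal one. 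Second, the base-case verification you leave unspecified is, in practice, exactly where the identity~\eqref{eq-G2-E6-E4} (or an equivalent explicit computation with a concrete lift such as $G_2$) enters; so your proof reorganizes, rather than avoids, the paper's key input. You should also justify lifting the mod-$3$ quotient $v_2/A_2$ to a form $\tilde h$ in characteristic zero (Katz's base-change theorem in weight $\ge 2$ does this). Your remark that the printed $\oE_4$ in the conclusion of~$(ii)$ should read $\oE_6$ is correct.
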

\begin{proof}
In order to prove~$(i)$, we start by recalling the $q$-expansion
$$\oE_4(q)=1+240\sum_{n=1}^{\infty}\sigma_3(n)q^n,$$
obtained by setting~$k=4$ in~\eqref{eq-q-exp-Ek}.
Since~$\ord_2(240)=4$, we have~$\oE_4(q)\equiv 1$ mod~$2^4$. Now, put~$n_1\=n(\oE_1)$, let~$\zeta_{n_1}$ be a primitive $n_1$-th roof of unity and define~$R_1\=\Z[1/n_1,\zeta_{n_1}]$. By the definition of~$\oE_1$ we have 
$$\oE_1(\Tate(q^{n_1}),\omega_{\can},\alpha_{n_1}) \equiv A_1(q^{n_1})\equiv 1 \, \mod 2R_1,$$
hence 
$$\oE_1^4(\Tate(q^{n_1}),\omega_{\can},\alpha_{n_1}) \equiv 1\equiv \oE_4(q^{n_1})  \mod 2^3R_1,$$
for any level~$n_1$ structure~$\alpha_{n_1}$ on~$\Tate(q^{n_1})$. We conclude that the form~$f$ obtained by reducing modulo~$2^3\Z[1/n_1]$  the form $\oE_4-\oE^4_1$ is an algebraic modular form of weight $4$, level $n_1$ over $\Z/2^3\Z$, whose $q$-expansions over $(\Z/2^3\Z)[\zeta_{n_1}]$ vanish identically.
By~\cite[Theorem~1.6.1]{Kat73} we deduce that $f=0$. By compatibility with base change we conclude that for any $\Z[1/n_1]$-algebra $R$ and any triple $(E,\omega,\alpha_{n_1})$ over $R$ we have
$$\oE_4(E,\omega)- \oE_1^4(E,\omega,\alpha_{n_1})\equiv f((E,\omega,\alpha_{n_1})_{R/2^3R})\equiv 0 \, \mod 2^3R.$$
In particular, choosing $R=\cO_p$, we get
\begin{equation}\label{eq-cong-E4-E2}
\ord_2(\oE_4(E,\omega)- \oE_1^4(E,\omega,\alpha_{n_1}))\geq 3,
\end{equation}
for every $E\in \Sups$, every basis $\omega$ of $\Omega^1_{E/\cO_p}$ and every level~$n_1$ structure~$\alpha_{n_1}$ on~$E$.
Then, $(i)$ is a direct consequence of~\eqref{eq-cong-E4-E2} and the ultrametric inequality.

The proof of~$(ii)$ is unfortunately less straightforward.
This is because the same argument used to prove~\eqref{eq-cong-E4-E2} only yields the inequality
$$\ord_3(\oE_6(E,\omega)- \oE_2^3(E,\omega,\alpha_{n_2}))\geq 2,$$
valid for any level $n_2\=n(\oE_2)$ structure $\alpha_{n_2}$ on $E$, but such inequality does not imply the desired result. On the other hand, the above argument allows us to infer
\begin{equation}\label{eq-E4-E2}
\ord_3(\oE_4(E,\omega)- \oE_2^2(E,\omega,\alpha_{n_2}))\geq 1.
\end{equation}
In order to prove~$(ii)$ we introduce the series
\begin{equation}\label{eq-G2-q-exp}
G_2(\tau)=1+24\sum_{n=1}^{\infty}\left(\sigma_1(n)-2\sigma_1\left(\frac{n}{2}\right)\right)e^{2\pi i n\tau}, \quad \tau \in \H,
\end{equation}
where $\sigma_1\left(\tfrac{n}{2}\right)$ is defined as zero when $n$ is odd. It is known that~$G_2$ is a classical holomorphic modular form of weight two for the group $\Gamma_0(2)=\{g \in \SL_2(\Z): g \equiv \left(\begin{smallmatrix} \ast & \ast \\ 0 & \ast \end{smallmatrix}\right) \mod 2\}$.\footnote{Up to an explicit multiplicative factor, this is denoted by~$G_{2,2}$ in~\cite[Section~1.2]{DiaShu05}.}
By \cite[Corollary~1.9.1]{Kat73}, $G_2$ defines an algebraic modular over~$\Z\left[1/2\right]$ of weight two and level two.
This form satisfies the identity
\begin{equation}\label{eq-G2-E6-E4}
4\, G_2^3=\oE_6+3\, \oE_4\, G_2.
\end{equation}
Indeed, the space of modular forms over~$\C$ of weight six for~$\Gamma_0(2)$ has dimension~$2$, see the dimension formulas in~\cite[Chapter~3]{DiaShu05}.
By comparing Fourier expansions, it is easy to check that~$\oE_6$ and~$\oE_4\, G_2$ are linearly independent over~$\C$, hence they form a basis of such space. This implies that there exist~$a,b\in \C$ with $G_2^3=a \, \oE_6+b\, \oE_4\, G_2$.
Then, \eqref{eq-G2-E6-E4} follows at the level of classical modular forms by computing the values of $a$ and $b$, which can be done by comparing Fourier expansions.
Finally, the fact that~\eqref{eq-G2-E6-E4} holds as an identity between algebraic modular forms over $\Z\left[1/2\right]$ is a consequence of~\cite[Corollary~1.9.1]{Kat73}.

We also recall the identity
$$\oE_6^2-\oE_4^3=1728\, \Delta.$$
At the level of classical modular forms,  see for example~\cite[Sections~1.1 and~1.2]{DiaShu05}. Then, this identity holds at the level of algebraic modular forms by the same reasoning as before. 
Given $E\in\Sups$ and a differential form~$\omega$ in~$\Omega^1_{E/\cO_p}(E)'$, we have $\Delta(E,\omega)\in \cO_p^{\times}$ since~$E$ has good reduction. This implies
\begin{equation}\label{eq-ord-E6-E4}
\ord_3(\oE_6^2(E,\omega)-\oE_4^3(E,\omega))=3.
\end{equation}
By using~\eqref{eq-G2-E6-E4} and~\eqref{eq-ord-E6-E4}, we will now prove~$(ii)$.
Let~$\alpha$ be a level~$n_2$ structure on~$E$. First, assume that $\ord_3(\oE_2(E,\omega,\alpha))<\tfrac{5}{6}$.
From~\eqref{eq-G2-E6-E4} we see that the reduction modulo~$3$ of~$G_2$ equals~$A_2$.
Since the same holds for~$\oE_2$, we conclude that
\begin{equation}\label{eq-E2-G2}
\ord_3(\oE_2(E,\omega,\alpha)-G_2(E,\omega,\beta))\geq 1, 
\end{equation}
for any level two structure~$\beta$. In particular
$$\ord_3(G_2(E,\omega,\beta))=\ord_3(\oE_2(E,\omega,\alpha))<\frac{5}{6}.$$
By~\eqref{eq-G2-E6-E4} we have
$$\oE_6(E,\omega)=G_2(E,\omega,\beta)\, (4G_2^2(E,\omega,\beta)-3\oE_4(E,\omega)).$$
But by~\eqref{eq-E4-E2} and~\eqref{eq-E2-G2} we also have
\begin{align*}
    \ord_3(3\, \oE_4(E,\omega))
    & =
    1+\ord_3(\oE_4(E,\omega))
    \\ & \geq
    1+\min\{1,\ord_3(G_2^2(E,\omega,\beta))\}
    \\ & >
    \ord_3(G_2^2(E,\omega,\beta)),
\end{align*}
hence
$$\ord_3(\oE_6(E,\omega))=\ord_3(G_2^3(E,\omega,\beta))=\ord_3(\oE_2^3(E,\omega,\alpha)).$$
This proves one implication. Let us now prove the reciprocal. We start by assuming that $\ord_3(\oE_6(E,\omega))<\tfrac{5}{2}$. If $\ord_3(\oE_4(E,\omega))<1$, then we can use~\eqref{eq-E4-E2}, \eqref{eq-ord-E6-E4} and~\eqref{eq-E2-G2} to deduce that $\ord_3(\oE^3_4(E,\omega))=\ord_3(\oE^2_6(E,\omega))$ and $\ord_3(G_2^2(E,\omega,\beta))=\ord_3(\oE_4(E,\omega))$. This implies
$$\ord_3(3\, G_2(E,\omega,\beta)\, \oE_4(E,\omega))=1+\ord_3(\oE_6(E,\omega))> \ord_3(\oE_6(E,\omega)).$$
By~\eqref{eq-G2-E6-E4} and~\eqref{eq-E2-G2} we conclude
$$\ord_3(\oE_2^3(E,\omega,\alpha))=\ord_3(G_2^3(E,\omega,\beta))=\ord_3(\oE_6(E,\omega)).$$
Now, if $\ord_3(\oE_4(E,\omega))\geq 1$ then~\eqref{eq-E4-E2} and~\eqref{eq-E2-G2} imply $\ord_3(G_2^2(E,\omega,\beta))\geq 1$, giving
$$\ord_3(3\, G_2(E,\omega,\beta)\, \oE_4(E,\omega))\geq \frac{5}{2}>\ord_3(\oE_6(E,\omega)).$$
As before, we conclude $\ord_3(\oE_2^3(E,\omega,\alpha))=\ord_3(\oE_6(E,\omega))$. This proves the reciprocal implication and completes the proof of the proposition.
\end{proof}

\section{Eichler--Shimura analytic relation}
\label{s:canonical analyticity}

In this appendix we further study the canonical branch~$\t$ of~$T_p$ that is defined on~$\Ord$ in Section~\ref{ss:canonical branch}.
We start extending~$\t$, as follows.
Recall that~$\kval$ denotes Katz' valuation, defined in Section~\ref{ss:katz valuation}.
Extend~$\kval$ to~$\Ell(\C_p)$ as~$\kval \equiv 0$ outside~$\Sups$, and put
\begin{equation}
  \label{eq:4}
  N_p
  \=
  \left\{ E \in \Ell(\C_p) : \kval(E) < \frac{p}{p + 1} \right\}.
\end{equation}
On $N_p \cap \Sups$, we use the definition of~$\t$ in Lemma~\ref{l:sups canonical subgroup}.
To define~$\t$ at a point~$E$ in~$\Bad$, let~$z$ in~$\bfD(0, 1)^*$ and let~$\varphi_z \colon \C_p^{\times} / z^{\Z} \to \Tate(z)(\C_p)$ be the isomorphism of analytic groups as in Section~\ref{ss:bad orbits}.
Then we define
\begin{displaymath}
  H(E)
  \=
  \varphi_z( \{ \zeta z^n \in \C_p^{\times} : \zeta^p = 1, n \in \Z \} / z^{\Z} ),
  \text{ and }
  \t(E) \= E / H(E).
\end{displaymath}
Note that in the notation~\eqref{e:1} of Section~\ref{ss:bad orbits}, we have $H(E) = C_{p, z^p}$.
The map~$\t \colon N_p \to \Ell(\C_p)$ so defined is the \emph{canonical branch of~$T_p$}.

The goal of this appendix is to prove the following result.

\begin{theorem}[Eichler--Shimura analytic relation]
  \label{t:canonical analyticity}
  The canonical branch~$\t$ of~$T_p$ is given by a finite sum of Laurent series, each of which converges on all of~$N_p$.
  Furthermore, for every~$E$ in~$N_p \setminus \Bad$ we have 
\begin{equation}
  \label{eq:5}
  \ord_p(\t(j(E)) - j(E)^p)
  \ge
  1 - \kval(E),
\end{equation}
and for every~$E$ in~$\Ell(\C_p)$ we have
  \begin{equation}
    \label{tp1 bis}
    T_p(E)
    =
  \begin{cases}
    \t^*(E) + [\t(E)]
    & \text{if } \kval(E) \le \frac{1}{p + 1};
    \\
    \t^*(E)
    & \text{if } \kval(E) > \frac{1}{p + 1}.
  \end{cases}
\end{equation}
\end{theorem}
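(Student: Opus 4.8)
The plan is to prove Theorem~\ref{t:canonical analyticity} by combining three ingredients that are already developed in the paper: the Deligne--Tate expansion of $\t$ on $\Ord$ (Theorem~\ref{teo-Deligne}), the Katz--Lubin canonical subgroup theory on the not-too-supersingular locus (Lemma~\ref{l:sups canonical subgroup}), and the Tate-uniformization description of $T_p$ on $\Bad$ (equation~\eqref{tateq}). The global analyticity statement should come from the observation that the series~\eqref{eq:6} of Theorem~\ref{teo-Deligne} already has the right shape: the polynomial part $z^p + pk(z)$ is entire, and the tail $\sum_{\ss}\sum_n A_n^{(\ss)}(z-\beta_\ss)^{-n}$ converges on all of $N_p$ because the valuation bound~\eqref{eq:13} forces $\ord_p(A_n^{(\ss)}) \ge \delta_\ss'(\tfrac1{p+1} + n\tfrac{p}{p+1})$, so that on the region where $\kproj < \tfrac{p}{p+1}$ the distance $|z - \beta_\ss|_p$ is bounded below in a way that makes the series converge. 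First I would make this convergence statement precise: using Proposition~\ref{vnorm} to translate $\kval(E) < \tfrac{p}{p+1}$ into a lower bound on $\ord_p(j(E) - \fj_\ss)$, hence on $\ord_p(j(E) - \beta_\ss)$ after comparing $\fj_\ss$ and $\beta_\ss$ (both reduce to $j(\ss)$), and then checking the geometric-type decay of the $n$-th term.

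Next I would establish the congruence~\eqref{eq:5}. On $N_p \setminus (\Bad \cup \Sups) = \Ord$ this is just~\eqref{eq-cong-t}, i.e. $\t(z) \equiv z^p \bmod p\cO_p$, which gives $\ord_p(\t(j(E)) - j(E)^p) \ge 1 \ge 1 - \kval(E)$ since $\kval \equiv 0$ there. On $N_p \cap \Sups$ the bound $1 - \kval(E)$ is the nontrivial content: here I would read off from the expansion~\eqref{eq:6} that the difference $\t(z) - z^p$ equals $pk(z) + \sum_\ss\sum_n A_n^{(\ss)}(z-\beta_\ss)^{-n}$, and estimate the valuation of each tail term using~\eqref{eq:13} together with the lower bound on $\ord_p(z - \beta_\ss)$ coming from $\kval(E) < \tfrac{p}{p+1}$ via Proposition~\ref{vnorm}; the worst term should contribute exactly $1 - \kval(E)$ (this is where the constant $\tfrac1{p+1}$ in~\eqref{eq:13} and the $\tfrac{p}{p+1}$ threshold conspire). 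I expect this valuation bookkeeping --- tracking the $\delta_\ss'$ factors and the case $p=2,3$ --- to be the main obstacle, and it may be cleanest to invoke the more detailed statement in Proposition~\ref{p:low canonical analyticity} of this appendix for the small primes.

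Finally I would prove the divisor identity~\eqref{tp1 bis}. There are three loci to consider. On $\Ord$ it is exactly~\eqref{tp1} of Proposition~\ref{prop-tpm}, and there $\kval \equiv 0 \le \tfrac1{p+1}$, so the first case applies. On $N_p \cap \Sups$ I would split further according to whether $\kval(E) \le \tfrac1{p+1}$ or $\kval(E) > \tfrac1{p+1}$: Lemma~\ref{l:sups canonical subgroup} says $H(E)$ is the unique order-$p$ subgroup with $\kval(E/H(E)) \ne p^{-1}\kval(E)$, while every other order-$p$ subgroup $C$ satisfies $\kval(E/C) = p^{-1}\kval(E)$ and $\t(E/C) = E$, which is precisely the $\t^*(E)$ part; what remains is to decide when the curve $E/H(E)$ reappears in $T_p(E)$ as a distinct point versus when it coincides with one of the $E/C$, and the thresholds in~\eqref{e:sups canonical subgroup A} show that $\t(E) = E/H(E)$ lands outside $\supp(\t^*(E))$ exactly when $\kval(E) \le \tfrac1{p+1}$ (so $p\kval(E) \le \tfrac{p}{p+1}$ and the image is genuinely a new point contributing $[\t(E)]$), whereas for $\kval(E) > \tfrac1{p+1}$ we have $\t^2(E) = E$ by Lemma~\ref{l:sups canonical subgroup}$(ii)$ and $\t(E)$ is already accounted for among the $p+1$ subgroups, giving just $\t^*(E)$ of degree $p + 1 = \deg T_p(E)$. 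On $\Bad$ the point $\kval(E) = 0 \le \tfrac1{p+1}$ puts us in the first case, and I would verify~\eqref{tp1 bis} directly from~\eqref{tateq} with $n = p$: the $p+1$ summands $\Tate(\ell)$ with $\ell^k = z^{p/k}$ for $k \in \{1, p\}$ split as the $p$ preimages $\ell$ with $\ell^p = z$ (these form $\t^*(E)$, since $\t$ has degree $p$ and $\t(\Tate(\ell)) = \Tate(z)$ for each such $\ell$) plus the single curve $\Tate(z^p) = \t(\Tate(z))$ coming from $k = 1$. Throughout, continuity of $T_p$ (Lemma~\ref{lemma-Hecke-continuity}) can be used to pass from a dense set of non-ramification points to all of each locus, exactly as in the proof of Proposition~\ref{prop-tpm}.
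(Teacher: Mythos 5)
Your outline is right where it follows the paper (convergence of the series from \eqref{eq:13} and Proposition~\ref{vnorm}; the estimate \eqref{eq:5} on $\Ord$ from \eqref{eq-cong-t}; the identity \eqref{tp1 bis} on $\Ord$ from Proposition~\ref{prop-tpm} and on $\Bad$ from \eqref{jtate} and \eqref{tateq}; and deferring $p=2,3$ to Proposition~\ref{p:low canonical analyticity}). The gap is on the supersingular locus for $p\ge 5$. On $N_p\cap\Sups$ the canonical branch $\t$ is \emph{defined} via the Katz--Lubin canonical subgroup (Lemma~\ref{l:sups canonical subgroup}), not by the Laurent series: Theorem~\ref{teo-Deligne} only asserts that the series \eqref{eq:6} represents $\t$ on $\Ord$. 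When you propose to ``read off from the expansion \eqref{eq:6}'' the estimate \eqref{eq:5} for $E$ in $N_p\cap\Sups$, you are tacitly assuming that the analytic continuation $\widehat{\t}$ of that series into the supersingular discs coincides with $E\mapsto E/H(E)$ there. That identification is the central point of the whole theorem and does not follow from anything you cite. The paper proves it as follows: the relation $\Phi_p(z,\widehat{\t}(z))=0$ holds on $\Ord\cup\Bad$ (using Lemma~\ref{l:bad analyticity}) and hence, by analyticity, on all of $N_p$, so $\widehat{\t}(E)$ is always one of the $p+1$ quotients $E/C$; by Lemma~\ref{l:sups canonical subgroup}, either $\widehat{\t}(E)=\t(E)$ or $\kval(\widehat{\t}(E))=p^{-1}\kval(E)$; a continuity argument in the radius reduces this dichotomy, on each disc $\bfD(j(\ss))\cap N_p$, to a single test point $E_0$ with $0<\ord_p(j(E_0)-\fj_{\ss})<\frac{1}{p+1}$, where the explicit estimates of Theorem~\ref{teo-Deligne} (with the specific choice $\beta_{\ss}=\fj_{\ss}$) force $\kval(\widehat{\t}(E_0))=p\,\kval(E_0)$ and rule out the bad alternative. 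Without some such argument, neither the claim that $\t$ is given by the Laurent series on all of $N_p$ nor \eqref{eq:5} on $N_p\cap\Sups$ is established.

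A second, related omission concerns \eqref{tp1 bis} on $\Sups$. The identity is asserted for every $E$ in $\Ell(\C_p)$, including the too-supersingular points with $\kval(E)\ge\frac{p}{p+1}$, where $\t(E)$ is undefined but $\t^*(E)$ still occurs; moreover $\t^*(E)$ is a divisor weighted by local degrees $\deg_{\t}$, which only make sense once $\t$ is known to be rigid analytic on $N_p\cap\Sups$ --- again the identification above. Your subgroup count via Lemma~\ref{l:sups canonical subgroup} correctly identifies the support and explains the threshold at $\kval(E)=\frac{1}{p+1}$, but it does not account for multiplicities; the paper obtains both from $\Phi_p(z,\widehat{\t}(z))=0$ together with degree computations for $\t^*$ on the strata $N_p'$, $S_p$, $A_p$ and $B_p$ introduced in its proof.
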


In view of~\eqref{eq:5}, the relation~\eqref{tp1 bis} can be seen as refinement and a lift to~$N_p$ of the classical Eichler--Shimura congruence relation, see for example~\cite[Section~7.4]{Shi71} or~\cite[Section~8.7]{DiaShu05}.

The proof of Theorem~\ref{t:canonical analyticity} is at the end of this appendix.
When restricted to~$\Ord$, it is a direct consequence of Theorem~\ref{teo-Deligne} and Proposition~\ref{prop-tpm} with~$m = 1$.
To prove~\eqref{tp1 bis} for~$E$ in~$\Sups$, we use Lemma~\ref{l:sups canonical subgroup}.
To prove this relation on~$\Bad$, we use the results on the uniformization of $p$-adic elliptic curves with multiplicative reduction, recalled in Section~\ref{ss:bad orbits}.
To prove~\eqref{eq:5} and that~$\t$ is a finite sum of Laurent series for~$p \ge 5$, we use Theorem~\ref{teo-Deligne} in Section~\ref{ss:canonical branch}.
For~$p = 2$ and~$3$, we use Proposition~\ref{p:low canonical analyticity} below, whose proof is based on the explicit formulae in~\cite[\emph{Appendice}]{Mes86}.
This result also provides a proof of Theorem~\ref{teo-Deligne} when~$p = 2$ and~$3$

Note that for~$p = 2$ and~$3$, the set~$\tSups$ consists of a single point whose $j$-invariant is equal to~$0$ and to~$1728$, see for example~\cite[Chapter~V, Section~4]{Sil09}.
\begin{proposition}
  \label{p:low canonical analyticity}
  Put~$\fj_2 \= 0$ and~$\fj_3 \= 1728$, and consider the polynomials
  \begin{displaymath}
    \chk_2(z)
    \=
    - 93 \cdot 2^4 z + 627 \cdot 2^8
    \text{ and }
    \chk_3(z)
    \=
    328 \cdot 3^2 z^2 + 85708 \cdot 3^3 z + 1263704 \cdot 3^5.
  \end{displaymath}
  Then for~$p = 2$ and~$3$, the canonical branch~$\t$ of~$T_p$ admits a Laurent series expansion of the form
  \begin{displaymath}
    \t(z)
    =
    (z - \fj_p)^p + \fj_p + \chk_p(z - \fj_p) + \sum_{n = 1}^{\infty} \frac{A_n^{(p)}}{(z - \fj_p)^n},
  \end{displaymath}
  where for every~$n \ge 1$ the coefficient~$A_n^{(p)}$ is in~$\Z$ and satisfies
  \begin{displaymath}
    \ord_p(A_n^{(p)})
    \ge
    \begin{cases}
      4 + 8n
      & \text{if $p = 2$};
      \\
      \frac{3}{2} + \frac{9}{2} n
      & \text{if $p = 3$},
    \end{cases}
  \end{displaymath}
  with equality if~$n = 1$.
\end{proposition}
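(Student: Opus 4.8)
The plan is to pin down the canonical branch $\t$ near the supersingular $j$-invariant $\fj_p$ by combining the modular equation (governing the behaviour on $\Ord \cup \Bad$), the Katz--Lubin canonical subgroup (governing the behaviour inside the supersingular disc), and a Newton-polygon computation (governing the tail coefficients). Recall that for $p = 2, 3$ the set $\tSups$ has a single element $\ss$, with $\fj_{\ss} = 0$ if $p = 2$ and $\fj_{\ss} = 1728$ if $p = 3$, and $\delta_{\ss} = 12$ if $p = 2$ and $\delta_{\ss} = 6$ if $p = 3$.

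First I would identify $\t$ on $\Ord$ as an algebraic function. For $E$ in $\Ordun$ the value $j(E/H(E)) = j(\t(E))$ is, by~\eqref{eq-mod-polynomial}, a root of $\Phi_p(j(E), Y) = 0$, and by Theorem~\ref{teo-Deuring}$(i)$ it satisfies $j(\t(E)) \equiv j(E)^p \bmod \cM_p$; by the Kronecker congruence $\Phi_p(X, Y) \equiv (Y - X^p)(Y^p - X) \bmod p$ this is a simple root (the two factors share a zero $Y$ only when $X \in \F_{p^2}$, i.e.\ when $X$ is supersingular), so $\t$ is rigid analytic on $\Ord$, while on $\Bad$ one has $\t(\Tate(z)) = \Tate(z^p)$ and hence $|\t(\Tate(z))|_p = |z|_p^{-p}$, as in Section~\ref{ss:bad orbits}. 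Writing $w = z - \fj_p$ and using that $\fj_p \equiv 0 \bmod p$ for $p = 2, 3$, the Kronecker congruence gives $\Phi_p(\fj_p + w, w^p) \equiv 0 \bmod p$ and $\partial_Y \Phi_p(\fj_p + w, w^p) \equiv w^{p^2} - w \bmod p$, which is invertible in $\F_p(\!( 1/w )\!)$; combined with the fact that $\Phi_p$ is monic in $Y$, Hensel's lemma over the completion of $\Z_{(p)}[\![ 1/w ]\!][w]$ produces the unique root $\sim w^p$, and shows it equals $w^p + (\text{a polynomial of degree} \le p - 1 \text{ in } w) + \sum_{n \ge 1} A_n^{(p)} w^{-n}$ with every coefficient in $\Z$. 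Running this recursion --- equivalently, substituting the explicit formulae of~\cite[\emph{Appendice}]{Mes86} for the Hauptmodul of $X_0(p)$ and for the two forgetful maps and the Atkin--Lehner involution --- yields the asserted polynomials $\chk_2$, $\chk_3$ and the leading shape $(z - \fj_p)^p + \fj_p + \chk_p(z - \fj_p)$.

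Next I would extend $\t$ into the supersingular disc and locate its disc of convergence. By Lemma~\ref{l:sups canonical subgroup} --- whose proof for $p = 2, 3$ is exactly where Proposition~\ref{prop-E1-E2} enters --- the map $\t$ extends to a rigid analytic self-map of $N_p$, and by Proposition~\ref{vnorm} one has $N_p = \{z \in \C_p : |z - \fj_p|_p > \rho_p\}$ with $\rho_p \= p^{-\delta_{\ss} p/(p+1)}$. Thus $\t$ extends analytically to the complement of the closed disc $\{|z - \fj_p|_p \le \rho_p\}$ in $\mathbb{P}^1$, with a pole of order $p$ at $z = \infty$ by the computation on $\Bad$. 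Passing to the coordinate $u = 1/(z - \fj_p)$ makes $\t$ an analytic function on $\{|u|_p < \rho_p^{-1}\}$ with a pole of order $p$ at $u = 0$; hence its Laurent expansion in $w = z - \fj_p$ has principal part a polynomial of degree $\le p$ and leading coefficient $1$ --- necessarily the polynomial found above --- and tail $\sum_{n \ge 1} A_n^{(p)} w^{-n}$ with $\limsup_n |A_n^{(p)}|_p^{1/n} \le \rho_p$, i.e.\ $\ord_p(A_n^{(p)}) \ge n \delta_{\ss} \tfrac{p}{p+1} - o(n)$.

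The main point, and where I expect the real work, is upgrading this asymptotic bound to the stated linear bound for every $n$, with equality at $n = 1$. For this I would run a Newton-polygon argument on the annulus $\{\rho_p < |z - \fj_p|_p < 1\}$: setting $v = \ord_p(z - \fj_p)$, Lemma~\ref{l:sups canonical subgroup} together with Proposition~\ref{vnorm} shows that the valuation of $\t(z) - \fj_p$, as a function of $v$, equals $pv$ for $0 < v \le \tfrac{\delta_{\ss}}{p+1}$ and equals $\delta_{\ss} - v$ for $\tfrac{\delta_{\ss}}{p+1} \le v < \tfrac{\delta_{\ss} p}{p+1}$. On the other hand this is the lower envelope of the Newton polygon of $\t(z) - \fj_p = (z-\fj_p)^p + \chk_p(z-\fj_p) + \sum_{n \ge 1} A_n^{(p)} (z-\fj_p)^{-n}$, in which the summand $A_n^{(p)} (z-\fj_p)^{-n}$ contributes the line $v \mapsto \ord_p(A_n^{(p)}) - nv$ of slope $-n$. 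For this envelope to be equal to the slope-$(-1)$ line $\delta_{\ss} - v$ on an entire interval, only the $n = 1$ summand can realize it, which forces $\ord_p(A_1^{(p)}) = \delta_{\ss}$ (this is $12$ for $p = 2$ and $6$ for $p = 3$, the "equality at $n = 1$" assertion); and for each $n \ge 2$ the line $\ord_p(A_n^{(p)}) - nv$ must lie above $\delta_{\ss} - v$ throughout $\big[\tfrac{\delta_{\ss}}{p+1}, \tfrac{\delta_{\ss} p}{p+1}\big)$, so letting $v \to \tfrac{\delta_{\ss} p}{p+1}^-$ gives $\ord_p(A_n^{(p)}) \ge \delta_{\ss} + (n-1)\tfrac{\delta_{\ss} p}{p+1} = \delta_{\ss}\big(\tfrac{1}{p+1} + n\tfrac{p}{p+1}\big)$, which is $4 + 8n$ for $p = 2$ and $\tfrac32 + \tfrac92 n$ for $p = 3$. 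The delicate parts are: checking that the two-sided Laurent expansion really is valid on the whole not-too-supersingular annulus when $p = 2, 3$ (this is the role of Proposition~\ref{prop-E1-E2} and Lemma~\ref{l:sups canonical subgroup}, since Theorem~\ref{teo-Deligne} is not yet available in these cases); carrying out the Newton-polygon comparison precisely enough that it constrains every tail coefficient individually rather than only on average; and the concrete computation --- via~\cite[\emph{Appendice}]{Mes86}, compare~\cite[p.~89]{Dwo69} --- producing the exact coefficients of $\chk_2$ and $\chk_3$.
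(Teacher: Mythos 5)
Your Newton--polygon endgame is an attractive alternative to the paper's direct coefficient estimates: granting the inputs, it does recover $\ord_p(A_n^{(p)})\ge\delta_{\ss}\bigl(\tfrac{1}{p+1}+n\tfrac{p}{p+1}\bigr)$ with equality at $n=1$ from the valuation profile in Lemma~\ref{l:sups canonical subgroup}. But the argument rests on two facts that you assert rather than prove, and they are exactly the hard content of the proposition when $p=2,3$. First, you need the canonical branch to be rigid analytic on all of~$N_p$ and given there by a \emph{single} two-sided Laurent series in $w=z-\fj_p$ converging down to $|w|_p=r_p$. Lemma~\ref{l:sups canonical subgroup} only defines $\t$ set-theoretically on the not-too-supersingular locus and records the valuation of $j(E/H(E))$, and Proposition~\ref{prop-E1-E2} only compares Eisenstein series; neither yields the analyticity of $E\mapsto j(E/H(E))$ across the supersingular annulus, nor the identification of that map with the analytic continuation of the branch you built on $\Ord\cup\Bad$ by Hensel. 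Without convergence on the circles $|w|_p=p^{-v}$ for $v$ near $\delta_{\ss}\tfrac{p}{p+1}$, those circles are not in the domain of the series and your Newton--polygon comparison degenerates to the weak asymptotic bound of your second paragraph. Second, the integrality $A_n^{(p)}\in\Z$ does not follow from Hensel's lemma over a $p$-adically complete ring (that gives $\Z_p$ at best), and the explicit polynomials $\chk_2,\chk_3$ are nowhere actually computed. You correctly flag these as ``the delicate parts,'' but flagging them does not fill them.

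The paper resolves all of these points simultaneously by an explicit construction rather than by abstract continuation: using Mestre's genus-zero parametrization of $X_0(p)$ it writes $T_p=(\halpha_p)_*\circ\hbeta_p^*$ with explicit rational maps, inverts $\hbeta_p$ on $\{|w|_p>r_p\}$ by an integral Laurent series $F_p(w)=w(1+\Delta_p(w))$ with $\|\Delta_p\|_{r_p}\le 1$ (Lemma~\ref{l:invertibility}), and sets $\cht=\chalpha_p\circ F_p$ shifted by $\fj_p$; convergence on~$N_p$, integrality, the polynomial part and the coefficient bounds all drop out of norm estimates for this composition, and the identification $\cht=\t$ on the supersingular annulus is then checked via the valuation characterization of the canonical subgroup, by verifying $|(\chalpha_p\circ F_p)(w)|_p\neq|w|_p^{1/p}$ there. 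Some such construction --- or at least a citation establishing an analytic section of $X_0(p)\to X(1)$ over the not-too-supersingular locus in the Katz--Lubin style --- must precede your Newton--polygon computation; once it is in place, your argument gives a clean and arguably more conceptual derivation of the exponents.
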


To prove this proposition, we introduce some notation and recall the explicit formulae in~\cite[\emph{Appendice}]{Mes86}.
For~$\K=\C$ or~$\C_p$, we use~$j$ to identify~$\Ell(\K)$ with~$\K$ and consider~$T_p$ as a correspondence acting on~$\Div(\K)$.
Let~$\Ell_0(p)$, $\alpha_p$ and~$\beta_p$ be as in Section~\ref{ss:Hecke correspondences}, so that~$T_p = (j \circ \alpha_p)_* \circ (j \circ \beta_p)^*$.
Denote by
\begin{displaymath}
  w_p \colon \Ell_0(p)(\K) \to \Ell_0(p)(\K)
\end{displaymath}
the \emph{Atkin--Lehner} or \emph{Fricke involution}, defined by~$w_p(E, C) \= (E/C, E[p]/C)$ and note that~$\beta_p = \alpha_p \circ w_p$.
Identify~$\Ell_0(p)(\C)$ with the quotient~$\Gamma_0(p) \setminus \H$ and denote by~$\eta \colon \H \to \C$ \emph{Dedekind's eta function}, defined by
\begin{displaymath}
  \eta(\tau)
  \=
  \exp \left( \frac{\pi i \tau}{12} \right) \prod_{n = 1}^{\infty} (1 - \exp(2\pi i n \tau)).
\end{displaymath}
Then for~$p = 2$ or~$3$, the function~$\whx_p \colon \H \to \C$ defined by
\begin{displaymath}
  \whx_p(\tau)
  \=
  \left( \frac{\eta(\tau)}{\eta(p \tau)} \right)^{\frac{24}{p - 1}}
\end{displaymath}
descends to a complex analytic isomorphism~$x_p \colon \Ell_0(p)(\C) \to \C$.
Moreover, defining
\begin{displaymath}
  \halpha_p(z)
  \=
  \begin{cases}
    \frac{(z + 2^4)^3}{z}
    & \text{if $p = 2$};
    \\
    \frac{(z + 3^3)(z + 3)^3}{z}
    & \text{if $p = 3$},
  \end{cases}
  \text{ and }
  \whw_p(z)
  \=
  \begin{cases}
    \frac{2^{12}}{z}
    & \text{if $p = 2$};
    \\
    \frac{3^6}{z}
    & \text{if $p = 3$},
  \end{cases}
\end{displaymath}
we have $j \circ \alpha_p = \halpha_p \circ x_p$ and $x_p \circ w_p = \whw_p \circ x_p$, see~\cite[pp. 238, 239]{Mes86}.
It follows that, if we put
\begin{displaymath}
  \hbeta_p(z)
  \=
  \halpha_p \circ \whw_p(z)
  =
  \begin{cases}
    \frac{(z + 2^8)^3}{z^2}
    & \text{if $p = 2$};
    \\
    \frac{(z + 3^3)(z + 3^5)^3}{z^3}
    & \text{if $p = 3$},
  \end{cases}
\end{displaymath}
then~$j \circ \beta_p = \hbeta_p \circ x_p$ and therefore~$T_p = (\halpha_p)_* \circ \hbeta_p^*$ as algebraic correspondences over~$\C$.
Since~$T_p$, $\halpha_p$ and~$\hbeta_p$ are all defined over~$\Q$, we have that the equality~$T_p = (\halpha_p)_* \circ \hbeta_p^*$ also holds as algebraic correspondences over~$\Div(\Ell(\C_p))$.

The following elementary lemma is used the proof of Proposition~\ref{p:low canonical analyticity}.
Given~$r$ in~$(0, 1)$, and a Laurent series~$\sum_{n = 0}^{\infty} \frac{A_n}{z^n}$ in~$\Z \left\llbracket \frac{1}{z} \right\rrbracket$, put
\begin{displaymath}
  \left\| \sum_{n = 0}^{\infty} \frac{A_n}{z^n} \right\|_r
  \=
  \sup \{ |A_n|_p r^{- n} : n \ge 0 \}.
\end{displaymath}

\begin{lemma}
  \label{l:invertibility}
  Let~$\delta(z)$ in~$\frac{1}{z} \Z \left\llbracket \frac{1}{z} \right\rrbracket$ be given and put $f(z) \= z(1 + \delta(z))$.
  Then there is~$\Delta(z)$ in~$\frac{1}{z} \Z \left\llbracket \frac{1}{z} \right\rrbracket$ such that~$F(z) \= z(1 + \Delta(z))$ satisfies~$F(f(z)) = z$.
  If in addition for some~$r$ in~$(0, 1)$ we have~$\| \delta \|_r \le 1$, then~$\| \Delta \|_r \le 1$.
\end{lemma}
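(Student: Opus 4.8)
The plan is to convert the functional equation $F(f(z)) = z$ into an explicit triangular recursion for the coefficients of $\Delta$, and to read off both the integrality of those coefficients and the norm bound from that recursion. To begin, put $g(z) := (1+\delta(z))^{-1} - 1 = -\delta(z) + \delta(z)^2 - \delta(z)^3 + \cdots$; since $\delta(z)$ has no constant term, for each fixed power of $z^{-1}$ only finitely many terms of this series contribute, so $g(z)$ is a well-defined element of $\frac{1}{z}\Z\llbracket\frac{1}{z}\rrbracket$. Writing $f(z) = z(1+\delta(z))$ and $F(z) = z(1+\Delta(z))$, we have $F(f(z)) = z(1+\delta(z))(1+\Delta(f(z)))$, so $F(f(z)) = z$ is equivalent to $\Delta(f(z)) = g(z)$. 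It therefore suffices to produce $\Delta(z) = \sum_{n \ge 1} A_n z^{-n}$ in $\frac{1}{z}\Z\llbracket\frac{1}{z}\rrbracket$ satisfying this equation.

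Next I would expand $\Delta(f(z))$ in powers of $z^{-1}$. Writing $(1+\delta(z))^{-n} = \sum_{k \ge 0} u^{(n)}_k z^{-k}$, so that $u^{(n)}_0 = 1$ and all $u^{(n)}_k$ are integers, one gets $f(z)^{-n} = \sum_{k \ge 0} u^{(n)}_k z^{-n-k}$, whence the coefficient of $z^{-m}$ in $\sum_{n \ge 1} A_n f(z)^{-n}$ equals $A_m + \sum_{n=1}^{m-1} A_n u^{(n)}_{m-n}$. Matching this against the coefficient $g_m$ of $z^{-m}$ in $g$ gives the recursion
\[
  A_m = g_m - \sum_{n=1}^{m-1} A_n u^{(n)}_{m-n}, \qquad m \ge 1,
\]
which determines $\Delta$ uniquely. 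Since each $g_m$ and each $u^{(n)}_k$ is an integer, an immediate induction on $m$ shows $A_m \in \Z$, proving the first assertion.

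For the norm statement, assume $\| \delta \|_r \le 1$ and use that $\| \cdot \|_r$ is non-archimedean and submultiplicative on $\Z\llbracket\frac{1}{z}\rrbracket$. Then $\| \delta^k \|_r \le \| \delta \|_r^k \le 1$ for all $k \ge 1$, and since the binomial coefficients appearing in $(1+\delta)^{-1} - 1$ and in $(1+\delta)^{-n}$ are integers, it follows that $\| g \|_r \le 1$ and $\| (1+\delta)^{-n} \|_r \le 1$ for every $n \ge 1$; equivalently, $|g_m|_p \le r^m$ and $|u^{(n)}_k|_p \le r^k$. A second induction on $m$ via the recursion then gives $|A_m|_p \le r^m$: granting $|A_n|_p \le r^n$ for all $n < m$, every summand satisfies $|A_n u^{(n)}_{m-n}|_p \le r^n r^{m-n} = r^m$, so $|A_m|_p \le \max\{|g_m|_p, r^m\} = r^m$, i.e. $\| \Delta \|_r \le 1$.

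The point requiring care — more a bookkeeping subtlety than a genuine obstacle — is that a priori $\Delta$ is only a formal Laurent series, so one cannot first assert $\| \Delta \|_r < \infty$ and then invoke some ``composition with $f$ is an isometry'' principle; the bound has to be extracted coefficient by coefficient from the recursion, exactly as above. The remaining verifications, namely the coefficientwise convergence of $g$, of $(1+\delta)^{-n}$ and of $\Delta \circ f$, together with the estimate $\| (1+\delta)^{-n} \|_r \le 1$, are routine.
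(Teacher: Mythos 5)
Your proof is correct and follows essentially the same route as the paper's: both determine the coefficients of $\Delta$ one at a time through a triangular recursion forced by the functional equation, and verify integrality and the bound $\left\| \Delta \right\|_r \le 1$ by induction on the coefficient index. The only cosmetic difference is that the paper organizes the construction as a successive approximation by Laurent polynomials and gets the norm bound from the closure under composition of the set $\left\{ z(1+g(z)) : \left\| g \right\|_r \le 1 \right\}$, whereas you first invert $1+\delta$ and then estimate the coefficients directly via submultiplicativity of $\left\| \cdot \right\|_r$; both are routine.
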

\begin{proof}
  We start defining recursively a sequence~$(\Delta_n)_{n = 0}^{\infty}$ in~$\frac{1}{z}  \Z \left[ \frac{1}{z} \right]$ such that for every integer~$n \ge 0$,
  \begin{displaymath}
    z^n \Delta_n(z) \in \Z [z],
    \Delta_{n + 1}(z) \equiv \Delta_n(z) \mod \frac{1}{z^{n + 1}} \Z \left[ \frac{1}{z} \right],
  \end{displaymath}
  and the Laurent polynomial~$F_n(z) \= z(1 + \Delta_n(z))$ satisfies
  \begin{displaymath}
    F_n(f(z)) \equiv z  \mod  \frac{1}{z^n} \Z \left[ \frac{1}{z} \right].
  \end{displaymath}
  For~$n = 0$ put~$\Delta_0(z) = 0$, so~$F_0(f(z)) = f(z) \equiv z \mod  \Z \left[ \frac{1}{z} \right]$.
  Let~$n \ge 0$ be an integer so that~$\Delta_n$ is already defined and let~$A$ in~$\Z$ be the coefficient of~$\frac{1}{z^n}$ in~$F_n(f(z))$.
  Then for~$\Delta_{n + 1}(z) \= \Delta_n(z) - \frac{A}{z^{n + 1}}$, we have
  \begin{multline*}
    (F_{n + 1} - F_n)(f(z))
    =
    - \frac{A}{z^n(1 + \delta(z))^n}
    =
    - \frac{A}{z^n} \left(1 + \sum_{k = 1}^{\infty} (- \delta(z))^k \right)^n
    \\ \equiv
    - \frac{A}{z^n} \mod \frac{1}{z^{n + 1}} \Z \left[ \frac{1}{z} \right],
  \end{multline*}
 and therefore
  \begin{displaymath}
    F_{n + 1}(f(z)) - z
    =
    F_n(f(z)) - z + (F_{n + 1} - F_n)(f(z))
    \equiv
    0 \mod \frac{1}{z^{n + 1}}  \Z \left[ \frac{1}{z} \right].
  \end{displaymath}
  This completes the definition of the sequence~$(\Delta_n)_{n = 0}^{\infty}$.
  It follows that the unique series~$\Delta$ in~$\frac{1}{z} \Z \left\llbracket \frac{1}{z} \right\rrbracket$ satisfying for every~$n \ge 0$ the congruence
  \begin{displaymath}
    \Delta(z)
    \equiv
    \Delta_n(z) \mod \frac{1}{z^{n + 1}}  \Z \left\llbracket \frac{1}{z} \right\rrbracket,
  \end{displaymath}
  satisfies~$F(f(z)) = z$.

  To prove the last assertion, note that for every~$r$ in~$(0,1)$,
  \begin{displaymath}
    I_r
    \=
    \left\{ z(1 + g(z)) : g(z) \in \tfrac{1}{z}\Z \left\llbracket \tfrac{1}{z} \right\rrbracket, \| g \|_r\leq 1 \right\}
  \end{displaymath}
  is a collection of series in~$\Z \left\llbracket \frac{1}{z} \right\rrbracket$ that is closed under composition.
  It follows from the above construction that, if for some~$r$ in~$(0, 1)$ we have~$\| \delta \|_r \le 1$, then for every integer~$n \ge 0$ the series~$F_n$ and~$F_n \circ f$ are both in~$I_r$.
  This implies that~$F$ is in~$I_r$, as wanted. 
\end{proof}

The proof of Proposition~\ref{p:low canonical analyticity} is given after the following lemma, which is also used in the proof of Theorem~\ref{t:canonical analyticity}.
\begin{lemma}
  \label{l:bad analyticity}
  For an arbitrary prime number~$p$, the right-hand side of~\eqref{eq:6} converges to~$\t$ on~$\Ord \cup \Bad$.
\end{lemma}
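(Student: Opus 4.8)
The plan is to write $F(z)$ for the right-hand side of~\eqref{eq:6} and establish two things: that $F$ converges at every point of $\Ord\cup\Bad$, and that its value there is $\t(z)$. The convergence on $\Ord$, together with the coefficient bound~\eqref{eq:13}, is already part of Theorem~\ref{teo-Deligne} (for $p=2$ and~$3$ the input on $\Ord$ is instead the explicit Laurent expansion produced from the formulae of~\cite{Mes86} in this appendix). For the $\Bad$ part I would first note that, since $\delta_{\ss}'\ge 1$, the bound~\eqref{eq:13} gives $\limsup_n|A_n^{(\ss)}|_p^{1/n}\le p^{-p/(p+1)}<1$, so each Laurent tail $\sum_{n\ge 1}A_n^{(\ss)}/(z-\beta_{\ss})^n$ converges on $\{z(X-\beta_{\ss})>p^{-p/(p+1)}\}$ and $F$ is rigid analytic on the region
\begin{displaymath}
  \cW \= \{\, z\in\AKber : z(X-\beta_{\ss})>p^{-p/(p+1)}\text{ for every }\ss\in\tSups\,\},
\end{displaymath}
which is the complement in $\AKber$ of finitely many pairwise disjoint closed Berkovich discs and is therefore connected. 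Both $\Ord$ and $\Bad$ lie in $\cW$: on $\Ord$ one has $|z-\beta_{\ss}|_p=1$, while on $\Bad$ one has $|z|_p>1\ge|\beta_{\ss}|_p$, hence $|z-\beta_{\ss}|_p=|z|_p$. In passing I would record, by the ultrametric inequality — the term $z^p$ dominating both $pk(z)$ and the pole parts, which have $p$-adic absolute value $<1$ — that $|F(z)|_p=|z|_p^p$ for every $z$ in $\Bad$.

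The next step is to pin down, via the modular equation, which branch of $T_p$ the function $F$ realises on $\Bad$. On $\Ord$ we have $F=\t$ by Theorem~\ref{teo-Deligne}, and for $E$ in $\Ord$ the curve $\t(E)=E/H(E)$ is a quotient of $E$ by a subgroup of order $p$, so $\Phi_p(j(E),j(\t(E)))=0$ by~\eqref{eq-mod-polynomial}. Hence the rigid analytic function $z\mapsto\Phi_p(z,F(z))$ on $\cW$ vanishes on every residue disc contained in $\Ord$, and since $\cW$ is connected the identity principle forces $\Phi_p(z,F(z))\equiv 0$ on all of $\cW$, in particular on $\Bad$. Now write a point of $\Bad$ as $E=\Tate(\zeta)$ with $0<|\zeta|_p<1$: by~\eqref{tateq} with $n=p$ the roots of $\Phi_p(j(\Tate(\zeta)),Y)$ are $j(\Tate(\zeta^p))$ together with the $p$ numbers $j(\Tate(\ell))$ for $\ell^p=\zeta$, and by~\eqref{jtate} their $p$-adic absolute values are $|\zeta|_p^{-p}$ and $|\zeta|_p^{-1/p}$ respectively — pairwise distinct because $0<|\zeta|_p<1$. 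Since $F(j(E))$ is one of these roots and $|F(j(E))|_p=|j(E)|_p^p=|\zeta|_p^{-p}$, it must be $j(\Tate(\zeta^p))$. Finally, because $H(E)=C_{p,\zeta^p}$ in the notation~\eqref{e:1} and $\Tate(\zeta)/C_{p,\zeta^p}=\Tate(\zeta^p)$, this reads $F(j(E))=j(E/H(E))=j(\t(E))$, which is exactly the assertion on $\Bad$; combined with Theorem~\ref{teo-Deligne} on $\Ord$, this proves the lemma.

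The only genuinely delicate point is the use of the identity principle: it is valid precisely because $F$ extends to a \emph{single} rigid analytic function on the connected region $\cW$ — not to two unrelated pieces on $\Ord$ and on $\Bad$ — and this is exactly what the ``finite sum of Laurent series converging on $\cW$'' description from the first step provides, together with the standard fact that deleting finitely many pairwise disjoint closed discs from the Berkovich affine line leaves a connected space. Everything else is elementary manipulation with the ultrametric inequality, the $q$-expansion~\eqref{jtateq}--\eqref{jtate} of the $j$-invariant of the Tate curve, and the description~\eqref{tateq} of Hecke orbits in $\Bad$, and none of it depends on $p$ once the $\Ord$-side convergence and bound are in hand.
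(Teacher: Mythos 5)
Your argument is correct and follows essentially the same route as the paper's proof: convergence of the Laurent expansion and the norm identity $|F(z)|_p=|z|_p^p$ on $\Bad$ via Theorem~\ref{teo-Deligne}, propagation of the relation $\Phi_p(z,F(z))=0$ from $\Ord$ to $\Bad$ by analyticity, and identification of the correct root of $\Phi_p(j(E),Y)$ using \eqref{jtate} and \eqref{tateq}. The only difference is that the paper leaves the analytic-continuation step implicit, whereas you spell out the connected region on which the identity principle is applied.
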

\begin{proof}
  Let~$\Phi_p(X, Y)$ be the modular polynomial of level~$p$, as defined in Section~\ref{ss:Hecke correspondences}, so that for every~$z$ in~$\Ord$ we have~$\Phi_p(z, \t(z)) = 0$.
  By Theorem~\ref{teo-Deligne}, the finite sum of Laurent series on the right-hand side of~\eqref{eq:6} converges on~$\Ord \cup \Bad$ to a function~$\widehat{\t}$ extending~$\t$, and for~$z$ in~$\Bad$ we have~$|\widehat{\t}(z)|_p = |z|_p^p$.
  It follows that for every~$z$ in~$\Bad$ we have~$\Phi_p(z, \widehat{\t}(z)) = 0$, so~$\widehat{\t}(z)$ is in the support of~$T_p(z)$.
  Combining~\eqref{jtate} and~\eqref{tateq}, we conclude that~$\widehat{\t}(z) = \t(z)$.
\end{proof}

\begin{proof}[Proof of Proposition~\ref{p:low canonical analyticity}]
Note that if we put~$r_2 \= 2^{-8}$ and~$r_3 \= 3^{- \frac{9}{2}}$, then for~$p = 2$ and~$3$ we have by Proposition~\ref{vnorm},
\begin{displaymath}
  N_p
  \=
  \{ z \in \C_p : |z - \fj_p|_p > r_p \}.
\end{displaymath}
  For~$p = 2$ and~$3$, put
  \begin{displaymath}
    \chalpha_p \= \halpha_p - \fj_p
    \text{ and }
    \chbeta_p \= \hbeta_p - \fj_p.
  \end{displaymath}
  Note that for~$p = 3$, we have
  \begin{displaymath}
    \chalpha_3(z) = \frac{(z^2 + 2 \cdot 3^2 z - 3^3)^2}{z}
    \text{ and }
    \chbeta_3(z) = \frac{(z^2 - 2 \cdot 3^5 z - 3^9)^2}{z^3}.
  \end{displaymath}
  So, for~$p = 2$ and~$3$ the rational map~$\delta_p(z) \= z^{-1}\chbeta_p(z) - 1$ is a Laurent polynomial in~$\frac{1}{z}\Z \left[ \frac{1}{z} \right]$ satisfying~$\| \delta_p \|_{r_p} \le 1$.
  In particular, for every~$z$ in the set
  \begin{displaymath}
    \chN_p
    \=
    \{ z' \in \C_p : |z'|_p > r_p \},
  \end{displaymath}
  we have~$|\chbeta_p(z)|_p = |z|_p$, so~$\chbeta_p$ maps~$\chN_p$ into itself.
  By Lemma~\ref{l:invertibility} there is~$\Delta_p(w)$ in~$\frac{1}{w} \Z \left\llbracket \frac{1}{w} \right\rrbracket$ such that~$\| \Delta_p \|_{r_p} \le 1$ and such that the map
  \begin{displaymath}
    \begin{array}{rccl}
      F_p \colon & \chN_p & \to & \chN_p
                               \\ & w & \mapsto & F_p(w) \= w(1 + \Delta_p(w))
    \end{array}
  \end{displaymath}
  is an inverse of~$\chbeta_p|_{\chN_p}$.

  We show below that~$\t$ coincides with the map
  \begin{displaymath}
    \begin{array}{rccl}
      \cht \colon & N_p & \to & \C_p
                                \\ & z & \mapsto & \cht(z) \= (\chalpha_p \circ F_p)(z - \fj_p) + \fj_p.
    \end{array}
  \end{displaymath}
  Once this is established, the proposition follows from explicit computations using the estimates,
  \begin{displaymath}
    \| \Delta_p \|_{r_p} \le 1,
    \left\| \frac{\chalpha_2(w)}{w^2} \right\|_{2^{-4}} \le 1 \text{ for~$p = 2$, and }
    \left\| \frac{\chalpha_3(w)}{w^3} \right\|_{3^{-\frac{3}{2}}} \le 1 \text{ for~$p = 3$}.
  \end{displaymath}
  By definition, for each~$z$ in~$\hN_p$ the point~$\cht(z)$ is in the support of~$T_p(z) = {(\alpha_p)_* \circ \beta_p^*(z)}$.
  Moreover, for every~$z$ in~$\Bad$ we have~$|\cht(z)|_p = |z|_p^p$, so by~\eqref{jtate} and~\eqref{tateq} we have~$\cht(z) = \t(z)$.
  Combined with Lemma~\ref{l:bad analyticity}, this implies that~$\cht$ and~$\t$ agree on~$\Ord \cup \Bad$.
  In view of Proposition~\ref{vnorm} and Lemma~\ref{l:sups canonical subgroup}, to prove that~$\cht$ and~$\t$ agree on~$N_p \cap \Sups$ it is sufficient to show that for every~$w$ in~$\chN_p \cap \cM_p$ we have~$|(\chalpha_p \circ F_p)(w)|_p \neq |w|_p^{\frac{1}{p}}$.
  Note that for every~$w$ in~$\chN_p$ we have~$|F_p(w)|_p = |w|_p$.
  A direct computation shows that for~$p = 2$ we have
  \begin{displaymath}
    |(\chalpha_2 \circ F_2)(w)|_2
    \begin{cases}
      = |w|_2^2
      & \text{if $2^{-4} < |w|_2 < 1$};
      \\
      \le 2^{-8}
      & \text{if $|w|_2 = 2^{-4}$};
      \\
      = \frac{2^{-12}}{|w|_2}
      & \text{if $r_2 < |w|_2 < 2^{-4}$},
    \end{cases}
  \end{displaymath}
  and that for~$p = 3$ we have
  \begin{displaymath}
    |(\chalpha_3 \circ F_3)(w)|_3
    \begin{cases}
      = |w|_3^3
      & \text{if $3^{-\frac{3}{2}} < |w|_3 < 1$};
      \\
      \le 3^{-\frac{9}{2}}
      & \text{if $|w|_3 = 3^{-\frac{3}{2}}$};
      \\
      = \frac{3^{-6}}{|w|_3}
      & \text{if $r_3 < |w|_3 < 3^{-\frac{3}{2}}$}.
    \end{cases}
  \end{displaymath}
  In all the cases we have~$|(\chalpha_p \circ F_p)(w)|_p \neq |w|_p^{\frac{1}{p}}$.
  This completes the proof of~$\t = \cht$, and of the proposition.
\end{proof}

\begin{proof}[Proof of Theorem~\ref{t:canonical analyticity}]
  We first prove~\eqref{eq:5}, and the assertions about the Laurent series expansion.
  For~$p = 2$ and~$3$, these are given by Proposition~\ref{p:low canonical analyticity}.
  Assume~$p \ge 5$.
  For each~$\ss$ in~$\tSups$, let~$\fj_{\ss}$ be given by Proposition~\ref{vnorm}, and define~$P_{\sups}(X) = \prod_{\ss \in \tSups} (X - \fj_{\ss})$ as in the proof of this proposition.
  Since the reduction modulo~$p$ of the polynomial~$P_{\sups}$ is separable, for every~$\ss$ in~$\tSups$ we have that~$\fj_{\ss}$ is in~$\Q_p^{\unr}$.
  Put~$\beta_{\ss} \= \fj_{\ss}$.
  Denote by~$\widehat{\t}$ the finite sum of Laurent series in the right-hand side of~\eqref{eq:6} for these choices of~$(\beta_{\ss})_{\ss \in \tSups}$.
  It follows from Theorem~\ref{teo-Deligne} and Proposition~\ref{vnorm} that~$\widehat{\t}$ converges on~$N_p$, and by Lemma~\ref{l:bad analyticity} that for every~$z$ in~$\Bad \cup \Ord$ we have~$\widehat{\t}(z) = \t(z)$.
  We proceed to prove that for every~$z$ in~$\hN_p \= N_p \cap \Sups$ we also have~$\widehat{\t}(z) = \t(z)$.
  
  Denote by~$\Phi_p(X, Y)$ the modular polynomial of level~$p$ defined in Section~\ref{ss:Hecke correspondences}.
  Note that for every~$z$ in~$\Bad \cup \Ord$ we have
  \begin{equation}
    \label{eq:11}
    \Phi_p(\widehat{\t}(z), z)
    =
    \Phi_p(z, \widehat{\t}(z))
    =
    0.
  \end{equation}
  Since~$\widehat{\t}$ is analytic, \eqref{eq:11} holds for every~$z$ in~$N_p$.
  In view of Lemma~\ref{l:sups canonical subgroup}, this implies that for every~$E$ in~$\hN_p$ we have either~$\kval(\widehat{\t}(E)) = \frac{1}{p} \kval(E)$, or
  \begin{equation}
    \label{eq:9}
    \kval(\widehat{\t}(E))
    \begin{cases}
      = p \kval(E)
      & \text{if } \kval(E) \in \left] 0, \frac{1}{p + 1} \right];
      \\
      \ge p \kval(E)
      & \text{if } \kval(E) = \frac{1}{p + 1};
      \\
      = 1 - \kval(E)
      & \text{if } \kval(E) \in \left] \frac{1}{p + 1}, \frac{p}{p + 1} \right[.
    \end{cases}
  \end{equation}
  We now prove that~\eqref{eq:9} holds for every~$E$ in~$\hN_p$.
  Fix~$\ss$ in~$\tSups$, and note that the function
  \begin{displaymath}
    \begin{array}{rccl}
    \nu \colon & \left] 0, \frac{p}{p + 1} \right[ \cap \Q & \to & \Q
                                                                   \\ & r & \mapsto & \nu(r) \= \inf \{ \kval(\widehat{\t}(E)) : E \in \bfD(j(\ss)), \kval(E) = r \},
  \end{array}
\end{displaymath}
extends continuously to~$\left] 0, \frac{p}{p + 1} \right[$.
Thus, either~\eqref{eq:9} holds for every~$E$ in~$N_p \cap \bfD(j(\ss))$, or for every~$E$ in this set we have~$\kval(\widehat{\t}(E)) = \frac{1}{p} \kval(E)$.
So, to prove that~\eqref{eq:9} holds for every~$E$ in~$N_p \cap \bfD(j(\ss))$ it is sufficient to prove that it holds for some~$E_0$ in~$N_p \cap \bfD(j(\ss))$.
  Choose~$E_0$ in~$N_p \cap \bfD(j(\ss))$ such that~$z_0 \= j(E_0)$ satisfies
  \begin{displaymath}
    0 < \ord_p(z_0 - \fj_{\ss}) < \frac{1}{p + 1}.
  \end{displaymath}
  By Theorem~\ref{teo-Deligne} we have
  \begin{displaymath}
    \ord_p \left(\widehat{\t}(z_0) - z_0^p - p k(z_0) \right)
    \ge
    1 - \ord_p(z_0 - \fj_{\ss})
    >
    \frac{p}{p + 1}.
  \end{displaymath}
  Since~$\ord_p(z_0 - \fj_{\ss}) < \frac{1}{p}$, we also have
  \begin{displaymath}
    \ord_p(\widehat{\t}(z_0) - \fj_{\ss}^p)
    =
    p \ord_p(z_0 - \fj_{\ss})
    <
    \frac{p}{p + 1}.
  \end{displaymath}
  Combined with~$\ord_p(\fj_{\ss}^p - \fj_{\ss^{(p)}}) \ge 1$ and $\ord_p(p k(z_0)) \ge 1$, this implies
  \begin{equation}
    \label{eq:10}
    \ord_p \left( \widehat{\t}(z_0) - \fj_{\ss^{(p)}} \right)
    =
    p \ord_p(z_0 - \fj_{\ss}),
  \end{equation}
  and therefore~\eqref{eq:9} with~$E = E_0$.
  This completes the proof that~\eqref{eq:9} holds for every~$E$ in~$\hN_p$.
  In view of \eqref{eq:11}, Proposition~\ref{vnorm}, and Lemma~\ref{l:sups canonical subgroup}, it follows that for every~$z$ in~$\hN_p$ we have~$\widehat{\t}(z) = \t(z)$.
  By Theorem~\ref{teo-Deligne} we also obtain~\eqref{eq:5}.
  
  It remains to prove~\eqref{tp1 bis} for an arbitrary prime number~$p$.
  Note that for~$E$ in~$\Ord$ this is given by Proposition~\ref{prop-tpm} with~$m = 1$, and that for~$E$ in~$\Bad$ this follows from the combination of~\eqref{jtate}, and of~\eqref{tateq} with~$n = p$.
  It remains to prove~\eqref{tp1 bis} for~$E$ in~$\hN_p$.
  By the considerations above, and the proof of Proposition~\ref{p:low canonical analyticity}, we have that~\eqref{eq:9} holds for every prime number~$p$ and for every~$E$ in~$\hN_p$.
By Lemma~\ref{l:sups canonical subgroup} we deduce that:
\begin{enumerate}
\item[1.]
  $\t$ maps
\begin{displaymath}
  N_p'
  \=
  \left\{ E \in \Ell(\C_p) : 0 < \kval(E) < \frac{1}{p + 1} \right\}
\end{displaymath}
onto~$\hN_p$, and for every~$E$ in~$\hN_p$ the divisor~$(\t|_{N_p'})^*(E)$ has degree~$p$;
\item[2.]
  $\t$ maps
\begin{displaymath}
  S_p
\=
\left\{ E \in \Ell(\C_p) : \kval(E) = \frac{1}{p + 1} \right\}
\end{displaymath}
onto~$B_p \= \Sups \setminus \hN_p$, and for every~$E$ in~$B_p$ the divisor~$(\t|_{S_p})^*(E)$ has degree~$p + 1$;
\item[3.]
  $\t$ maps~$A_p \= \hN_p \setminus (N_p' \cup S_p)$ onto itself, and for every~$E$ in~$A_p$ we have~$(\t|_{A_p})^*(E) = [\t(E)]$.
\end{enumerate}
The proof of~\eqref{tp1 bis} is divided in the following cases:
\begin{enumerate}
\item[1.]
  For~$E$ in~$B_p$, we have~$\t^*(E) = (\t|_{S_p})^*(E)$ and this divisor has degree~$p + 1$.
  Together with~\eqref{eq:11} this implies~$T_p(E) = \t^*(E)$;
\item[2.]
  For~$E$ in~$A_p$, we have~$\t^*(E) = (\t|_{N_p'})^*(E) + (\t|_{A_p})^*(E)$ and this divisor has degree~$p + 1$.
  As in the previous case we conclude that~$T_p(E) = \t^*(E)$;
\item[3.]
  For~$E$ in~$N_p' \cup S_p$, we have~$\t^*(E) = (\t|_{N_p'})^*(E)$ and this divisor is of degree~$p$.
  Combined with~\eqref{eq:11} this implies that the divisor~$T_p(E) - \t^*(E)$ has degree~$1$.
  On the other hand, by~\eqref{eq:9} the point~$\t(E)$ is not in the support of~$\t^*(E)$, so by~\eqref{eq:11} we have~$T(E) - \t^*(E) = [\t(E)]$.
\end{enumerate}
This completes the proof of~\eqref{tp1 bis}, and of the theorem.
\end{proof}

\bibliographystyle{alpha}

\end{document}